\DeclareFontShape{OT1}{cmr}{bx}{sc}{<-> cmbcsc10}{}
\newcommand{\1}{\mathbf{1}}
\newcommand{\R}{ {\mathbb{R}}}
\newcommand{\pp}{ {\partial} }
\newcommand{\RR}{{{\mathbb R}}}
\newcommand{\cc}{ {\mathbf c}}
\newcommand{\cuad}{{\sqcap\kern-.68em\sqcup}}
\newcommand{\be}{\begin{equation}}
	\newcommand{\ee}{\end{equation}}
\newcommand{\la}{\lambda}
\newcommand{\dx}{\; dx}
\newcommand{\inn}{{\quad\mbox{in } }}
\renewcommand{\div}{\mathop{\rm div}}
\newcommand{\curl}{\mathop{\rm curl}}
\theoremstyle{plain}
\newtheorem{theorem}{Theorem}[section]
\newtheorem{lemma}[theorem]{Lemma}
\newtheorem{prop}[theorem]{Proposition}
\newtheorem{remark}{Remark}[theorem]
\newcommand{\bremark}{\begin{remark} \em}
	\newcommand{\eremark}{\end{remark} }
\numberwithin{equation}{section}
\title[$H$-system and its heat flow]{Finite-time singularity formation for the heat flow of the $H$-system}
\author[Y. Sire]{Yannick Sire}
\address{\noindent Department of Mathematics, Johns Hopkins University, 3400 N. Charles Street, Baltimore, MD 21218, USA}
\email{ysire1@jhu.edu}
\author[J. Wei]{Juncheng Wei}
\address{\noindent Department of Mathematics, University of British Columbia, Vancouver, B.C., V6T 1Z2, Canada}
\email{jcwei@math.ubc.ca}
\author[Y. Zheng]{Youquan Zheng}
\address{\noindent School of Mathematics, Tianjin University, Tianjin 300072, P. R. China}
\email{zhengyq@tju.edu.cn}
\author[Y. Zhou]{Yifu Zhou}
\address{\noindent
School of Mathematics and Statistics, Wuhan University, Wuhan 430072, China}
\email{yifuzhou@whu.edu.cn}
\begin{document}
\begin{abstract}
We construct the first example of finite time blow-up solutions for the heat flow of the $H$-system, describing the evolution of surfaces with constant mean curvature
\begin{equation*}
\begin{cases}
u_t = \Delta u - 2u_{x_1}\wedge u_{x_2}~&\text{ in }~\R^2\times\R_+,\\
 u(\cdot, 0) = u_0~&\text{ in }~\R^2,
  \end{cases}
\end{equation*}
where $u$: $\R^2\times\R_+\to \R^3$.
The singularity at finite time forms as a scaled least energy $H$-bubble, denoted as $W$, exhibiting type II blow-up speed. One key observation is that the linearized operators around $W$ projected onto $W^\perp$ and in the $W$-direction are in fact  decoupled. On $W^\perp$, the linearization is the linearized harmonic map heat flow, while in the $W$-direction, it is the linearized Liouville-type flow. Based on this, we also prove the non-degeneracy of the $H$-bubbles with any degree.
\end{abstract}
\maketitle

{
  \hypersetup{linkcolor=black}
  \tableofcontents
}

\section{Introduction and main results}

\medskip

A classical problem in geometric analysis is the following {\sl Plateau} problem: for a given curve $\Gamma$ , find a surface with boundary $\Gamma$, with mean curvature $H(x)$ for a point $x$ on the surface, where $H$ is some (smooth) function.  In the case of the ambient space $\R^3$,  a  parametric surface with prescribed mean curvature, satisfies the following equation, also known as the $H$-surface system
\begin{equation}\label{eqn-Hsys}
\Delta u-2H(u) u_{x_1}\wedge u_{x_2}=0 ~\mbox{ on } D,
\end{equation}
where $u$: $D\subset \R^2\to\R^3$ with $D$ being the unit disk, $H$ is a given scalar, ``$\wedge$'' denotes the wedge product, and, for instance, $u_{x_1} = \frac{\partial }{\partial x_1}u$. The geometric significance of system \eqref{eqn-Hsys} is that conformal solutions $u$, i.e. solutions satisfying additionally,
$$
|u_{x_1}|^2-|u_{x_2}|^2=u_{x_1}\cdot u_{x_2}=0 ~\mbox{ on } D,
$$
 parameterize immersed 2D disk-type surfaces of prescribed mean curvature $H$.  Solutions of \eqref{eqn-Hsys} may arise as ``soap bubbles'', that is, surfaces of least area enclosing a given volume. Concerning the existence and optimal estimates, the Dirichlet problem of the $H$-system was studied intensively in many seminal works, such as Heinz \cite{Heinz1954}, Hildebrandt \cite{Hildebrandt1969,Hildebrandt1970}, Gulliver-Spruck \cite{Spruck1971,Spruck1972}, Steffen \cite{Steffen1976-1,Steffen1976-2} and Wente \cite{Wente1969}. Struwe \cite{StruweActa} considered the Plateau problem of the $H$-system and proved its existence; see also Duzaar-Steffen \cite{DuzaarSteffen}. For more geometric motivations and backgrounds, we refer to the comprehensive monographs of Struwe \cite{Struwe1988,Struwebook}, Duzaar-Steffen \cite{DuzaarSteffen1}, Steffen \cite{Steffen1}, Bethuel-Caldiroli-Guida \cite{Bethuel1} and their references.

\medskip

For the Dirichlet problem in a smooth bounded domain $\Omega\subset \R^2$
\begin{equation}\label{stationary-H-system}
\begin{cases}
\Delta u =  H(x, u, \nabla u)u_{x_1}\wedge u_{x_2}~&\text{ in }~\Omega,\\
 u(\cdot) = \tilde g~&\text{ on }~\partial\Omega
  \end{cases}
\end{equation}
with the scalar $H$ being smooth, it was proved by Wente \cite{Wente1975} and Chanillo-Malchiodi \cite{chanillomalchiodi2005cagasymptotic} that there is no nonzero solution in simply connected domain when $\tilde g = 0$. For $H(x, u, \nabla u) = 2$ and $\|\tilde g\|_{L^\infty} < 1$, a solution with minimal energy was constructed by Hildebrandt in \cite{Hildebrandt1970},  while in Brezis-Coron \cite{BrezisCoroncpam}, Steffen \cite{SteffenARMA}, Struwe \cite{Struwe1988}, the authors considered large energy solutions, and it was proved by Heinz \cite{HeinzARMA} that the condition $\|\tilde g\|_{L^\infty} < 1$ is sharp. For general $H(x, u, \nabla u)$, the existence of solutions was proved in a series of works by Caldiroli-Musina \cite{CaldiroliMusinaCCM,Caldiroli2004,CaldiroliMusinaRMI,MusinaJAM2004} via the variational perturbative method introduced by Ambrosetti and Badiale in \cite{AmbrosettiBadiale}. Furthermore, bubbling and multi-bubble solutions have been constructed in Caldiroli-Musina
\cite{caldirolimusinaduke2004} and Chanillo-Malchiodi \cite{chanillomalchiodi2005cagasymptotic}. Regularity of weak solutions was studied by Musina \cite{MusinaAALLMA}. The asymptotic behavior of the solutions for (\ref{stationary-H-system}) was studied, for instance, in Sasahara
\cite{Yasuhiro1995}, Isobe \cite{Takeshi2000,Takeshi2001a,Takeshi2001b}, Caldiroli-Musina \cite{CaldiroliMusina2006arma,CaldiroliMusinaJFA2007} and the references therein.

\medskip

Rivi\`ere proved in the important work \cite{Riviere-2008} that  two-dimensional conformally invariant nonlinear elliptic PDEs, including the prescribed mean curvature equation and harmonic map equation, can be written in terms of suitable conservation laws. Based on this special compensated-compactness structure,  he proved that the solutions of the prescribed bounded mean curvature equation and the harmonic map equation in any manifolds are continuous, and that critical points of two-dimensional continuously differentiable conformally invariant elliptic Lagrangians of the form
 \begin{equation}\label{H-2-form}
E[u] = \frac{1}{2}\int_{\mathbb{R}^2}|\nabla u|^2dx_1\wedge dx_2 + \int_{\mathbb{R}^2}\omega(u)(u_{x_1}, u_{x_2})dx_1\wedge\dx_2
\end{equation}
  are continuous. Here $\omega$ is a $C^1$ differential two-form on a $C^2$-submanifolds $N^k$ of $\mathbb{R}^m$, and $k$ and $m$ are arbitrary integers satisfying $1\leq k \leq m$.

\medskip

For the $H$-system in $\R^2$ with constant mean curvature $H=1$, finite-energy entire solutions were classified in the classical paper \cite{BrezisCoron} by Brezis-Coron as
\begin{equation*}
u(z) = \Pi\left(\frac{P(z)}{Q(z)}\right)+C, \quad z = (x, y) = x+iy
\end{equation*}
where $\Pi:\mathbb{C}\to \mathbb{S}^2$ is the inverse stereographic projection defined by $$\Pi(z) =\frac{1}{1+|z|^2}\begin{bmatrix}
2z\\
|z|^2-1
\end{bmatrix},$$ $P$, $Q$ are polynomials and $C$ is a constant vector in $\mathbb{R}^3$.
From this result we know that a typical class of solutions to
\begin{equation}\label{eqn-H=1}
\Delta u-2 u_{x_1}\wedge u_{x_2}=0 ~\mbox{ in }~\R^2
\end{equation}
 are $W^{(m)}(z) = \Pi\left(z^m\right)$ for $m\in \mathbb{Z}^+$, where $m$ is the degree of the map. Here we consider the so called non-degenerate property of {\it the degree $m$ bubble} $W^{(m)}$, which concerns the bounded kernel functions of the linearized equation around $W^{(m)}$ as follows
\begin{equation}\label{e:linearized}
\begin{aligned}
\Delta\phi = 2\left(W^{(m)}_{x_1}\wedge \phi_{x_2} + \phi_{x_1}\wedge W^{(m)}_{x_2}\right).
\end{aligned}
\end{equation}
The non-degeneracy plays an important role in the construction of $H$-bubbles; see Caldiroli-Musina \cite{caldirolimusinaduke2004} for instance. In the polar coordinates, $W^{(m)}$ can be written as
\begin{equation*}
W^{(m)}(x) = W^{(m)}(r, \theta) =
\begin{bmatrix}
e^{im\theta}\sin(w_m)\\
\cos(w_m)\\
\end{bmatrix}
=
\begin{bmatrix}
\frac{2 r^m \cos (m\theta)}{r^{2 m}+1}\\
\frac{2 r^m \sin (m\theta)}{r^{2 m}+1}\\
\frac{r^{2 m}-1}{r^{2 m}+1}
\end{bmatrix}
,\quad x = (r\cos\theta, r\sin\theta)\in \mathbb{R}^2,\quad  m\in \mathbb{Z}_+,
\end{equation*}
and
$$
w_m=\pi-2\arctan(r^m).
$$
The linearized equation (\ref{e:linearized}) then becomes
\begin{equation}\label{e:linearized1}
\begin{aligned}
\Delta\phi = \frac2r\left(W^{(m)}_r\wedge \phi_\theta + \phi_r\wedge W^{(m)}_\theta\right)
\end{aligned}
\end{equation}
for $\phi = \phi(r, \theta)$. For $m=\pm 1$, Chanillo and Malchiodi  \cite{chanillomalchiodi2005cagasymptotic} proved the non-degeneracy result; see also   \cite{Yasuhiro1995,Takeshi2001a,Takeshi2001b,caldirolimusinaduke2004,MusinaJAM2004} and the references therein. Chanillo and Malchiodi further conjectured such non-degeneracy holds true for {\it the degree $m$ bubble} ($|m|\geq 2$). Our first result confirms this:
\begin{theorem}[Non-degeneracy of $H$-bubbles]\label{thm}
The solution to \eqref{eqn-H=1}
\begin{equation*}
W^{(m)}(x) = W^{(m)}(r, \theta) =
\begin{bmatrix}
\frac{2 r^m \cos (m\theta)}{r^{2 m}+1}\\
\frac{2 r^m \sin (m\theta)}{r^{2 m}+1}\\
\frac{r^{2 m}-1}{r^{2 m}+1}
\end{bmatrix}
,\quad x = (r\cos\theta, ~r\sin\theta)\in \mathbb{R}^2,\quad  m\in \mathbb{Z}^+
\end{equation*}
is non-degenerate in the sense that all bounded solutions of the linearized equation (\ref{e:linearized1}) are linear combinations of $4m+5$ functions defined as follows,
\begin{equation}\label{thm-kernels}
\begin{aligned}
&\frac{r^{2m} - 1}{r^{2m} + 1}W^{(m)}, \quad \frac{r^m}{1 + r^{2m}}\cos (m\theta)W^{(m)},\quad \frac{r^m}{(1 +r^{2m})}\sin (m\theta)W^{(m)},\\
&\frac{r^{m-k}}{1+r^{2m}}\left(\cos (k\theta)E_1^{(m)} + \sin (k\theta)E_2^{(m)}\right), \quad \frac{r^{m-k}}{1+r^{2m}}\left(\sin (k\theta)E_1^{(m)} - \cos (k\theta)E_2^{(m)}\right),\\
&\frac{r^{m+l}}{1+r^{2m}}\left(\cos (l\theta)E_1^{(m)}-\sin (l\theta)E_2^{(m)}\right), \quad \frac{r^{m+l}}{1+r^{2m}}\left(\sin (l\theta)E_1^{(m)} + \cos (l\theta)E_2^{(m)}\right)
\end{aligned}
\end{equation}
for $k=0, 1, \cdots, m$ and $l= 1, \cdots, m$. Here \begin{equation*}
E_1^{(m)} = E_1^{(m)}(r, \theta) =
\begin{bmatrix}
\frac{r^{2m}-1}{r^{2m}+1}\cos(m\theta)\\\frac{r^{2m}-1}{r^{2m}+1}\sin(m\theta)\\ \frac{-2r^m}{r^{2m}+1}
\end{bmatrix}
,\quad
E_2^{(m)} = E_2^{(m)}(r, \theta) =
\begin{bmatrix}
-\sin(m\theta)\\ \cos(m\theta)\\ 0
\end{bmatrix}.
\end{equation*}
\end{theorem}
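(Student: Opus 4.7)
The approach centers on the orthonormal moving frame $\{E_1^{(m)}, E_2^{(m)}, W^{(m)}\}$ already set up in the theorem, with $E_1^{(m)}, E_2^{(m)}$ spanning the tangent plane to $S^2$ at $W^{(m)}(r,\theta)$. I decompose an arbitrary bounded solution $\phi$ of \eqref{e:linearized1} as
\[
\phi(r,\theta) = a(r,\theta)\, W^{(m)} + b(r,\theta)\, E_1^{(m)} + c(r,\theta)\, E_2^{(m)},
\]
and substitute into \eqref{e:linearized1}. Using the conformality of $W^{(m)}$ (namely $|W^{(m)}_{x_1}| = |W^{(m)}_{x_2}|$ and $W^{(m)}_{x_1} \cdot W^{(m)}_{x_2} = 0$), together with the identities $\Delta W^{(m)} = -|\nabla W^{(m)}|^2 W^{(m)}$ and $W^{(m)}_{x_1} \wedge W^{(m)}_{x_2} = -\tfrac{1}{2}|\nabla W^{(m)}|^2 W^{(m)}$, a direct computation shows that the $W^{(m)}$-projection of \eqref{e:linearized1} decouples from the $(W^{(m)})^\perp$-projection, exactly as anticipated in the abstract. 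The former becomes the scalar Schr\"odinger-type equation $\Delta a + |\nabla W^{(m)}|^2 a = 0$ with $|\nabla W^{(m)}|^2 = \tfrac{8m^2 r^{2m-2}}{(1+r^{2m})^2}$ (the linearized Liouville flow), while the latter is the linearized harmonic map equation for $\phi^\perp = b\,E_1^{(m)} + c\,E_2^{(m)}$.

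Next I Fourier decompose in $\theta$, writing $a = \sum_k a_k(r)\, e^{ik\theta}$ and setting $B := b + ic = \sum_k B_k(r)\, e^{ik\theta}$. In the $W^{(m)}$-direction each mode satisfies
\[
a_k'' + \frac{1}{r} a_k' - \frac{k^2}{r^2}\, a_k + \frac{8m^2 r^{2m-2}}{(1+r^{2m})^2}\, a_k = 0,
\]
which admits bounded solutions only at $k=0$ (giving $\tfrac{r^{2m}-1}{r^{2m}+1}$) and $k=\pm m$ (giving $\tfrac{r^m}{1+r^{2m}}$); these account for the three kernels in the first line of \eqref{thm-kernels}. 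For $\phi^\perp$, the equation for $B_k$ is likewise an explicitly solvable linear second-order ODE. Motivated by the geometry of the moduli space of degree-$m$ holomorphic self-maps of $S^2$, I find the bounded solutions $B_k(r) = \tfrac{r^{m-k}}{1+r^{2m}}$ for $k = 0, 1, \ldots, m$ (``holomorphic'' modes, producing $2(m+1)$ real kernels after splitting $e^{ik\theta}$ into its $\cos$ and $\sin$ parts) and $B_{-l}(r) = \tfrac{r^{m+l}}{1+r^{2m}}$ for $l = 1, \ldots, m$ (``anti-holomorphic'' modes, producing $2m$ real kernels). In every mode the second linearly independent ODE solution is excluded by Frobenius-type asymptotic analysis at the singular endpoints $r=0$ and $r=\infty$.

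Summing the modes yields $3 + 2(m+1) + 2m = 4m+5$ independent bounded solutions, and translating back from the frame coordinates $(a,b,c)$ into vector form recovers precisely the list \eqref{thm-kernels}. The main obstacle will be establishing completeness in the tangential equation: for modes $|k| > m$ one must rule out all bounded solutions, and for the threshold modes $k = \pm m$ one must carry out a careful indicial analysis at $r = 0$ and $r = \infty$, potentially via a change of variables recasting the ODE as a hypergeometric equation, to confirm that no additional bounded kernel sneaks in. The decoupling of the normal and tangential equations is precisely what renders this otherwise formidable computation tractable.
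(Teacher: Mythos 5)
Your strategy matches the paper's proof at every structural level: decompose $\phi$ in the moving frame $\{E_1^{(m)}, E_2^{(m)}, W^{(m)}\}$, observe that the projections onto $W^{(m)}$ and onto $[W^{(m)}]^\perp$ decouple under the linearization, expand in Fourier modes in $\theta$, and reduce to a family of second-order scalar ODEs in $r$. The one genuine (if minor) streamlining you introduce is the complex tangential coordinate $B = b + ic$: the paper works in real Fourier modes and obtains \emph{coupled} systems for the pairs $(a,d)$ and $(b,c)$ in \eqref{e:ode1}--\eqref{e:ode2}, which must then be diagonalized; your complexified $B_k$ solves a single decoupled scalar ODE per mode, which is tidier. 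You also correctly identify the normal equation as the mode-$k$ linearized Liouville operator with potential $|\nabla W^{(m)}|^2 = \tfrac{8m^2 r^{2m-2}}{(1+r^{2m})^2}$, and your kernel count $3 + 2(m+1) + 2m = 4m+5$ agrees.

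That said, what you have written is a roadmap, not a proof, and you are candid about this in your last paragraph. The substance of the paper's argument is precisely the part you defer: it writes down, for each mode, \emph{both} linearly independent solutions of the ODE in closed form (Cases 1--4 of Section~3, covering $k=0$, $k=1$, $k=m$, and general $k$ separately), and then checks their asymptotics at $r\to 0$ and $r\to\infty$ to conclude that the second solution is always unbounded (via $\log r$ terms at $k=0,m$, and via negative or super-critical power growth otherwise). This is not a routine Frobenius exercise: the threshold cases $k=1$, $k=m$, and $k>m$ all require separate treatment (e.g., for $m>1$ the mode $k=1$ normal equation has \emph{no} bounded solution even though $k=1$ lies in the range where the tangential equation does), and the paper's case-by-case explicit solutions are what make the completeness claim verifiable rather than plausible. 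Until those general solutions are written down and their boundedness analyzed, the assertion that no further bounded kernel ``sneaks in'' at the threshold modes is an unproven claim, not a consequence of the setup. You should also note that, since the tangential linearization coincides with that of the harmonic map equation, the tangential half of the completeness argument could in principle be imported from the non-degeneracy result for degree-$m$ harmonic maps in \cite{chenliuwei}; the paper chooses instead to re-derive it self-containedly, and either route would need to be spelled out to close the argument.
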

\begin{remark}
For $m\leq -1$, $W^{(m)}(z) = \Pi'\left({\bar z}^{|m|}\right)$ with
$$\Pi'(z) =\frac{1}{1+|z|^2}\begin{bmatrix}
2z\\
1-|z|^2
\end{bmatrix},
$$
and the same non-degeneracy result holds with $m$ replaced by $|m|$.
\end{remark}
Note that the method in \cite{chanillomalchiodi2005cagasymptotic} for the case $m=\pm 1$ depends on the spectrum of $\Delta_{\mathbb{S}^2}$ and the spherical decomposition on $L^2(\mathbb{S}^2)$. In \cite{MusinaJAM2004}, Musina further studied the role of the spectrum of the Laplace operator on $\mathbb{S}^2$ in the $H$-bubble problem. Here our proof of Theorem \ref{thm} is based on a decoupling property of the linearized operator and an ODE argument.

\medskip

A key observation we make is that the linearized equations on $[W^{(m)}]^\perp$ and in the $W^{(m)}$-direction are decoupled, see the decompositions in Section \ref{appendixB} and Lemma \ref{linearization-lemma-0} (for $m=1$). On $[W^{(m)}]^\perp$, the linearized problem can be solved in a similar manner as the one for harmonic maps, see \cite[Corollary 1]{chenliuwei}. Indeed, the kernel functions \eqref{thm-kernels} in Theorem \ref{thm} are the same as those for the linearized harmonic map equation except for the first three functions parallel to $W^{(m)}$ at each point. Due to the critical growth in the nonlinearity, the $H$-system \eqref{eqn-H=1} shares some similarities with the harmonic map equation with target manifold being $\mathbb S^2$
$$
\Delta u+|\nabla u|^2u=0 ~\mbox{ in }~\R^2.
$$
In fact, \eqref{eqn-H=1} is the Euler-Lagrange equation of the energy functional
\begin{equation}\label{Henergy}
E_H[u]=\frac12\int_{\R^2 } |\nabla u|^2 +\frac23 \int_{\R^2 } u\cdot(u_{x_1}\wedge u_{x_2}):=E_D[u]+E_V[u],
\end{equation}
that is scaling invariant. Here, $E_D[u]$ is the usual Dirichlet energy that appears in the context of harmonic maps, and $E_V[u]$ may be referred to as the $H$-volume functional of the surface parametrized by $u$.

\medskip

Non-degeneracy of harmonic maps proved in \cite{chenliuwei} is a rather important property in the study of singularity formation for the harmonic map heat flow and the quantitative stability of harmonic maps, see \cite{17HMF,DengSunWeiharmonicmaps} and the references therein. In a related context, we refer the interested readers to \cite{LenzmanSchikorra,SireWeiZheng,DengSunWei} for the non-degeneracy of half harmonic maps. Inspired by the works \cite{chenliuwei,DengSunWeiharmonicmaps} on harmonic maps, it might be natural to expect the non-degeneracy of general bubble $u(z) = \Pi\left(\frac{P(z)}{Q(z)}\right)+C$ and the locally quantitative stability; see also \cite{rupflin}.

\medskip

The previous discussion, though important for our purposes, concerns the stationnary problem. In this paper, we focus on the geometric flow that describes the evolution of parametric surfaces with constant mean curvature. It is the associated heat flow of the $H$-system
\begin{equation}\label{e:main}
\begin{cases}
u_t = \Delta u - 2u_{x_1}\wedge u_{x_2}~&\text{ in }~\R^2\times\R_+,\\
 u(\cdot, 0) = u_0~&\text{ in }~\R^2,
  \end{cases}
\end{equation}
where $u(x,t)=u(x_1,x_2,t):\R^2\times \R_+\to \R^3$, and $u_0 :\R^2\to \R^3$ is a given smooth map. The system (\ref{e:main}) is the negative $L^2$-gradient flow of the energy \eqref{Henergy}, and its stationary equation, namely $H$-system, is the equation satisfied by surfaces of mean curvature $H=1$ in conformal representation.
Global existence and regularity for weak solutions of the initial-boundary value problem to the heat flow of the $H$-system were established by Rey in \cite{Rey1991heatflow}. Partial regularity of weak solution was studied in the works of Wang \cite{Wangchangyou1997,Wangchangyou1999}. Existence and uniqueness of short time regular solution were proved by Chen-Levine \cite{ChenLevine}, where they also analyzed the bubbling phenomenon at the first singular time. In \cite{Duzaar2013TAMS,Duzaar2015IMRN}, B\"ogelein-Duzaar-Scheven showed short-time regularity, and the global existence of weak solution which is regular except from finitely many singular times; see Duzaar-Scheven \cite{Duzaar2015AIHP} for the global existence for a Plateau problem. 

\medskip

We are interested in the formation of singularity at finite time for the heat flow \eqref{e:main}.  Even though it is known that the heat flow for $H$-system shares many similarities with harmonic map flows (and there are now plenty of examples of finite time singularities for harmonic map flows, e.g. \cite{17HMF} and references therein), there is no example  of solutions to \eqref{e:main} which exhibit singularity formation at finite time, even in the equivariant case. The aim of this paper is to investigate the blow-up mechanism of the system \eqref{e:main} and its precise asymptotics, and we will prove the first instance of finite time blowup for \eqref{e:main}. Though the elliptic theory for such systems share many similarities with the one for harmonic maps, the geometric flow exhibits behaviour which are surprising and make the analysis substantially harder. As alluded before the splitting of the linearization between directions behaving like the linearization of the harmonic map heat flow around a bubble and the one sharing strong similarities with a Liouville flow (i.e. an exponential nonlinearity) introduces major adjustments to the general strategy due to interaction issues. This behaviour at the linear level, though critical, is nevertheless decoupled which allows to close the argument.

\medskip

The building block of our construction is the following  least energy entire, nontrivial $H$-bubble ($m=1$) given by
\begin{equation}\label{e:H-bubble}
W(x) = \frac{1}{1+|x|^2}\left[
\begin{matrix}
2x\\
|x|^2-1
\end{matrix}
\right], \quad x\in \mathbb{R}^2,
\end{equation}
where we write $W=W^{(1)}$, $E_1=E_1^{(1)}$ and $E_2=E_2^{(1)}$ for simplicity. $W$ satisfies
\begin{equation*}
\int_{\mathbb{R}^2}|\nabla W|^2 = 8\pi, \quad W(\infty) =(0,0,1)^{\mathsf T}.
\end{equation*}
Note that $W(x)$ can be expressed as the $1$-equivariant form
\begin{equation}\label{corotationalform}
W(x) = \left[
\begin{matrix}
e^{i\theta}\sin w(r)\\
\cos w(r)
\end{matrix}
\right], \,\,w(r) = \pi - 2\arctan(r),\,\, x = re^{i\theta}.
\end{equation}
Define $\gamma $-rotation matrix around $z$-axis as
\begin{equation}\label{Matrix}
	Q_{\gamma }:=
	\begin{bmatrix}
		\cos\gamma  & -\sin\gamma  & 0 \\[0.3em]
		\sin \gamma  & \cos\gamma   & 0 \\[0.3em]
		0 & 0 & 1
	\end{bmatrix}.
\end{equation}

\medskip

Our main result is stated as follows.
\begin{theorem}[Type II finite-time blow-up]\label{t:main}
For any point $q\in\R^2$ and sufficiently small $T > 0$, there exists $u_0$ such that $\nabla_x u(x, t)$ with $u(x,t)$ solving (\ref{e:main}) blows up at $q$ as $t\to T$. More precisely, there exist $\kappa \in\R_+$, $\gamma _*\in\R$, and a map $u_*\in H^1_{{\rm loc}}(\R^2;\R^3)\cap L^{\infty}(\R^2;\R^3)$ such that
\begin{equation*}
u(x, t) - u_*(x) - Q_{\gamma _*}\left[W\left(\frac{x-\xi(t)}{\lambda(t)}\right)-W(\infty)\right]\to 0~\text{ as }~t\to T
\end{equation*}
in $H^1_{{\rm loc}}(\R^2;\R^3)\cap L^{\infty}(\R^2;\R^3)$ with
\begin{equation*}
\lambda(t) = \kappa \frac{T-t}{|\log(T-t)|^2}(1 + o(1)), \quad \xi(t)=q+o(1),
\end{equation*}
where $o(1)\to 0$ as $t\to T$. In particular, we have
\begin{equation*}
|\nabla u(\cdot, t)|^2dx \overset{\ast}{\rightharpoonup} |\nabla u_*|^2dx+ 8\pi\delta_q~\text{ as }~t\to T
\end{equation*}
in the sense of Radon measures.
\end{theorem}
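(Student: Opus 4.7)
The plan is to follow the inner--outer gluing scheme developed by D\'avila--del Pino--Wei \cite{17HMF} for the harmonic map heat flow, exploiting the decoupling of the linearized operator $L_W$ stated in the abstract: on the orthogonal complement of $W$ it coincides with the linearization of the harmonic map heat flow, while along $W$ it reduces to a linearized Liouville-type operator. Fix modulation parameters $\lambda(t)>0$, $\xi(t)\in\R^2$, $\gamma(t)\in\R$, set $y=(x-\xi)/\lambda$, and build a high-order approximation
$$U^*(x,t)=u_*(x)+Q_{\gamma(t)}\bigl[W(y)-W(\infty)\bigr]+\Phi^*(y,t),$$
where $u_*$ is a smooth background map carrying the regular part at infinity and the inner correction $\Phi^*$ is obtained by iteratively solving elliptic equations of the form $L_W[\Phi_k^*]=E_k$ on $\R^2$; solvability and decay of $\Phi^*$ are controlled by Theorem \ref{thm} with $m=1$.

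Search for the solution in the gluing form
$$u(x,t)=U^*(x,t)+\psi(x,t)+Q_{\gamma(t)}\,\eta_R(y)\,\phi(y,t),$$
with $\eta_R$ a smooth cut-off in the rescaled variable. Inserting this ansatz into \eqref{e:main} produces a coupled system: an \emph{inner} equation $\lambda^2\phi_t=L_W[\phi]+\mathcal{H}[\psi,\lambda,\xi,\gamma]$ posed in $\R^2\times(0,T)$, and an \emph{outer} equation $\psi_t=\Delta\psi+\mathcal{G}[\phi,\psi,\lambda,\xi,\gamma]$ on $\R^2\times(0,T)$. Decompose the inner unknown as $\phi=\phi_\parallel W+\phi_\perp$ with $\phi_\perp\cdot W=0$. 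Thanks to the decoupling, $L_W$ acts separately on the two components: on $\phi_\perp$ one applies the linear theory for the parabolic linearization of the harmonic map flow, obtaining a right inverse modulo the orthogonality conditions associated with the tangential kernel elements of \eqref{thm-kernels}; on $\phi_\parallel W$ one develops a parallel parabolic theory for the Liouville-type operator $\phi_\parallel\mapsto \Delta\phi_\parallel-|\nabla W|^2\phi_\parallel$ and its dual. In both cases, the non-degeneracy of $W$ (Theorem \ref{thm} with $m=1$) furnishes the kernel and cokernel needed to construct inverses with weighted bounds of the type $|\phi(y,t)|\lesssim\lambda^\nu(1+|y|)^{-a}$. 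The outer problem is handled via the fundamental solution of the heat equation in weighted $L^\infty$ norms adapted to the slow decay of the source terms.

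Imposing the solvability conditions in the inner problem yields a system of ODEs for $(\dot\lambda,\dot\xi,\dot\gamma)$. The leading equation for $\lambda$ has the form $\dot\lambda\,|\log\lambda|+c_0\lambda=\text{lower order}$, whose solution is $\lambda(t)=\kappa(T-t)|\log(T-t)|^{-2}(1+o(1))$, exactly as in the harmonic map case; the equations for $\xi$ and $\gamma$ force $\xi(t)\to q$ and $\gamma(t)\to\gamma_*$. Existence of the desired solution then follows from a Banach fixed point for the tuple $(\phi_\perp,\phi_\parallel,\psi,\lambda,\xi,\gamma)$ in suitable weighted spaces, and a posteriori estimates recover the limit profile $u_*\in H^1_{\rm loc}\cap L^\infty$ and the concentration of energy as $8\pi\delta_q$.

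The hardest step will be the treatment of the $W$-direction $\phi_\parallel$. Unlike the harmonic map heat flow, where perturbations remain tangent to the target sphere, the $H$-system has no target constraint, so the nonlinearity $u_{x_1}\wedge u_{x_2}$ couples $\phi_\parallel$ and $\phi_\perp$ at the quadratic level. The linear theory for the Liouville-type mode must deliver weighted estimates of the same quality as those for $\phi_\perp$, and one must verify that the orthogonality conditions coming from the two independent families of kernels in \eqref{thm-kernels} yield a consistent, not overdetermined, modulation system. The exact splitting of $L_W$ at the linear level, highlighted by the authors, is precisely what allows the inner--outer scheme to close despite these interactions.
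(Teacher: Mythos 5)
You have the right architecture (inner--outer gluing, decoupled linearization, modulation of $\lambda,\xi,\gamma$), but the proposal misses the feature that makes the $H$-system genuinely harder than the harmonic map flow and that the paper must introduce to make the argument close. The linearized Liouville operator in the $W$-direction has slowly decaying bounded kernels in modes $\pm 1$, namely $\mathcal Z_{1,1}=\cos\theta\sin w\,W$ and $\mathcal Z_{1,2}=\sin\theta\sin w\,W$ from \eqref{kernels}. These cannot be orthogonalized away by tuning $(\lambda,\xi,\gamma)$ alone: one must augment the ansatz with two new modulation scalars $c_1(t),c_2(t)$ multiplying $\mathcal Z_{1,1},\mathcal Z_{1,2}$, as in $U_{\lambda,\gamma,\xi,c_1,c_2}=Q_\gamma(W+c_1\mathcal Z_{1,1}+c_2\mathcal Z_{1,2})$ \eqref{def-approx}. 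Without them the right inverse for $\Pi_W[\phi]$ in modes $\pm 1$ (Proposition \ref{prop-lt-W1}) is unavailable and the inner problem in the $W$-direction does not admit solutions with the required decay. These extra parameters obey a non-local integro-differential $c_1$-$c_2$ system \eqref{integral-differentialequation} that is itself coupled to the $\lambda$-$\gamma$ system through ${\rm Re}[p_0e^{-i\gamma}]$, and this coupling produces a further restriction $\mathbf Z\ge 0$ on the outer data \eqref{mathbfZ}, i.e. additional instability beyond the equivariant class. Your description of the reduced system as ``a system of ODEs'' of the form $\dot\lambda|\log\lambda|+c_0\lambda=\cdots$ is also not accurate: even in \cite{17HMF} the $\lambda$-$\gamma$ equation is the non-local relation $\int_{-T}^{t-\lambda^2}p_0(s)(t-s)^{-1}ds\approx{\rm const}$, and the $c_1$-$c_2$ system here requires its own linear theory (Section \ref{sec-lt-c1c2}).

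Two more concrete issues. First, in mode $0$ of the $W$-direction the operator has a zero-energy resonance: the kernel $\mathcal Z_0=\cos w\,W$ is bounded but does not decay, so the pointwise barrier arguments that work in the other modes do not yield a solution with enough decay. The paper handles this by spectral analysis via the distorted Fourier transform (Proposition \ref{prop-lt-W0} and Appendix \ref{sec-DFT}); your proposal has no substitute for this step. Second, a minor but telling sign error: the operator along $W$ is $\Delta+\frac{8}{(1+\rho^2)^2}=\Delta+|\nabla W|^2$ (a positive, i.e. Liouville-type, potential, cf. Lemma \ref{linearization-lemma-0}), not $\Delta-|\nabla W|^2$ as you wrote. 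With the wrong sign the resonance disappears and the whole difficulty you correctly flagged as ``the hardest step'' would look illusory. You do rightly anticipate that the $W$-direction is the source of new trouble, but the specific mechanisms the paper invents to deal with it---the $c_1,c_2$ modulations, the coupled non-local system, the DFT for the resonant mode, and the auxiliary global corrections $\Phi^{(0)},\Phi^{(1)},\Phi^{(\pm2)}_{U^\perp},\Phi^{(2)}_U$---are the substance of the proof and are absent from your outline.
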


\begin{remark}
\noindent
\begin{itemize}
\item The fine asymptotics of $u$ is obtained in the construction:
$$
u(x,t)=Q_{\gamma (t)}W\left(\frac{x-\xi(t)}{\lambda(t)}\right)+\Phi_{{\rm per}},
$$
where the perturbation $\Phi_{{\rm per}}$ is small in the sense that
$|\Phi_{{\rm per}}|\lesssim T^{\epsilon_1}, ~ |\nabla_x \Phi_{{\rm per}}|\lesssim \la^{\epsilon_2}(t)$
for some $\epsilon_1>0$ and $-1<\epsilon_2<0$. More precise asymptotic behavior can be found in Section \ref{gluingsystem}.

\medskip

\item The same construction works for the Cauchy-Dirichlet problem in a smooth bounded domain $\Omega \subset \R^2$:
\begin{equation*}
\begin{cases}
u_t = \Delta u - 2u_{x_1}\wedge u_{x_2}~&\rm{ in }~\Omega\times\R_+,\\
u=(0,0,1)^{\mathsf T}~&\rm{ on }~\pp\Omega\times\R_+,\\
 u(\cdot, 0) = u_0~&\rm{ in }~\Omega
  \end{cases}
\end{equation*}
for given smooth map $u_0$ with $u_0|_{\pp\Omega}=(0,0,1)^{\mathsf T}$. See \cite{WangHS11} for related existence results as well as blow-up and regularity criteria for \eqref{e:main} with Dirichlet boundary. It is worth mentioning that with such Dirichlet boundary, the solutions to \eqref{e:main} are in fact different from those discussed in \cite{WangHS11}, where zero boundary condition was imposed.

\medskip

\item Motivated by \cite{17HMF} concerning the blow-up for the harmonic map heat flow, it is reasonable to expect that the finite time singularities might also happen in the gradient flow for the general energy functional \eqref{H-2-form}, and different blow-up mechanism may arise depending on the 2-form $\omega$ presumably.

\medskip

\item Bubbling solutions taking higher degree profile might be possible by virtue of the non-degeneracy in Theorem \ref{thm}. However, the construction is more involved due to the presence of more bounded kernel functions for the linearized operator.
\end{itemize}
\end{remark}

The general framework of the construction is based on the inner-outer gluing method, first developed by Cort\'azar-del Pino-Musso  \cite{ContazarDelPinoMusso} and D\'avila-del Pino-Wei \cite{17HMF}, and it is a versatile approach designed for the singularity formation for evolution PDEs.

\medskip

The heat flow of the $H$-system is intimately related to the harmonic map heat flow
$$
u_t=\Delta u+|\nabla u|^2u ~\mbox{ in }~\R^2\times\R_+
$$
due to their similar quadratic nonlinearities, and our construction is primarily inspired by D\'avila-del Pino-Wei \cite{17HMF} concerning the blow-up for the latter geometric flow. Despite similar criticality and structure shared with harmonic map heat flow, the blow-up mechanism of \eqref{e:main} turns out to be more delicate due to the system in the $W$-direction. In fact, as a consequence of Lemma \ref{linearization-lemma-0}, the elliptic linearization decouples as the linearized harmonic map equation on $W^\perp$ and the linearized Liouville equation in the $W$-direction. Compared to the case of the harmonic map heat flow, the treatment required in the $W$-direction for \eqref{e:main} results in several technical novelties and a somewhat more unstable blow-up beyond the equivariant symmetry class. We shall describe these in what follows.

\medskip

We now describe informally our construction. Recall we are interested in constructing solutions without symmetry assumptions. The major difficulties in the construction arise from the non-radial symmetry and the non-local/global effects, triggered by the slow spatial decay, in {\it both} $W$-direction and on $W^\perp$. Indeed, the ansatz for the desired blow-up solution with equivariant-type symmetry \eqref{ansatz-symm} turns out to be relatively simpler without the introduction of extra modulation parameters $c_1$ and $c_2$ corresponding to the parabolic linearized Liouville equation in the $W$-direction. Due to the non-radial symmetry, the ansatz has to be modified to further improve the slowly decaying errors in non-radial modes, yielding an extra non-local system governing the modes $\pm1$ parameters $c_1$ and $c_2$ in the $W$-direction (cf. the $c_1$-$c_2$ system in Section \ref{sec-lt-c1c2}). Moreover, the $c_1$-$c_2$ system is in fact coupled with another $\la$-$\gamma$ system, governing the scaling parameter $\la$ and the rotational parameter $\gamma$ on $W^\perp$ in a non-local way, and this is a consequence of slowly decaying error in mode $0$ on $W^\perp$. The $\la$-$\gamma$ system was first observed and  derived in the context of the harmonic map heat flow \cite{17HMF}. For \eqref{e:main},  the non-locality of the $c_1$-$c_2$ system and its coupling with the $\la$-$\gamma$ system seem to be a new feature beyond the symmetry class.

\medskip

Another aspect is the use of the distorted Fourier transform for the spectral analysis of mode $0$ in the $W$-direction (i.e., the linearized Liouville equation, cf. Appendix \ref{sec-DFT}). The corresponding kernel is of order $1$ at space infinity which makes the analysis more subtle, and the techniques for linear theories in all the other modes seem not to be sufficient to ensure a solution with sufficient decay. Instead, we use the techniques of the distorted Fourier transform for this specific mode and carry out a spectral analysis for the associated half-line Schr\"odinger operator, see Appendix \ref{sec-DFT}. The motivation is from a series important works of Krieger-Miao-Schlag-Tataru \cite{Krieger08,KST09Duke,krieger2020stability,KMS20WM} in the hyperbolic settings and from a recent one \cite{LLG} in a dissipative-dispersive setting. These techniques might be of use in the study of semilinear elliptic and parabolic equations with exponential-type nonlinearity.

\medskip

\noindent{\bf A roadmap to the construction.}
We elaborate in a bit more detailed fashion below. The first step of our proof is to find a good approximate solution. Let us begin with a simple motivational ansatz to solution $u$ of \eqref{e:main}, taking the co-rotational form similar to \eqref{corotationalform}:
\begin{equation}\label{ansatz-symm}
u(x, t) = \psi(r, t)\left[
\begin{matrix}
e^{i\theta}\sin \varphi(r, t)\\
\cos\varphi(r, t)
\end{matrix}
\right].
\end{equation}
Then system (\ref{e:main}) becomes the following system of 1D evolution equations
\begin{equation}\label{sys-cor}
\left\{
\begin{aligned}
&\varphi_t = \varphi_{rr}+\frac{\varphi_r}{r}+\frac{(-\sin\varphi+2r\varphi_r)\psi_r}{r\psi}-\frac{\sin(2\varphi)}{2r^2},\\
&\psi_t = \psi_{rr} + \frac{\psi_r}{r} -\frac{2\psi^2\varphi_r\sin\varphi}{r}+\frac{-1+\cos(2\varphi)-2r^2\varphi_r^2}{2r^2}\psi.
\end{aligned}
\right.
\end{equation}
One can check directly that $(\pi-2\arctan(r), 1)$ is a stationary solution. Then a natural approximation solution is
\begin{equation*}
\varphi_0(r, t) = \pi-2\arctan\left(\frac{r}{\lambda(t)}\right) \text{ and }\psi_0(r, t) = 1.
\end{equation*}
The linearized system of \eqref{sys-cor} around $\left(\varphi_0,~\psi_0\right)$ is given by
\begin{equation*}
\left\{
\begin{aligned}
&(\phi_1)_t = (\phi_1)_{rr}+\frac{(\phi_1)_r}{r} -\frac{r^4-6r^2\lambda^2+\lambda^4}{r^2(r^2+\lambda^2)^2}\phi_1-\frac{6\lambda}{r^2+\lambda^2}(\phi_2)_r,\\
&(\phi_2)_t = (\phi_2)_{rr} + \frac{(\phi_2)_r}{r} + \frac{8\lambda^2}{(r^2+\lambda^2)^2}\phi_2
\end{aligned}
\right.
\end{equation*}
for perturbations $\phi_1$ and $\phi_2$ of $\varphi_0$ and $\psi_0$, respectively. Note the second equation is the linearization for the 2D Liouville equation
$$\Delta u+e^u=0$$
around the  bubble $\log\frac{8\lambda^2}{(r^2+\lambda^2)^2}$.

\medskip

Let us emphasize that the ansatz in the motivational example above enjoys the equivariant symmetry, but beyond such class, the ansatz has to be modified significantly due to the non-radial modes.  Inspired by the simple ansatz \eqref{ansatz-symm} and the non-degeneracy in Theorem \ref{thm}, we will find a suitable profile that approximates well the real solution to \eqref{e:main} and then further improve it by adding several corrections. First we define the error of $u$ as
\begin{equation*}\label{e:error}
S[u]:= -u_t + \Delta u - 2u_{x_1}\wedge u_{x_2}.
\end{equation*}
We take the approximate solution as
\begin{equation*}\label{def-approx}
\begin{aligned}
U_{\la,\gamma,\xi,c_1,c_2}
&:=~Q_{\gamma(t)} \left[W\left(\frac{x-\xi(t)}{\lambda(t)}\right)+c_1(t)\frac{2\rho}{\rho^2+1}\cos\theta  W\left(\frac{x-\xi(t)}{\lambda(t)}\right) + c_2(t) \frac{2\rho}{\rho^2+1}\sin\theta  W\left(\frac{x-\xi(t)}{\lambda(t)}\right)\right]\\
&:=~Q_{\gamma}(W+c_1\mathcal Z_{1,1}+c_2\mathcal Z_{1,2}),
\end{aligned}
\end{equation*}
where $Q_\gamma$ is given in \eqref{Matrix}, and the modulation parameters $\la(t),~\xi(t),~\gamma(t),~c_1(t),~c_2(t)$ are to be adjusted. The most notable difference in the ansatz compared to the radial case is the introduction of the new parameters $c_1$, $c_2$, which in fact correspond to the modes $\pm1$ in the linearized Liouville problem in the $Q_\gamma W$-direction. Indeed, $\mathcal Z_{1,1}$ and $\mathcal Z_{1,2}$ are the corresponding kernels given in \eqref{kernels}.

\medskip

As we will see in Section \ref{approximateandimprovement}, the non-radial ansatz $U_{\la,\gamma,\xi,c_1,c_2}$ generates slowly decaying errors in both $W$-direction and on $W^\perp$, and thus needs to be improved by adding several non-local corrections. We choose the corrected approximate solution as
\begin{equation*}\label{def-U_*}
\begin{aligned}
U_*:=&~U_{\la,\gamma,\xi,c_1,c_2}+\eta_1\Big(\Phi^{(0)}+\Phi^{(1)}+\Phi^{(2)}_{U^\perp}+\Phi^{(-2)}_{U^\perp}+\Phi^{(2)}_U\Big),
\end{aligned}
\end{equation*}
where the purpose of the cut-off function
$
\eta_1:=\eta(x-\xi(t))
$
is to avoid potential slow spatial decay in the remote region, and the five corrections all have their own role. Here the corrections $\Phi^{(0)}$ and $\Phi^{(1)}$, in non-local form, are to improve slow spatial decay in mode $0$ on $[Q_\gamma W]^\perp$ and in modes $\pm1$ in the $Q_\gamma W$-direction. The other three corrections $\Phi^{(\pm 2)}_{U^\perp}$, $\Phi^{(2)}_U$ are to improve slow time decay in modes $\pm2$ on $[Q_\gamma W]^\perp$ and in mode $2$ in the $Q_\gamma W$-direction, respectively. Such bulky ansatz is necessary when designing weighted spaces for the gluing process, especially in the choice of constants measuring these spaces that the desired solutions reside in (see \eqref{finalchoice} in Section \ref{gluingsystem}). In other words, without these corrections, weighted spaces for the perturbation cannot be found in the current set-up. This is one of the most important parts in the construction, and we do not know if there are simpler ansatzes for the approximate solution.

\medskip

We next look for a solution with the following form
$$
u=U_*+\Phi,
$$
so $S[u]=0$ yields
$$
\pp_t \Phi=\Delta \Phi-2\pp_{x_1} U_*\wedge \pp_{x_2}\Phi-2\pp_{x_1}\Phi\wedge\pp_{x_2}U_*-2\pp_{x_1}\Phi\wedge \pp_{x_2}\Phi+S[U_*].
$$
To implement the inner-outer gluing procedure, we decompose $\Phi$ into
$$
\Phi(x,t)= \eta_R Q_\gamma  \Big[\Phi_W\left(y, t\right) +\Phi_{W^\perp}\left(y, t\right)\Big] + \Psi(x, t), \quad y=\frac{x-\xi(t)}{\lambda(t)},
$$
where $\Phi_W$ is in the $W$-direction, $\Phi_{W^\perp}$ is on $W^\perp$,
$
\eta_R := \eta\left(\frac{x-\xi(t)}{R(t)\lambda(t)}\right),
$
and $R(t)=\la^{-\beta}(t)$ with $\beta>0$ to be chosen later.
Then it is sufficient to find a desired solution $u$ to \eqref{e:main} if the triple $(\Phi_W,~\Phi_{W^\perp},~\Psi)$ satisfies the coupled gluing system:
\begin{align} \label{gluing-sysa}
\la^2\pp_t \Phi_W=&~\Delta_y\Phi_{W } - 2\pp_{y_1}W\wedge \pp_{y_2}\Phi_{W } - 2\pp_{y_1} \Phi_{W }\wedge \pp_{y_2}W+{\rm RHS}_{{\rm in},W} \quad\mbox{  in  }~~ \mathcal{D}_{2R},\\
\label{gluing-sysb}
\la^2\pp_t \Phi_{W^\perp}=&~\Delta_y\Phi_{W^\perp} - 2\pp_{y_1}W\wedge \pp_{y_2}\Phi_{W^\perp} - 2\pp_{y_1} \Phi_{W^\perp}\wedge \pp_{y_2}W+{\rm RHS}_{{\rm in},W^\perp}\quad\mbox{  in  }~~ \mathcal{D}_{2R},\\
\label{gluing-sysc}
\pp_t \Psi=&~\Delta_x \Psi + {\rm RHS}_{{\rm out}}  \mbox{  in  }~~ \R^2 \times(0,T),
\end{align}
i.e., $\Phi_W$ and $\Phi_{W^\perp}$ satisfy inner problem in the $W$-direction and on $W^\perp$, respectively, and $\Psi$ solves the outer problem. The detailed form of the right hand sides ${\rm RHS}_{{\rm in},W}$, ${\rm RHS}_{{\rm in},W^\perp}$ and ${\rm RHS}_{{\rm out}}$ with couplings will be given in Section \ref{gluingsystem-innerouter}.

\medskip

To attack the inner-outer gluing system \eqref{gluing-sysa}-\eqref{gluing-sysc}, a key property derived in Lemma \ref{linearization-lemma-0} is that the equation (\ref{gluing-sysb}) for $\Phi_{W^\perp}$ is essentially a perturbation of the linearized harmonic map heat flow around the bubble $W$, while the equation (\ref{gluing-sysa}) for $\Phi_W$ is a perturbation of the linearized Liouville-type flow, i.e., linearization of
$$
u_t=\Delta u+e^u ~\mbox{ in }~\R^2\times \R_+
$$
around the canonical bubble $\log\frac{8\lambda^{-2}}{(1+|y|^2)^2},$
and these two linearized operators are in fact decoupled. This remarkable structure allows us to develop the linear theories separately in the $W$-direction and on $W^\perp$, and the full nonlinear systems for $\Phi_{W^\perp}$ and $\Phi_W$ are weakly coupled provided their weighted spaces are properly designed. We will give the full linear theory in Section \ref{lineartheories}, and the gluing system will be solved in Section \ref{gluingsystem}.

\medskip

As in the ansatz $U_{\la,\gamma,\xi,c_1,c_2}$, we introduce two modulation parameters $c_1$ and $c_2$ that correspond to the slowly decaying kernels $\mathcal Z_{1,1}$, $\mathcal Z_{1,2}$ (Fourier modes $\pm1$) for the linearized Liouville operator. The non-local corrections $\Phi^{(0)}$ and $\Phi^{(1)}$, taking care of the slowly decaying errors respectively on $[Q_\gamma W]^\perp$ and in the $Q_\gamma W$-direction, in turn  yield two non-local systems that govern the blow-up dynamics of $\la$-$\gamma$ and $c_1$-$c_2$ through the orthogonality conditions at corresponding modes. For the $\la$-$\gamma$ system, the influence of $c_1$, $c_2$ turns out to be a perturbation. However, in the $c_1$-$c_2$ system, the coupling from $\la$ and $\gamma$ in fact serves as one of the leading parts, and these parameters obey
\begin{equation}\label{integral-differentialequation}
\begin{aligned}
&\int_{-T}^{t-\la^2(t)}\frac{p_1(s)}{t-s}ds +\frac23\left(\int_{-T}^{t-\la^2(t)}\frac{{\rm Re}[p_0(s) e^{-i\gamma(t)}]}{t-s}ds\right)  \cc=f(t),\\
&\quad p_1(t) = -2(\lambda \cc)', \quad  \cc(t) = c_1(t) + i c_2(t), \quad p_0(t) = -2(\dot\lambda + i\lambda\dot\gamma )e^{i\gamma }
\end{aligned}
\end{equation}
for some $f(t)\to 0$ as $t\to T$. Our strategy is to approximate \eqref{integral-differentialequation} by the local dynamics \eqref{eqn-532532532}, where the second term involving $\mathbf Z$ can be regarded as the main contribution from the $\la$-$\gamma$ system, and the role of the initial data can be seen; see Section \ref{leading-dynamics}. We also observe from here that, compared to the one for the heat flow of harmonic map, the blow-up for \eqref{e:main} seems to be somewhat more unstable beyond equivariant symmetry class due to the restrictive assumptions on the initial data. The solvability of the full problem (\ref{integral-differentialequation}) is more subtle due to the double integro-differential operators, and its resolution will be done by linearization in Section \ref{sec-lt-c1c2}.

\medskip

The rest of this paper is organized as follows. In Section \ref{Sec-notations}, notations and necessary formulas for the linearized operators will be given. In Section \ref{appendixB}, we prove Theorem   \ref{thm}. This part is of independent interest. In Section \ref{approximateandimprovement}, we discuss the approximate solution for the desired blow-up and its improvement by introducing a couple of corrections, where some technical analysis is postponed to Appendix \ref{appendixA}. In Section \ref{gluingsystem-innerouter}, we make the final ansatz and formulate the gluing system. Asymptotics of the modulation parameters will be derived in Section \ref{leading-dynamics}. In Section \ref{lineartheories}, linear theories for the outer problem and the inner problems in the $W$-direction and on $W^\perp$ will be developed, where the spectral analysis and the pointwise control for the mode $0$ in the $W$-direction via distorted Fourier transform will be carried out in Appendix \ref{sec-DFT}. The full gluing system will be solved in Section \ref{gluingsystem}. In Section \ref{sec-lt-c1c2}, the linear theory for the $c_1$-$c_2$ system will be established.

\bigskip

\section{Notations and preliminaries}\label{Sec-notations}

\medskip

We first list in this section notations, useful properties and formulas for the linearized operator of the $H$-system around the $H$-bubble. Our building block is
$W\left(\frac{x-\xi}{\la}\right)$ given in \eqref{e:H-bubble} with $y=\frac{x-\xi}{\la},$
 and
\begin{equation*}
E_1(y) = \left[
\begin{matrix}
e^{i\theta}\cos w(\rho)\\
-\sin w(\rho)
\end{matrix}
\right],\quad E_2(y) =
\left[
\begin{matrix}
ie^{i\theta}\\
0
\end{matrix}
\right]
\end{equation*}
form a Frenet basis associated to $W$. Here
$$w(\rho) = \pi - 2\arctan(\rho), \quad y = \rho e^{i\theta},$$
so
\begin{equation*}
w_\rho = -\frac{2}{\rho^2+1},\quad \sin w =-\rho w_\rho = \frac{2\rho}{\rho^2+1}, \quad \cos w = \frac{\rho^2-1}{\rho^2+1}.
\end{equation*}

\noindent $\bullet$ {\bf Notations.}

\begin{itemize}
\item A map $f$ is said to be in the $W$-direction if there exists a scalar $c$ such that $f=cW$. Similarly, $f$ is said to be on $W^\perp$ (or $f\in W^\perp$) if $f\cdot W=0$.
\item The complex form of a map $f=f_1 E_1+f_2 E_2$ on $W^\perp$ is defined by
$$
(f)_{\mathbb C}=f_1 +if_2.
$$
\item We write
$$
f:=\Pi_W[f]+\Pi_{W^\perp}[f]~\mbox{ for }~\Pi_{W^\perp}[f]:=f-(f\cdot W)W.
$$
\item
A map $f$ is said to be in mode $k$ on $W^\perp$ if $\Pi_{W^\perp}[f]$ can be written as
$$
\Pi_{W^\perp}[f]={\rm Re}(f_k(r)e^{ik\theta})E_1+{\rm Im}(f_k(r)e^{ik\theta})E_2,\quad x=r e^{i\theta}.
$$
\item
The notation $(\Pi_{W^\perp}[f])_{\mathbb C,j}$ denotes the projection of the complex form of $\Pi_{W^\perp}[f]$ onto mode $j$.

\item For $x\in \mathbb{R}^2$, $t\leq T$  and admissiable functions $g(x), h(x,t)$, denote
	\begin{equation*}
		\begin{aligned}
			&
			\left(\Gamma_{\R^2}\circ g \right)(x,t) := (4\pi t)^{-1}\int_{\R^2} e^{-\frac{|x-y|^2}{4t}} g(y) dy,\\
			&
			\left( \Gamma_{\R^2} \bullet h \right) (x,t) :=
			\int_{0}^t \int_{\R^2}
			\left[ 4\pi(t-s)\right]^{-1}
			e^{ -\frac{|x-y|^2}{4(t-s)}  } h(y,s) dy ds .
		\end{aligned}
	\end{equation*}
	
\item Denote $\langle y \rangle = \sqrt{1+|y|^2}$ for any $y\in \RR^2$.
	
\item The symbol $``\,\lesssim\,"$ means $``\,\leq\, C\,"$ for a positive constant $C$ independent of $t$ and $T$. Here $C$ might be different from line to line.
\end{itemize}

\medskip

\noindent $\bullet$ {\bf Linearized operator.}

\medskip

The linearization around $W$ reads
\begin{equation}\label{def-linearizedoperator}
L_W[\varphi] := \Delta_y\varphi - 2W_{y_1}\wedge \varphi_{y_2} - 2\varphi_{y_1}\wedge W_{y_2},
\end{equation}
and as a consequence of the non-degeneracy of degree 1 map $W$ proved by Chanillo-Malchiodi \cite{chanillomalchiodi2005cagasymptotic}, all bounded kernel functions of the operator $L_W$ must be linear combinations of
\begin{equation}\label{kernels}
\left\{
\begin{aligned}
&Z_{0,1}(y)=\rho w_{\rho}  E_1(y),\\
&Z_{0,2}(y)=\rho w_{\rho}  E_2(y),\\
&Z_{1,1}(y)=w_{\rho} [\cos \theta E_1(y)+\sin\theta E_2(y)],\\
&Z_{1,2}(y)=w_{\rho} [\sin \theta E_1(y)-\cos\theta E_2(y)],\\
&Z_{-1,1}(y)=\rho^2 w_{\rho} [\cos \theta E_1(y)-\sin \theta E_2(y)],\\
&Z_{-1,2}(y)=\rho^2 w_{\rho} [\sin \theta E_1(y)+\cos \theta E_2(y)],\\
&\mathcal Z_0=\cos w W,\\
&\mathcal Z_{1,1}=\cos \theta \sin w W,\\
&\mathcal Z_{1,2}=\sin \theta \sin w W.\\
\end{aligned}
\right.
\end{equation}
For
\begin{equation*}
U:=Q_{\gamma } W,
\end{equation*}
we also define
\begin{equation*}
L_U[\phi]:=\Delta_x \phi - 2U_{x_1}\wedge \phi_{x_2} - 2\phi_{x_1}\wedge U_{x_2}.
\end{equation*}
Here, $Q_\gamma$ is defined in \eqref{Matrix}. Clearly, one has
$$L_U[Q_{\gamma }\varphi]=\la^{-2}Q_{\gamma }L_W[\varphi].
$$
We now give several useful formulations of $L_W$  acting on $\varphi$ in different forms.

\medskip

\begin{lemma}\label{linearization-lemma-0}
If we set
$$
\Phi(y) =   \phi_1(\rho, \theta)E_1 + \phi_2(\rho, \theta)E_2+\phi_3(\rho, \theta)W
$$
and suppose that $L_W[\Phi] = 0$, then the scalars $\phi_1$, $\phi_2$ and $\phi_3$ should satisfy the following equations
\begin{align}\label{eqn-E_1}
&~\partial_{\rho\rho}\phi_1+\frac1\rho \partial_{\rho}\phi_1+\frac{1}{\rho^2}\partial_{\theta\theta}\phi_1-\frac1{\rho^2}\phi_1+\frac8{(1+\rho^2)^2}\phi_1-\frac{2(\rho^2-1)}{\rho^2(\rho^2+1)}\pp_{\theta}\phi_2=0,\\
\label{eqn-E_2}
&~\partial_{\rho\rho}\phi_2+\frac1\rho \partial_{\rho}\phi_2+\frac{1}{\rho^2}\partial_{\theta\theta}\phi_2-\frac{1}{\rho^2}\phi_2+\frac8{(1+\rho^2)^2}\phi_2+\frac{2(\rho^2-1)}{\rho^2(\rho^2+1)}\pp_{\theta}\phi_1=0,\\
\label{eqn-U}
&~\partial_{\rho\rho}\phi_3+\frac1\rho \partial_{\rho}\phi_3+\frac{1}{\rho^2}\partial_{\theta\theta}\phi_3+\frac{8}{(1+\rho^2)^2}\phi_3=0.
\end{align}
\end{lemma}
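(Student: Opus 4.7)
\textbf{Proof proposal for Lemma \ref{linearization-lemma-0}.}

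The plan is to expand the operator $L_W$ directly in the Frenet frame $(E_1,E_2,W)$ using polar coordinates, and to verify that all cross terms coupling the $W^\perp$-part $(\phi_1,\phi_2)$ with the $W$-part $\phi_3$ cancel via structural identities for $w(\rho)$. First, I record the needed structural data. Using $W=(\cos\theta\sin w,\sin\theta\sin w,\cos w)^{\mathsf T}$, $E_1=(\cos\theta\cos w,\sin\theta\cos w,-\sin w)^{\mathsf T}$, $E_2=(-\sin\theta,\cos\theta,0)^{\mathsf T}$ one checks the frame identities $E_1\wedge E_2=W$, $E_2\wedge W=E_1$, $W\wedge E_1=E_2$, together with the covariant derivatives
\begin{align*}
\pp_\rho W&=w_\rho E_1, &\pp_\theta W&=\sin w\,E_2,\\
\pp_\rho E_1&=-w_\rho W, &\pp_\theta E_1&=\cos w\,E_2,\\
\pp_\rho E_2&=0, &\pp_\theta E_2&=-\cos w\,E_1-\sin w\,W.
\end{align*}
I will also use the identities $\sin w=-\rho w_\rho$ and the equation $w_{\rho\rho}+\tfrac{w_\rho}{\rho}=\tfrac{\sin w\cos w}{\rho^2}$ satisfied by $w(\rho)=\pi-2\arctan\rho$.

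Next, switching to polar coordinates in $\RR^2$, a direct computation gives
\[
-2(W_{y_1}\wedge\Phi_{y_2}+\Phi_{y_1}\wedge W_{y_2})=-\frac{2}{\rho}\bigl(W_\rho\wedge\Phi_\theta-W_\theta\wedge\Phi_\rho\bigr),
\]
so with $W_\rho=w_\rho E_1$ and $W_\theta=-\rho w_\rho E_2$ the whole wedge contribution reduces to
\[
-\frac{2w_\rho}{\rho}\bigl(E_1\wedge\Phi_\theta+\rho\,E_2\wedge\Phi_\rho\bigr).
\]
Then I would compute $\Phi_\rho$, $\Phi_\theta$ by the product rule, substituting in the frame derivatives above, and likewise compute $\Delta\Phi=\Phi_{\rho\rho}+\tfrac{1}{\rho}\Phi_\rho+\tfrac{1}{\rho^2}\Phi_{\theta\theta}$ componentwise along $E_1$, $E_2$, $W$. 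This produces three lengthy scalar identities.

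The main task is then to collect coefficients and check the promised cancellations. Specifically, the coefficient of $\phi_3$ and its derivatives in the $E_1$-component of $L_W[\Phi]$ collapses to $\phi_3(w_{\rho\rho}+\tfrac{w_\rho}{\rho}-\tfrac{\sin w\cos w}{\rho^2})=0$ by the ODE satisfied by $w$; all remaining couplings of $\phi_3$ to $\phi_1$ in the $W$-component similarly cancel pairwise after using $\sin w=-\rho w_\rho$. The same identity $\sin w=-\rho w_\rho$ also forces the $(\phi_3)_\theta$-coefficient in the $E_2$-equation and the $(\phi_2)_\theta$-coefficient in the $W$-equation to vanish, giving the decoupling. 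Finally I would simplify the purely $\phi_1,\phi_2$ terms via the arithmetic identity
\[
w_\rho^2-\frac{\cos^2 w}{\rho^2}=-\frac{1}{\rho^2}+\frac{8}{(1+\rho^2)^2},
\]
with the analogous computation for the $E_2$-component using $w_\rho^2-\tfrac{\sin^2w}{\rho^2}-\tfrac{2w_\rho\sin w}{\rho}=\tfrac{8}{(1+\rho^2)^2}$, so that the $E_1$ and $E_2$ equations assume exactly the form \eqref{eqn-E_1} and \eqref{eqn-E_2}, while the $W$-equation reduces to \eqref{eqn-U}.

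The only real difficulty is bookkeeping: one must be careful about signs in the wedge products and about the non-obvious second-order corrections coming from the $\theta$-derivatives of $E_2$ and $E_1$ (which are the source of the terms $-\tfrac{1}{\rho^2}\phi_{1,2}$ in the two $W^\perp$-equations). Once the frame identities and the $w$-ODE are in hand, the cancellations are algebraic, and no further ingredient is needed.
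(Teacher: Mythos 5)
Your plan is correct and is essentially the same route as the paper's proof: expand $L_W[\Phi]$ in the frame $(E_1,E_2,W)$ using polar coordinates and the frame derivatives, then cancel the cross-couplings via $\sin w=-\rho w_\rho$ and the ODE $w_{\rho\rho}+\tfrac{w_\rho}{\rho}=\tfrac{\sin w\cos w}{\rho^2}$, and simplify the remaining potentials. The only cosmetic difference is that you first reduce the wedge contribution to the tidy polar form $-\tfrac{2w_\rho}{\rho}(E_1\wedge\Phi_\theta+\rho\,E_2\wedge\Phi_\rho)$, whereas the paper computes $\phi_{y_1}\wedge U_{y_2}$ and $U_{y_1}\wedge\phi_{y_2}$ directly before collecting; also, in the $E_2$-component the potential actually arises as $-\tfrac{1}{\rho^2}-\tfrac{2w_\rho\sin w}{\rho}$ rather than the combination you quote, but since $\sin w=-\rho w_\rho$ both reduce to $-\tfrac{1}{\rho^2}+\tfrac{8}{(1+\rho^2)^2}$, so this is harmless.
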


\begin{remark}\label{rmk-211211}
\noindent
\begin{itemize}
\item We observe from above Lemma that the the components in $W$-direction and on $W^\perp$ are in fact decoupled under linearization.
\item Notice that the scalar equation \eqref{eqn-U} can be regarded as the linearization of the Liouville equation
$$
\Delta u+e^u=0 ~\mbox{ in }~\R^2
$$
around $\log\frac{8}{(1+\rho^2)^2},$
which is non-degenerate in the sense that the linearization only has following bounded kernels
$$
\frac{\rho^2-1}{\rho^2+1},\quad \frac{2\rho e^{i\theta}}{\rho^2+1}.
$$
See also \eqref{kernels}.
\end{itemize}
\end{remark}

The next three lemmas concern the expansion of
\begin{equation*}
\tilde{L}_U[\Phi]:= -2U_{x_1}\wedge \Phi_{x_2} - 2\Phi_{x_1}\wedge U_{x_2}
\end{equation*}
in the linearization $L_U[\Phi]$, and these will be useful in analyzing the couplings in the gluing system.

\begin{lemma}\label{linearization-lemma-1}
In the polar coordinate system
$$
\Phi(x) = \Phi(r, \theta),\quad x = re^{i\theta},\quad \rho = \frac{r}{\lambda},
$$
the term
$
\tilde{L}_U[\Phi]$
can be expressed as
\begin{equation*}
\begin{aligned}
\tilde{L}_U[\Phi]
 =&~- \frac{2}{\lambda}w_{\rho}\left(\Phi_r\cdot(Q_\gamma   W )\right)Q_\gamma   E_1 +\frac{2}{\lambda r}w_{\rho}(\Phi_\theta\cdot (Q_\gamma  W ))Q_\gamma   E_2\\
 &~ +\frac{2}{\lambda r}w_{\rho}\left[ r(\Phi_r\cdot(Q_\gamma   E_1))-(\Phi_\theta\cdot (Q_\gamma  E_2))  \right] Q_\gamma  W .
\end{aligned}
\end{equation*}
\end{lemma}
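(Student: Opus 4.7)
This is essentially a direct computation in polar coordinates using the frame $\{Q_\gamma W, Q_\gamma E_1, Q_\gamma E_2\}$, so I will lay out the main steps rather than grind through all the algebra.

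First, I would compute $U_{x_1}$ and $U_{x_2}$ explicitly. Using the chain rule with $\rho = r/\lambda$, $\partial_{x_1}\rho = \cos\theta/\lambda$, $\partial_{x_2}\rho = \sin\theta/\lambda$, $\partial_{x_1}\theta = -\sin\theta/r$, $\partial_{x_2}\theta = \cos\theta/r$, together with the identities
\begin{equation*}
W_\rho = w_\rho E_1,\qquad W_\theta = \sin w\, E_2 = -\rho w_\rho E_2,
\end{equation*}
which follow directly from differentiating the co-rotational form \eqref{corotationalform}, a short calculation yields
\begin{equation*}
U_{x_1} = \frac{w_\rho}{\lambda}\bigl[\cos\theta\, Q_\gamma E_1 + \sin\theta\, Q_\gamma E_2\bigr],\qquad
U_{x_2} = \frac{w_\rho}{\lambda}\bigl[\sin\theta\, Q_\gamma E_1 - \cos\theta\, Q_\gamma E_2\bigr].
\end{equation*}
Writing $\Phi_{x_1} = \cos\theta\,\Phi_r - \frac{\sin\theta}{r}\Phi_\theta$ and $\Phi_{x_2} = \sin\theta\,\Phi_r + \frac{\cos\theta}{r}\Phi_\theta$, I substitute into $\tilde L_U[\Phi] = -2U_{x_1}\wedge \Phi_{x_2} - 2\Phi_{x_1}\wedge U_{x_2}$ and collect terms. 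The mixed $\cos\theta\sin\theta$-contributions cancel between the two pieces, while the $\cos^2\theta + \sin^2\theta = 1$ identities combine the remaining terms into the clean expression
\begin{equation*}
\tilde L_U[\Phi] = -\frac{2 w_\rho}{\lambda}\Bigl[ Q_\gamma E_2 \wedge \Phi_r + \frac{1}{r}\, Q_\gamma E_1 \wedge \Phi_\theta\Bigr].
\end{equation*}

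Second, I would pass to the orthonormal frame. Since $\{W, E_1, E_2\}$ is the positively oriented Frenet basis with $E_1\wedge E_2 = W$, $E_2\wedge W = E_1$, $W\wedge E_1 = E_2$ (a quick direct check on the explicit components at the beginning of Section~\ref{Sec-notations}), and since $Q_\gamma$ is a proper rotation so preserves the wedge product, the same relations hold for $\{Q_\gamma W, Q_\gamma E_1, Q_\gamma E_2\}$ pointwise. Decomposing $\Phi_r$ and $\Phi_\theta$ pointwise in this orthonormal basis gives
\begin{align*}
Q_\gamma E_2 \wedge \Phi_r &= (\Phi_r\cdot Q_\gamma W)\, Q_\gamma E_1 - (\Phi_r\cdot Q_\gamma E_1)\, Q_\gamma W,\\
Q_\gamma E_1 \wedge \Phi_\theta &= -(\Phi_\theta\cdot Q_\gamma W)\, Q_\gamma E_2 + (\Phi_\theta\cdot Q_\gamma E_2)\, Q_\gamma W,
\end{align*}
and substituting these into the formula above and regrouping the coefficient of $Q_\gamma W$ as $\frac{2w_\rho}{\lambda r}\bigl[r(\Phi_r\cdot Q_\gamma E_1) - (\Phi_\theta\cdot Q_\gamma E_2)\bigr]$ yields exactly the stated identity.

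There is no serious obstacle here; the statement is a bookkeeping lemma meant to exhibit the geometric structure of $\tilde L_U$ in the moving Frenet frame. The only point requiring care is the orientation convention: one must verify $E_1\wedge E_2 = W$ (not $-W$) from the explicit components, and use that $Q_\gamma\in SO(3)$ to transfer the cross-product relations to the rotated frame. Once those are in place, everything follows from the cancellations described above.
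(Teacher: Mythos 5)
Your proposal is correct and follows essentially the same route as the paper's proof in Appendix~A.1: both reduce $\tilde L_U[\Phi]$ to wedge products of $\Phi_r$, $\Phi_\theta$ against $Q_\gamma E_1$, $Q_\gamma E_2$ and then decompose in the rotated Frenet frame with the same orientation conventions $E_1\wedge E_2 = W$, $E_2\wedge W = E_1$, $W\wedge E_1 = E_2$. The only cosmetic difference is that you derive the intermediate polar form $-\frac{2w_\rho}{\lambda}\bigl[Q_\gamma E_2\wedge\Phi_r + \frac{1}{r}Q_\gamma E_1\wedge\Phi_\theta\bigr]$ by explicitly computing $U_{x_1}$, $U_{x_2}$ and cancelling cross terms, whereas the paper reaches it directly from $U_r = \frac{1}{\lambda}w_\rho Q_\gamma E_1$ and $\frac{1}{r}U_\theta = -\frac{1}{\lambda}w_\rho Q_\gamma E_2$ via the implicit polar identity for symmetrized wedge products.
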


\medskip

We consider a $C^1$ function $\Phi:\R^2 \to \mathbb{C}\times \mathbb{R}$, that we express in the form
\begin{equation*}
\Phi(x) = \left[
\begin{matrix}
\varphi_1(x) + i\varphi_2(x)\\
\varphi_3(x)
\end{matrix}
\right].
\end{equation*}
We also denote
\begin{equation*}
\varphi = \varphi_1 + i\varphi_2,\quad \bar{\varphi} = \varphi_1 - i\varphi_2
\end{equation*}
and define the operators
\begin{equation*}
\div \varphi = \partial_{x_1}\varphi_1 + \partial_{x_2}\varphi_2,\quad \text{curl } \varphi = \partial_{x_1}\varphi_2 - \partial_{x_2}\varphi_1.
\end{equation*}

\medskip

\begin{lemma}\label{linearization-lemma-2}
In the polar coordinate system
$$
\Phi(x) = \Phi(r, \theta)=(\varphi_1,~\varphi_2,~\varphi_3)^T,\quad x = re^{i\theta},\quad \rho = \frac{r}{\lambda},
$$
the term $\tilde{L}_U[\Phi]$ can be decomposed as follows
\begin{equation*}
\begin{aligned}
\tilde{L}_U[\Phi] &= \tilde{L}_U[\Phi]_0 + \tilde{L}_U[\Phi]_1 + \tilde{L}_U[\Phi]_2
\end{aligned}
\end{equation*}
with
\begin{equation*}
\begin{aligned}
\tilde{L}_U[\Phi]_0 = \frac{1}{\lambda}\rho w_\rho^2[\div(e^{-i\gamma }\varphi)Q_\gamma  E_1 + {\rm{curl }}(e^{-i\gamma }\varphi)Q_\gamma  E_2]+\frac{1}{\lambda}w_{\rho}^2\div(e^{-i\gamma }\varphi))Q_\gamma  W,
\end{aligned}
\end{equation*}
\begin{equation*}
\begin{aligned}
\tilde{L}_U[\Phi]_1 & = -\frac{2}{\lambda} w_\rho\cos w\Big[[\partial_{x_1}\varphi_3\cos\theta + \partial_{x_2}\varphi_3\sin\theta] Q_\gamma  E_1 + [\partial_{x_1}\varphi_3\sin\theta - \partial_{x_2}\varphi_3\cos\theta]Q_\gamma  E_2\Big] \\ & \quad -\frac{2}{\lambda}w_\rho\sin w[\partial_{x_1}\varphi_3\cos\theta +\partial_{x_2}\varphi_3\sin\theta] Q_\gamma  W,
\end{aligned}
\end{equation*}
\begin{equation*}
\begin{aligned}
\tilde{L}_U[\Phi]_2
&= \frac{1}{\lambda} \rho w_\rho^2\Big[[\div (e^{i\gamma }\bar\varphi)\cos(2\theta) - {\rm{curl }}(e^{i\gamma }\bar\varphi)\sin(2\theta)]Q_\gamma  E_1 + [\div (e^{i\gamma }\bar\varphi)\sin(2\theta) + {\rm{curl }}(e^{i\gamma }\bar\varphi)\cos(2\theta)] Q_\gamma  E_2\Big] \\ & \quad +\frac{2}{\lambda} w_\rho\left[-\frac{1}{2}\rho^2w_\rho\div(e^{i\gamma }\bar\varphi) \cos(2\theta) +\frac{1}{2}\rho^2w_\rho{\rm{curl }}(e^{i\gamma }\bar\varphi)\sin(2\theta)\right] Q_\gamma  W .
\end{aligned}
\end{equation*}
\end{lemma}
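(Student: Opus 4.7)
The plan is to take the compact representation of $\tilde L_U[\Phi]$ provided by Lemma \ref{linearization-lemma-1} and expand each of the four scalar projections $\Phi_r\cdot Q_\gamma W$, $\Phi_\theta\cdot Q_\gamma W$, $\Phi_r\cdot Q_\gamma E_1$, $\Phi_\theta\cdot Q_\gamma E_2$ into Fourier modes in $\theta$. Using the chain rule $\partial_r=\cos\theta\,\partial_{x_1}+\sin\theta\,\partial_{x_2}$ and $r^{-1}\partial_\theta=-\sin\theta\,\partial_{x_1}+\cos\theta\,\partial_{x_2}$ together with the explicit Cartesian forms of $W$, $E_1$, $E_2$, each projection splits naturally into a \emph{horizontal} piece carrying only $\varphi=\varphi_1+i\varphi_2$ and a \emph{vertical} piece carrying only $\varphi_3$. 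This split is dictated by the fact that the first two components of $Q_\gamma W$, $Q_\gamma E_1$, $Q_\gamma E_2$ depend on $\theta$ through $\cos(\theta+\gamma)$ and $\sin(\theta+\gamma)$, while the third component depends only on $\rho$.

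The vertical pieces deliver $\tilde L_U[\Phi]_1$ directly. For instance, the $\varphi_3$ contribution to $\Phi_r\cdot Q_\gamma W$ is $\cos w\,(\varphi_3)_r=\cos w[\cos\theta\,\partial_{x_1}\varphi_3+\sin\theta\,\partial_{x_2}\varphi_3]$, and analogous computations for the other three projections yield the stated formula with prefactors $w_\rho\cos w$ and $w_\rho\sin w$ as claimed. I would dispatch this case first since it is essentially bookkeeping and the resulting mode $1$ structure is clean.

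For the horizontal pieces, the products of $\cos(\theta+\gamma)$, $\sin(\theta+\gamma)$ (arising from $Q_\gamma W$, $Q_\gamma E_1$) with $\cos\theta$, $\sin\theta$ (arising from $\partial_r$ and $r^{-1}\partial_\theta$) expand via product-to-sum identities into a $\theta$-independent part and a $(2\theta+\gamma)$-oscillating part. Using
\[
\div(e^{i\alpha}\psi)=\cos\alpha\,\div\psi-\sin\alpha\,\curl\psi,\qquad \curl(e^{i\alpha}\psi)=\cos\alpha\,\curl\psi+\sin\alpha\,\div\psi,
\]
valid because $\gamma=\gamma(t)$ is independent of $x$, the $\theta$-independent part reorganizes into combinations of $\div(e^{-i\gamma}\varphi)$ and $\curl(e^{-i\gamma}\varphi)$, while the oscillating part, after noting $\div\bar\varphi=(\varphi_1)_{x_1}-(\varphi_2)_{x_2}$ and $\curl\bar\varphi=-[(\varphi_1)_{x_2}+(\varphi_2)_{x_1}]$, rewrites via
\[
\cos(2\theta+\gamma)\div\bar\varphi-\sin(2\theta+\gamma)\curl\bar\varphi=\div(e^{i\gamma}\bar\varphi)\cos(2\theta)-\curl(e^{i\gamma}\bar\varphi)\sin(2\theta)
\]
(and its companion with $\cos\leftrightarrow\sin$). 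Multiplying by the prefactors $-\tfrac{2}{\lambda}w_\rho$ and $\pm\tfrac{2}{\lambda r}w_\rho$ inherited from Lemma \ref{linearization-lemma-1} and using $\sin w=-\rho w_\rho$, the coefficients collapse to the $\rho w_\rho^2$ form displayed in $\tilde L_U[\Phi]_0$ and $\tilde L_U[\Phi]_2$.

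The main obstacle is algebraic bookkeeping: tracking signs in the complex-conjugate operations, correctly positioning $e^{\pm i\gamma}$ inside $\div$ and $\curl$, and verifying that the scalar contribution along $Q_\gamma W$ telescopes as advertised. For the last point one needs the identity $\cos w-1=w_\rho$ (which follows from $\cos w=\tfrac{\rho^2-1}{\rho^2+1}$ and $w_\rho=-\tfrac{2}{\rho^2+1}$); it produces the clean $w_\rho^2$ factor in the $Q_\gamma W$ coefficient of $\tilde L_U[\Phi]_0$ after the subtraction $\Phi_r\cdot Q_\gamma E_1-r^{-1}\Phi_\theta\cdot Q_\gamma E_2$ on the horizontal side. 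Once the four projections are assembled with these identities, the claimed three-mode decomposition follows by matching terms.
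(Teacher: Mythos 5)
Your proposal is correct and follows essentially the same route as the paper: start from Lemma \ref{linearization-lemma-1}, expand the four scalar projections by writing $\partial_r$ and $r^{-1}\partial_\theta$ in Cartesian form, separate the $\varphi_3$ contribution (yielding $\tilde L_U[\Phi]_1$) from the $\varphi=\varphi_1+i\varphi_2$ contribution, recognize the $\theta$-independent part as $\div/\curl$ and the $2\theta$-oscillating part as $\div/\curl$ of the conjugate, and use $\sin w=-\rho w_\rho$ together with $\cos w-1=w_\rho$ to produce the $\rho w_\rho^2$ and $w_\rho^2$ prefactors. The one organizational difference is that the paper conjugates first, defining $\phi=Q_{-\gamma}\Phi$ so that all projections are against $W,E_1,E_2$ without any $\gamma$, then substitutes $\varphi=e^{i\gamma}\phi$ at the very end; you instead carry $\gamma$ through the trigonometric expansion and absorb it via the identities $\div(e^{i\alpha}\psi)=\cos\alpha\,\div\psi-\sin\alpha\,\curl\psi$ and its curl companion. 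These two bookkeeping schemes are algebraically interchangeable and lead to the same intermediate formulae, so yours is a correct, slightly rearranged version of the paper's proof rather than a genuinely different argument.
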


\medskip

\begin{lemma}\label{linearization-lemma-3}
For
\begin{equation*}
\Phi(x) = \left[
\begin{matrix}
\phi(r)e^{i\theta}\\
0
\end{matrix}
\right]
\end{equation*}
with $\phi$ complex-valued,
the term
$
\tilde{L}_U[\Phi]$
can be expressed as
\begin{equation*}
\begin{aligned}
\tilde{L}_U [\Phi]& = \frac{2}{\lambda}\rho w_{\rho}^2\left[\emph{Re}(e^{-i\gamma }\phi_r(r))Q_\gamma  E_1 + \frac{1}{r}\emph{Im}(e^{-i\gamma }\phi(r))Q_\gamma  E_2\right] \\
&\quad + \frac{2}{\lambda}w_{\rho}\left(\emph{Re}(\phi_re^{-i\gamma })\cos w - \frac{1}{r}\emph{Re}(\phi e^{-i\gamma }) \right)Q_\gamma  W .
\end{aligned}
\end{equation*}
Assume $\psi$ is real-valued and
\begin{equation*}
\Psi(x) = \left[
\begin{matrix}
0\\
0\\
\psi(r)\\
\end{matrix}
\right].
\end{equation*}
Then
\begin{equation*}
\begin{aligned}
\tilde{L}_U [\Psi] =&~ -\frac2{\la} w_\rho\cos w \left(\psi_r Q_\gamma E_1-\frac{1}r \psi_\theta Q_\gamma E_2\right)+\frac2{\la}\rho w_\rho^2\psi_r  Q_\gamma W.
\end{aligned}
\end{equation*}
\end{lemma}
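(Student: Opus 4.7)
The plan is to feed the two test maps $\Phi$ and $\Psi$ directly into the master formula of Lemma \ref{linearization-lemma-1}, which already distills $\tilde L_U[\cdot]$ into four scalar dot products in the Frenet frame $(Q_\gamma E_1, Q_\gamma E_2, Q_\gamma W)$. So the proof reduces to bookkeeping: compute $\Phi_r, \Phi_\theta, \Psi_r, \Psi_\theta$, take the four scalar products against $Q_\gamma W$, $Q_\gamma E_1$, $Q_\gamma E_2$, and collect terms using the polar identities $\sin w = -\rho w_\rho$ and $\cos w = (\rho^2-1)/(\rho^2+1)$.

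\medskip

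First I would record the coordinate expressions
\begin{equation*}
Q_\gamma W = \bigl(e^{i(\theta+\gamma)}\sin w,\;\cos w\bigr)^{\mathsf T},\quad
Q_\gamma E_1 = \bigl(e^{i(\theta+\gamma)}\cos w,\;-\sin w\bigr)^{\mathsf T},\quad
Q_\gamma E_2 = \bigl(i e^{i(\theta+\gamma)},\;0\bigr)^{\mathsf T},
\end{equation*}
in the complex-real splitting $\mathbb R^3\cong\mathbb C\times\mathbb R$, where the inner product is $(a,c)\cdot(b,d)=\mathrm{Re}(a\bar b)+cd$. For $\Phi=(\phi(r)e^{i\theta},0)^{\mathsf T}$ one has $\Phi_r=(\phi_r e^{i\theta},0)^{\mathsf T}$ and $\Phi_\theta=(i\phi e^{i\theta},0)^{\mathsf T}$, and a direct computation gives
\begin{equation*}
\Phi_r\cdot Q_\gamma W = \sin w\,\mathrm{Re}(e^{-i\gamma}\phi_r),\qquad
\Phi_\theta\cdot Q_\gamma W = -\sin w\,\mathrm{Im}(e^{-i\gamma}\phi),
\end{equation*}
\begin{equation*}
\Phi_r\cdot Q_\gamma E_1 = \cos w\,\mathrm{Re}(e^{-i\gamma}\phi_r),\qquad
\Phi_\theta\cdot Q_\gamma E_2 = \mathrm{Re}(e^{-i\gamma}\phi).
\end{equation*}

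\medskip

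Plugging these four quantities into the expression of Lemma \ref{linearization-lemma-1} and replacing $\sin w = -\rho w_\rho$ yields precisely the two terms of the claimed identity for $\Phi$. The case of $\Psi = (0,0,\psi(r))^{\mathsf T}$ is even simpler: $\Psi_\theta=0$ and the only nonzero inner products are $\Psi_r\cdot Q_\gamma W = \psi_r\cos w$ and $\Psi_r\cdot Q_\gamma E_1 = -\psi_r\sin w$; inserted into Lemma \ref{linearization-lemma-1} and using once more $-\sin w=\rho w_\rho$ these recover the formula for $\tilde L_U[\Psi]$ (the $\psi_\theta$-term vanishes under the radial assumption, and in fact the displayed formula is the natural extension without radiality).

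\medskip

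No substantive obstacle is expected: the lemma is a purely algebraic reduction of a previously established master formula, and the only care needed is sign tracking in the identification $\mathbb R^2\cong\mathbb C$ for the first two components, together with consistent use of $\sin w=-\rho w_\rho$ to convert between trigonometric and derivative forms of $w$.
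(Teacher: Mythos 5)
Your proof is correct and follows the natural route: feed $\Phi$ and $\Psi$ into the master formula of Lemma \ref{linearization-lemma-1}, compute the four scalar products $\Phi_r\cdot Q_\gamma W$, $\Phi_\theta\cdot Q_\gamma W$, $\Phi_r\cdot Q_\gamma E_1$, $\Phi_\theta\cdot Q_\gamma E_2$ in the $\mathbb C\times\mathbb R$ splitting, and simplify with $\sin w=-\rho w_\rho$; all four of your inner products and the final collections of terms check out. The paper does not in fact include an explicit proof of this lemma in Appendix \ref{appendix-linearization} (only linearization-lemma-0, -1, and -2 are written out there), but your argument is exactly what the paper's pattern of proofs calls for, and your remark that the $\Psi$-formula with the $\psi_\theta$ term is the natural non-radial extension is the correct reading of the statement.
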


\medskip

The proof of all the lemmas in this section will be postponed to Appendix \ref{appendix-linearization}.

\bigskip

\section{Proof of Theorem \ref{thm}: Non-degeneracy of the degree $m$ bubble}\label{appendixB}

\medskip

For notational simplicity, we write throughout this section
$$
W^{(m)}=W,\quad E_1^{(m)}=E_1,\quad E_2^{(m)}=E_2.
$$

\begin{proof}[Proof of Theorem \ref{thm}]

We divide the proof into the following steps.

\medskip

\noindent {\bf Step 1}. In the polar coordinate system, we write the solution $\phi = \phi(r, \theta)$ of (\ref{e:linearized1}) as
\begin{equation*}
\begin{aligned}
&\phi(r, \theta) = \xi(r, \theta)E_1 + \eta(r, \theta)E_2 + \zeta(r, \theta)W.
\end{aligned}
\end{equation*}
Then by direct computation, we know the linearized operator
$$L[\phi] = \Delta \phi - \frac{2}{r}\phi_r\wedge W_\theta - \frac{2}{r}W_r\wedge \phi_\theta:=(L^{(1)}[\phi], L^{(2)}[\phi], L^{(3)}[\phi])$$
 can be expressed as follows,
\begin{align*}
&L^{(1)}[\phi]\\
&= \frac{1}{r^2 \left(r^{2 m}+1\right)^3}\cos (m\theta)\left[-m^2 (-1+7 r^{2 m}-7 r^{4 m}+r^{6 m})\xi+16 m^2 r^{3 m}\zeta\right.\\
&\quad +(-2m+2m r^{2m}+2m r^{4m}-2m r^{6m})\eta_{\theta}\\
&\quad +(-1-r^{2m}+r^{4m}+r^{6m})\xi_{\theta\theta}+(-r-r^{1+2m}+r^{1+4m}+r^{1+6m})\xi_{r}\\
&\quad +(-r^2-r^{2+2m}+r^{2+4m}+r^{2+6m})\xi_{rr}+(2r^{2+m}+4r^{2+3m}+2r^{2+5m})\zeta_{rr}\\
&\quad \left. +(2r^{1+m}+4r^{1+3m}+2r^{1+5m})\zeta_{r}+(2r^m+4r^{3m}+2r^{5m})\zeta_{\theta\theta}\right]\\
&\quad +\frac{(r^{2 m}+1)}{r^2 \left(r^{2 m}+1\right)^3}\sin (m\theta)\left[(2m-2m r^{4m})\xi_{\theta} + m^2 (1-6 r^{2 m}+r^{4 m}) \eta\right.\\
&\quad\quad +(-1-2r^{2m}-r^{4m})\eta_{\theta\theta}+(-r-2r^{1+2m}-r^{1+4m})\eta_{r}\\
&\quad\quad\left.+(-r-2r^{2+2m}-r^{2+4m})\eta_{rr})\right],
\end{align*}
\begin{align*}
&L^{(2)}[\phi]=\frac{-1-r^{2m}}{r^2\left(r^{2 m}+1\right)^3}\cos (m\theta)\left[(2m-2m r^{4m})\xi_{\theta}+m^2(1-6r^{2m}+r^{4m})\eta\right.\\
&\quad +(-1-2r^{2m}-r^{4m})\eta_{\theta\theta}+(-r-2r^{1+2m}-r^{1+4m})\eta_{r}\\
&\quad \left.+(-r^2-2r^{2+2m}-r^{2+4m})\eta_{rr}\right]\\
&\quad +\frac{1}{r^2 \left(r^{2 m}+1\right)^3}\sin (m\theta)\left[-m^2(-1+7r^{2m}-7r^{4m}+r^{6m})\xi+16m^2r^{3m}\zeta\right.\\
&\quad +(-2m+2mr^{2m}+2mr^{4m}-2mr^{6m})\eta_{\theta}+(-1-r^{2m}+r^{4m}+r^{6m})\xi_{\theta\theta}\\
&\quad +(-r-r^{1+2m}+r^{1+4m}+r^{1+6m})\xi_{r}+(-r^2-r^{2+2m}+r^{2+4m}+r^{2+6m})\xi_{rr}\\
&\quad +(2r^m+4r^{3m}+2r^{5m})\zeta_{\theta\theta}+(2r^{1+m}+4r^{1+3m}+2r^{1+5m})\zeta_{r}\\
&\quad \left. +(2r^{2+m}+4r^{2+3m}+2r^{2+5m})\zeta_{rr}\right]
\end{align*}
and
\begin{align*}
&L^{(3)}[\phi]\\
&= \frac{1}{r^2\left(r^{2 m}+1\right)^3}\left[2m^2(r^m-6r^{3m}+r^{5m})\xi+8m^2r^{2m}\zeta+(-4mr^m+4mr^{5m})\eta_{\theta}\right.\\
&\quad +(-2r^m-4r^{3m}-2r^{5m})\xi_{\theta\theta} + (-1-r^{2m}+r^{4m}+r^{6m})\zeta_{\theta\theta}\\
&\quad +(-2r^{1+m}-4r^{1+3m}-2r^{1+5m})\xi_{r} +(-r-r^{1+2m}+r^{1+4m}+r^{1+6m})\zeta_{r}\\
&\quad \left.+(-2r^{2+m}-4r^{2+3m}-2r^{2+5m})\xi_{rr}+(-r^2-r^{2+2m}+r^{2+4m}+r^{2+6m})\xi_{rr}\right].
\end{align*}

\medskip

\noindent {\bf Step 2}. Using Fourier expansion, we set $\xi(r, \theta) = a(r) \cos (k\theta)+b(r) \sin (k\theta)$, $\eta(r, \theta) = c(r) \cos (k\theta)+d(r) \sin (k\theta)$ and $\zeta(r, \theta) = e(r)\cos(k\theta)+f(r)\sin(k\theta)$, $k$ is an integer, and we write
$$L_\perp(\phi):=(L^{(1)}_{\perp}[\phi], L^{(1)}_{\perp}[\phi], L^{(1)}_{\perp}[\phi]) : = (L^{(1)}[\phi], L^{(2)}[\phi], L^{(3)}[\phi]) - ((L^{(1)}[\phi], L^{(2)}[\phi], L^{(3)}[\phi])\cdot W)W.$$
Then we have
\begin{equation*}
L^{(1)}_{\perp}[\phi]=\frac{1}{r^2 \left(r^{2 m}+1\right)^3}((1+r^{2m})\sin(m\theta)A_1+(-1+r^{2m})\cos(m\theta)A_2)
\end{equation*}
with
\begin{align*}
A_1 &= \cos(k\theta)(-2km(r^{4m}-1)b(r)+(k^2(1+r^{2m})^2+m^2(1-6r^{2m}+r^{4m}))c(r))\\
&\quad -\cos(k\theta)(r(1+r^{2m})^2(c'(r)+rc''(r)))\\
&\quad +\sin(k\theta)(2km(r^{4m}-1)a(r)+(k^2(1+r^{2m})^2+m^2(1-6r^{2m}+r^{4m}))d(r))\\
&\quad -\sin(k\theta)(r(1+r^{2m})^2(d'(r)+rd''(r))),
\end{align*}

\begin{align*}
A_2 &= \cos(k\theta)((-k^2(1+r^{2m})^2-m^2(1-6r^{2m}+r^{4m}))a(r)-2km(r^{4m}-1)d(r))\\
&\quad +\cos(k\theta)(r(1+r^{2m})^2(a'(r)+r a''(r))\\
&\quad +\sin(k\theta)(-k^2(1+r^{2m})^2-m^2(1-6r^{2m}+r^{4m}))b(r))\\
&\quad +\sin(k\theta)(2km(r^{2m}-1)(r^{2m}+1)c(r)+r(1+r^{2m})^2(b'(r)+r b''(r)),
\end{align*}

\begin{equation*}
L^{(2)}_{\perp}[\phi]=\frac{1}{r^2 \left(r^{2 m}+1\right)^3}((1+r^{2m})\cos(m\theta)B_1+(-1+r^{2m})\sin(m\theta)B_2)
\end{equation*}
with
\begin{align*}
B_1 &= \cos(k\theta)(2km(r^{4m}-1)b(r)-(k^2(1+r^{2m})^2+m^2(1-6r^{2m}+r^{4m}))c(r))\\
&\quad +\cos(k\theta)(r(1+r^{2m})^2(c'(r)+rc''(r)))\\
&\quad -\sin(k\theta)(2km(r^{4m}-1)a(r)+(k^2(1+r^{2m})^2+m^2(1-6r^{2m}+r^{4m}))d(r))\\
&\quad +\sin(k\theta)(r(1+r^{2m})^2(d'(r)+rd''(r))),
\end{align*}

\begin{align*}
B_2 &= \cos(k\theta)(-(k^2(1+r^{2m})^2+m^2(1-6r^{2m}+r^{4m}))a(r)-2km(r^{4m}-1)d(r)\\
&\quad +r(1+r^{2m})^2(a'(r)+r a''(r))\\
&\quad +\sin(k\theta)(-(k^2(1+r^{2m})^2+m^2(1-6r^{2m}+r^{4m}))b(r)+2km(r^{4m}-1)c(r)\\
&\quad +r(1+r^{2 m})^2(b'(r)+r b''(r)),
\end{align*}

\begin{equation*}
L^{(3)}_{\perp}[\phi]=\frac{2r^{m-2}}{(1+r^{2m})^3}\cos(k\theta)C_1+\frac{2r^{m-2}}{(1+r^{2m})^3}\sin(k\theta)C_2,
\end{equation*}
with
\begin{equation*}
\begin{aligned}
C_1 &= (k^2(1+r^{2m})^2+m^2(1-6r^{2m}+r^{4m}))a(r)+(1+r^{2m})2km(r^{2m}-1)d(r)\\
&\quad -r(1+r^{2m})(a'(r)+ra''(r)),
\end{aligned}
\end{equation*}

\begin{equation*}
\begin{aligned}
C_2 &= (k^2(1+r^{2m})^2+m^2(1-6r^{2m}+r^{4m}))b(r)-(1+r^{2m})2km(r^{2m}-1)c(r)\\
&\quad +r(1+r^{2m})(b'(r)+rb''(r)).
\end{aligned}
\end{equation*}

In the direction of $W$, we have
\begin{equation*}
(L^{(1)}[\phi], L^{(2)}[\phi], L^{(3)}[\phi])\cdot W =\frac{1}{r^2(1+r^{2m})^2}\cos(k\theta)D_1+\frac{2r^{m-2}}{(1+r^{2m})^3}\sin(k\theta)D_2
\end{equation*}
with
\begin{equation*}
D_1 = -(-8m^2r^{2m}+k^2(1+r^{2m})^2)e(r)+r(1+r^{2m})^2(e'(r)+re''(r)),
\end{equation*}
\begin{equation*}
D_2 = -(-8m^2r^{2m}+k^2(1+r^{2m})^2)f(r)+r(1+r^{2m})^2(f'(r)+rf''(r)).
\end{equation*}

We also write $L_\perp(\phi)$ as
\begin{equation*}
\begin{aligned}
L_\perp(\phi) & = (L_\perp(\phi)\cdot E_1) E_1 + (L_\perp(\phi)\cdot E_2)E_2
\end{aligned}
\end{equation*}
with
\begin{equation*}
\begin{aligned}
&L_\perp(\phi)\cdot E_1\\
& = ((-k^2(r^{2m}+1)^2-m^2(1-6r^{2m}+r^{4m}))a(r)-2km(r^{4m}-1)d(r))\cos(k\theta)\\
&\quad +r(1+r^{2m})^2(a'(r)+ra''(r))\cos(k\theta)\\
&\quad + ((-k^2(r^{2m}+1)^2-m^2(1-6r^{2m}+r^{4m}))b(r)+2km(r^{4m}-1)c(r))\sin(k\theta)\\
&\quad + r(1+r^{2m})^2(b'(r)+rb''(r))\sin(k\theta)
\end{aligned}
\end{equation*}
and
\begin{equation*}
\begin{aligned}
&L_\perp(\phi)\cdot E_2\\ &= ((-k^2(r^{2m}+1)^2-m^2(1-6r^{2m}+r^{4m}))c(r)+2km(r^{4m}-1)b(r))\cos(k\theta)\\
&\quad +r(1+r^{2m})^2(c'(r)+rc''(r))\cos(k\theta)\\
&\quad + ((-k^2(r^{2m}+1)^2-m^2(1-6r^{2m}+r^{4m}))d(r)-2km(r^{4m}-1)a(r))\sin(k\theta)\\
&\quad +r(1+r^{2m})^2(d'(r)+rd''(r))\sin(k\theta).
\end{aligned}
\end{equation*}

Therefore, to solve the linearized equation (\ref{e:linearized1}), we need to solve the following system of ODEs,
\begin{equation}\label{e:ode1}
\left\{
\begin{aligned}
a''(r)+\frac{1}{r}a'(r)-\frac{k^2+\frac{m^2(1-6r^{2m}+r^{4m})}{(r^{2m}+1)^2}}{r^2}a(r)-\frac{2km(r^{2m}-1)}{r^2(1+r^{2m})}d(r) = 0,\\
d''(r)+\frac{1}{r}d'(r)-\frac{k^2+\frac{m^2(1-6r^{2m}+r^{4m})}{(r^{2m}+1)^2}}{r^2}d(r)-\frac{2km(r^{2m}-1)}{r^2(1+r^{2m})}a(r) = 0,
\end{aligned}
\right.
\end{equation}
\begin{equation}\label{e:ode2}
\left\{
\begin{aligned}
b''(r)+\frac{1}{r}b'(r)-\frac{k^2+\frac{m^2(1-6r^{2m}+r^{4m})}{(r^{2m}+1)^2}}{r^2}b(r)+\frac{2km(r^{2m}-1)}{r^2(1+r^{2m})}c(r) = 0,\\
c''(r)+\frac{1}{r}c'(r)-\frac{k^2+\frac{m^2(1-6r^{2m}+r^{4m})}{(r^{2m}+1)^2}}{r^2}c(r)+\frac{2km(r^{2m}-1)}{r^2(1+r^{2m})}b(r) = 0,
\end{aligned}
\right.
\end{equation}
and
\begin{equation}\label{e:ode3}
\left\{
\begin{aligned}
e''(r)+\frac{1}{r}e'(r)+\frac{8m^2r^{2m}-k^2(1+r^{2m})^2}{r^2(r^{2m}+1)^2}e(r) = 0,\\
f''(r)+\frac{1}{r}f'(r)+\frac{8m^2r^{2m}-k^2(1+r^{2m})^2}{r^2(r^{2m}+1)^2}f(r) = 0.
\end{aligned}
\right.
\end{equation}

\medskip

\noindent {\bf Step 3}. We solve the system (\ref{e:ode1})-(\ref{e:ode3}) in the following four cases.

{\bf Case 1}: If $k = 0$, then the solutions of (\ref{e:ode1})-(\ref{e:ode3}) are
\begin{equation*}
a(r) = \frac{(c_1+c_3)}{2}\frac{r^m}{1+r^{2m}} + \frac{(c_2+c_4)}{2}\frac{2r^m\log(r^{2m})+r^{3m}-r^{-m}}{1+r^{2m}},
\end{equation*}
\begin{equation*}
d(r) = \frac{(c_1-c_3)}{2}\frac{r^m}{1+r^{2m}} + \frac{(c_2-c_4)}{2}\frac{2r^m\log(r^{2m})+r^{3m}-r^{-m}}{1+r^{2m}},
\end{equation*}
\begin{equation*}
b(r) = \frac{(c_5+c_7)}{2}\frac{r^m}{1+r^{2m}} + \frac{(c_6+c_8)}{2}\frac{2r^m\log(r^{2m})+r^{3m}-r^{-m}}{1+r^{2m}},
\end{equation*}
\begin{equation*}
c(r) = \frac{(c_5-c_7)}{2}\frac{r^m}{1+r^{2m}} + \frac{(c_6-c_8)}{2}\frac{2r^m\log(r^{2m})+r^{3m}-r^{-m}}{1+r^{2m}},
\end{equation*}
\begin{equation*}
e(r) = \frac{c_9(r^{2m} - 1)}{1 + r^{2m}}+ \frac{c_{10}r^m((r^m - r^{-m})\log(r^{2m}) - 4r^{-m})}{1 + r^{2m}},
\end{equation*}
\begin{equation*}
f(r) = \frac{c_{11}(r^{2m} - 1)}{1 + r^{2m}} + \frac{c_{12}r^m((r^m - r^{-m})\log(r^{2m}) - 4r^{-m})}{1 + r^{2m}}.
\end{equation*}
Therefore the bounded kernel functions take the form
\begin{equation*}
\begin{aligned}
&\phi(r, \theta) = \frac{(c_1+c_3)}{2}\frac{r^m}{1+r^{2m}}\cos (k\theta)E_1 + \frac{(c_5-c_7)}{2}\frac{r^m}{1+r^{2m}}\cos (k\theta)E_2\\
&\quad\quad\quad\quad + \frac{c_9(r^{2m} - 1)}{1 + r^{2m}}\cos (k\theta)W.
\end{aligned}
\end{equation*}

{\bf Case 2}: If $k = 1$, then the solutions of (\ref{e:ode1})-(\ref{e:ode3}) are
\begin{equation*}
\begin{aligned}
a(r)& = \frac{c_{13}r^{m-1}}{2(1 + r^{2m})} + \frac{c_{14}r((-1-m)r^{-m}+2m^2r^m+mr^{3m}-2r^m-r^{3m})}{2(1 + r^{2m})}\\
&\quad + \frac{c_{15}r^{m+1}}{2(1 + r^{2m})} + \frac{c_{16}r^{-1}((-1+m)r^{-m}+2m^2r^m-mr^{3m}-2r^m-r^{3m})}{2(1 + r^{2m})},
\end{aligned}
\end{equation*}
\begin{equation*}
\begin{aligned}
d(r)& = \frac{c_{13}r^{m-1}}{2(1 + r^{2m})} + \frac{c_{14}r((-1-m)r^{-m}+2m^2r^m+mr^{3m}-2r^m-r^{3m})}{2(1 + r^{2m})}\\
&\quad - \frac{c_{15}r^{m+1}}{2(1 + r^{2m})} - \frac{c_{16}r^{-1}((-1+m)r^{-m}+2m^2r^m-mr^{3m}-2r^m-r^{3m})}{2(1 + r^{2m})},
\end{aligned}
\end{equation*}
\begin{equation*}
\begin{aligned}
b(r) &= \frac{c_{17}r^{m-1}}{2(1 + r^{2m})} + \frac{c_{18}r((-1-m)r^{-m}+2m^2r^m+mr^{3m}-2r^m-r^{3m})}{2(1 + r^{2m})}\\
&\quad + \frac{c_{19}r^{m+1}}{2(1 + r^{2m})} + \frac{c_{20}r^{-1}((-1+m)r^{-m}+2m^2r^m-mr^{3m}-2r^m-r^{3m})}{2(1 + r^{2m})},
\end{aligned}
\end{equation*}
\begin{equation*}
\begin{aligned}
c(r) &= -\frac{c_{17}r^{m-1}}{2(1 + r^{2m})} - \frac{c_{18}r((-1-m)r^{-m}+2m^2r^m+mr^{3m}-2r^m-r^{3m})}{2(1 + r^{2m})}\\
&\quad + \frac{c_{19}r^{m+1}}{2(1 + r^{2m})} + \frac{c_{20}r^{-1}((-1+m)r^{-m}+2m^2r^m-mr^{3m}-2r^m-r^{3m})}{2(1 + r^{2m})},
\end{aligned}
\end{equation*}
\begin{equation*}
\begin{aligned}
e(r)& = \frac{c_{21}}{1 + r^{2m}}((1 - m)r^{-m} + (1 + m)r^m)r^{m-1}\\
&\quad + \frac{c_{22}}{1 + r^{2m}}((1 + m)r^{-m} + (1 - m)r^m)r^{m+1},
\end{aligned}
\end{equation*}
\begin{equation*}
\begin{aligned}
f(r)& = \frac{c_{23}}{1 + r^{2m}}((1 - m)r^{-m} + (1 + m)r^m)r^{m-1}\\
&\quad + \frac{c_{24}}{1 + r^{2m}}((1 + m)r^{-m} + (1 - m)r^m)r^{m+1}.
\end{aligned}
\end{equation*}

If $m=1$, the bounded kernel functions take the form
\begin{equation*}
\begin{aligned}
&\phi(r, \theta)\\
&\quad = \left(\left(\frac{c_{13}}{2(1 + r^{2})}+ \frac{c_{15}r^{2}}{2(1 + r^{2})}\right)\cos (\theta)+\left(\frac{c_{17}}{2(1 + r^{2})}+ \frac{c_{19}r^{2}}{2(1 + r^{2})}\right)\sin (\theta)\right)E_1\\
&\quad\quad + \left(\left(-\frac{c_{17}}{2(1 + r^{2})}+ \frac{c_{19}r^{2}}{2(1 + r^{2})}\right)\cos (\theta)+\left(\frac{c_{13}}{2(1 + r^{2})}-\frac{c_{15}r^{2}}{2(1 + r^{2})}\right)\sin (\theta)\right)E_2\\
&\quad\quad + \left(\frac{(c_{21}+c_{22})2r}{1 + r^{2}}\cos (\theta)+\frac{(c_{23}+c_{24})2r}{1 + r^{2}}\sin (\theta)\right)W.
\end{aligned}
\end{equation*}

If $m>1$, the bounded kernel functions take the form
\begin{equation*}
\begin{aligned}
&\phi(r, \theta)=\\ &\left(\left(\frac{c_{13}r^{m-1}}{2(1 + r^{2m})}+ \frac{c_{15}r^{m+1}}{2(1 + r^{2m})}\right)\cos (\theta)+\left(\frac{c_{17}r^{m-1}}{2(1 + r^{2m})}+ \frac{c_{19}r^{m+1}}{2(1 + r^{2m})}\right)\sin (\theta)\right)E_1\\
& + \left(\left(-\frac{c_{17}r^{m-1}}{2(1 + r^{2m})}+ \frac{c_{19}r^{m+1}}{2(1 + r^{2m})}\right)\cos (\theta)+\left(\frac{c_{13}r^{m-1}}{2(1 + r^{2m})}-\frac{c_{15}r^{m+1}}{2(1 + r^{2m})}\right)\sin (\theta)\right)E_2.
\end{aligned}
\end{equation*}

{\bf Case 3}: If $k = m$, then the solutions of (\ref{e:ode1})-(\ref{e:ode3}) are
\begin{equation*}
\begin{aligned}
a(r)& = \frac{c_{25}}{2(1 + r^{2m})} + \frac{c_{26}r^m(r^{3m} + 2r^{-m}\log(r^{2m}) + 4r^m)}{2(1 + r^{2m})}\\
&\quad + \frac{c_{27}r^{2m}}{2(1 + r^{2m})} + \frac{c_{28}r^{-m}(-2r^{3m}\log(r^{2m}) + 4r^m + r^{-m})}{2(1 + r^{2m})},
\end{aligned}
\end{equation*}
\begin{equation*}
\begin{aligned}
d(r)& = \frac{c_{25}}{2(1 + r^{2m})} + \frac{c_{26}r^m(r^{3m} + 2r^{-m}\log(r^{2m}) + 4r^m)}{2(1 + r^{2m})}\\
&\quad - \frac{c_{27}r^{2m}}{2(1 + r^{2m})} - \frac{c_{28}r^{-m}(-2r^{3m}\log(r^{2m}) + 4r^m + r^{-m})}{2(1 + r^{2m})},
\end{aligned}
\end{equation*}
\begin{equation*}
\begin{aligned}
b(r) &= \frac{c_{29}r^{2m}}{2(1 + r^{2m})} + \frac{c_{30}r^{-m}(-2r^{3m}\log(r^{2m}) + 4r^m + r^{-m})}{2(1 +r^{2m})}\\
&\quad + \frac{c_{31}}{2(1 + r^{2m})} + \frac{c_{32}r^m(r^{3m} + 2r^{-m}\log(r^{2m}) + 4r^m)}{2(1 + r^{2m})},
\end{aligned}
\end{equation*}
\begin{equation*}
\begin{aligned}
c(r) &= \frac{c_{29}r^{2m}}{2(1 + r^{2m})} + \frac{c_{30}r^{-m}(-2r^{3m}\log(r^{2m}) + 4r^m + r^{-m})}{2(1 + r^{2m})}\\
&\quad - \frac{c_{31}}{2(1 + r^{2m})} - \frac{c_{32}r^m(r^{3m} + 2r^{-m}\log(r^{2m}) + 4r^m)}{2(1 + r^{2m})},
\end{aligned}
\end{equation*}
\begin{equation*}
e(r) = \frac{c_{33}r^m}{(1 + r^{2m})} + \frac{c_{34}(2r^m\log(r^{2m}) + r^{3m} - r^{-m})}{1 + r^{2m}},
\end{equation*}
\begin{equation*}
f(r) = \frac{c_{35}r^m}{(1 +r^{2m})} + \frac{c_{36}(2r^m\log(r^{2m}) + r^{3m} - r^{-m})}{1 + r^{2m}}.
\end{equation*}
Therefore the bounded kernel functions take the form
\begin{equation*}
\begin{aligned}
&\phi(r, \theta)\\ &= \left(\left(\frac{c_{25}}{2(1 + r^{2m})}+\frac{c_{27}r^{2m}}{2(1 + r^{2m})}\right)\cos (k\theta)+\left(\frac{c_{29}r^{2m}}{2(1 + r^{2m})}+ \frac{c_{31}}{2(1 + r^{2m})}\right)\sin (k\theta)\right)E_1\\
&\quad + \left(\left(\frac{c_{29}r^{2m}}{2(1 + r^{2m})}- \frac{c_{31}}{2(1 + r^{2m})}\right)\cos (k\theta)+\left(\frac{c_{25}}{2(1 + r^{2m})}- \frac{c_{27}r^{2m}}{2(1 + r^{2m})}\right)\sin (k\theta)\right)E_2\\
&\quad + (\frac{c_{33}r^m}{(1 + r^{2m})}\cos (k\theta)+\frac{c_{34}r^m}{(1 +r^{2m})}\sin (k\theta))W.
\end{aligned}
\end{equation*}

{\bf Case 4}: The solutions of (\ref{e:ode1})-(\ref{e:ode3}) in the case $k\neq 0$, $k\neq 1$ and $k\neq m$ are
\begin{equation*}
\begin{aligned}
a(r) &= c_{37}\frac{r^{m-k}}{2(1+r^{2m})}+c_{38}\frac{r^k(\frac{k(k+m)}{2}r^{-m}+(k-m)(kr^{3m}/2+(k+m)r^m))}{2(1+r^{2m})}\\
&\quad + c_{39}\frac{r^{m+k}}{2(1+r^{2m})}+c_{40}\frac{r^{-k}(\frac{k(k-m)}{2}r^{-m}+(k+m)(kr^{3m}/2+(k-m)r^m))}{2(1+r^{2m})},
\end{aligned}
\end{equation*}
\begin{equation*}
\begin{aligned}
d(r) &= c_{37}\frac{r^{m-k}}{2(1+r^{2m})}+c_{38}\frac{r^k(\frac{k(k+m)}{2}r^{-m}+(k-m)(kr^{3m}/2+(k+m)r^m))}{2(1+r^{2m})}\\
&\quad - c_{39}\frac{r^{m+k}}{2(1+r^{2m})}-c_{40}\frac{r^{-k}(\frac{k(k-m)}{2}r^{-m}+(k+m)(kr^{3m}/2+(k-m)r^m))}{2(1+r^{2m})},
\end{aligned}
\end{equation*}
\begin{equation*}
\begin{aligned}
b(r) &= \frac{r^{-m-k}}{1+r^{2m}}\left(c_{41}(1+(k+m)r^{2m}(\frac{2}{k}+\frac{r^{2m}}{k-m}))+c_{42}r^{2(k+m)}\right.\\
&\quad\quad\quad\quad\quad\left.+c_{43}\frac{r^{2k}(k(k+m)+2(k^2-m^2)r^{2m}+k(k-m)r^{4m})}{k(k+m)}+c_{44}r^{2m}\right),
\end{aligned}
\end{equation*}
\begin{equation*}
\begin{aligned}
c(r) &= \frac{r^{-m-k}}{1+r^{2m}}\left(c_{41}(1+(k+m)r^{2m}(\frac{2}{k}+\frac{r^{2m}}{k-m}))+c_{42}r^{2(k+m)}\right.\\
&\quad\quad\quad\quad\quad\left.-c_{43}\frac{r^{2k}(k(k+m)+2(k^2-m^2)r^{2m}+k(k-m)r^{4m})}{k(k+m)}-c_{44}r^{2m}\right),
\end{aligned}
\end{equation*}
\begin{equation*}
\begin{aligned}
e(r)& = \frac{c_{45}}{1 + r^{2m}}((k - m)r^{-m} + (k + m)r^m)r^{m-k}\\
&\quad + \frac{c_{46}}{1 + r^{2m}}((k + m)r^{-m} + (k - m)r^m)r^{m+k},
\end{aligned}
\end{equation*}
\begin{equation*}
\begin{aligned}
f(r)& = \frac{c_{47}}{1 + r^{2m}}((k - m)r^{-m} + (k + m)r^m)r^{m-k}\\
&\quad + \frac{c_{48}}{1 + r^{2m}}((k + m)r^{-m} + (k - m)r^m)r^{m+k}.
\end{aligned}
\end{equation*}
If $k > m$, there are no bounded kernel functions. If $k =2, \cdots, m-1$, the bounded kernel functions take the form
\begin{equation*}
\begin{aligned}
\phi(r, \theta)&=\left(c_{37}\frac{r^{m-k}}{2(1+r^{2m})}+ c_{39}\frac{r^{m+k}}{2(1+r^{2m})}\right)\cos (k\theta)E_1\\
&\quad +\frac{r^{-m-k}}{1+r^{2m}}\left(c_{42}r^{2(k+m)}+c_{44}r^{2m}\right)\sin (k\theta)E_1\\
&\quad + \frac{r^{-m-k}}{1+r^{2m}}\left(c_{42}r^{2(k+m)}- c_{44}r^{2m}\right)\cos (k\theta)E_2\\
&\quad +\left(c_{37}\frac{r^{m-k}}{2(1+r^{2m})}- c_{39}\frac{r^{m+k}}{2(1+r^{2m})}\right)\sin (k\theta)E_2.
\end{aligned}
\end{equation*}
From the above computations, we conclude the validity of Theorem \ref{thm}.

\end{proof}

\bigskip

\section{Approximation and improvement}\label{approximateandimprovement}

\medskip

In this section, we find a suitable profile that approximates well the real solution to \eqref{e:main} and then further improve it by adding several corrections. We first define the error of $u$ as
\begin{equation}\label{e:error}
S[u]:= -u_t + \Delta u - 2u_{x_1}\wedge u_{x_2}.
\end{equation}
Recall \eqref{kernels}. We take the approximate solution as
\begin{equation}\label{def-approx}
\begin{aligned}
&~U_{\la,\gamma,\xi,c_1,c_2}\\
:=&~Q_{\gamma(t)} \left[W\left(\frac{x-\xi(t)}{\lambda(t)}\right)+c_1(t)\frac{2\rho}{\rho^2+1}\cos\theta  W\left(\frac{x-\xi(t)}{\lambda(t)}\right) + c_2(t) \frac{2\rho}{\rho^2+1}\sin\theta  W\left(\frac{x-\xi(t)}{\lambda(t)}\right)\right]\\
=&~Q_{\gamma}(W+c_1\mathcal Z_{1,1}+c_2\mathcal Z_{1,2}),
\end{aligned}
\end{equation}
where $Q_\gamma$ in \eqref{Matrix} can be written as
$$
Q_{\gamma}=e^{\gamma J_z},\quad J_z=\begin{bmatrix}
0&-1&0\\
1&0&0\\
0&0&0\\
\end{bmatrix},
$$
and the modulation parameters $\la(t),~\xi(t),~\gamma(t),~c_1(t),~c_2(t)$ are to be determined.

\begin{remark}
By the non-degeneracy of degree $1$ $H$-bubble, there are nine kernels for the associated linearized operator. In the above ansatz, we only introduce six modulation parameters. The other three correspond to the infinitesimal generators of rigid motions: rotations around $x$- and $y$-axes, and another translation for the map in $\R^3$, and the kernels are all of order one at space infinity. For technical reasons, we take advantage of regularity properties to control these modes in the absence of modulation.
\end{remark}

We have
\begin{equation}\label{eqn-333333}
\begin{aligned}
&\pp_\rho  \mathcal  Z_{1,1}=\cos\theta w_{\rho}(\cos w W+\sin w E_1),\\
&\pp_\theta  \mathcal  Z_{1,1}=-\sin w  \sin\theta W +\sin^2 w \cos \theta E_2,\\
&\pp_\rho  \mathcal  Z_{1,2}=\sin\theta w_{\rho}(\cos w W+\sin w E_1),\\
&\pp_\theta  \mathcal  Z_{1,2}=\sin w  \cos\theta W +\sin^2 w \sin \theta E_2.\\
\end{aligned}
\end{equation}
Then the error of the approximation \eqref{def-approx} is given by
\begin{align*}
&~S[U_{\la,\gamma,\xi,c_1,c_2}]\\
=&~-\pp_t (Q_{\gamma} W)-\sum_{j=1}^2\left[\dot c_j Q_{\gamma} \mathcal Z_{1,j}+c_j\pp_t(Q_{\gamma} \mathcal Z_{1,j})\right]\\
&~-2c_1c_2Q_{\gamma} \pp_{x_1}  \mathcal  Z_{1,1}\wedge \pp_{x_2}  \mathcal Z_{1,2}-2c_1c_2Q_{\gamma} \pp_{x_1} \mathcal  Z_{1,2}\wedge \pp_{x_2} \mathcal  Z_{1,1}\\
=&~-\pp_t (Q_{\gamma} W)-\sum_{j=1}^2\dot c_j Q_{\gamma} \mathcal Z_{1,j}-\sum_{j=1}^2 c_j\dot\gamma J_z e^{\gamma J_z} \mathcal Z_{1,j}-\sum_{j=1}^2 c_j Q_{\gamma}(\pp_\rho \mathcal Z_{1,j} \rho_t+\pp_\theta \mathcal Z_{1,j} \theta_t)\\
&~-\frac{2\la^{-2} c_1c_2}{\rho} Q_{\gamma}\left(\pp_\rho \mathcal Z_{1,1}\wedge \pp_{\theta} \mathcal Z_{1,2}+\pp_\rho \mathcal Z_{1,2}\wedge \pp_{\theta} \mathcal Z_{1,1}\right)\\
=&~\la^{-1}\dot{\lambda}\rho w_\rho Q_{\gamma }E_1+\dot\gamma \rho w_{\rho}Q_{\gamma }E_2\\
&~+\la^{-1}\dot\xi_{1}w_{\rho}Q_{\gamma }\left[\cos\theta E_1+\sin\theta E_2\right]+ \la^{-1}\dot\xi_{2} w_{\rho}Q_{\gamma }\left[\sin\theta E_1-\cos\theta E_2\right]\\
&~-\frac{2\rho}{1+\rho^2}(\dot c_1\cos\theta +\dot c_2\sin\theta) Q_\gamma W-\frac{2\dot\gamma \rho \sin w}{1+\rho^2}\left(c_1 \cos\theta+c_2\sin\theta \right) Q_\gamma E_2\\
&~-\bigg[\la^{-1}\rho^{-1}\sin w(\dot\xi_1\sin\theta-\dot\xi_2\cos\theta)(c_2\cos\theta-c_1\sin\theta)\\
&~\quad-\la^{-1}w_\rho \cos w(\dot\xi_1\cos\theta+\dot\xi_2 \sin\theta+ \dot\la \rho)(c_1\cos\theta+c_2\sin\theta)\bigg] Q_{\gamma} W\\
&~+\bigg[\la^{-1}w_\rho\sin w(\dot\xi_1\cos\theta+\dot\xi_2 \sin\theta+ \dot\la \rho)(c_1\cos\theta+c_2\sin\theta)\bigg]Q_{\gamma} E_1\\
&~-\bigg[\la^{-1}\rho^{-1}\sin^2 w(\dot\xi_1\sin\theta-\dot\xi_2 \cos\theta)(c_1\cos\theta+c_2\sin\theta)\bigg]Q_{\gamma} E_2\\
&~-\frac{2\la^{-2} c_1c_2}{\rho} Q_{\gamma}\left(-\sin2\theta w_\rho \sin^2 w\cos w E_1-\cos 2\theta w_\rho \sin^2 w E_2+\sin2\theta w_\rho \sin^3 w W\right),
\end{align*}
where we have used \eqref{eqn-333333}. We then arrange terms and write
\begin{equation}\label{def-error1}
\begin{aligned}
&~S[U_{\la,\gamma,\xi,c_1,c_2}]\\
=&~\underbrace{\la^{-1}\dot{\lambda}\rho w_\rho Q_{\gamma }E_1+\dot\gamma \rho w_{\rho}Q_{\gamma }E_2}_{:=\mathcal{E}_{U^\perp}^{(0)}}~+~\mathcal R_{U^\perp}\\
&~+\underbrace{(-2)\la^{-2}c_1c_2 w_\rho^2 \sin w(\sin2\theta \cos w Q_\gamma E_1+\cos2\theta Q_\gamma E_2)}_{:=\mathcal E_{U^\perp}^{(\pm 2)}}\\
&~+\underbrace{\frac{\dot\xi_{1}}{\la}w_{\rho}Q_{\gamma }\left[\cos\theta E_1+\sin\theta E_2\right]+ \frac{\dot\xi_{2}}{\la} w_{\rho}Q_{\gamma }\left[\sin\theta E_1-\cos\theta E_2\right]}_{:=\mathcal{E}_{U^\perp}^{(1)} }\\
&~+\underbrace{\rho w_\rho\left[\cos\theta (\dot c_1+\la^{-1}\dot\la c_1) +\sin\theta (\dot c_2+\la^{-1}\dot\la c_2)\right] Q_\gamma W}_{:=\mathcal E_{U}^{(\pm1)}}\\
&~+\underbrace{2\la^{-2}c_1c_2\sin 2\theta \rho w_\rho^2 \sin^2 w Q_\gamma W}_{:=\mathcal E_{U}^{(\pm2)}}~+~\mathcal R_{U},
\end{aligned}
\end{equation}
where
\begin{equation*}
\begin{aligned}
\mathcal R_{U^\perp}:=&~\dot\gamma \rho w_\rho \sin w\left(c_1 \cos\theta+c_2\sin\theta \right) Q_\gamma E_2\\
&~+
\bigg[\la^{-1}w_\rho\sin w(\dot\xi_1\cos\theta+\dot\xi_2 \sin\theta+ \dot\la \rho)(c_1\cos\theta+c_2\sin\theta)\bigg]Q_{\gamma} E_1\\
&~-\bigg[\la^{-1}\rho^{-1}\sin^2 w(\dot\xi_1\sin\theta-\dot\xi_2 \cos\theta)(c_1\cos\theta+c_2\sin\theta)\bigg]Q_{\gamma} E_2\\
=&~\bigg[\la^{-1}w_\rho\sin w(\dot\xi_1\cos\theta+\dot\xi_2 \sin\theta+ \dot\la \rho)(c_1\cos\theta+c_2\sin\theta)\bigg]Q_{\gamma} E_1\\
&~+\bigg[\Big(\dot\gamma \rho w_\rho \sin w-\la^{-1}\rho^{-1}\sin^2 w(\dot\xi_1\sin\theta-\dot\xi_2 \cos\theta)\Big)(c_1\cos\theta+c_2\sin\theta)\bigg]Q_{\gamma} E_2,
\end{aligned}
\end{equation*}
\begin{equation*}
\begin{aligned}
\mathcal R_{U}:=&~-\bigg[\la^{-1}\rho^{-1}\sin w(\dot\xi_1\sin\theta-\dot\xi_2\cos\theta)(c_2\cos\theta-c_1\sin\theta)\\
&~\quad-\la^{-1}w_\rho [\cos w(\dot\xi_1\cos\theta+\dot\xi_2 \sin\theta)+\dot\la \rho w_\rho](c_1\cos\theta+c_2\sin\theta)\bigg] Q_{\gamma} W.
\end{aligned}
\end{equation*}
We observe that, compared to $\mathcal R_{U^\perp}$ and $\mathcal R_{U}$, the error terms $\mathcal E_{U^\perp}^{(0)}$ and $\mathcal E_{U}^{(\pm 1)}$ decay slower in space, and $\mathcal E_{U^\perp}^{(\pm 2)}$ and $\mathcal E_{U}^{(\pm 2)}$ decay slower in time (in view of the blow-up dynamics that we search for). So we add several global corrections to improve the errors $\mathcal E_{U^\perp}^{(0)}$, $\mathcal E_{U}^{(\pm 1)}$, $\mathcal E_{U^\perp}^{(\pm 2)}$, and $\mathcal E_{U}^{(\pm 2)}$.

To deal with $\mathcal E_{U^\perp}^{(0)}$ and $\mathcal E_{U}^{(\pm 1)}$, we add two global/non-local corrections to improve the spatial decay at leading order. We aim to find $\Phi^{(0)}$ and $\Phi^{(1)}$ solving approximately
\begin{align*}
&~\pp_t \Phi^{(0)}\approx\Delta \Phi^{(0)} +\mathcal E_{U^\perp}^{(0)},\\
&~\pp_t \Phi^{(1)}\approx\Delta \Phi^{(1)} +\mathcal E_{U}^{(\pm 1)}.
\end{align*}
Here $\Phi^{(0)}=\Phi^{(0)}[\lambda, \gamma , \xi, c_1,c_2] $ and $\Phi^{(1)}=\Phi^{(1)}[\lambda, \gamma , \xi, c_1,c_2] $ are non-local corrections, depending on the choice of parameters, to be specified below.
To handle $\mathcal E_{U^\perp}^{(\pm 2)}$ and $\mathcal E_{U}^{(\pm 2)}$, we solve the following two linearized problems:
\begin{align*}
&~\pp_t\Phi^{(j)}_{U^\perp}=L_U[\Phi^{(j)}_{U^\perp}]+\mathcal E_{U^\perp}^{(\pm 2)},\quad j=\pm2,\\
&~\pp_t \Phi^{(2)}_U=L_U[\Phi^{(2)}_U]+\mathcal E_{U}^{(\pm 2)},
\end{align*}
where $\Phi^{(j)}_{U^\perp}=\Phi^{(j)}_{U^\perp}(\rho,t)$ is on $U^\perp$, and $\Phi^{(2)}_U=\Phi^{(2)}_U(\rho,t)$ is in the $U$-direction.

\medskip

\noindent $\bullet$ {\bf Approximate form of $\Phi^{(0)}$.}

\medskip

We notice that
\begin{align*}
\mathcal E_{U^\perp}^{(0)}
=&~\la^{-1}\dot{\lambda}\rho w_\rho Q_{\gamma }E_1+\dot\gamma \rho w_{\rho}Q_{\gamma }E_2\\
\approx&~ -\frac{2r(\dot\la+i\la\dot\gamma)}{\la^2+r^2} \begin{bmatrix}
e^{i(\theta+\gamma)}\\
0\\
\end{bmatrix}~\mbox{ for }~r=|x-\xi|\gg \la.
\end{align*}
So we assume that
\begin{equation*}
\Phi^{(0)}(x, t) = \left[\begin{matrix}
\varphi^0(x, t)\\
0
\end{matrix}\right]
\end{equation*}
and $\varphi^0$ is an approximation of
\begin{equation*}
\varphi^0_t = \Delta \varphi^0 - \frac{2r}{r^2+\lambda^2}(\dot\lambda + i\lambda\dot\gamma )e^{i(\theta+\gamma )} .
\end{equation*}
Let
$$\varphi^0(x, t) = re^{i\theta}\psi^0(z(r), t),\quad z(r) = \sqrt{r^2+\lambda^2(t)},$$
for $\psi^0(z, t)$ satisfying the equation
\begin{equation*}
\psi^0_t = \psi^0_{zz} + \frac{3\psi^0_z}{z} + \frac{p_0(t)}{z^2},
\end{equation*}
where we define
\begin{equation}\label{def-p_0}
p_0(t) := -2(\dot\lambda + i\lambda\dot\gamma )e^{i\gamma }.
\end{equation}
From the Duhamel's principle, we know that
\begin{equation}\label{eqn-psi^0}
\psi^0(z, t) = \int_{-T}^tp_0(s)k(z, t-s)ds, \quad k(z, t) = \frac{1-e^{-\frac{z^2}{4t}}}{z^2}
\end{equation}
is a weak solution. Notice $k_t=k_{zz}+\frac3zk_z$. Then we have
\begin{equation*}
-\partial_t\Phi^{(0)} + \Delta_x \Phi^{(0)} = \tilde{\mathcal R}_0^0 + \tilde{\mathcal R}_1^0:=\tilde{\mathcal R}^0, \quad \tilde{\mathcal R}_0^0 = \left[\begin{matrix}
\mathcal R_0^0\\
0
\end{matrix}\right], \quad \tilde{\mathcal R}_1^0 = \left[\begin{matrix}
\mathcal R_1^0\\
0
\end{matrix}\right]
\end{equation*}
where
\begin{equation*}
\mathcal R_0^0 = -r e^{i\theta}\frac{p_0(t)}{z^2} + re^{i\theta}\frac{\lambda^2}{z^4}\int_{-T}^tp_0(s)(zk_z-z^2k_{zz})(z(r), t-s)ds
\end{equation*}
and
\begin{equation*}
\mathcal R_1^0 = e^{i\theta}{\rm Re}[(\dot\xi e^{-i\theta})]\int_{-T}^tp_0(s)k(z(r), t-s)ds - \frac{r}{z^2}e^{i\theta}\left(\lambda\dot\lambda-{\rm Re}(re^{-i\theta}\dot\xi(t))\right)\int_{-T}^tp_0(s)zk_z(z(r), t-s)ds.
\end{equation*}

\medskip

\noindent $\bullet$ {\bf Approximate form of $\Phi^{(1)}$.}

\medskip

Since
\begin{align*}
\mathcal E_{U}^{(\pm 1)}=&~\rho w_\rho\left[\cos\theta (\dot c_1+\la^{-1}\dot\la c_1) +\sin\theta (\dot c_2+\la^{-1}\dot\la c_2)\right] Q_\gamma W\\
\approx &~-\frac{2r}{r^2+\la^2}\begin{bmatrix}
0\\
0\\
\cos\theta (\la c_1)' +\sin\theta (\la c_2)'
\end{bmatrix} ~\mbox{ for }~r\gg \la,
\end{align*}
we assume that
\begin{equation*}
\Phi^{(1)}(x, t) = \left[\begin{matrix}
0\\
0\\
{\rm Re}(\varphi^1(x, t))
\end{matrix}\right]
\end{equation*}
and $\varphi^1$ solves approximately
\begin{equation*}
\varphi^1_t = \Delta \varphi^1- \frac{2r}{r^2+\lambda^2} e^{-i\theta}(\la \mathbf c)',
\end{equation*}
where we use the notation
\begin{equation}\label{def-cc}
\cc(t) := c_1(t)+i c_2(t).
\end{equation}
We look for
$$\varphi^1(x, t) = re^{-i\theta}\psi^1(z(r), t),\quad z(r) = \sqrt{r^2+\lambda^2},$$ where $\psi^1(z, t)$ satisfies the equation
\begin{equation*}
\psi^1_t = \psi^1_{zz} + \frac{3\psi^1_z}{z} + \frac{p_1(t)}{z^2},
\end{equation*}
and here we define
\begin{equation}\label{def-p_1}
\quad p_1(t) := -2(\la \cc)'.
\end{equation}
Similar to \eqref{eqn-psi^0}, we see that
\begin{equation}\label{eqn-psi^1}
\psi^1(z, t) = \int_{-T}^tp_1(s)k(z, t-s)ds.
\end{equation}
So we add a correction
\begin{equation*}
\Phi^{(1)}(x, t) = \left[\begin{matrix}
0\\
0\\
{\rm Re}(\varphi^1(r, t))
\end{matrix}\right]
\end{equation*}
with
$$
\varphi^1(r, t) = re^{-i\theta}\int_{-T}^tp_1(s)k(z(r), t-s)ds.
$$
We next compute the new error produced by $\Phi^{(1)}$:
\begin{equation*}
-\partial_t\Phi^{(1)} + \Delta_x \Phi^{(1)} ={\rm Re}(\tilde{\mathcal R}_0^1 + \tilde{\mathcal R}_1^1):=\tilde{\mathcal R}^1, \quad \tilde{\mathcal R}_0^1 = \left[\begin{matrix}
0\\
0\\
\mathcal R_0^1
\end{matrix}\right], \quad \tilde{\mathcal R}_1^1 = \left[\begin{matrix}
0\\
0\\
\mathcal R_1^1
\end{matrix}\right]
\end{equation*}
where
\begin{equation*}
\mathcal R_0^1 = -re^{-i\theta}\frac{p_1(t)}{z^2} + re^{-i\theta}\frac{\lambda^2}{z^4}\int_{-T}^tp_1(s)(zk_z-z^2k_{zz})(z(r), t-s)ds
\end{equation*}
and
\begin{equation*}
\mathcal R_1^1 = e^{-i\theta}{\rm Re}[(\dot\xi e^{-i\theta})]\int_{-T}^tp_1(s)k(z(r), t-s)ds - \frac{re^{-i\theta}}{z^2}\left(\lambda\dot\lambda-{{\rm Re}}(re^{-i\theta}\dot\xi(t))\right)\int_{-T}^tp_1(s)zk_z(z(r), t-s)ds.
\end{equation*}

\medskip

\noindent $\bullet$ {\bf Equations for $\Phi^{(\pm2)}_{U^\perp}$.}

\medskip

To handle $\mathcal E_{U^\perp}^{(\pm 2)}$  in \eqref{def-error1}, we first write it in complex form:
\begin{align*}
\left( Q_{-\gamma} \mathcal E_{U^\perp}^{(\pm 2)}\right)_{\mathbb C}=&~-2\la^{-2} c_1c_2  w^2_\rho \sin w(\sin2\theta \cos w+i\cos2\theta)\\
=&~-i\la^{-2} c_1c_2  w^2_\rho \sin w\left[e^{2i\theta} (1-\cos w)+
e^{-2i\theta}(1+\cos w)\right]\\
:=&~\left(Q_{-\gamma}\mathcal E_{U^\perp}^{( 2)}\right)_{\mathbb C}+\left(Q_{-\gamma}\mathcal E_{U^\perp}^{(- 2)}\right)_{\mathbb C}.
\end{align*}
We try to add two corrections, expressed in $(\rho,t)$ coordinates
\begin{equation}\label{eqn-correctpm2'}
\begin{aligned}
&\Phi^{(-2)}_{U^\perp}(\rho,t)=\phi^{(-2)}_1 Q_\gamma E_1 + \phi^{(-2)}_2 Q_\gamma E_2, \quad (\Phi_{U^\perp}^{(-2)})_{\mathbb C}=\phi^{(-2)}_1+i\phi^{(-2)}_2:=\varphi_{-2}(\rho,t)e^{-2i\theta}\\
&\Phi_{U^\perp}^{(2)}(\rho,t)=\phi^{(2)}_1 Q_\gamma E_1 + \phi^{(2)}_2 Q_\gamma  E_2, \quad (\Phi_{U^\perp}^{(2)})_{\mathbb C}=\phi^{(2)}_1+i\phi^{(2)}_2:=\varphi_{2}(\rho,t)e^{2i\theta},\\
\end{aligned}
\end{equation}
where the complex-valued $\varphi_{-2}$ and $\varphi_2$ solve
\begin{equation}\label{eqn-correctpm2}
\begin{aligned}
\la^2\pp_t \varphi_{j}=\pp_{\rho\rho} \varphi_{j}+\frac1{\rho}\pp_\rho \varphi_{j}+\left[\frac8{(1+\rho^2)^2}-\frac{1+j^2}{\rho^2}-\frac{2j(\rho^2-1)}{\rho^2(\rho^2+1)}\right]\varphi_{j} +\la^2 e^{-ji\theta}\left(Q_{-\gamma}\mathcal E_{U^\perp}^{(j)}\right)_{\mathbb C}, \quad j=\pm 2.
\end{aligned}
\end{equation}
In other words, we solve the   linearized problem
\begin{align*}
&~\pp_t\Phi^{(j)}_{U^\perp}=L_U[\Phi^{(j)}_{U^\perp}]+\mathcal E_{U^\perp}^{(\pm 2)},\quad j=\pm2,
\end{align*}
where $\Phi^{(j)}_{U^\perp}=\Phi^{(j)}_{U^\perp}(\rho,t)$ is on $U^\perp$. A solution to  \eqref{eqn-correctpm2} with zero initial data will be ensured by a linear theory developed later in Section \ref{sec-linearinner}.

\medskip

\noindent $\bullet$ {\bf Equation for $\Phi^{(2)}_U$.}

\medskip

To deal with $\mathcal E_{U}^{(\pm2)}$ given in \eqref{def-error1}, we need a correction $\Phi^{(2)}_U(\rho,t)$ in the form
\begin{equation}\label{eqn-psi_2}
\Phi^{(2)}_U(\rho,t)=\sin2\theta \psi_2(\rho,t)  Q_\gamma W
\end{equation}
with $\psi_2$ solving
\begin{equation}\label{eqn-correctL2}
\la^2 \pp_t \psi_2 = \pp_{\rho\rho} \psi_2 +\frac1\rho \pp_\rho \psi_2-\frac4{\rho^2}\psi_2+\frac8{(1+\rho^2)^2}\psi_2 + 2c_1c_2 \rho w_\rho^2 \sin^2 w
\end{equation}
since it is exactly mode $\pm 2$ of linearization in the $U$-direction (cf. \eqref{eqn-U}). Equation \eqref{eqn-correctL2} with zero Cauchy data will be solved by the linear theory in Section \ref{sec-linearinner}.

\bigskip

We now compute the new error $S[U_*]$ of the corrected approximation
\begin{equation}\label{def-U_*}
\begin{aligned}
U_*:=&~U_{\la,\gamma,\xi,c_1,c_2}+\eta_1\Big(\Phi^{(0)}+\Phi^{(1)}+\Phi^{(2)}_{U^\perp}+\Phi^{(-2)}_{U^\perp}+\Phi^{(2)}_U\Big)\\
:=&~Q_{\gamma}(W+c_1\mathcal Z_{1,1}+c_2\mathcal Z_{1,2})+\eta_1 \Phi_*,
\end{aligned}
\end{equation}
where the purpose of the cut-off function
$$
\eta_1:=\eta(x-\xi(t))
$$
is to avoid potential slow spatial decay in the remote region. Here $\eta(s)$ is a smooth cut-off function  with $\eta(s)=1$ for $s<1$ and $\eta(s) = 0$ for $ s > 2$.
We first analyze the leading terms
$$
\mathcal R_{U^\perp}+\mathcal R_{U}+\mathcal E_{U^\perp}^{(1)}+\eta_1 (\mathcal E_{U^\perp}^{(0)}+\tilde{\mathcal R}^0)+\eta_1 (\mathcal E_{U}^{(\pm1)}+\tilde{\mathcal R}^1)+\eta_1 \tilde L_U[\Phi^{(0)}+\Phi^{(1)}]
$$
in the corrected error $S[U_*]$. By definition, we have
\begin{equation}\label{eqn-321}
\begin{aligned}
&~\mathcal R_{U^\perp}+\mathcal R_{U}+\mathcal E_{U^\perp}^{(1)}\\
=&~\bigg[\la^{-1}w_\rho\sin w(\dot\xi_1\cos\theta+\dot\xi_2 \sin\theta+ \dot\la \rho)(c_1\cos\theta+c_2\sin\theta)\bigg]Q_{\gamma} E_1\\
&~+\bigg[\Big(\dot\gamma \rho w_\rho \sin w-\la^{-1}\rho^{-1}\sin^2 w(\dot\xi_1\sin\theta-\dot\xi_2 \cos\theta)\Big)(c_1\cos\theta+c_2\sin\theta)\bigg]Q_{\gamma} E_2\\
&~-\bigg[\la^{-1}\rho^{-1}\sin w(\dot\xi_1\sin\theta-\dot\xi_2\cos\theta)(c_2\cos\theta-c_1\sin\theta)\\
&~\quad-\la^{-1}w_\rho [\cos w(\dot\xi_1\cos\theta+\dot\xi_2 \sin\theta)+\dot\la \rho w_\rho](c_1\cos\theta+c_2\sin\theta)\bigg] Q_{\gamma} W\\
&~+\frac{\dot\xi_{1}}{\la}w_{\rho}Q_{\gamma }\left[\cos\theta E_1+\sin\theta E_2\right]+ \frac{\dot\xi_{2}}{\la} w_{\rho}Q_{\gamma }\left[\sin\theta E_1-\cos\theta E_2\right],
\end{aligned}
\end{equation}
whose complex form on $W^\perp$ is
\begin{equation}\label{eqn-322}
\begin{aligned}
&~\Big(\mathcal R_{U^\perp}+\mathcal R_{U}+\mathcal E_{U^\perp}^{(1)}\Big)_{\mathbb C}\\
=&~\rho w_\rho \sin w (\la^{-1}\dot\la+i\dot\gamma)(c_1\cos\theta+c_2\sin\theta)\\
&~+\la^{-1}w_\rho \sin w (\dot\xi_1 -i\dot\xi_2)e^{i\theta} (c_1\cos\theta+c_2\sin\theta)\\
&~+\la^{-1}w_\rho (\dot\xi_1-i\dot\xi_2) e^{i\theta}.
\end{aligned}
\end{equation}
Next, by using
\begin{equation}\label{eqn-325325325}
\begin{aligned}
\left[
\begin{matrix}
f(\rho)e^{i\theta}\\
g(\rho, \theta)\\
\end{matrix}\right]
 =&~ \Big[\cos w {\rm Re}(f e^{-i\gamma })-g \sin w\Big]Q_\gamma  E_1 + {\rm Im}(f e^{-i\gamma })Q_\gamma  E_2 \\
 &~+ \Big[\sin w {\rm Re}(f e^{-i\gamma })+ g \cos w\Big]Q_\gamma  W,
\end{aligned}
\end{equation}
one has
\begin{equation}\label{eqn-323323323323}
\begin{aligned}
&~\mathcal E_{U^\perp}^{(0)}+\tilde{\mathcal R}^0\\
=&~\rho w_\rho \begin{bmatrix}
(\la^{-1}\dot \la +i\dot\gamma)e^{i(\theta+\gamma)}\\
0\\
\end{bmatrix}
+
\begin{bmatrix}
\mathcal R_0^0\\
0\\
\end{bmatrix}
+
\begin{bmatrix}
\mathcal R_1^0\\
0\\
\end{bmatrix}
+
\rho w_\rho \la^{-1}\dot\la
\begin{bmatrix}
 e^{i(\theta+\gamma)}w_\rho\\
-\sin w\\
\end{bmatrix}\\
=&~
\begin{bmatrix}
(r\frac{\lambda^2}{z^4}\int_{-T}^tp_0(s)(zk_z-z^2k_{zz})(z(r), t-s)ds)e^{i\theta}
\\
0\\
\end{bmatrix}
+
\begin{bmatrix}
(\la^{-1}\dot\la e^{i\gamma}\rho w_\rho^2)e^{i\theta}\\
\la^{-1}\dot\la\rho^2 w_\rho^2\\
\end{bmatrix}
+
\begin{bmatrix}
\mathcal R_1^0\\
0\\
\end{bmatrix}\\
=&~\Bigg[\cos w {\rm Re}[(f_{U^\perp}^{(0)}+f_{U^\perp}^{(1)})e^{-i\gamma }]-\la^{-1}\dot\la\rho^2 w_\rho^2\sin w\Bigg]Q_\gamma  E_1 + {\rm Im}[(f_{U^\perp}^{(0)}+f_{U^\perp}^{(1)})e^{-i\gamma }]Q_\gamma  E_2\\
&~ + \Big[\sin w {\rm Re}[(f_{U^\perp}^{(0)}+f_{U^\perp}^{(1)})e^{-i\gamma }]+ \la^{-1}\dot\la\rho^2 w_\rho^2\cos w\Big]Q_\gamma  W,
\end{aligned}
\end{equation}
where
\begin{equation}\label{eqn-324324324324}
\begin{aligned}
f_{U^\perp}^{(0)}:=&~r\frac{\lambda^2}{z^4}\int_{-T}^tp_0(s)(zk_z-z^2k_{zz})(z , t-s)ds+\la^{-1}\dot\la e^{i\gamma}\rho w_\rho^2-\frac{r}{z^2}\la\dot\la \int_{-T}^t p_0(s)z k_z(z,t-s)ds,\\
f_{U^\perp}^{(1)}:=&~{\rm Re}[(\dot\xi e^{-i\theta})]\int_{-T}^tp_0(s)\Big(k(z , t-s)+\frac{r^2}{z}k_z(z,t-s)\Big)ds.\\
\end{aligned}
\end{equation}

Similarly,
\begin{equation*}
\begin{aligned}
&~\mathcal E_{U}^{(\pm1)}+\tilde{\mathcal R}^1\\
=&~\rho w_\rho\left[\cos\theta (\dot c_1+\la^{-1}\dot\la c_1) +\sin\theta (\dot c_2+\la^{-1}\dot\la c_2)\right]
\begin{bmatrix}
e^{i(\theta+\gamma)}\sin w\\
w_\rho\\
\end{bmatrix}\\
&~+\begin{bmatrix}
0\\
0\\
{\rm Re}\left[re^{-i\theta}\frac{\lambda^2}{z^4}\int_{-T}^tp_1(s)(zk_z-z^2k_{zz})(z(r), t-s)ds\right]\\
\end{bmatrix}
+
\begin{bmatrix}
0\\
0\\
{\rm Re}[\mathcal R_1^1]\\
\end{bmatrix},
\end{aligned}
\end{equation*}
and from \eqref{eqn-325325325}, it follows that
\begin{align} \notag
\Big(\mathcal E_{U}^{(\pm1)}+\tilde{\mathcal R}^1\Big)\cdot Q_\gamma W=&~-\sin w  \left(\rho^2 w^2_\rho\left[\cos\theta (\dot c_1+\la^{-1}\dot\la c_1) +\sin\theta (\dot c_2+\la^{-1}\dot\la c_2)\right] \right)\\ \notag
&~+ \left(\rho w^2_\rho\left[\cos\theta (\dot c_1+\la^{-1}\dot\la c_1) +\sin\theta (\dot c_2+\la^{-1}\dot\la c_2)\right] \right)\cos w\\ \notag
&~+{\rm Re}\left[re^{-i\theta}\frac{\lambda^2}{z^4}\int_{-T}^tp_1(s)(zk_z-z^2k_{zz})(z(r), t-s)ds+\mathcal R_1^1\right]\cos w\\ \notag
=&~-\rho w^2_\rho\left[\cos\theta (\dot c_1+\la^{-1}\dot\la c_1) +\sin\theta (\dot c_2+\la^{-1}\dot\la c_2)\right]  \\ \notag
&~+{\rm Re}\left[re^{-i\theta}\frac{\lambda^2}{z^4}\int_{-T}^tp_1(s)(zk_z-z^2k_{zz})(z(r), t-s)ds+\mathcal R_1^1\right]\cos w,\\ \label{eqn-327}
\Big(\mathcal E_{U}^{(\pm1)}+\tilde{\mathcal R}^1\Big)\cdot Q_\gamma E_1=&~-\cos w  \left(\rho^2 w^2_\rho\left[\cos\theta (\dot c_1+\la^{-1}\dot\la c_1) +\sin\theta (\dot c_2+\la^{-1}\dot\la c_2)\right] \right)\\ \notag
&~-\left(\rho w^2_\rho\left[\cos\theta (\dot c_1+\la^{-1}\dot\la c_1) +\sin\theta (\dot c_2+\la^{-1}\dot\la c_2)\right] \right)\sin w\\ \notag
&~-{\rm Re}\left[re^{-i\theta}\frac{\lambda^2}{z^4}\int_{-T}^tp_1(s)(zk_z-z^2k_{zz})(z(r), t-s)ds+\mathcal R_1^1\right]\sin w \\ \notag
=&~-\rho^2 w^2_\rho\left[\cos\theta (\dot c_1+\la^{-1}\dot\la c_1) +\sin\theta (\dot c_2+\la^{-1}\dot\la c_2)\right] \\ \notag
&~-{\rm Re}\left[re^{-i\theta}\frac{\lambda^2}{z^4}\int_{-T}^tp_1(s)(zk_z-z^2k_{zz})(z(r), t-s)ds+\mathcal R_1^1\right]\sin w,\\ \notag
\Big(\mathcal E_{U}^{(\pm1)}+\tilde{\mathcal R}^1\Big)\cdot Q_\gamma E_2=&~0.
\end{align}

Also, from Lemma \ref{linearization-lemma-3}, we have
\begin{align} \notag
&~\tilde L_U[\Phi^{(0)}+\Phi^{(1)}]\\ \notag
=&~\frac{2}{\lambda}\rho w_{\rho}^2\left[{\rm Re}(e^{-i\gamma }(\psi^0+\frac{r^2}{z}\pp_z \psi^0))Q_\gamma  E_1 + \frac{1}{r}{\rm Im}(e^{-i\gamma }r\psi^0)Q_\gamma  E_2\right] \\ \notag
&~ + \frac{2}{\lambda}w_{\rho}\left({\rm Re}(e^{-i\gamma }(\psi^0+\frac{r^2}{z}\pp_z \psi^0))\cos w - \frac{1}{r}{\rm Re}(e^{-i\gamma }r\psi^0) \right)Q_\gamma  W\\ \notag
&~-\frac2{\la} w_\rho\cos w \left({\rm Re}[e^{-i\theta}(\psi^1+\frac{r^2}{z}\psi_z^1)] Q_\gamma E_1-\frac{1}r {\rm Im}[e^{-i\theta}r\psi^1] Q_\gamma E_2\right)\\  \label{eqn-327327327}
&~+\frac2{\la}\rho w_\rho^2{\rm Re}[e^{-i\theta}(\psi^1+\frac{r^2}{z}\psi_z^1)]  Q_\gamma W\\ \notag
=&~\frac{2}{\lambda}w_{\rho}\left({\rm Re}(e^{-i\gamma }(\psi^0+\frac{r^2}{z}\pp_z \psi^0))\cos w - \frac{1}{r}{\rm Re}(e^{-i\gamma }r\psi^0)+\rho w_\rho{\rm Re}[e^{-i\theta}(\psi^1+\frac{r^2}{z}\psi_z^1)] \right)Q_\gamma  W\\ \notag
&~+\frac{2}{\lambda}w_{\rho}\left(\rho w_{\rho}{\rm Re}(e^{-i\gamma }(\psi^0+\frac{r^2}{z}\pp_z \psi^0))-\cos w{\rm Re}[e^{-i\theta}(\psi^1+\frac{r^2}{z}\psi_z^1)] \right)Q_\gamma  E_1\\ \notag
&~+\frac{2}{\lambda r}  w_{\rho}\left(\rho w_\rho{\rm Im}(e^{-i\gamma }r\psi^0)+\cos w {\rm Im}[e^{-i\theta}r\psi^1]\right)Q_\gamma  E_2.
\end{align}

\bigskip

We denote the remaining terms in the error $S[U_*]$ by
\begin{equation}\label{def-R_*}
\mathcal R_* :=S[U_*]-\Big[\mathcal R_{U^\perp}+\mathcal R_{U}+\mathcal E_{U^\perp}^{(1)}+\eta_1 (\mathcal E_{U^\perp}^{(0)}+\tilde{\mathcal R}^0)+\eta_1 (\mathcal E_{U}^{(\pm1)}+  \tilde{\mathcal R}^1)+\eta_1 \tilde L_U[\Phi^{(0)}+\Phi^{(1)}]\Big]-E_{\eta_1},
\end{equation}
where $E_{\eta_1}$ is defined in \eqref{def-E_eta1},
and we claim:
\begin{lemma}\label{lemma-newerror}
The remainder $\mathcal R_*$ in the corrected error $S[U_*]$ projected in each direction $Q_\gamma W$, $Q_\gamma E_1$ and $Q_\gamma E_2$ is given by
\begin{align*}
&~\mathcal R_*\cdot Q_\gamma W\\
=&-2\eta_1 \la^{-1}w_\rho(\dot\xi_1\cos\theta+\dot\xi_2 \sin\theta+\dot\la \rho)(\phi_1^{(2)}+\phi_1^{(-2)})\\
&~+\eta_1 \sin w[\dot\gamma+\la^{-1}\rho^{-1}(\dot\xi_1 \sin\theta-\dot\xi_2 \cos\theta)] (\phi_2^{(2)}+\phi_2^{(-2)})\\
&~+\eta_1 \la^{-1}(\dot\xi_1\cos\theta+\dot\xi_2 \sin\theta+\dot\la \rho)\sin2\theta \pp_\rho \psi_2 \\
&~-2\eta_1 \la^{-1}\rho^{-1}(\dot\xi_1 \sin\theta-\dot\xi_2 \cos\theta)\cos2\theta \psi_2 \\
&~-\frac{2\eta_1^2}r \Bigg[\left(\cos w {\rm Re}\left[e^{-i\gamma }(\psi^0+\frac{r^2}{z}\pp_z \psi^0)\right]-{\rm Re}[e^{-i\theta}(\psi^1+\frac{r^2}{z}\pp_z\psi^1)]\sin w\right)\\
&~\qquad+\la^{-1}\left(\sin2\theta w_\rho \psi_2+\pp_\rho\phi_1^{(2)}+\pp_\rho\phi_1^{(-2)}\right)\Bigg]\\
&~\quad\times \left({\rm Im}\left[e^{-i\gamma }ir\psi^0\right]+(\pp_\theta \phi_2^{(2)}+\pp_\theta \phi_2^{(-2)})+\cos w (\phi_1^{(2)}+\phi_1^{(-2)})+\sin2\theta \sin w\psi_2\right) \\
&~+\frac{2\eta_1^2}r \left({\rm Im}\left[e^{-i\gamma }(\psi^0+\frac{r^2}{z}\pp_z \psi^0)\right]+\la^{-1}(\pp_\rho\phi_2^{(2)}+\pp_\rho\phi_2^{(-2)})\right)\\
&~\quad \times \left(\cos w {\rm Re}\left[e^{-i\gamma }ir\psi^0\right]-{\rm Im}(r e^{-i\theta}\psi^1)\sin w +(\pp_\theta \phi_1^{(2)}+\pp_\theta \phi_1^{(-2)})-\cos w (\phi_2^{(2)}+\phi_2^{(-2)})\right) \\
&~-\frac{2\eta_1 }r \left({\rm Im}\left[e^{-i\gamma }ir\psi^0\right]+(\pp_\theta \phi_2^{(2)}+\pp_\theta \phi_2^{(-2)})+\cos w (\phi_1^{(2)}+\phi_1^{(-2)})+\sin2\theta \sin w\psi_2\right)\\
&~\quad \times\la^{-1}w_{\rho}\sin w(c_1 \cos\theta + c_2 \sin\theta) \\
&~-\frac{2\eta_1 }r \Bigg[\left(\cos w {\rm Re}\left[e^{-i\gamma }(\psi^0+\frac{r^2}{z}\pp_z \psi^0)\right]-{\rm Re}[e^{-i\theta}(\psi^1+\frac{r^2}{z}\pp_z\psi^1)]\sin w\right)\\
&~\qquad+\la^{-1}\left(\sin2\theta w_\rho \psi_2+\pp_\rho\phi_1^{(2)}+\pp_\rho\phi_1^{(-2)}\right)\Bigg] \times \sin^2 w(c_1\cos\theta+ c_2\sin \theta) ,\\
\end{align*}
\begin{align*}
&~\mathcal R_*\cdot Q_\gamma E_1\\
=&~\eta_1 \cos w[\dot\gamma+\la^{-1}\rho^{-1}(\dot\xi_1 \sin\theta-\dot\xi_2 \cos\theta)](\phi_2^{(2)}+\phi_2^{(-2)})\\
&~+2\eta_1 \la^{-1}\rho^{-1}(\dot\xi_1 \sin\theta-\dot\xi_2 \cos\theta)[\phi_2^{(2)}-\phi_2^{(-2)}]\\
&~+\eta_1 \la^{-1}(\dot\xi_1\cos\theta+\dot\xi_2 \sin\theta+\dot\la \rho) (\pp_\rho \phi_1^{(2)}+\pp_\rho \phi_1^{(-2)})\\
&~+\eta_1 \la^{-1}w_\rho(\dot\xi_1\cos\theta+\dot\xi_2 \sin\theta+\dot\la \rho) \sin2\theta \psi_2 \\
&-\frac{2\eta_1^2 }r \left({\rm Im}\left[e^{-i\gamma }(\psi^0+\frac{r^2}{z}\pp_z \psi^0)\right]+\la^{-1}(\pp_\rho\phi_2^{(2)}+\pp_\rho\phi_2^{(-2)})\right)\\
&~\quad \times \left(\sin w {\rm Re}\left[e^{-i\gamma }ir\psi^0\right]+{\rm Im}(r e^{-i\theta}\psi^1)\cos w-\sin w(\phi_2^{(2)}+\phi_2^{(-2)})+2\psi_2\cos2\theta\right) \\
&~+\frac{2\eta_1^2 }r \left(\sin w {\rm Re}\left[e^{-i\gamma }(\psi^0+\frac{r^2}{z}\pp_z \psi^0)\right]+{\rm Re}[e^{-i\theta}(\psi^1+\frac{r^2}{z}\pp_z\psi^1)]\cos w\right)\\
&~\quad\times \left({\rm Im}\left[e^{-i\gamma }ir\psi^0\right]+(\pp_\theta \phi_2^{(2)}+\pp_\theta \phi_2^{(-2)})+\cos w (\phi_1^{(2)}+\phi_1^{(-2)})+\sin2\theta \sin w\psi_2\right) \\
&~+\frac{2\eta_1 }r \left({\rm Im}\left[e^{-i\gamma }ir\psi^0\right]+(\pp_\theta \phi_2^{(2)}+\pp_\theta \phi_2^{(-2)})+\cos w (\phi_1^{(2)}+\phi_1^{(-2)})+\sin2\theta \sin w\psi_2\right)\\
&~\quad\times \la^{-1}w_{\rho} \cos w (c_1 \cos\theta + c_2 \sin\theta)  \\
&~+\frac{2\eta_1 }r \Bigg[\left(\sin w {\rm Re}\left[e^{-i\gamma }(\psi^0+\frac{r^2}{z}\pp_z \psi^0)\right]+{\rm Re}[e^{-i\theta}(\psi^1+\frac{r^2}{z}\pp_z\psi^1)]\cos w\right)\\
&~\qquad+\la^{-1}\left[\sin2\theta\pp_\rho \psi_2-w_\rho(\phi_1^{(2)}+\phi_1^{(-2)})\right]\Bigg]\times\sin^2 w(c_1\cos\theta+ c_2\sin \theta) \\
&~-\frac{2\eta_1 }r \left({\rm Im}\left[e^{-i\gamma }(\psi^0+\frac{r^2}{z}\pp_z \psi^0)\right]+\la^{-1}(\pp_\rho\phi_2^{(2)}+\pp_\rho\phi_2^{(-2)})\right)\times \sin w (c_2 \cos\theta-c_1\sin\theta),  \\
\end{align*}
\begin{align*}
&~\mathcal R_*\cdot Q_\gamma E_2\\
=&~-\eta_1 \cos w[\dot\gamma+\la^{-1}\rho^{-1}(\dot\xi_1 \sin\theta-\dot\xi_2 \cos\theta)](\phi_1^{(2)}+\phi_1^{(-2)})  \\
&~+2\eta_1 \la^{-1}\rho^{-1}(\dot\xi_1 \sin\theta-\dot\xi_2 \cos\theta)[\phi_1^{(-2)}-\phi_1^{(2)}] \\
&~+\eta_1 \la^{-1}(\dot\xi_1\cos\theta+\dot\xi_2 \sin\theta+\dot\la \rho) (\pp_\rho \phi_2^{(2)}+\pp_\rho \phi_2^{(-2)}) \\
&~-\eta_1 \sin w[\dot\gamma+\la^{-1}\rho^{-1}(\dot\xi_1 \sin\theta-\dot\xi_2 \cos\theta)] \sin2\theta \psi_2  \\
&~-\frac{2\eta_1^2 }r \Bigg[\left(\sin w {\rm Re}\left[e^{-i\gamma }(\psi^0+\frac{r^2}{z}\pp_z \psi^0)\right]+{\rm Re}[e^{-i\theta}(\psi^1+\frac{r^2}{z}\pp_z\psi^1)]\cos w\right)\\
&~\qquad+\la^{-1}\left[\sin2\theta\pp_\rho \psi_2-w_\rho(\phi_1^{(2)}+\phi_1^{(-2)})\right]\Bigg]\\
&~\quad \times \left(\cos w {\rm Re}\left[e^{-i\gamma }ir\psi^0\right]-{\rm Im}(r e^{-i\theta}\psi^1)\sin w +(\pp_\theta \phi_1^{(2)}+\pp_\theta \phi_1^{(-2)})-\cos w (\phi_2^{(2)}+\phi_2^{(-2)})\right) \\
&~+\frac{2\eta_1^2 }r \Bigg[\left(\cos w {\rm Re}\left[e^{-i\gamma }(\psi^0+\frac{r^2}{z}\pp_z \psi^0)\right]-{\rm Re}[e^{-i\theta}(\psi^1+\frac{r^2}{z}\pp_z\psi^1)]\sin w\right)\\
&~\qquad+\la^{-1}\left(\sin2\theta w_\rho \psi_2+\pp_\rho\phi_1^{(2)}+\pp_\rho\phi_1^{(-2)}\right)\Bigg]\\
&~\quad \times \left(\sin w {\rm Re}\left[e^{-i\gamma }ir\psi^0\right]+{\rm Im}(r e^{-i\theta}\psi^1)\cos w-\sin w(\phi_2^{(2)}+\phi_2^{(-2)})+2\psi_2\cos2\theta\right)\\
&~-\frac{2\eta_1 }r \left(\cos w {\rm Re}\left[e^{-i\gamma }ir\psi^0\right]-{\rm Im}(r e^{-i\theta}\psi^1)\sin w +(\pp_\theta \phi_1^{(2)}+\pp_\theta \phi_1^{(-2)})-\cos w (\phi_2^{(2)}+\phi_2^{(-2)})\right)\\
&~\quad\times \la^{-1}w_{\rho} \cos w (c_1 \cos\theta + c_2 \sin\theta)  \\
&~+\frac{2\eta_1 }r \left(\sin w {\rm Re}\left[e^{-i\gamma }ir\psi^0\right]+{\rm Im}(r e^{-i\theta}\psi^1)\cos w-\sin w(\phi_2^{(2)}+\phi_2^{(-2)})+2\psi_2\cos2\theta\right)\\
&~\quad \times \la^{-1}w_{\rho}\sin w(c_1 \cos\theta + c_2 \sin\theta)  \\
&~+\frac{2\eta_1 }r \Bigg[\left(\cos w {\rm Re}\left[e^{-i\gamma }(\psi^0+\frac{r^2}{z}\pp_z \psi^0)\right]-{\rm Re}[e^{-i\theta}(\psi^1+\frac{r^2}{z}\pp_z\psi^1)]\sin w\right)\\
&~\qquad+\la^{-1}\left(\sin2\theta w_\rho \psi_2+\pp_\rho\phi_1^{(2)}+\pp_\rho\phi_1^{(-2)}\right)\Bigg]\times \sin w (c_2 \cos\theta-c_1\sin\theta) .
\end{align*}
\end{lemma}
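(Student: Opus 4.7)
\textbf{Proof proposal for Lemma \ref{lemma-newerror}.} The plan is to expand $S[U_*]$ by linearity of the heat operator and bilinearity of the wedge product, split everything into pieces we have already accounted for versus genuine remainders, and then project each remainder onto the Frenet frame $\{Q_\gamma E_1, Q_\gamma E_2, Q_\gamma W\}$ using \eqref{eqn-325325325} and the lemmas of Section \ref{Sec-notations}. Writing $U_* = U_{\la,\gamma,\xi,c_1,c_2} + \eta_1 \Phi_*$ with $\Phi_* = \Phi^{(0)}+\Phi^{(1)}+\Phi^{(2)}_{U^\perp}+\Phi^{(-2)}_{U^\perp}+\Phi^{(2)}_U$, we obtain
\begin{align*}
S[U_*]=&\ S[U_{\la,\gamma,\xi,c_1,c_2}] + (-\pp_t+\Delta)(\eta_1\Phi_*) + E_{\eta_1}\\
&\ -2\bigl[\pp_{x_1}U_{\la,\gamma,\xi,c_1,c_2}\wedge\pp_{x_2}(\eta_1\Phi_*)+\pp_{x_1}(\eta_1\Phi_*)\wedge\pp_{x_2}U_{\la,\gamma,\xi,c_1,c_2}\bigr]\\
&\ -2\pp_{x_1}(\eta_1\Phi_*)\wedge\pp_{x_2}(\eta_1\Phi_*),
\end{align*}
where $E_{\eta_1}$ collects the cut-off error terms. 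The first line on the right is already decomposed in \eqref{def-error1}; the second (linear-in-$\Phi_*$) line equals $\eta_1\tilde L_U[\Phi_*]$ plus extra bilinear contributions coming from the pieces $Q_\gamma(c_1\mathcal Z_{1,1}+c_2\mathcal Z_{1,2})$ of the ansatz; the third line is the purely quadratic wedge in $\Phi_*$.

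Next I use the equations satisfied by the corrections to cancel most of the marked leading terms. By construction, $(-\pp_t+\Delta)\Phi^{(0)}$ produces exactly $\mathcal E_{U^\perp}^{(0)}+\tilde{\mathcal R}^0$ and $(-\pp_t+\Delta)\Phi^{(1)}$ produces $\mathcal E_{U}^{(\pm 1)}+\tilde{\mathcal R}^1$, modulo contributions that I push into the modulation-through-$\rho(t)$ time derivatives; while $\Phi^{(\pm 2)}_{U^\perp}$ and $\Phi^{(2)}_U$ are tailored so that $\pp_t\Phi^{(\pm 2)}_{U^\perp}-L_U[\Phi^{(\pm 2)}_{U^\perp}]=\mathcal E_{U^\perp}^{(\pm 2)}$ and $\pp_t\Phi^{(2)}_U-L_U[\Phi^{(2)}_U]=\mathcal E_{U}^{(\pm 2)}$. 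Subtracting these matching contributions and the remaining items listed in \eqref{def-R_*} eliminates from $\mathcal R_*$ everything except: (a) the chain-rule time-derivative terms acting through $\rho=(x-\xi)/\la$ on the self-similar corrections $\Phi^{(\pm 2)}_{U^\perp}$ and $\Phi^{(2)}_U$, which generate the $\dot\la,\dot\gamma,\dot\xi$ factors multiplying $\phi_j^{(\pm 2)}$ and $\psi_2$ in the first block of each projection; (b) the extra linear-in-$\Phi_*$ wedge contributions from the $c_j\mathcal Z_{1,j}$ pieces of the ansatz, which I handle via Lemma \ref{linearization-lemma-1} and yield the last $\la^{-1}w_\rho$ lines involving $c_1\cos\theta+c_2\sin\theta$; (c) the purely quadratic wedge $-2\pp_{x_1}(\eta_1\Phi_*)\wedge\pp_{x_2}(\eta_1\Phi_*)$.

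To project (a), (b), (c) onto the Frenet frame, I pass to polar variables and write every component of $\Phi_*$ in the form dictated by \eqref{eqn-correctpm2'}, \eqref{eqn-psi_2}, $\Phi^{(0)}=(\varphi^0,0)^\mathsf{T}$, $\Phi^{(1)}=(0,0,\mathrm{Re}\varphi^1)^\mathsf{T}$, with $\varphi^0=re^{i\theta}\psi^0$, $\varphi^1=re^{-i\theta}\psi^1$. The wedge quadratic term (c) is handled componentwise: writing $A=\pp_{x_1}\Phi_*$, $B=\pp_{x_2}\Phi_*$ in the moving basis $\{Q_\gamma E_1, Q_\gamma E_2, Q_\gamma W\}$ and using the polar expressions $\pp_{x_1}=\cos\theta\pp_r-\frac{\sin\theta}r\pp_\theta$, $\pp_{x_2}=\sin\theta\pp_r+\frac{\cos\theta}r\pp_\theta$ together with the identities $\pp_x(Q_\gamma E_i)=\la^{-1}w_\rho(\cdots)Q_\gamma W+\cdots$ derived as in Lemma \ref{linearization-lemma-3}, each wedge component becomes a sum of products of radial/angular derivatives with factors $1/r$, accounting for the large $\tfrac{2\eta_1^2}{r}$ blocks. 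Contribution (b) is produced by applying Lemma \ref{linearization-lemma-1} with $U$ replaced by $Q_\gamma(c_1\mathcal Z_{1,1}+c_2\mathcal Z_{1,2})$, which, after using \eqref{eqn-333333}, yields the $\tfrac{2\eta_1}{r}w_\rho(\cdots)(c_1\cos\theta+c_2\sin\theta)$ couplings. Contribution (a) follows by direct differentiation, using that $\phi_i^{(\pm 2)}$ and $\psi_2$ depend on $x$ only through $\rho$ and $\theta$, so that $\pp_t$ acting through $\rho_t=-\la^{-1}\dot\la\rho-\la^{-1}(\dot\xi_1\cos\theta+\dot\xi_2\sin\theta)$ and $\theta_t=-(r\la)^{-1}(\dot\xi_1\sin\theta-\dot\xi_2\cos\theta)$ generates exactly the $\dot\la,\dot\gamma,\dot\xi$ prefactors displayed.

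The only real obstacle is the bookkeeping: each projection on $Q_\gamma W$, $Q_\gamma E_1$, $Q_\gamma E_2$ collects contributions from five different corrections interacting with the three directions, and one must carefully distinguish the $e^{\pm 2i\theta}$ mode structures of $\Phi^{(\pm 2)}_{U^\perp}$ from the $e^{\pm i\theta}$ modes of $\Phi^{(0)},\Phi^{(1)}$. I would organize the calculation by first computing each scalar component of the three wedge bilinear terms in $(\rho,\theta)$ coordinates using the explicit formulas $\sin w=2\rho/(1+\rho^2)$, $\cos w=(\rho^2-1)/(1+\rho^2)$, $w_\rho=-2/(1+\rho^2)$, then dotting with $Q_\gamma W$, $Q_\gamma E_1$, $Q_\gamma E_2$ in turn; the claimed identities then follow by assembling the pieces. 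No new idea is needed beyond careful application of \eqref{eqn-325325325} and the lemmas of Section \ref{Sec-notations}.
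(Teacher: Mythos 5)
Your proposal takes essentially the same route as the paper's proof in Appendix \ref{app-remainder}: expand $S[U_*]$ by linearity and bilinearity, subtract the leading pieces already catalogued in \eqref{def-R_*} (including $E_{\eta_1}$), and then compute the surviving contributions — chain-rule time derivatives acting through $\rho_t,\theta_t$ and $\pp_t(Q_\gamma E_i)$, the bilinear wedge interactions with $c_1\mathcal Z_{1,1}+c_2\mathcal Z_{1,2}$, and the purely quadratic wedge $\tfrac{2}{r}\pp_r\Phi_*\wedge\pp_\theta\Phi_*$ — in polar coordinates and project onto the moving Frenet frame via \eqref{eqn-325325325}. The only imprecision is the parenthetical claim that ``$(-\pp_t+\Delta)\Phi^{(0)}$ produces exactly $\mathcal E_{U^\perp}^{(0)}+\tilde{\mathcal R}^0$'': in fact $(-\pp_t+\Delta)\Phi^{(0)}=\tilde{\mathcal R}^0$ alone, and $\mathcal E_{U^\perp}^{(0)}$ enters separately from the expansion \eqref{def-error1} of $S[U_{\la,\gamma,\xi,c_1,c_2}]$ before the two are grouped in \eqref{def-R_*}, but this does not change the bookkeeping and your subsequent description of what survives into $\mathcal R_*$ is correct.
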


The derivation of above Lemma is rather lengthy, and we postpone it in Appendix \ref{app-remainder}.

\medskip

\bigskip

\section{Formulating the gluing system}\label{gluingsystem-innerouter}

\medskip

We aim to find a real solution
$$
u=U_*+\Phi,
$$
so $S[u]=0$ yields
$$
\pp_t \Phi=\Delta \Phi-2\pp_{x_1} U_*\wedge \pp_{x_2}\Phi-2\pp_{x_1}\Phi\wedge\pp_{x_2}U_*-2\pp_{x_1}\Phi\wedge \pp_{x_2}\Phi+S[U_*].
$$
We decompose $\Phi$ into
$$
\Phi(x,t)= \eta_R Q_\gamma  \Big[\Phi_W\left(y, t\right) +\Phi_{W^\perp}\left(y, t\right)\Big] + \Psi(x, t), \quad y=\frac{x-\xi(t)}{\lambda(t)},
$$
where $\Phi_W$ is in the $W$-direction, $\Phi_{W^\perp}$ is on $W^\perp$,
\begin{equation*}
\eta_R := \eta\left(\frac{x-\xi(t)}{R(t)\lambda(t)}\right),
\end{equation*}
and $R(t)=\la^{-\beta}(t)$ with $\beta>0$ to be chosen later.

Then it is sufficient to find a desired solution $u$ to \eqref{e:main} if the triple $(\Phi_W,~\Phi_{W^\perp},~\Psi)$ satisfies the gluing system:
\begin{equation}\label{gluing-sys}
\begin{aligned}
\la^2\pp_t \Phi_W=&~\Delta_y\Phi_{W } - 2\pp_{y_1}W\wedge \pp_{y_2}\Phi_{W } - 2\pp_{y_1} \Phi_{W }\wedge \pp_{y_2}W\\
&~+\la^2\Pi_W\Big[Q_{-\gamma}\tilde L_U[\Psi]\Big]+\la^2 \Pi_W\Big[Q_{-\gamma}(S[U_*])\Big]+\la^2\mathcal H_{{\rm in}}^{W} \quad\mbox{  in  }~~ \mathcal{D}_{2R},\\
\la^2\pp_t \Phi_{W^\perp}=&~\Delta_y\Phi_{W^\perp} - 2\pp_{y_1}W\wedge \pp_{y_2}\Phi_{W^\perp} - 2\pp_{y_1} \Phi_{W^\perp}\wedge \pp_{y_2}W\\
&~+\la^2\Pi_{W^\perp}\Big[Q_{-\gamma}\tilde L_U[\Psi]\Big]+\la^2 \Pi_{W^\perp}\Big[Q_{-\gamma} (S[U_*])\Big]+\la^2\mathcal H_{{\rm in}}^{W^\perp}\quad\mbox{  in  }~~ \mathcal{D}_{2R},\\
\pp_t \Psi=&~\Delta_x \Psi +(1-\eta_R) \tilde L_U[\Psi]+(1-\eta_R)  S[U_*] \\
&~ + \Big[Q_\gamma(\Phi_W +\Phi_{W^\perp})\Delta_x\eta_R  + 2\nabla_x\eta_R\cdot\nabla_x(Q_\gamma\Phi_W +Q_\gamma\Phi_{W^\perp})-Q_\gamma(\Phi_W +\Phi_{W^\perp})\partial_t\eta_R\Big]\\
&~-2(1-\eta_R)\pp_{x_1}(U_*-U)\wedge \pp_{x_2}\Big(\eta_R Q_\gamma(\Phi_W+\Phi_{W^\perp})+\Psi\Big)\\
&~-2(1-\eta_R)\pp_{x_1}\Big(\eta_R Q_\gamma(\Phi_W+\Phi_{W^\perp})+\Psi\Big)\wedge \pp_{x_2}(U_*-U)\\
&~+(1-\eta_R)\Big[-\eta_R\dot\gamma J_z Q_\gamma(\Phi_W +\Phi_{W^\perp}) +\eta_R (\la^{-1}\dot\xi+\la^{-1}\dot\la y)\cdot \nabla_y(Q_\gamma\Phi_W +Q_\gamma\Phi_{W^\perp})\Big]\\
&~-2(1-\eta_R)\pp_{x_1}\Big(\eta_R Q_\gamma(\Phi_W+\Phi_{W^\perp})+\Psi\Big)\wedge \pp_{x_2}\Big(\eta_R Q_\gamma(\Phi_W+\Phi_{W^\perp})+\Psi\Big)  \mbox{  in  }~~ \R^2 \times(0,T),
\end{aligned}
\end{equation}
i.e., $\Phi_W$ and $\Phi_{W^\perp}$ satisfy inner problem in the $W$-direction and on $W^\perp$, respectively, and $\Psi$ solves the outer problem. Here
$$
\mathcal{D}_{2R}:=\Big\{(y,t)\in\R^2\times \R_+: ~|y|\leq 2R(t),~t\in(0,T)\Big\},
$$
$\widetilde{\mathcal R}_*$, $E_{\eta_1}$ are defined in \eqref{def-tildeR_*}, \eqref{def-E_eta1}, and
\begin{equation}\label{def-HWHWperp}
\begin{aligned}
\mathcal H_{{\rm in}}^{W}=&~\Pi_W\Bigg\{Q_{-\gamma}\bigg[-2\pp_{x_1}(U_*-U)\wedge \pp_{x_2}\Big(\eta_R Q_\gamma(\Phi_W+\Phi_{W^\perp})+\Psi\Big)\\
&~\qquad\qquad\qquad-2\pp_{x_1}\Big(\eta_R Q_\gamma(\Phi_W+\Phi_{W^\perp})+\Psi\Big)\wedge \pp_{x_2}(U_*-U)\\
&~\qquad\qquad\qquad -\eta_R\dot\gamma J_z Q_\gamma(\Phi_W +\Phi_{W^\perp}) +\eta_R (\la^{-1}\dot\xi+\la^{-1}\dot\la y)\cdot \nabla_y(Q_\gamma\Phi_W +Q_\gamma\Phi_{W^\perp})\\
&~\qquad\qquad\qquad-2\pp_{x_1}\Big(\eta_R Q_\gamma(\Phi_W+\Phi_{W^\perp})+\Psi\Big)\wedge \pp_{x_2}\Big(\eta_R Q_\gamma(\Phi_W+\Phi_{W^\perp})+\Psi\Big)\bigg]\Bigg\},\\
\mathcal H_{{\rm in}}^{W^\perp}=&~\Pi_{W^\perp}\Bigg\{Q_{-\gamma}\bigg[-2\pp_{x_1}(U_*-U)\wedge \pp_{x_2}\Big(\eta_R Q_\gamma(\Phi_W+\Phi_{W^\perp})+\Psi\Big)\\
&~\qquad\qquad\qquad-2\pp_{x_1}\Big(\eta_R Q_\gamma(\Phi_W+\Phi_{W^\perp})+\Psi\Big)\wedge \pp_{x_2}(U_*-U)\\
&~\qquad\qquad\qquad -\eta_R\dot\gamma J_z Q_\gamma(\Phi_W +\Phi_{W^\perp}) +\eta_R (\la^{-1}\dot\xi+\la^{-1}\dot\la y)\cdot \nabla_y(Q_\gamma\Phi_W +Q_\gamma\Phi_{W^\perp})\\
&~\qquad\qquad\qquad-2\pp_{x_1}\Big(\eta_R Q_\gamma(\Phi_W+\Phi_{W^\perp})+\Psi\Big)\wedge \pp_{x_2}\Big(\eta_R Q_\gamma(\Phi_W+\Phi_{W^\perp})+\Psi\Big)\bigg]\Bigg\}.
\end{aligned}
\end{equation}
We will solve equations \eqref{gluing-sys}$_1$ and \eqref{gluing-sys}$_2$ with zero initial data and \eqref{gluing-sys}$_3$ with non-trivial initial data
$$
\Psi(x,0)=Z^*=(z_1^*,z_2^*,z_3^*),\quad x\in\R^2.
$$
Problems \eqref{gluing-sys}$_1$ and \eqref{gluing-sys}$_2$ are the inner problem which captures the blow-up property, while \eqref{gluing-sys}$_3$ describes the main character with singularities removed. We will solve these problems by developing corresponding linear theories and the fixed point argument in suitable weighted spaces. We notice that the inner problem \eqref{gluing-sys}$_2$ is essentially a perturbation of the linearized harmonic map heat flow around the bubble $W$, and the inner problem \eqref{gluing-sys}$_1$ is a perturbation of the parabolic linearized Liouville equation (cf. Lemma \ref{linearization-lemma-0} and Remark \ref{rmk-211211}).

\medskip

\bigskip

\section{Leading dynamics for the parameters}\label{leading-dynamics}

\medskip

In this section, we capture the leading dynamics of the parameters $\la(t)$, $\xi(t)$, $\gamma(t)$, $c_1(t)$, $c_2(t)$, and these are in fact determined by orthogonality conditions in corresponding modes that are needed to ensure the existence of desired solutions with fast space-time decay (in the linear theories in Section \ref{sec-linearinner}). We will choose $\la R\ll 1$, so $\eta_1(x)=\eta_1^2(x)=1$ and $E_{\eta_1}=0$ (cf. \eqref{def-E_eta1}) for $|x-\xi(t)|\leq 2\la R$. Our aim is to adjust the parameters such that the orthogonalities hold
\begin{equation}\label{orthogo}
\begin{aligned}
&~\int_{B_{2R}} \left( \Pi_{W^\perp}\Big[Q_{-\gamma}\tilde L_U[\Psi]\Big]+  \Pi_{W^\perp}\Big[Q_{-\gamma} (S[U_*])\Big]+\mathcal H_{{\rm in}}^{W^\perp}\right) \cdot Z_{i,j}(y) dy=0,\quad i=0,1,~j=1,2,\\
&~\int_{B_{2R}} \left( \Pi_{W }\Big[Q_{-\gamma}\tilde L_U[\Psi]\Big]+  \Pi_{W }\Big[Q_{-\gamma} (S[U_*])\Big]+\mathcal H_{{\rm in}}^{W }\right) \cdot \mathcal Z_{1,j}(y) dy=0,\quad j=1,2\\
\end{aligned}
\end{equation}
for all $t\in(0,T)$, where the kernels $Z_{i,j}$ and $\mathcal Z_{1,j}$ are defined in \eqref{kernels}. The reason for not requiring orthogonalities with the other three kernel functions $Z_{-1,1}$, $Z_{-1,2}$ and $\mathcal Z_0$ is to avoid further complications due to the introduction of new modulation parameters, and we shall use linear theories  without orthogonality conditions in Section \ref{sec-linearinner} together with regularity estimates to control these modes.

\medskip

The goal of this section is to derive the leading dynamics governing these parameters by approximating \eqref{orthogo}. To do this, we decompose the remainder $\mathcal R_*$ in $S[U_*]$ as
\begin{equation}\label{def-tildeR_*}
\mathcal R_*:=\widetilde{\mathcal R}_*+\mathcal R_{*,1},
\end{equation}
where $\mathcal R_{*,1}$ is defined as
\begin{equation}\label{def-R_*1}
\begin{aligned}
&~\mathcal R_{*,1}\\
:=&~- \frac{2\eta_1 }r \sin w(c_1\cos\theta+ c_2\sin \theta)\Bigg(\la^{-1} w_\rho {\rm Im}\left[e^{-i\gamma }ir\psi^0\right] +\sin w\cos w {\rm Re}\left[e^{-i\gamma }(\psi^0+\frac{r^2}{z}\pp_z \psi^0)\right]  \Bigg)  Q_\gamma W \\
&~+ \frac{2\eta_1 }r (c_1\cos\theta+ c_2\sin \theta)\Bigg(\la^{-1} w_\rho \cos w {\rm Im}\left[e^{-i\gamma }ir\psi^0\right] +\sin^3 w  {\rm Re}\left[e^{-i\gamma }(\psi^0+\frac{r^2}{z}\pp_z \psi^0)\right]  \Bigg)  Q_\gamma E_1\\
&~-\frac{2\eta_1 }r \sin w (c_2 \cos\theta-c_1\sin\theta) {\rm Im}\left[e^{-i\gamma }(\psi^0+\frac{r^2}{z}\pp_z \psi^0)\right]Q_\gamma E_1\\
&~-\frac{2\eta_1 }r \la^{-1} w_\rho \cos 2w (c_1\cos\theta+c_2 \sin\theta)  {\rm Re}\left[e^{-i\gamma }ir\psi^0\right] Q_\gamma E_2\\
&~+\frac{2\eta_1 }r \sin w\cos w (c_2\cos\theta-c_1\sin\theta) {\rm Re}\left[e^{-i\gamma }(\psi^0+\frac{r^2}{z}\pp_z \psi^0)\right] Q_\gamma E_2.
\end{aligned}
\end{equation}
So the error reads
\begin{equation}\label{error-decomp}
\begin{aligned}
S[U_*]=&~\Big[\mathcal R_{U^\perp}+\mathcal R_{U}+\mathcal E_{U^\perp}^{(1)}+\eta_1 (\mathcal E_{U^\perp}^{(0)}+\tilde{\mathcal R}^0)+\eta_1 (\mathcal E_{U}^{(\pm1)}+  \tilde{\mathcal R}^1)+\eta_1 \tilde L_U[\Phi^{(0)}+\Phi^{(1)}]\Big]+\widetilde{\mathcal R}_*+\mathcal R_{*,1}+E_{\eta_1}.
\end{aligned}
\end{equation}
In what follows, $S[U_*]$ in the orthogonality conditions will be approximated by its main terms, and the analysis of the remainders will be done in Section \ref{solve-ortho}.

\medskip

\subsection{$\la$-$\gamma$ system and translation parameter $\xi$}

We start from the RHS on $W^\perp$, and this turns out to yield the dynamics for $\la$ and $\gamma$ at mode $0$ and for $\xi:=(\xi_1,\ \xi_2)$ at mode $1$.

\medskip

\noindent $\bullet$ {\bf Mode $0$ on $W^\perp$: $\la$-$\gamma$ system.}

\medskip

The orthogonality condition \eqref{orthogo}$_1$ with $i=0$ and $j=1,2$ in the complex form implies
\begin{equation}\label{orthogo-Wperp0}
\int_0^{2\pi}\int_0^{2R} \left(\Pi_{W^\perp}\Big[Q_{-\gamma}\tilde L_U[\Psi]\Big]+  \Pi_{W^\perp}\Big[Q_{-\gamma} (S[U_*])\Big]+\mathcal H_{{\rm in}}^{W^\perp}\right)_{\mathbb C} \rho^2 w_\rho   d\rho d\theta=0,
\end{equation}
which is a complex system for $\la$ and $\gamma$. We single out the main terms at mode $0$:
\begin{equation*}
\left(\Pi_{W^\perp}\Big[Q_{-\gamma}\tilde L_U[Z_0^*(q)]\Big]+  \Pi_{W^\perp}\Big[Q_{-\gamma} \Big((\mathcal E_{U^\perp}^{(0)}+\tilde{\mathcal R}^0)+\tilde L_U[\Phi^{(0)}]\Big)\Big]\right)_{\mathbb C,0}.\\
\end{equation*}
We will deal with the remainder terms, turn out to be faster vanishing in time, when we solve the full problem via fixed point argument. From Lemma \ref{linearization-lemma-2}, one has
\begin{equation*}
\begin{aligned}
&~\left(\Pi_{W^\perp}\Big[Q_{-\gamma}\tilde L_U[Z_0^*(q)]\Big]\right)_{\mathbb C,0}\\
=&~\la^{-1}\rho w_\rho^2[\div(e^{-i\gamma }Z^*(q)) +i {\rm{curl }}(e^{-i\gamma }Z^*(q))].
\end{aligned}
\end{equation*}
Also, from \eqref{eqn-323323323323}, \eqref{eqn-324324324324},   \eqref{eqn-327327327}, and \eqref{eqn-psi^0}, we obtain
\begin{align*}
&~\left(\Pi_{W^\perp}\Big[Q_{-\gamma} \Big((\mathcal E_{U^\perp}^{(0)}+\tilde{\mathcal R}^0)+\tilde L_U[\Phi^{(0)}]\Big)\Big]\right)_{\mathbb C,0}\\
=&~\Big(\cos w {\rm Re}[(f_{U^\perp}^{(0)})e^{-i\gamma }]-\la^{-1}\dot\la\rho^2 w_\rho^2\sin w \Big)+i {\rm Im}[(f_{U^\perp}^{(0)})e^{-i\gamma }]\\
&~+2\la^{-1}\rho w^2_{\rho}{\rm Re}\Big[e^{-i\gamma }(\psi^0+\frac{r^2}{z}\pp_z \psi^0)\Big]+i2\la^{-1}  \rho w^2_\rho{\rm Im}(e^{-i\gamma }\psi^0)\\
=&~\cos w {\rm Re}\Bigg[\frac{\la^{-1}\rho}{(1+\rho^2)^2}\int_{-T}^t p_0(s) e^{-i\gamma(t)}\big(zk_z(z,t-s)-z^2k_{zz}(z,t-s)\big)ds\Bigg]\\
&~-\cos w {\rm Re}\Bigg[\frac{\dot\la \rho}{1+\rho^2}\int_{-T}^t p_0(s) e^{-i\gamma(t)} zk_z(z,t-s) ds\Bigg]-\la^{-1}\dot\la \rho w_\rho^2\\
&~+2\la^{-1}\rho w_\rho^2 {\rm Re}\Bigg[\int_{-T}^t p_0(s) e^{-i\gamma(t)}\bigg(k(z,t-s)+\frac{r^2}{z}k_{z}(z,t-s)\bigg)ds \Bigg]\\
&~+i{\rm Im}\Bigg[\frac{\la^{-1}\rho}{(1+\rho^2)^2}\int_{-T}^t p_0(s) e^{-i\gamma(t)}\big(zk_z(z,t-s)-z^2k_{zz}(z,t-s)\big)ds\Bigg]\\
&~-i{\rm Im}\Bigg[\frac{\dot\la \rho}{1+\rho^2}\int_{-T}^t p_0(s) e^{-i\gamma(t)} zk_z(z,t-s) ds\Bigg]\\
&~+i2\la^{-1}  \rho w^2_\rho {\rm Im}\Big[\int_{-T}^t p_0(s) e^{-i\gamma(t)}k(z,t-s)ds \Big]\\
=&~-\cos w {\rm Re}\Bigg[\la^{-1}\rho w_\rho^2 \int_{-T}^t \frac{p_0(s)e^{-i\gamma(t)}}{t-s}\zeta^2 K_{\zeta\zeta} ds\Bigg]\\
&~+\cos w {\rm Re} \Bigg[\dot\la \rho w_\rho  \int_{-T}^t \frac{p_0(s)e^{-i\gamma(t)}}{t-s}\zeta  K_{\zeta } ds\Bigg]-\la^{-1}\dot\la \rho w_\rho^2\\
&~+2\la^{-1}\rho w_\rho^2 {\rm Re}\Bigg[ \int_{-T}^t \frac{p_0(s)e^{-i\gamma(t)}}{t-s}\Big(K+\frac{2\rho^2}{1+\rho^2}\zeta K_\zeta\Big) ds\Bigg]\\
&~+i {\rm Im}\Bigg[\la^{-1}\rho w_\rho^2 \int_{-T}^t \frac{p_0(s)e^{-i\gamma(t)}}{t-s}\zeta^2 K_{\zeta\zeta} ds\Bigg]\\
&~+i {\rm Im} \Bigg[\dot\la \rho w_\rho  \int_{-T}^t \frac{p_0(s)e^{-i\gamma(t)}}{t-s}\zeta  K_{\zeta } ds\Bigg] +i2\la^{-1}  \rho w^2_\rho {\rm Im} \Bigg[ \int_{-T}^t \frac{p_0(s)e^{-i\gamma(t)}}{t-s}K ds\Bigg],
\end{align*}
where
\begin{equation}\label{def-Kzeta}
k(z, t) = \frac{1-e^{-\frac{z^2}{4t}}}{z^2}, \quad \zeta=\frac{z^2}{t-s}=\frac{\la^2(1+\rho^2)}{t-s}, \quad K(\zeta) = \frac{1-e^{-\frac{\zeta}{4}}}{\zeta}.
\end{equation}
Therefore, \eqref{orthogo-Wperp0} at leading order gives the following system in complex form:
\begin{equation}\label{eqn-575757}
\begin{aligned}
&~\int_{-T}^t \frac{{{\rm Re}}(p_0(s)e^{-i\gamma (t)})}{t-s}\Gamma_1\left(\frac{\lambda(t)^2}{t-s}\right)ds + 2\dot\lambda(t)+i\int_{-T}^t \frac{{{\rm Im}}(p_0(s)e^{-i\gamma (t)})}{t-s}\Gamma_2\left(\frac{\lambda(t)^2}{t-s}\right)ds\\
=&~2\Big[\div(e^{-i\gamma(t)}Z^*(q)) +i {\rm{curl }}(e^{-i\gamma(t)}Z^*(q))\Big],
\end{aligned}
\end{equation}
where
\begin{equation*}
\begin{aligned}
\Gamma_1(\tau) = &~ \int_{0}^\infty \Bigg(\rho^3w_\rho^3\left[\Big(2K(\zeta) + 4\zeta K_\zeta(\zeta) \frac{\rho^2}{1+\rho^2} \Big)- \cos w \zeta^2 K_{\zeta\zeta}(\zeta)\right]+\cos w \la\dot\la \rho^3 w^2_\rho \zeta K_\zeta\Bigg)_{\zeta =\tau(1+\rho^2)}d\rho,\\
\Gamma_2(\tau) = &~ \int_{0}^\infty \Big(\rho^3w_\rho^3\left[2K(\zeta) + \zeta^2 K_{\zeta\zeta}(\zeta)\right]+\la\dot\la \rho^3 w^2_\rho \zeta K_\zeta\Big)_{\zeta =\tau(1+\rho^2)}d\rho,
\end{aligned}
\end{equation*}
and we have used $\int_0^{+\infty} \rho^3 w_\rho^3 d\rho =-2.$ Clearly,
$$
\Gamma_1(\tau)=\left\{
\begin{aligned}
&-1+O(\tau), ~&~\tau\leq 1,\\
&O\left(\frac1{\tau}\right), ~&~\tau> 1,\\
\end{aligned} \quad
\right.
\quad
\Gamma_2(\tau)=\left\{
\begin{aligned}
&-1+O(\tau), ~&~\tau\leq 1,\\
&O\left(\frac1{\tau}\right), ~&~\tau> 1,\\
\end{aligned} \quad
\right.
$$
so it follows from \eqref{eqn-575757} that
\begin{equation}\label{eqn-585858}
\begin{aligned}
\int_{-T}^{t-\la^2} \frac{ p_0(s)  }{t-s} ds =-2\Big[\div Z^*(q) +i {\rm{curl }} Z^*(q)\Big]+O(p_0(t))+o(1),
\end{aligned}
\end{equation}
as $t\to T$.  Recall $p_0(t) := -2(\dot\lambda + i\lambda\dot\gamma )e^{i\gamma }$. We then proceed as \cite[Section 5, p. 372]{17HMF} to obtain
\begin{equation*}
\begin{aligned}
\la(t)\sim \frac{T-t}{|\log(T-t)|^2},\quad \gamma(t)= \gamma_*= \arctan\frac{\curl Z^*(q)}{\div Z^*(q)}
\end{aligned}
\end{equation*}
as $t\to T$ under the assumption that $\div Z^*(q) < 0$. So the leading part of $\la(t)$ is given by
\begin{equation*}
\la_*(t)=\frac{(T-t)|\log T|}{|\log(T-t)|^2},
\end{equation*}
where the $|\log T|$ is a normalization factor. As we will see later, the dynamics of $\la$-$\gamma$ system and $c_1$-$c_2$ system are in fact coupled, and this results in an extra restriction:
\begin{equation}\label{mathbfZ}
\cos\gamma_* \div Z^*(q) + \sin\gamma_* \curl Z^*(q)\leq 0,
\end{equation}
 roughly speaking, yielding more instability.

\begin{remark}
Both restrictions $\div Z^*(q) < 0$ and \eqref{mathbfZ} can be achieved at the same time by choosing initial data $Z^*$ such that
$$
\div  Z^*(q)<0,\quad \left|\frac{\curl Z^*(q)}{\div Z^*(q)}\right|\ll 1
$$
since once $Z^*$ is fixed the rotation angle $\gamma_*$ is determined automatically by
$$
\tan \gamma_* = \frac{\curl Z^*(q)}{\div Z^*(q)}.
$$
In such case, $\gamma_*\ll 1$ and thus one has \eqref{mathbfZ}.
\end{remark}

\bigskip

\noindent $\bullet$ {\bf Mode $1$ on $W^\perp$: $\xi$.}

\medskip

Similarly, the orthogonality condition \eqref{orthogo}$_1$ with $i=1$ and $j=1,2$ in the complex form gives
\begin{equation}\label{orthogo-Wperp1}
\int_0^{2\pi}\int_0^{2R} \left(\Pi_{W^\perp}\Big[Q_{-\gamma}\tilde L_U[\Psi]\Big]+  \Pi_{W^\perp}\Big[Q_{-\gamma} (S[U_*])\Big]+\mathcal H_{{\rm in}}^{W^\perp}\right)_{\mathbb C}   w_\rho e^{-i\theta} \rho d\rho d\theta=0.
\end{equation}
From \eqref{eqn-321}, \eqref{eqn-322}, \eqref{eqn-323323323323}, \eqref{eqn-324324324324}, \eqref{eqn-327},  \eqref{eqn-327327327}, \eqref{def-R_*} and \eqref{def-tildeR_*}, the main terms that have contribution in $B_{2R}$ in above integration are given by
\begin{align*}
&~\left(\Pi_{W^\perp}\Big[Q_{-\gamma}\tilde L_U[\Psi]\Big]+  \Pi_{W^\perp}\Big[Q_{-\gamma} (S[U_*]-\widetilde{\mathcal R}_*)\Big]\right)_{\mathbb C,1}\\
= &~\left(\Pi_{W^\perp}\Big[Q_{-\gamma}\tilde L_U[Z_0^*(q)]\Big]+  \Pi_{W^\perp}\Big[Q_{-\gamma} (S[U_*]-\widetilde{\mathcal R}_*)\Big]\right)_{\mathbb C,1}+\left(\Pi_{W^\perp}\Big[Q_{-\gamma}\tilde L_U[\Psi-Z_0^*(q)]\Big]\right)_{\mathbb C,1}\\
=&~-2\la^{-1}w_\rho \cos w e^{i\theta}\Big(\pp_{x_1} z^*_3(q)-i\pp_{x_2}z^*_3(q)\Big)+\left(\Pi_{W^\perp}\Big[Q_{-\gamma}\tilde L_U[\Psi-Z_0^*(q)]\Big]\right)_{\mathbb C,1}\\
&~+\la^{-1}w_\rho (\dot\xi_1-i\dot\xi_2) e^{i\theta}+
\rho w_\rho \sin w (\la^{-1}\dot\la+i\dot\gamma)\frac{e^{i\theta}}{2}(c_1-ic_2)\\
&~+
 \frac12 e^{i(\theta-\gamma)}(\dot\xi_1-i\dot\xi_2)\int_{-T}^tp_0(s)\Big(k+\frac{r^2}{z}k_z\Big)(z , t-s)ds
 \\
 &~+\frac{w_\rho}{4} e^{i\theta}(\dot\xi_1-i\dot\xi_2) \Bigg[\int_{-T}^t \Big(p_0(s)e^{-i\gamma(t)}+\bar{p}_0(s)e^{i\gamma(t)}\Big)\Big(k+\frac{r^2}{z}k_z\Big)(z,t-s)ds\Bigg]\\
 &~+\frac{\la^{-1}}{4} \rho^2 w_\rho^2 e^{i\theta} \bar{p}_1(t)+\frac{\la^{-1}}{8} \rho^2 w_\rho^3 e^{i\theta}\int_{-T}^t \bar{p}_1(s)(zk_z-z^2 k_{zz})(z,t-s)ds\\
 &~+\frac14 \rho^2 w_\rho^2 e^{i\theta} \dot\la \int_{-T}^t \bar{p}_1(s)zk_z(z,t-s)ds-2\la^{-1}w_\rho \cos w e^{i\theta} \int_{-T}^t \bar{p}_1(s)k(z,t-s)ds\\
 &~-\la^{-1}w_{\rho} \cos w \frac{r^2}{z} e^{i\theta} \int_{-T}^t \bar{p}_1(s)k_z(z,t-s)ds\\
&~+  \frac{2 }r (c_1\cos\theta+ c_2\sin \theta)\Bigg(\la^{-1} w_\rho \cos w {\rm Im}\left[e^{-i\gamma }ir\psi^0\right] +\sin^3 w  {\rm Re}\left[e^{-i\gamma }(\psi^0+\frac{r^2}{z}\pp_z \psi^0)\right]  \Bigg)   \\
&~-\frac{2  }r \sin w (c_2 \cos\theta-c_1\sin\theta) {\rm Im}\left[e^{-i\gamma }(\psi^0+\frac{r^2}{z}\pp_z \psi^0)\right] \\
&~-i\frac{2  }r \la^{-1} w_\rho \cos 2w (c_1\cos\theta+c_2 \sin\theta)  {\rm Re}\left[e^{-i\gamma }ir\psi^0\right]  \\
&~+i\frac{2  }r \sin w\cos w (c_2\cos\theta-c_1\sin\theta) {\rm Re}\left[e^{-i\gamma }(\psi^0+\frac{r^2}{z}\pp_z \psi^0)\right] .
\end{align*}
Then the dynamics given by \eqref{orthogo-Wperp1} is approximately the following ODE:
\begin{equation}\label{eqn-515515515}
\begin{aligned}
\dot\xi_1-i\dot\xi_2 \sim &~CR^{-2} \Big(\pp_{x_1} z^*_3(q)-i\pp_{x_2}z^*_3(q)\Big)+O(|\cc|)\\
&~-\frac{\la}{4\pi}\int_0^{2\pi}\int_0^{2R}   \left(\Pi_{W^\perp}\Big[Q_{-\gamma}\tilde L_U[\Psi-Z_0^*(q)]\Big]+\Pi_{W^\perp}\Big[Q_{-\gamma} \widetilde{\mathcal R}_*\Big]+\mathcal H_{{\rm in}}^{W^\perp}\right)_{\mathbb C,1} w_\rho e^{-i\theta} \rho d\rho d\theta\\
\end{aligned}
\end{equation}
for some constant $C$, where we have used the fact that $\int_0^{+\infty} \rho w_\rho^2 \cos w d\rho=0$.  For later purpose (when dealing with $c_1$-$c_2$ system), we will choose initial data such that
$$\pp_{x_1} z^*_3(q)=\pp_{x_2}z^*_3(q)=0,$$
and
$$
\la\Big|Q_{-\gamma}\tilde L_U[\Psi-Z_0^*(q)]\Big| \lesssim \la^{\Theta},\quad |\cc|\lesssim \la^{\Theta}(t)
$$
for some $0<\Theta<1$. For the gluing procedure to work, we will eventually choose $\Theta$ to be slightly bigger than $1/3$ (cf. the final choice \eqref{finalchoice}). Because of this, also by estimates \eqref{est-710710710} and \eqref{est-732732732} that will be carried out later, one has
$$
\la\left|\Pi_{W^\perp}\Big[Q_{-\gamma} \widetilde{\mathcal R}_*\Big]+\mathcal H_{{\rm in}}^{W^\perp}\right|\lesssim \la^{3\Theta-1}.
$$
Therefore, there exists a solution
$$
\xi(t)=q+o((T-t)^{3\Theta})
$$
for above ODE, where we recall  $\la(t)\sim \frac{T-t}{|\log(T-t)|^2}$.

\medskip

\subsection{$c_1$-$c_2$ system}

\medskip

Finally, we derive the asymptotic behavior of the translation parameters in the $W$-direction. From the linear theory of the inner problem in the $W$-direction, we need:
\begin{equation}\label{orthogo-W1}
\begin{aligned}
&~\int_0^{2\pi}\int_0^{2R} \left[ Q_{-\gamma}\tilde L_U[\Psi] \cdot W+   \Big(Q_{-\gamma}(S[U_*])+\mathcal H_{{\rm in}}^{W}\Big) \cdot W\right] \sin w \cos\theta \rho d\rho d\theta=0,\\
&~\int_0^{2\pi}\int_0^{2R} \left[ Q_{-\gamma}\tilde L_U[\Psi] \cdot W+   \Big(Q_{-\gamma}(S[U_*]) +\mathcal H_{{\rm in}}^{W}\Big) \cdot W\right] \sin w \sin\theta \rho d\rho d\theta=0.
\end{aligned}
\end{equation}
By \eqref{eqn-321}, \eqref{eqn-323323323323}, \eqref{eqn-327},   \eqref{eqn-327327327}, \eqref{def-R_*} and \eqref{def-tildeR_*}, we have
\begin{align*}
&~Q_{-\gamma}(S[U_*]-\widetilde{\mathcal R}_*)\cdot W\\
=&~Q_{-\gamma}\Big(\mathcal R_{U^\perp}+\mathcal R_{U}+\mathcal E_{U^\perp}^{(1)}+(\mathcal E_{U^\perp}^{(0)}+\tilde{\mathcal R}^0)+(\mathcal E_{U}^{(\pm1)}+\tilde{\mathcal R}^1)+\tilde L_U[\Phi^{(0)}+\Phi^{(1)}]\Big)\cdot W\\
&~+Q_{-\gamma}(\mathcal R_*-\widetilde{\mathcal R}_*)\cdot W\\
=&~-\la^{-1}\rho^{-1}\sin w(\dot\xi_1\sin\theta-\dot\xi_2\cos\theta)(c_2\cos\theta-c_1\sin\theta)\\
&~+\la^{-1}w_\rho [\cos w(\dot\xi_1\cos\theta+\dot\xi_2 \sin\theta)+\dot\la \rho w_\rho](c_1\cos\theta+c_2\sin\theta) \\
&~+\sin w {\rm Re}[(f_{U^\perp}^{(0)}+f_{U^\perp}^{(1)})e^{-i\gamma }]+ \la^{-1}\dot\la\rho^2 w_\rho^2\cos w\\
&~-\rho w^2_\rho\left[\cos\theta (\dot c_1+\la^{-1}\dot\la c_1) +\sin\theta (\dot c_2+\la^{-1}\dot\la c_2)\right]  \\
&~+{\rm Re}\left[re^{-i\theta}\frac{\lambda^2}{z^4}\int_{-T}^tp_1(s)(zk_z-z^2k_{zz})(z(r), t-s)ds+\mathcal R_1^1\right]\cos w\\
&~+\frac{2}{\lambda}w_{\rho}\left({\rm Re}(e^{-i\gamma }(\psi^0+\frac{r^2}{z}\pp_z \psi^0))\cos w - \frac{1}{r}{\rm Re}(e^{-i\gamma }r\psi^0)+\rho w_\rho{\rm Re}[e^{-i\theta}(\psi^1+\frac{r^2}{z}\psi_z^1)] \right),\\
&~-\frac2r \sin w(c_1\cos\theta+ c_2\sin \theta)\Bigg(\la^{-1} w_\rho {\rm Im}\left[e^{-i\gamma }ir\psi^0\right] +\sin w\cos w {\rm Re}\left[e^{-i\gamma }(\psi^0+\frac{r^2}{z}\pp_z \psi^0)\right]  \Bigg),
\end{align*}
and by Lemma \ref{linearization-lemma-2},
\begin{equation*}
\begin{aligned}
&~Q_{-\gamma}\tilde L_U[\Psi] \cdot W\approx Q_{-\gamma}\tilde L_U[Z^*(q)] \cdot W\\
=&~\frac{1}{\lambda}w_{\rho}^2\div(e^{-i\gamma }Z^*(q))) -\frac{2}{\lambda}w_\rho\sin w[\partial_{x_1}z^*_3(q)\cos\theta +\partial_{x_2}z^*_3(q)\sin\theta]  \\
&~+\frac{2}{\lambda} w_\rho\left[-\frac{1}{2}\rho^2w_\rho\div(e^{i\gamma }\bar{Z}^*(q)) \cos(2\theta) +\frac{1}{2}\rho^2w_\rho{\rm{curl }}(e^{i\gamma }\bar{Z}^*(q))\sin(2\theta)\right] .
\end{aligned}
\end{equation*}
So the terms in $\left( Q_{-\gamma}\tilde L_U[\Psi] \cdot W+   Q_{-\gamma}(S[U_*]-\widetilde{\mathcal R}_*) \cdot W\right)$ that might contribute in \eqref{orthogo-W1} are given by
\begin{align*}
&~-2\la^{-1}w_\rho\sin w[\partial_{x_1}z^*_3(q)\cos\theta +\partial_{x_2}z^*_3(q)\sin\theta]+Q_{-\gamma}\tilde L_U[\Psi-Z^*(q)] \cdot W\\
&~-\la^{-1}\rho^{-1}\sin w(\dot\xi_1\sin\theta-\dot\xi_2\cos\theta)(c_2\cos\theta-c_1\sin\theta)\\
&~+\la^{-1}w_\rho [\cos w(\dot\xi_1\cos\theta+\dot\xi_2 \sin\theta)+\dot\la \rho w_\rho](c_1\cos\theta+c_2\sin\theta) \\
&~+\sin w {\rm Re}[( f_{U^\perp}^{(1)})e^{-i\gamma }] -\rho w^2_\rho\left[\cos\theta (\dot c_1+\la^{-1}\dot\la c_1) +\sin\theta (\dot c_2+\la^{-1}\dot\la c_2)\right]  \\
&~+{\rm Re}\left[re^{-i\theta}\frac{\lambda^2}{z^4}\int_{-T}^tp_1(s)(zk_z-z^2k_{zz})(z(r), t-s)ds+\mathcal R_1^1\right]\cos w\\
&~+\frac{2}{\lambda}w_{\rho}\left(\rho w_\rho{\rm Re}[e^{-i\theta}(\psi^1+\frac{r^2}{z}\psi_z^1)] \right)\\
&~-\frac2r \sin w(c_1\cos\theta+ c_2\sin \theta) \la^{-1} w_\rho {\rm Im}\left[e^{-i\gamma }ir\psi^0\right] \\
&~-\frac2r \sin w(c_1\cos\theta+ c_2\sin \theta)\sin w\cos w {\rm Re}\left[e^{-i\gamma }(\psi^0+\frac{r^2}{z}\pp_z \psi^0)\right]   \\
=&~-2\la^{-1}w_\rho\sin w[\partial_{x_1}z^*_3(q)\cos\theta +\partial_{x_2}z^*_3(q)\sin\theta]+Q_{-\gamma}\tilde L_U[\Psi-Z^*(q)] \cdot W\\
&~+\la^{-1}w_\rho^2 \Big[\dot\xi_1 c_1\cos^2\theta+\dot\xi_2 c_2 \sin^2 \theta+\dot\xi_1 c_2 \sin\theta\cos \theta+\dot\xi_2 c_1 \sin\theta\cos\theta\Big]\\
&~+\la^{-1}w_\rho\Big[\sin2\theta(\dot\xi_1 c_2+\dot\xi_2 c_1)+\cos2\theta (\dot\xi_1 c_1 -\dot\xi_2 c_2)\Big]-\rho w^2_\rho\left[\cos\theta  \dot c_1 +\sin\theta  \dot c_2 \right]\\
&~+\sin w {\rm Re}[(\dot\xi e^{-i\theta})] {\rm Re}\Bigg[  \int_{-T}^tp_0(s) e^{-i\gamma(t)}\Big(k+\frac{r^2}{z}k_z \Big)(z , t-s)ds\Bigg]\\
&~+\cos w{\rm Re}\left[re^{-i\theta}\frac{\lambda^2}{z^4}\int_{-T}^tp_1(s)(zk_z-z^2k_{zz})(z(r), t-s)ds\right]\\
&~+\cos w{\rm Re}\Bigg[  e^{-i\theta}{\rm Re}[(\dot\xi e^{-i\theta})]\int_{-T}^tp_1(s)k(z(r), t-s)ds\\
&~\qquad\qquad\quad - \frac{re^{-i\theta}}{z^2}\left(\lambda\dot\lambda-{{\rm Re}}(re^{-i\theta}\dot\xi(t))\right)\int_{-T}^tp_1(s)zk_z(z(r), t-s)ds\Bigg]\\
&~+2\la^{-1}\rho w^2_{\rho}  {\rm Re}\left[e^{-i\theta}\int_{-T}^t p_1(s)\Big(k+\frac{r^2}{z}k_z\Big)(z,t-s)ds\right]\\
&~+2\la^{-1} w_\rho \sin w (c_1\cos\theta+c_2 \sin\theta) {\rm Re}\Bigg[  \int_{-T}^tp_0(s) e^{-i\gamma(t)} \frac{r^2}{z}k_z(z,t-s) ds\Bigg]\\
&~+2\la^{-1} w^2_\rho \sin w (c_1\cos\theta+c_2 \sin\theta) {\rm Re}\Bigg[  \int_{-T}^tp_0(s) e^{-i\gamma(t)}\Big(k+\frac{r^2}{z}k_z \Big)(z , t-s)ds\Bigg].
\end{align*}
So orthogonality condition \eqref{orthogo-W1}$_1$ implies
\begin{align*}
&~-2\pi \partial_{x_1}z^*_3(q) \la^{-1}\int_0^{+\infty} \rho w_\rho \sin^2 w d\rho-\pi\dot c_1 \int_0^{+\infty} \rho^2 w^2_\rho \sin w d\rho\\
&~+\int_0^{2\pi}\int_0^{+\infty} \left( Q_{-\gamma}\tilde L_U[\Psi-Z^*(q)] \cdot W\right) \sin w \cos\theta \rho d\rho d\theta \\
&~+\pi\dot\xi_1\int_0^{+\infty}\rho\sin^2 w  {\rm Re}\Bigg[  \int_{-T}^tp_0(s) e^{-i\gamma(t)}\Big(k +\frac{r^2}{z}k_z\Big)(z,t-s)ds\Bigg] d\rho\\
&~+\frac{\pi}{4} \la^{-1}\int_0^{+\infty}\rho^2 w_\rho^2 \sin w\cos w \int_{-T}^t {\rm Re}[p_1](s)(zk_z-z^2k_{zz})(z , t-s)ds d\rho\\
&~+\frac{\pi}{2}\dot\la \int_0^{+\infty} \rho^2 w_\rho \sin w \cos w \int_{-T}^t {\rm Re}[p_1](s) zk_z(z , t-s)ds d\rho\\
&~+2\pi\la^{-1}\int_0^{+\infty}\rho^2 w^2_{\rho} \sin w  \int_{-T}^t {\rm Re}[p_1](s)\Big(k+\frac{r^2}{z}k_z\Big)(z,t-s)ds  d\rho\\
&~+2\pi\la^{-1}c_1 \int_0^{+\infty} \rho w_\rho \sin^2 w   {\rm Re}\Bigg[  \int_{-T}^tp_0(s) e^{-i\gamma(t)} \frac{r^2}{z}k_z(z,t-s) ds\Bigg] d\rho\\
&~+2\pi \la^{-1} c_1 \int_0^{+\infty} \rho w^2_\rho \sin^2 w  {\rm Re}\Bigg[  \int_{-T}^tp_0(s) e^{-i\gamma(t)}\Big(k +\frac{r^2}{z}k_z\Big)(z,t-s)ds\Bigg] d\rho\\
&~=0,
\end{align*}
and \eqref{orthogo-W1}$_2$ implies
\begin{align*}
&~-2\pi \partial_{x_2}z^*_3(q) \la^{-1}\int_0^{+\infty} \rho w_\rho \sin^2 w d\rho-\pi\dot c_2 \int_0^{+\infty} \rho^2 w^2_\rho \sin w d\rho\\
&~+\int_0^{2\pi}\int_0^{+\infty} \left( Q_{-\gamma}\tilde L_U[\Psi-Z^*(q)] \cdot W\right) \sin w \sin\theta \rho d\rho d\theta \\
&~+\pi\dot\xi_2\int_0^{+\infty}\rho\sin^2 w  {\rm Re}\Bigg[  \int_{-T}^tp_0(s) e^{-i\gamma(t)}\Big(k(z , t-s)+\frac{r^2}{z}k_z(z,t-s)\Big)ds\Bigg] d\rho\\
&~+\frac{\pi}{4} \la^{-1}\int_0^{+\infty}\rho^2 w_\rho^2 \sin w\cos w \int_{-T}^t {\rm Im}[p_1](s)(zk_z-z^2k_{zz})(z(r), t-s)ds d\rho\\
&~+\frac{\pi}{2}\dot\la \int_0^{+\infty} \rho^2 w_\rho \sin w \cos w \int_{-T}^t {\rm Im}[p_1](s) zk_z(z , t-s)ds d\rho\\
&~+2\pi\la^{-1}\int_0^{+\infty}\rho^2 w^2_{\rho} \sin w  \int_{-T}^t {\rm Im}[p_1](s)\Big(k+\frac{r^2}{z}k_z\Big)(z,t-s)ds  d\rho\\
&~+2\pi\la^{-1}c_2 \int_0^{+\infty} \rho w_\rho \sin^2 w   {\rm Re}\Bigg[  \int_{-T}^tp_0(s) e^{-i\gamma(t)} \frac{r^2}{z}k_z(z,t-s) ds\Bigg] d\rho\\
&~+2\pi \la^{-1} c_2 \int_0^{+\infty} \rho w^2_\rho \sin^2 w  {\rm Re}\Bigg[  \int_{-T}^tp_0(s) e^{-i\gamma(t)}\Big(k +\frac{r^2}{z}k_z\Big)(z,t-s)ds\Bigg] d\rho\\
&~=0.
\end{align*}
Neglecting the terms that have faster vanishing in time, we arrive at
\begin{align*}
&~4\partial_{x_1}z^*_3(q)    -2\la \dot c_1  +\frac{\la}{\pi}\int_0^{2\pi}\int_0^{+\infty} \left( Q_{-\gamma}\tilde L_U[\Psi-Z^*(q)] \cdot W\right) \sin w \cos\theta \rho d\rho d\theta \\
&~-  \int_0^{+\infty}\rho^2 w_\rho^2 \sin w\cos w \int_{-T}^t \frac{{\rm Re}[p_1](s)}{t-s}\zeta^2 K_{\zeta\zeta}ds d\rho\\
&~+2 \int_0^{+\infty}\rho^2 w^2_{\rho} \sin w  \int_{-T}^t \frac{{\rm Re}[p_1](s)}{t-s}\Big(K(\zeta)+\frac{2\rho^2}{1+\rho^2}\zeta K_\zeta\Big) ds  d\rho\\
&~+2 c_1 \int_0^{+\infty} \rho w_\rho \sin^2 w    \int_{-T}^t \frac{ {\rm Re}[p_0(s)e^{-i\gamma(t)}] }{t-s}  \frac{2\rho^2}{1+\rho^2} \zeta K_\zeta ds  d\rho\\
&~+2  c_1 \int_0^{+\infty} \rho w^2_\rho \sin^2 w   \int_{-T}^t \frac{ {\rm Re}[p_0(s) e^{-i\gamma(t)}]}{t-s} \Big(K(\zeta)+\frac{2\rho^2}{1+\rho^2}\zeta K_\zeta\Big) ds d\rho\\
\approx &~ 0,
\end{align*}
and
\begin{align*}
&~4 \partial_{x_2}z^*_3(q)  -2\la \dot c_2  +\frac{\la}{\pi}\int_0^{2\pi}\int_0^{+\infty} \left( Q_{-\gamma}\tilde L_U[\Psi-Z^*(q)] \cdot W\right) \sin w \sin\theta \rho d\rho d\theta \\
&~-  \int_0^{+\infty}\rho^2 w_\rho^2 \sin w\cos w \int_{-T}^t \frac{{\rm Im}[p_1](s)}{t-s}\zeta^2 K_{\zeta\zeta}ds d\rho\\
&~+2 \int_0^{+\infty}\rho^2 w^2_{\rho} \sin w  \int_{-T}^t \frac{{\rm Im}[p_1](s)}{t-s}\Big(K(\zeta)+\frac{2\rho^2}{1+\rho^2}\zeta K_\zeta\Big) ds  d\rho\\
&~+2 c_2 \int_0^{+\infty} \rho w_\rho \sin^2 w    \int_{-T}^t \frac{ {\rm Re}[p_0(s)e^{-i\gamma(t)}] }{t-s}  \frac{2\rho^2}{1+\rho^2} \zeta K_\zeta ds  d\rho\\
&~+2  c_2 \int_0^{+\infty} \rho w^2_\rho \sin^2 w   \int_{-T}^t \frac{ {\rm Re}[p_0(s) e^{-i\gamma(t)}]}{t-s} \Big(K(\zeta)+\frac{2\rho^2}{1+\rho^2}\zeta K_\zeta\Big) ds d\rho\\
\approx &~ 0,
\end{align*}
where $\zeta$ and $K(\zeta)$ are defined in \eqref{def-Kzeta}. Therefore, in complex form, the orthogonality condition in the $W$-direction reads
\begin{equation*}
\begin{aligned}
&~\int_{-T}^t \frac{p_1(s)}{t-s} \Gamma_3\left(\frac{\lambda^2(t)}{t-s}\right)ds+2\la \dot \cc + \Gamma_4[p_0] \cc  \\
\approx &~  4 \big(\partial_{x_1}z^*_3(q)+i \partial_{x_2}z^*_3(q)\big) +\frac{\la}{\pi}\int_0^{2\pi}\int_0^{+\infty} \left( Q_{-\gamma}\tilde L_U[\Psi-Z^*(q)] \cdot W\right) \sin w e^{i\theta} \rho d\rho d\theta
\end{aligned}
\end{equation*}
where
\begin{equation*}
\begin{aligned}
&~ p_1(t) = -2(\la \cc)', \quad \cc(t)=c_1(t) +ic_2(t),\\
&~\Gamma_3(\tau) = \int_{0}^{+\infty}\rho^2 w_\rho^2 \sin w\left[\cos w\zeta^2 K_{\zeta\zeta}(\zeta)-2K(\zeta) - \frac{4\rho^2}{1+\rho^2}\zeta K_\zeta(\zeta)\right]_{\zeta =\tau(1+\rho^2)}d\rho,
\end{aligned}
\end{equation*}
and
\begin{equation*}
\begin{aligned}
\Gamma_4[p_0] =&~-2\int_0^{+\infty} \rho w_\rho \sin^2 w    \int_{-T}^t \frac{ {\rm Re}[p_0(s)e^{-i\gamma(t)}] }{t-s}  \frac{2\rho^2}{1+\rho^2} \zeta K_\zeta ds  d\rho\\
&~ -2\int_0^{+\infty} \rho w^2_\rho \sin^2 w   \int_{-T}^t \frac{ {\rm Re}[p_0(s) e^{-i\gamma(t)}]}{t-s} \Big(K(\zeta)+\frac{2\rho^2}{1+\rho^2}\zeta K_\zeta\Big) ds d\rho\\
=&~ \int_{-T}^t \frac{ {\rm Re}[p_0(s) e^{-i\gamma(t)}]}{t-s}\Gamma_5\left(\frac{\lambda^2(t)}{t-s}\right)ds
\end{aligned}
\end{equation*}
with
\begin{equation*}
\begin{aligned}
\Gamma_5(\tau) = -2\int_{0}^{+\infty}\rho  w_\rho  \sin^2 w\left[(1+w_\rho) \frac{2\rho^2}{1+\rho^2}\zeta K_\zeta(\zeta)+w_\rho K(\zeta)\right]_{\zeta =\tau(1+\rho^2)}d\rho.
\end{aligned}
\end{equation*}

It is direct to see that
$$
\Gamma_3(\tau)=\left\{
\begin{aligned}
&-1+O(\tau), ~&~\tau\leq 1,\\
&O\left(\frac1{\tau}\right), ~&~\tau> 1,\\
\end{aligned} \quad
\right.
\quad
\Gamma_5(\tau)=\left\{
\begin{aligned}
&-\frac23+O(\tau), ~&~\tau\leq 1,\\
&O\left(\frac1{\tau}\right), ~&~\tau> 1,\\
\end{aligned} \quad
\right.
$$
and thus above reduced problem, at main order, can be written in the complex form as
\begin{equation*}
\int_{-T}^{t-\la^2(t)}\frac{p_1(s)}{t-s}ds +\frac23\left(\int_{-T}^{t-\la^2(t)}\frac{{\rm Re}[p_0(s) e^{-i\gamma(t)}]}{t-s}ds\right)  \cc =-4 \big(\partial_{x_1}z^*_3(q)+i \partial_{x_2}z^*_3(q)\big)+f(t),
\end{equation*}
where
$$
f(t):=-\frac{\la}{\pi}\int_0^{2\pi}\int_0^{+\infty} \left( Q_{-\gamma}\tilde L_U[\Psi-Z^*(q)] \cdot W\right) \sin w e^{i\theta} \rho d\rho d\theta.
$$
In order for $p_1(t)$ to vanish as $t\to T$, we require
$$
\partial_{x_1}z^*_3(q)=\partial_{x_2}z^*_3(q)=0.
$$
Then the new balancing condition becomes
\begin{equation}\label{eqn-p_1}
\int_{-T}^{t-\la^2(t)}\frac{p_1(s)}{t-s}ds +\frac23\left(\int_{-T}^{t-\la^2(t)}\frac{{\rm Re}[p_0(s) e^{-i\gamma(t)}]}{t-s}ds\right)  \cc=f(t).
\end{equation}
Here, $f(t)$ is essentially the contribution from the outer profile $\Psi$, and we will later see from the weighted space chosen for $\Psi$ that
$$
|f(t)|\lesssim \la_*^{\Theta}(t), \quad \la_*(t)=\frac{(T-t)|\log T|}{|\log(T-t)|^2}
$$
for some $0<\Theta<1.$

\medskip

We now derive the leading asymptotic behavior of \eqref{eqn-p_1} by approximating it by an ODE, and the non-local parts remained will be solved by a linear theory in later section together with the fixed point argument.

Notice, by the leading order of $\la\sim \la_*$, that
\begin{align*}
\int_{-T}^{t-\la^2(t)}\frac{p_1(s)}{t-s}ds=&~\int_{-T}^{t-(T-t)} \frac{p_1(s)}{t-s}ds+p_1(t)\log(t-s)\Big|_{s=t-\la^2(t)}^{t-(T-t)}+\int_{t-(T-t)}^{t-\la^2(t)}\frac{p_1(s)-p_1(t)}{t-s}ds\\
=&~\int_{-T}^{t-(T-t)} \frac{p_1(s)}{t-s}ds+p_1(t)\left[\log(T-t)-2\log\la(t)\right]+\int_{t-(T-t)}^{t-\la^2(t)}\frac{p_1(s)-p_1(t)}{t-s}ds\\
\approx&~ \int_{-T}^t \frac{p_1(s)}{T-s}ds-p_1(t)\log(T-t)\\
:=&~\mathcal P_1(t),
\end{align*}
and exactly the same argument works for
\begin{align*}
\int_{-T}^{t-\la^2(t)}\frac{{\rm Re}[p_0(s) e^{-i\gamma(t)}]}{t-s}ds
\approx &~ \int_{-T}^t \frac{{\rm Re}[p_0(s) e^{-i\gamma(t)}]}{T-s}ds-{\rm Re}[p_0(t) e^{-i\gamma(t)}]\log(T-t)\\
:=&~\mathcal P_0(t).
\end{align*}
We then have an approximation for equation \eqref{eqn-p_1}:
\begin{equation*}
\mathcal P_1(t)+\frac23 \mathcal P_0(t) \cc =f(t),
\end{equation*}
and we want to solve for $p_1(t)$ from here.
Notice that
\begin{align*}
&-\log(T-t)\mathcal P_1'=\left[\log^2(T-t)p_1(t)\right]',\\
&-\log(T-t)\mathcal P_0'=\left[\log^2(T-t){\rm Re}[p_0(t) e^{-i\gamma(t)}]\right]'.
\end{align*}
Recall that
$$
p_1=-2(\la \cc)',\quad p_0=-2(\dot\la+i\la\dot\gamma) e^{i\gamma}.
$$
Then we get
\begin{equation}\label{eqn-527527527}
2\Big[\log^2(T-t) (\la \cc)'\Big]'+\frac43 \Big[\log^2(T-t) \dot\la\Big]' \cc+\frac23 \log(T-t) \mathcal P_0 \cc'=\log(T-t) f'(t).
\end{equation}
The third term above can in fact be approximated by
\begin{align*}
\mathcal P_0 \approx&~ 2\dot\la -2{\rm Re}\Big[\div(e^{-i\gamma_*}Z^*(q)) +i {\rm{curl }}(e^{-i\gamma_*}Z^*(q))\Big] \\
\approx&~ -2{\rm Re}\Big[\div(e^{-i\gamma_*}Z^*(q)) +i {\rm{curl }}(e^{-i\gamma_*}Z^*(q))\Big]
\end{align*}
due to \eqref{eqn-585858}, where $\gamma_*$ is the rotation angle. Let us now write
\begin{equation*}
\mathbf Z:=-2{\rm Re}\Big[\div(e^{-i\gamma_*}Z^*(q)) +i {\rm{curl }}(e^{-i\gamma_*}Z^*(q))\Big],
\end{equation*}
and thus equation \eqref{eqn-527527527} reads approximately
\begin{equation*}
2\Big[\log^2(T-t) (\la \cc)'\Big]'+\frac23 \log(T-t) \mathbf Z \cc'=\log(T-t) f'(t)
\end{equation*}
as $\frac43 \Big[\log^2(T-t) \dot\la\Big]' \cc$ is relatively smaller in size. Above equation should be understood in the weak sense when solving rigorously $\cc$. But for now, we only single out the main part, smooth for $t\in(0,T)$, of $\cc$. The full solvability of $\cc$ is given in Section \ref{sec-lt-c1c2}.

 Integrating above equation implies
\begin{equation*}
-2\log^2(T-t) (\la \cc)'+\frac{2\mathbf Z}{3} \int_t^T \log(T-s) \cc'(s)ds=-f(t)\log(T-t)+\int_t^T \frac{f(s)}{T-s}ds
\end{equation*}
which is again approximately
\begin{equation}\label{eqn-532532532}
-2\log^2(T-t) (\la \cc)'-\frac{2\mathbf Z}{3}\log(T-t) \cc(t)= -f(t)\log(T-t)+\int_t^T \frac{f(s)}{T-s}ds.
\end{equation}
The equation for $\la \cc$ is given by
\begin{equation*}
\left[(T-t)^{-\frac{\mathbf Z}{6}\frac{\log(T-t)}{|\log T|}}(\la\cc)\right]'=(T-t)^{-\frac{\mathbf Z}{6}\frac{\log(T-t)}{|\log T|}}\left(\frac{f(t)}{2\log(T-t)}-\frac{1}{2\log^2(T-t)}\int_t^T \frac{f(s)}{T-s}ds\right).
\end{equation*}
Thanks to \eqref{mathbfZ}, i.e., $\mathbf Z\geq 0$, one can find a solution $\cc$ which vanishes as $t\to T$. Indeed,
\begin{align*}
|\la \cc|=&~\left|(T-t)^{\frac{\mathbf Z}{6}\frac{\log(T-t)}{|\log T|}}\int_t^{T} (T-\tau)^{-\frac{\mathbf Z}{6}\frac{\log(T-\tau)}{|\log T|}}\left(\frac{f(\tau)}{2\log(T-\tau)}-\frac{1}{2\log^2(T-\tau)}\int_\tau^T \frac{f(s)}{T-s}ds\right)d\tau\right|\\
\lesssim&~\frac{|f(t)|}{|\log(T-t)|}\left|(T-t)^{\frac{\mathbf Z}{6}\frac{\log(T-t)}{|\log T|}}\int_t^{T} (T-\tau)^{-\frac{\mathbf Z}{6}\frac{\log(T-\tau)}{|\log T|}}d\tau \right|\\
\lesssim&~
\begin{cases}
\frac{3|\log T|(T-t)|f(t)|}{\mathbf Z|\log(T-t)|^2}, &~\mathbf Z >0,\\
\frac{(T-t)|f(t)|}{|\log(T-t)|}, &~\mathbf Z =0,\\
\end{cases}
\end{align*}
and
\begin{equation*}
|\cc|\lesssim
\begin{cases}
 \la_*^{\Theta}(t), &~\mathbf Z >0,\\
 \la_*^{\Theta}(t) \frac{|\log(T-t)|}{|\log T|} , &~\mathbf Z =0\\
\end{cases}
\end{equation*}
because of $|f(t)|\lesssim \la_*^{\Theta}(t)$. Throughout the rest of this paper, we consider the most representative case when $\mathbf Z >0$, so we have
\begin{equation}\label{est-cc}
|\cc|\lesssim
 \la_*^{\Theta}(t).
\end{equation}
\begin{remark}
 The case with $\mathbf Z=0$ is in fact more special as the leading dynamics of $c_1$-$c_2$ system in such case reduces to the one similar to that of $\la$-$\gamma$ system. The coupling between $\la$-$\gamma$ system and $c_1$-$c_2$ system only appears in the case $\mathbf Z>0$.
\end{remark}

\bigskip

\section{Linear theories}\label{lineartheories}

\medskip

In this section, we give the linear theories concerning the a priori estimates for the linear problem of outer problem \eqref{gluing-sys}$_3$, and the inner problems in the $W$-direction \eqref{gluing-sys}$_1$ and on $W^\perp$ \eqref{gluing-sys}$_2$.

\subsection{Linear theory for the outer problem}
For  $q\in \R^2 $ and $T>0$ sufficiently small, we consider the problem
\begin{align}
\label{heat-eq0}
\begin{cases}
\psi_t   = \Delta_x \psi + g(x,t) &\inn~ \R^2  \times (0,T), \\
\psi(x,0)   =  Z^*(x)    &\inn ~ \R^2
\end{cases}
\end{align}
for smooth initial value $Z^*$ with compact support. The RHS $g$ of \eqref{heat-eq0} is assumed to be bounded with respect to some weights that appear in the outer problem \eqref{gluing-sys}$_3$.
Thus we define the weights
\begin{align}\label{weights}
\left\{
\begin{aligned}
\varrho_1 & :=   \lambda_*^{\Theta}  (\lambda_* R)^{-1}  \1_{ \{ r \leq 3\la_*R \} },
\\
\varrho_2 & := T^{-\sigma_0}  \frac{\lambda_*^{1-\sigma_0}}{r^2}  \1_{ \{ r \geq  \la_*R \} },
\\
\varrho_3 & := T^{-\sigma_0},
\end{aligned}
\right.
\end{align}
where $r= |x-q|$, $\Theta>0$ and  $\sigma_0>0$ is  small.
For a function $g(x,t)$ we define the $L^\infty$-weighted norm
\begin{align}
\label{topo-outRHS}
\|g\|_{**} : =   \sup_{ \gamma  \times (0,T)}  \Big ( 1 + \sum_{i=1}^3 \varrho_i(x,t)\, \Big )^{-1}  {|g(x,t)|} .
\end{align}
We define the $L^\infty$-weighted norm for $\psi$
\begin{align}
\nonumber
\| \psi\|_{\sharp, \Theta,\alpha}
&:=
\lambda^{-\Theta}_*(0)
\frac{1}{|\log T|  \lambda_*(0) R(0) }\|\psi\|_{L^\infty(\R^2 \times (0,T))}
+ \lambda^{-\Theta}_*(0) \|\nabla_x \psi\|_{L^\infty(\R^2 \times (0,T))}
\\
\nonumber
&~\quad
+
\sup_{\R^2 \times (0,T)}   \lambda^{-\Theta-1}_*(t) R^{-1}(t)
\frac{1}{|\log(T-t)|} |\psi(x,t)-\psi(x,T)|
\\
\nonumber
&~\quad
+ \sup_{\R^2 \times (0,T)} \, \lambda^{-\Theta}_*(t)
|\nabla_x \psi(x,t)-\nabla_x \psi(x,T) |
\\
\label{topo-out}
& ~\quad
+ \sup_{}
\lambda^{-\Theta}_*(t)
(\lambda_*(t) R(t))^{\alpha}  \frac {|\nabla_x \psi(x,t) -\nabla_x \psi(x',t') |}{ ( |x-x'|^2 + |t-t'|)^{\alpha/2   }} ,
\end{align}
where $\Theta\in(0,1)$, $\alpha \in (0,\frac{1}{2})$, and the last supremum is taken in the region
\[
x,\, x'\in \R^2 ,\quad  t,\, t'\in (0,T), \quad |x-x'|\le 2 \la_*(t)R(t), \quad  |t-t'| < \frac 14 (T-t) .
\]

Also, we choose the initial data $Z^*(x)$ so that
\begin{equation}\label{outer-vanishing}
\psi_1(q,T)=\psi_2(q,T)=\psi_3(q,T)=\pp_{x_1}\psi_3(q,T)=\pp_{x_2}\psi_3(q,T)=0.
\end{equation}
This can be achieved by the following: choose regular maps $\mathscr Z_{k}$ with compact support, $k=1,\dots, 5$ satisfying
$$
\mathscr Z_{k}(q)=e^{(5)}_k
$$
where $\{e^{(5)}_k\}_{k=1}^5$ forms an orthonormal basis of $\R^5$. We choose $\mathscr C_k$ so that
\begin{equation}\label{vanishings}
\begin{aligned}
&~\Big(\Gamma_{\R^2} \bullet~ \mathcal G,~ [\nabla_x (\Gamma_{\R^2} \bullet~ \mathcal G)]_3\Big)(q,T)+\Big(\Gamma_{\R^2} \circ~ Z^*,~ [\nabla_x (\Gamma_{\R^2} \circ Z^*)]_3\Big)(q,T)\\
&~+\sum_{k=1}^5 \mathscr C_k(\Gamma_{\R^2}\circ \mathscr Z_{k},~[\nabla_x (\Gamma_{\R^2}\circ \mathscr Z_{k})]_3)(q,T)=0.
\end{aligned}
\end{equation}
The first three and the last two vanishing conditions  in \eqref{outer-vanishing} are needed in the gluing process and in $c_1$-$c_2$  system (i.e. $p_1(t)$), respectively.

We shall measure the solution $\psi$ to the problem \eqref{heat-eq0}
in the norm $\| \ \|_{\sharp,\Theta,\alpha}$ defined in \eqref{topo-out}. We invoke some useful estimates proved in \cite[Appendix A]{17HMF} as follows.
\begin{prop}[\cite{17HMF}]
\label{prop3}
For $T>0$ sufficiently small, there is a linear operator that maps a function $g:\R^2  \times (0,T) \to \R^3$ with  $\|g\|_{**}<\infty$ into $\psi$ which solves problem \eqref{heat-eq0}.
Moreover, the following estimate holds
\begin{align*}
\| \psi\|_{\sharp, \Theta ,\alpha}
 \leq C \|g\|_{**} ,
\end{align*}
where $\alpha\in(0,\frac{1}{2})$.
\end{prop}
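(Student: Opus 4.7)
The plan is to represent the solution via Duhamel's formula,
\[
\psi(x,t) \;=\; (\Gamma_{\R^2} \circ Z^*)(x,t) \;+\; (\Gamma_{\R^2} \bullet g)(x,t),
\]
and estimate each piece against the norm \eqref{topo-out}. The homogeneous contribution $\Gamma_{\R^2}\circ Z^*$ is smooth and bounded on $\R^2\times[0,T]$ since $Z^*$ is regular with compact support; together with parabolic regularity and the choice of constants $\mathscr C_k$ in \eqref{vanishings}, it Taylor-expands around $(q,T)$ at rate $O(|x-q|+(T-t)^{1/2})$, which is much stronger than the $\la_*^\Theta$ rates appearing in $\|\cdot\|_{\sharp,\Theta,\alpha}$ and hence contributes negligibly. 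The real work is the inhomogeneous term $\Gamma_{\R^2}\bullet g$.

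Since $|g|\le \|g\|_{**}(1+\varrho_1+\varrho_2+\varrho_3)$, by linearity it suffices to estimate $\Gamma_{\R^2}\bullet \varrho_i$ and $\Gamma_{\R^2}\bullet 1$ separately in the norm \eqref{topo-out}. For $\varrho_1=\la_*^{\Theta}(\la_* R)^{-1}\mathbf 1_{\{r\le 3\la_*R\}}$ the spatial mass satisfies $\int_{\R^2}\varrho_1(y,s)\,dy\lesssim \la_*^{\Theta}(s)\la_*(s)R(s)$, and convolving with the heat kernel and integrating in $s$ produces a logarithmic loss, giving the $L^\infty$ bound $\lesssim \|g\|_{**}\la_*^{\Theta}(0)\la_*(0)R(0)|\log T|$ prescribed by the first term of $\|\cdot\|_{\sharp,\Theta,\alpha}$. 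For $\varrho_2=T^{-\sigma_0}\la_*^{1-\sigma_0}r^{-2}\mathbf 1_{\{r\ge \la_*R\}}$ one uses the classical pointwise bound $|\Gamma_{\R^2}\bullet r^{-2}\mathbf 1_{\{r\ge \la_*R\}}|\lesssim 1+|\log(\la_*R)|$, which after multiplying by $T^{-\sigma_0}\la_*^{1-\sigma_0}$ gives the desired $\la_*^{\Theta}$ behavior for $T$ small enough. The uniform weight $\varrho_3=T^{-\sigma_0}$ and the constant $1$ produce contributions of order $T^{1-\sigma_0}$ and $T$ respectively, which are absorbed. For the gradient, one uses the bound $|\nabla_y\Gamma_{\R^2}(x-y,t-s)|\lesssim (t-s)^{-1/2}\Gamma_{\R^2}(x-y,c(t-s))$ and carries out the same three-weight decomposition.

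The time-difference bounds $|\psi(x,t)-\psi(x,T)|$ and $|\nabla_x\psi(x,t)-\nabla_x\psi(x,T)|$ follow by writing
\[
\psi(x,T)-\psi(x,t) \;=\; \int_t^{T}\!\!\!\int_{\R^2}\Gamma_{\R^2}(x-y,\tau-t)\,g(y,\tau)\,dy\,d\tau + (\text{smoothing terms})
\]
and applying the same weighted heat-kernel estimates on the interval $(t,T)$, using that the weights $\varrho_i$ depend on $\la_*(\tau)$, which is monotone in $\tau$. For the parabolic Hölder seminorm of $\nabla_x\psi$ at the interior scale $\la_*(t)R(t)$, one uses a standard freezing argument: split $g$ into a piece supported in the parabolic cylinder of radius $\la_*(t)R(t)$ around $(x,t)$ and its complement. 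On the far piece, $\nabla_x\Gamma_{\R^2}$ is itself Hölder continuous with the required exponent; on the near piece, Schauder-type theory for the heat equation with $L^\infty$ source gives $C^{1,\alpha}$ in parabolic sense, and the $(\la_* R)^{\alpha}$ scaling exactly absorbs the oscillation.

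The main obstacle is bookkeeping: each of the three weights generates a different scale, and one must verify that its contribution to each of the five terms of $\|\cdot\|_{\sharp,\Theta,\alpha}$ is finite with a constant independent of $T$. The procedure is essentially the one developed in \cite[Appendix A]{17HMF} (the heat flow of harmonic maps), where exactly the same type of inner/outer scales $\la_*(t)R(t)$ appear; the proof carries over verbatim to our $\R^3$-valued setting since the equation is componentwise linear. The most delicate estimate is the Hölder seminorm at the inner scale, which requires that the cut-off radius $\la_*(t)R(t)$ shrinks slower than $\sqrt{T-t}$; this is guaranteed by $R(t)=\la_*^{-\beta}(t)$ with $\beta$ small. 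Once these estimates are assembled, summing over the three weight classes yields $\|\psi\|_{\sharp,\Theta,\alpha}\le C\|g\|_{**}$.
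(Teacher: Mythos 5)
The paper does not prove this proposition; it is stated as an import from \cite[Appendix A]{17HMF}, so there is no internal proof to compare against. Your plan of Duhamel representation plus a weight-by-weight estimate is indeed the architecture of that appendix, and you correctly name the three weight classes, the crossover scale $(\la_* R)^2$ for the heat-kernel convolution, and the Schauder/freezing argument for the H\"older seminorm. As a reconstruction it is faithful at the level of strategy.

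However, two specific points are wrong or at least dangerously glossed.

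First, the treatment of the homogeneous term. You claim $\Gamma_{\R^2}\circ Z^*$ ``contributes negligibly,'' but this cannot be right for a generic fixed $Z^*$: the first term of $\|\cdot\|_{\sharp,\Theta,\alpha}$ weights $\|\psi\|_{L^\infty}$ by $\big(\lambda_*^{\Theta}(0)|\log T|\lambda_*(0)R(0)\big)^{-1}\to\infty$ as $T\to 0$, whereas $\|e^{t\Delta}Z^*\|_{L^\infty}$ is bounded below by a constant independent of $T$ by the maximum principle. So if $Z^*\neq 0$, the homogeneous piece does \emph{not} satisfy $\|\cdot\|_{\sharp,\Theta,\alpha}\lesssim\|g\|_{**}$; indeed it blows up. The phrase ``linear operator'' in the proposition signals that the estimate is really for the Duhamel operator $g\mapsto\Gamma_{\R^2}\bullet g$ with zero Cauchy data, and the $Z^*$ (and the corrector terms $\mathscr C_k\mathscr Z_k$) are estimated by a different argument in the gluing step of Section~\ref{gluingsystem}; see the separate discussion of the Cauchy integrals there. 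Also, the constants $\mathscr C_k$ from \eqref{vanishings} are not part of problem \eqref{heat-eq0} at all; folding them into this proposition conflates the linear theory with the later ansatz.

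Second, the scale comparison at the end is reversed. With $R(t)=\la_*^{-\beta}(t)$ and $\beta\in(0,\tfrac12)$, one has $\la_*(t)R(t)=\la_*^{1-\beta}(t)\sim (T-t)^{1-\beta}$ up to logarithms, while $\sqrt{T-t}=(T-t)^{1/2}$; since $1-\beta>1/2$, the cutoff radius $\la_* R$ shrinks \emph{faster}, not slower, than $\sqrt{T-t}$, so the inner spatial scale is well inside the parabolic scale. You state the opposite and then claim it is ``guaranteed by $\beta$ small,'' which is internally inconsistent even with your own intended inequality. The correct separation $\la_* R\ll\sqrt{T-t}$ is what permits the interior Schauder estimate on the parabolic cylinder of radius $\la_* R$ with the source frozen; with the reversed inequality the freezing argument would not close.

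The $\varrho_2$ estimate is also stated too casually: the coefficient $\la_*^{1-\sigma_0}(s)$ and the cutoff radius $\la_*(s)R(s)$ both depend on the integration variable $s$, so one cannot simply quote a static bound for $\Gamma_{\R^2}\bullet r^{-2}\mathbf 1_{\{r\ge\la_*R\}}$ and then ``multiply by'' the prefactor. The $s$-integral of $\la_*^{1-\sigma_0}(s)\log(\sqrt{t-s}/(\la_*(s)R(s)))/(t-s)$ must be carried out, and it is there that the small parameter $\sigma_0>0$ is actually used.

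None of these are fatal to the strategy, but as written the sketch would not establish the proposition; the initial-data issue in particular is a genuine logical gap, not a matter of bookkeeping.
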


\medskip

\subsection{Linear theories for the inner problems}\label{sec-linearinner}

For the inner problem, we consider the model equation for the inner problem as follows.
\begin{equation}\label{linearinnerproblem}
\left\{
\begin{aligned}
&\partial_\tau\phi = L_W[\phi] + h(y, \tau), &~|y|\leq 2 R(t(\tau)),~\tau\in(\tau_0,+\infty),\\
&\phi(y,\tau_0)  = 0, &~|y|\leq 2R(t(\tau_0)),\\
\end{aligned}
\right.
\end{equation}
where
$$
 \tau =\tau(t) = \tau_0 + \int_{0}^t\frac{ds}{\lambda(s)^2},
$$
for $\tau_0$ satisfying $t(\tau_0)=0$, and $L_W$ is the linearized operator defined in \eqref{def-linearizedoperator}. We write the solution $\phi = \phi(\rho, \theta,\tau)$, ~$y=\rho e^{i\theta}$ of (\ref{linearinnerproblem}) as
\begin{equation*}
\phi(\rho, \theta, \tau) = \phi_1(\rho, \theta, \tau)E_1 + \phi_2(\rho, \theta, \tau)E_2 + \phi_0(\rho, \theta, \tau)W.
\end{equation*}
We will deal with $\Pi_{W^\perp}[\phi]=\phi_1(\rho, \theta, \tau)E_1 + \phi_2(\rho, \theta, \tau)E_2$ and  $\Pi_{W}[\phi]=\phi_0(\rho, \theta, \tau)W$ separately, and consider
\begin{equation}\label{inner-Wperp}
\left\{
\begin{aligned}
&\partial_\tau(\Pi_{W^\perp}[\phi]) = L_W[\Pi_{W^\perp}[\phi]] + \Pi_{W^\perp}[h](y, \tau), &~|y|\leq 2 R(t(\tau)),~\tau\in(\tau_0,+\infty),\\
&\Pi_{W^\perp}[\phi](y,\tau_0)  = 0, &~|y|\leq 2R(t(\tau_0)),\\
\end{aligned}
\right.
\end{equation}
and
\begin{equation}\label{inner-W}
\left\{
\begin{aligned}
&\partial_\tau(\Pi_{W}[\phi]) = L_W[\Pi_{W}[\phi]] + \Pi_{W}[h](y, \tau), &~|y|\leq 2 R(t(\tau)),~\tau\in(\tau_0,+\infty),\\
&\Pi_{W}[\phi](y,\tau_0)  = 0, &~|y|\leq 2R(t(\tau_0)).\\
\end{aligned}
\right.
\end{equation}
We will measure the RHS by the norm:
\begin{equation}\label{topo-inRHS}
\|h\|_{\nu,a}:=\sup_{|y|\leq 2R(t),~t\in(0,T)} \la_*^{-\nu}(t)\langle y\rangle^{a}|h(y,t)|
\end{equation}
if we use $(y,t)$ variables and by
\begin{equation*}
\|h\|^{(\tau)}_{v,\ell}:=\sup_{|y|\leq 2R(t),~t\in(0,T)} v^{-1}(\tau)\langle y\rangle^{\ell}|h(y,\tau)|
\end{equation*}
if we use $(y,\tau)$ variables. Here, $v(\tau)$ is some H\"older continuous function decaying in $\tau$ as $\tau\to\infty$.

\medskip

\subsubsection{Linear theory for the inner problem on $W^\perp$}

If we use the complex notation $W^\perp$
$$
(\Pi_{W^\perp}[\phi])_{\mathbb C}=\phi_1+i\phi_2,
$$
then the equation on $W^\perp$ reads as
\begin{equation*}
\pp_{\tau} (\Pi_{W^\perp}[\phi])_{\mathbb C}=\Delta (\Pi_{W^\perp}[\phi])_{\mathbb C}-\frac1{\rho^2}(\Pi_{W^\perp}[\phi])_{\mathbb C}+\frac{8}{(1+\rho^2)^2}(\Pi_{W^\perp}[\phi])_{\mathbb C}+i\frac{2(\rho^2-1)}{\rho^2(\rho^2+1)}\pp_{\theta}(\Pi_{W^\perp}[\phi])_{\mathbb C}+(\Pi_{W^\perp}[h])_{\mathbb C}
\end{equation*}
with zero initial data.
We further expand  $(\Pi_{W^\perp}[\phi])_{\mathbb C}$ and   $(\Pi_{W^\perp}[h])_{\mathbb C}$  in Fourier modes
$$
(\Pi_{W^\perp}[\phi])_{\mathbb C}=\sum_{k\in\mathbb Z}\phi_k^{W^\perp}(\rho,\tau) e^{ik\theta}, \quad (\Pi_{W^\perp}[h])_{\mathbb C} =\sum_{k\in\mathbb Z}h_k^{W^\perp}(\rho,\tau) e^{ik\theta}.
$$
Then the complex-valued scalar $\phi_k^{W^\perp}$ at mode $k$ satisfies
\begin{equation}\label{eqn-scalar}
\begin{aligned}
\pp_{\tau} \phi_k^{W^\perp}= &~\partial_{\rho\rho}\phi_k^{W^\perp}+\frac1\rho \partial_{\rho}\phi_k^{W^\perp}-\frac{1+k^2}{\rho^2}\phi_k^{W^\perp}+\frac{8}{(1+\rho^2)^2}\phi_k^{W^\perp}-\frac{2k(\rho^2-1)}{\rho^2(\rho^2+1)}\phi_k^{W^\perp}+h_k^{W^\perp}\\
:=&~\mathcal L^{W^\perp}_k [\phi_k^{W^\perp}]+h_k^{W^\perp}
\end{aligned}
\end{equation}
which is precisely the linearization of harmonic map equation around degree $1$  harmonic map dealt with in \cite[Section 7]{17HMF}. So we have:

\begin{prop}{\rm (\cite[Proposition 7.1]{17HMF})}
\label{prop-lt-Wperp}
Assume that $a\in(2,3)$, $\nu>0$,   and $\|\Pi_{W^\perp}[h]\|_{\nu,a} <+\infty$.
Let us write $$\Pi_{W^\perp}[h] = h_0^{W^\perp} + h_1^{W^\perp} + h_{-1}^{W^\perp} + h_\perp^{W^\perp} ~\mbox{ with }~ h_{\perp}^{W^\perp} = \sum_{k\not=0,\pm 1} h_k^{W^\perp}.$$
Then there exists a solution $\Pi_{W^\perp}[\phi](h)$ of problem \eqref{inner-Wperp}, which defines
a linear operator of $\Pi_{W^\perp}[h]$, and satisfies the following estimate for
$|y|\leq 2 R(t)$ with $t\in (0,T)$:
\begin{align*}
&\quad
|\Pi_{W^\perp}[\phi](y,t)|+(1+|y|)\left |\nabla_y \Pi_{W^\perp}[\phi](y,t)\right |  \\
& \lesssim
   \lambda^\nu_*(t)   \,
 \min\left\{\frac{R^{ 5-a}(t)}{1+|y|^3},  ~\frac{R^{\frac{5-a}2}(t)}{1+|y| } \right\}
 \, \| h_0^{W^\perp} -\bar h_{0}^{W^\perp} \|_{\nu,a}
+
\frac{ \lambda^\nu_*(t)  R^2(t) } {1+ |y|}  \|\bar h_0^{W^\perp}\|_{\nu,a}
\\
& \quad
+   \frac{ \lambda^\nu_*(t) }{ 1+ |y|^{a-2} }\, \left \| h_1^{W^\perp} - \bar h_1^{W^\perp}\right  \|_{\nu,a}
+   \frac{ \lambda^\nu_*(t)  R^4(t)} {1+ |y|^2} \left \| \bar h_{1}^{W^\perp} \right  \|_{\nu,a}
\\
& \quad
+
 \lambda^\nu_*(t)   \,
 \min\left\{\frac{R^{4-a}(t)}{1+|y|^2}, ~ \log R \right\}
 \, \| h_{-1}^{W^\perp} -\bar h_{-1}^{W^\perp} \|_{\nu,a}
 + \lambda^\nu_*(t) \log R(t) \,   \| \bar h_{-1}^{W^\perp} \|_{\nu,a}
\\
& \quad
+
\frac{ \lambda^\nu_*(t)  }{ 1+ |y|^{a-2} }\,   \|h_\perp^{W^\perp} \|_{\nu,a} .
\end{align*}
Here
\begin{equation}\label{def-hbar}
\bar h^{W^\perp}_{k}(y,t)   :=       \sum_{j=1}^2   \frac {\chi Z_{k,j}(y)} { \int_{\R^2} \chi  |Z_{k,j} |^2  }  \, \int_{ \R^2 } \Pi_{W^\perp}[h](z  , t )  \cdot Z_{k,j}(z)\, dz, \quad~k=0,\pm 1,~j=1,2,
\end{equation}
with $Z_{k,j}$ defined in \eqref{kernels}, where
\begin{equation}\label{def-chichi}
\chi(y,\tau) = \begin{cases}
w_\rho^2(|y|)  & \hbox{ if }  |y|< 2R(t ),\\
0& \hbox{ if }  |y|\ge 2R(t ).
\end{cases}
\end{equation}
\end{prop}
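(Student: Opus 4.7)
\medskip

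The plan is to proceed mode-by-mode in the Fourier expansion already set up in \eqref{eqn-scalar} and to invoke the decoupling of the sectors $W$ and $W^{\perp}$ from Lemma~\ref{linearization-lemma-0}. Writing $\Pi_{W^\perp}[\phi]=\sum_k \phi_k^{W^\perp}(\rho,\tau)e^{ik\theta}$, each scalar equation $\partial_\tau \phi_k^{W^\perp}=\mathcal{L}^{W^\perp}_k[\phi_k^{W^\perp}]+h_k^{W^\perp}$ is a radial parabolic problem on $\{\rho\le 2R(t)\}$ with zero initial condition. Since the full operator $L_W$ restricted to $W^\perp$ is, after conjugation by a smooth unitary rotation in the $(E_1,E_2)$ frame, essentially the linearized harmonic map heat flow around the degree-one bubble, the modal operators $\mathcal{L}^{W^\perp}_k$ coincide with those analyzed in the harmonic map flow framework of D\'avila--del Pino--Wei, and I would follow that scheme carefully.

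The first step is to treat the harmless regime $|k|\ge 2$, where $\mathcal{L}^{W^\perp}_k$ has no bounded kernels (by Step~3 Case~4 of the proof of Theorem~\ref{thm} specialized to $m=1$). Here I would build $\phi_k^{W^\perp}$ via a Duhamel representation against a Dirichlet heat kernel on the ball $B_{2R}$, together with barrier arguments adapted to the weight $\lambda_*^{\nu}\langle y\rangle^{-a}$. Because the potential $\tfrac{8}{(1+\rho^2)^2}-\tfrac{1+k^2}{\rho^2}-\tfrac{2k(\rho^2-1)}{\rho^2(\rho^2+1)}$ is negative at infinity with coercivity index $\ge 1$, elliptic inversion gains two powers of $\langle y\rangle$, producing the bound $\lambda_*^{\nu}(1+|y|)^{-(a-2)}\|h_{\perp}^{W^\perp}\|_{\nu,a}$.

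The main work is in modes $k=0,\pm 1$, where $\mathcal{L}^{W^\perp}_k$ has two explicit bounded kernels among the $Z_{k,j}$. For each such mode I would split $h_k^{W^\perp}=(h_k^{W^\perp}-\bar h_k^{W^\perp})+\bar h_k^{W^\perp}$, where $\bar h_k^{W^\perp}$ is defined in \eqref{def-hbar} as the $L^2_\chi$-projection onto the kernel span of $Z_{k,1},Z_{k,2}$. The zero-mean part is solved by a blow-up-and-contradiction scheme: assume the a priori estimate fails, rescale at a putative maximum, pass to the limit via parabolic regularity, and identify the limit as a bounded ancient solution to $\partial_\tau \phi=\mathcal{L}^{W^\perp}_k\phi$ with zero orthogonality to the bounded kernels; non-degeneracy then forces the limit to vanish, a contradiction. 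The improved decay in $R$ (namely $R^{5-a}$ for $k=0$, $R^{4-a}$ for $k=-1$) reflects the increased indicial gain after projecting out the kernel. The residual problem with right-hand side $\bar h_k^{W^\perp}$ is solved explicitly through an ODE ansatz along $Z_{k,j}$; the worst decay of the resulting $\phi_k^{W^\perp}$ is governed by the growth of the corresponding dual (polynomially unbounded) element of the kernel of $\mathcal{L}^{W^\perp}_k$, yielding the logarithmic loss $\log R$ in the $k=-1$ mode (where the bounded kernel is $\rho^2 w_\rho$, saturating near the boundary) and the factors $R^2$, $R^4$ for $k=0,1$.

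The hard part is controlling the boundary contribution at $\rho=2R(t)$: one must carry the weighted estimate up to the moving boundary without assuming Dirichlet data there. I would handle this by truncating $\bar h_k^{W^\perp}$ against $\chi$ in \eqref{def-chichi}, which makes its support lie inside $\{|y|<2R\}$, and by using the fact that the linear map $h\mapsto \phi$ only needs to be built on the interior region while the outer problem absorbs the cross terms through the coupling. A fixed-point/Schauder argument then assembles the mode decomposition into a globally defined operator $h\mapsto \phi$, linear by construction and satisfying the stated weighted bound together with the gradient estimate $(1+|y|)|\nabla_y\phi|$; the gradient bound follows by interior parabolic regularity applied to parabolic cylinders of radius $\sim 1+|y|$ together with the pointwise bound already obtained.
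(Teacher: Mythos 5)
The paper does not prove this proposition itself: it is quoted verbatim (with adapted notation) from \cite[Proposition 7.1]{17HMF}, after observing via \eqref{eqn-scalar} that the scalar mode-$k$ operator $\mathcal L^{W^\perp}_k$ on $W^\perp$ coincides exactly with the one arising in the linearized harmonic map heat flow around the degree-one bubble. So there is no internal proof to compare against; your write-up is necessarily a reconstruction of the external argument rather than a parallel to anything contained in this paper.

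As a reconstruction, your skeleton is broadly sound: decompose into Fourier modes, use the absence of bounded kernels for $|k|\geq 2$ to gain two powers of $\langle y\rangle$ via barriers, and for $k=0,\pm 1$ split $h_k^{W^\perp}=(h_k^{W^\perp}-\bar h_k^{W^\perp})+\bar h_k^{W^\perp}$ and treat the two pieces separately. Two points worth flagging. First, the quantitative $R$-scales ($R^{5-a}$, $R^{(5-a)/2}$, $R^{4-a}$, $\log R$, $R^2$, $R^4$) in the cited reference are extracted from explicit Duhamel/variation-of-parameters representations and carefully chosen supersolutions in self-similar variables, not from the blow-up/compactness contradiction scheme you propose for the zero-mean parts; a pure contradiction argument would identify the right qualitative decay but would not by itself deliver these sharp $R$-powers, which are exactly what the later parameter-range bookkeeping \eqref{restri-1}--\eqref{restri-9} needs. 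Second, the assembly over modes is a direct linear superposition; invoking a fixed-point or Schauder argument at this stage is unnecessary and conflates this linear estimate with the later nonlinear gluing of Section~\ref{gluingsystem}. With those corrections your outline is consistent with the cited source, but the paper itself simply imports the result.
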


Via a re-gluing process, better estimates can be gained at mode $0$ with a slight modification on the orthogonality condition. Consider the linear problem at mode $0$ on $W^\perp$:
\begin{equation}\label{eqn-RG-Wperp0}
\left\{
\begin{aligned}
&~\la^2 \pp_t \phi= L_W \phi+h(\rho,t)+\sum_{j=1,2} \tilde c_{0j}  Z_{0,j} w_\rho^2, \qquad |y|\leq 2 R(t ),~t\in(0,T),\\
&~\phi\perp W, \quad\phi=0 ~\mbox{ on }~\pp B_{2R}\times (0,T),\quad \phi(\cdot,0)=0 ~\mbox{ in }~B_{2R(0)}.
\end{aligned}
\right.
\end{equation}

\begin{prop}{\rm (\cite[Proposition 7.2]{17HMF})}\label{prop-RG-Wperp0}
Let $\sigma_*\in(0,1)$. Under the assumptions of Proposition \ref{prop-lt-Wperp}, there exists $(\phi,~\tilde c_{0j})$, linear in $h$, solving \eqref{eqn-RG-Wperp0} such that
$$
|\phi|+(1+\rho)|\pp_\rho \phi|\lesssim \la_*^{\nu}(t)\|h\|_{\nu,a} \min\left\{\frac{R^{\sigma_*(5-a)}(t)}{1+|y|^3},  \frac{1}{1+|y|^{a-2}} \right\}
$$
and such that
$$
\tilde c_{0j}=\tilde c_{0j}[h]=-\frac{\int_{\R^2} h\cdot Z_{0,j}}{\int_{\R^2} w_\rho^2 Z_{0,j}^2}-G[h],\quad j=1,2
$$
for some operator $G$ linear in $h$ with
$$
|G[h]|\lesssim \la_*^{\nu} R^{-\sigma_* \sigma'}\|h\|_{\nu,a},\quad \sigma'\in(0,~a-2).
$$
\end{prop}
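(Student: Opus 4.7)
The plan is to enhance the basic linear theory of Proposition \ref{prop-lt-Wperp} by an inner-outer decomposition at the intermediate scale $\bar R(t):=R^{\sigma_*}(t)$, trading the large factor $R^{5-a}$ appearing in the mode-$0$ estimate for the smaller $R^{\sigma_*(5-a)}$ at the cost of a small additional correction $G[h]$ in the orthogonality coefficient.

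First, I would fix the natural leading choice
\[
\tilde c_{0j}^{(0)}[h]:=-\frac{\int_{\R^2} h\cdot Z_{0,j}\,dy}{\int_{\R^2} w_\rho^2\,|Z_{0,j}|^2\,dy},\qquad j=1,2,
\]
so that the modified right-hand side $\tilde h:=\Pi_{W^\perp}[h]+\sum_{j=1,2}\tilde c_{0j}^{(0)} Z_{0,j}w_\rho^2$ satisfies $\int_{\R^2}\tilde h\cdot Z_{0,j}\,dy=0$ at each $t$; by the formula \eqref{def-hbar} this forces $\bar{\tilde h}_{0}^{W^\perp}\equiv 0$ and kills the problematic $R^2/(1+|y|)$ contribution in Proposition \ref{prop-lt-Wperp}.

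Next, I would carry out the re-gluing at scale $\bar R$. Fix a radial cutoff $\eta_*(y)=\eta(|y|/\bar R)$, look for the solution in the form $\phi=\eta_*\phi^{\mathrm{in}}+\phi^{\mathrm{out}}$, and split the equation into an inner problem for $\phi^{\mathrm{in}}$ posed on $B_{2\bar R}$, supplemented with a new orthogonality correction $\sum_{j}\tilde c_{0j}^{(1)} Z_{0,j}w_\rho^2$, and an outer problem for $\phi^{\mathrm{out}}$ posed on $\{|y|\ge \bar R/2\}$, where $\mathcal L_0^{W^\perp}$ reduces to a small perturbation of $\Delta-\rho^{-2}$ (since the potential $8/(1+\rho^2)^2$ is $O(\bar R^{-4})$ there). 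Applying Proposition \ref{prop-lt-Wperp} directly on $B_{2\bar R}$ with $R$ replaced by $\bar R$, while using the orthogonality to kill the $\bar h_0$ contribution on the smaller ball, yields the desired near-field bound $\lambda_*^\nu\|h\|_{\nu,a}\, R^{\sigma_*(5-a)}/(1+|y|^3)$; the outer problem is handled by a standard weighted-energy or Duhamel estimate for the essentially Laplacian exterior operator, and since the source decays as $\rho^{-a}$ with $a>2$ this gives a solution of size $\lambda_*^\nu\|h\|_{\nu,a}/(1+|y|^{a-2})$, matching the second slot of the $\min$ in the statement.

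Finally, I would close the iteration. The cutoff generates commutator sources $2\nabla\eta_*\cdot\nabla\phi^{\mathrm{in}}+(\Delta\eta_*)\phi^{\mathrm{in}}$ concentrated on $\{\bar R\le|y|\le 2\bar R\}$; since $\phi^{\mathrm{in}}$ obeys the near-field estimate there, these feed back into both the outer problem and the orthogonality equation for $\tilde c_{0j}^{(1)}$. A direct computation shows the induced feedback operator has norm $O(R^{-\sigma_*\sigma'})$ for any $\sigma'\in(0,a-2)$, so that the coupled system for $(\phi^{\mathrm{in}},\phi^{\mathrm{out}},\tilde c_{0j}^{(1)})$ can be inverted by a Neumann series; reconstructing $\tilde c_{0j}=\tilde c_{0j}^{(0)}[h]-G[h]$ gives the claimed bound $|G[h]|\lesssim \lambda_*^\nu R^{-\sigma_*\sigma'}\|h\|_{\nu,a}$. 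The main obstacle will be verifying that this feedback truly gains the smallness $R^{-\sigma_*\sigma'}$: this exploits precisely the hypothesis $a>2$, which allows the $\rho^{-a}$ source decay to be upgraded to the stronger $\rho^{-(a-2)}$ decay of the outer profile, rendering the commutator and matching errors quantitatively subordinate and closing the Neumann iteration.
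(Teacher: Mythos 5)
Your proposal is correct and matches the re-gluing argument used in the cited reference \cite[Proposition 7.2]{17HMF}, which the present paper quotes without reproducing the proof. The essential ingredients are all there: first subtracting $\tilde c_{0j}^{(0)}$ to orthogonalize the source and eliminate the $R^2/(1+|y|)$ contribution coming from $\bar h_0^{W^\perp}$, then re-gluing at the intermediate scale $\bar R=R^{\sigma_*}$ so that applying Proposition \ref{prop-lt-Wperp} on $B_{2\bar R}$ yields the improved near-field factor $\bar R^{5-a}=R^{\sigma_*(5-a)}$, handling the exterior region where $L_W$ is a small perturbation of $\Delta-\rho^{-2}$ to get the far-field $\rho^{2-a}$ decay (the two regimes crossing precisely at $|y|\sim\bar R$), and finally absorbing the commutator and potential-tail feedback of size $O(R^{-\sigma_*\sigma'})$, $\sigma'\in(0,a-2)$, into the Neumann series defining $G[h]$. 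A quick sanity check supports your commutator estimate: on the annulus $\{\bar R\le|y|\le 2\bar R\}$ the commutator has size $\lambda_*^\nu\|h\|_{\nu,a}\bar R^{-a}$, so its pairing against $Z_{0,j}\sim\rho^{-1}$ gives $O(\lambda_*^\nu\|h\|_{\nu,a}\bar R^{1-a})=O(\lambda_*^\nu\|h\|_{\nu,a}R^{-\sigma_*(a-1)})$, which is dominated by $R^{-\sigma_*\sigma'}$ since $\sigma'<a-2<a-1$.
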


More refined versions of above linear theory on $W^\perp$ are obtained in \cite[Section 9]{LLG} (in a special case of $a=1$, $b=0$).

\medskip

\subsubsection{Linear theory for inner problem in the $W$-direction}

For the equation \eqref{inner-W} in the $W$-direction, the use of
$$
\Pi_{W }[\phi] =\sum_{k\in\mathbb Z}\phi_k^{W }(\rho,\tau) e^{ik\theta}, \quad \Pi_{W }[h] =\sum_{k\in\mathbb Z}h_k^{W }(\rho,\tau) e^{ik\theta}
$$
gives equation in mode $k$:
\begin{equation}\label{eqn-scalarW}
\begin{cases}
\pp_{\tau} \phi_k^W(\rho,\tau)= \mathcal L_k^W[\phi_k^W] + h^W_k(\rho,\tau) ,\quad (\rho,\tau)\in (0,\infty) \times (\tau_0,\infty),\\
\phi^W_k(\rho,\tau_{0}) = 0 ,\quad \rho\in (0,\infty),
\end{cases}
\end{equation}
where
\begin{equation*}
\mathcal L_k^W:=\pp_{\rho\rho}+\frac1{\rho}\pp_{\rho}-\frac{k^2}{\rho^2}+\frac{8}{(1+\rho^2)^2}.
\end{equation*}

\medskip

\noindent $\bullet$ {\bf Mode $0$.}

\medskip

In mode $k=0$, we establish a linear theory via the distorted Fourier transform (DFT).

\begin{prop}\label{prop-lt-W0}
In \eqref{eqn-scalarW} with $k=0$, if $\| h_0^W \|^{(\tau)}_{v,\ell}  <\infty$ with $\ell >\frac 32$, then there exists a solution $\phi_{0}^W$ that satisfies the following estimate
\begin{equation*}
\begin{aligned}
|\phi^W_{0}(\rho,\tau)| \lesssim \ &
\| h_0^W \|^{(\tau)}_{v,\ell}  \1_{ \{ \rho\le \tau^{\frac 12} \} }
\begin{cases}
	v(\tau) \tau^{1-\frac{\ell}{2}}
	+
	\tau^{-\frac{\ell}{2}}
	\int_{\frac{\tau_0}{2}}^{\frac{\tau}{2}}
	v(s) ds
	&
	\mbox{ \ if  \ } \ell <2
	\\
	v(\tau) ( \log  \tau)^2
	+
	\tau^{-1}  \log  \tau
	\int_{\frac{\tau_0}{2}}^{\frac{\tau}{2}}
	v(s) ds
	&
	\mbox{ \ if  \ } \ell =2
	\\
	v(\tau)  \log  \tau
	+
	\tau^{-1}
	\int_{\frac{\tau_0}{2}}^{\frac{\tau}{2}}
	v(s) ds
	&
	\mbox{ \ if  \ } \ell >2
\end{cases}
\\
&
+
\| h_0^W \|^{(\tau)}_{v,\ell}  \1_{ \{ \rho > \tau^{\frac 12} \} }
\rho^{-1/2}
\begin{cases}
	v(\tau) \tau^{\frac 54-\frac{\ell}{2}}
	+
	\tau^{\frac 14 - \frac{\ell}{2}}
	\int_{\frac{\tau_0}{2}}^{ \frac{\tau}{2} }
	v(s)  ds
	&
	\mbox{ \ if  \ }\ell<2
	\\
	v(\tau) \tau^{\frac 14} \langle  \log  \tau \rangle
+
	\tau^{-\frac 34}
	\langle  \log  \tau \rangle
	\int_{\frac{\tau_0}{2}}^{ \frac{\tau}{2} }
	v(s) ds
	&
	\mbox{ \ if  \ }\ell=2
	\\
	v(\tau) \tau^{\frac 14 }
	+
	\tau^{-\frac 34 }
	\int_{\frac{\tau_0}{2}}^{ \frac{\tau}{2} }
	v(s)  ds
	&
	\mbox{ \ if  \ }\ell>2
\end{cases}
.
\end{aligned}
\end{equation*}
If, in addition, $2<\ell <\frac 52$ and the orthogonality condition
	\begin{equation}\label{h-1-ortho}
		\int_{0}^{\infty} h_0^W(\rho,\tau) \frac{\rho^2-1}{\rho^2+1}\rho d\rho =0
		\mbox{ \ for all \ }\tau > \tau_{0}
	\end{equation}
is satisfied, then the following estimate holds
	\begin{equation*}
		|\phi^W_{0}(\rho,\tau)|
		\lesssim
		\| h_0^W \|^{(\tau)}_{v,\ell}
		\begin{cases}
			v(\tau)
			\langle \rho\rangle^{2 - \ell }
			+
			\tau^{-\frac{\ell}{2}}
			\int_{\frac{\tau_0 }{2}}^{\frac{\tau }{2}}
			v(s) ds
			&
			\mbox{ \ if \ } \rho\le \tau^{\frac 12}
			\\
			\rho^{-1/2}
			\left(
			v(\tau) \tau^{\frac 54 -\frac{\ell}{2}}
			+
			\tau^{\frac 14-\frac{\ell}{2}} \int_{\frac{\tau_0}{2}}^{\frac{\tau}{2}}
			v(s)  ds
			\right)
			&
			\mbox{ \ if \ } \rho > \tau^{\frac 12}
		\end{cases}
	.
	\end{equation*}

\end{prop}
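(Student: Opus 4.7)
\textbf{Proof proposal for Proposition~\ref{prop-lt-W0}.} The plan is to reduce the parabolic problem \eqref{eqn-scalarW} at mode $k=0$ to a half-line Schr\"odinger evolution and then use the distorted Fourier transform adapted to the threshold resonance of the linearized Liouville operator. Writing $\phi_0^W(\rho,\tau)=\rho^{-1/2}u(\rho,\tau)$ converts $\mathcal L_0^W$ into
\begin{equation*}
\mathcal L_0^W \phi = \rho^{-1/2}\Bigl(u_{\rho\rho}+\tfrac{1}{4\rho^2}u+\tfrac{8}{(1+\rho^2)^2}u\Bigr)=-\rho^{-1/2}\mathcal H u,\quad \mathcal H:=-\pp_{\rho\rho}-V,\quad V(\rho)=\tfrac{1}{4\rho^2}+\tfrac{8}{(1+\rho^2)^2},
\end{equation*}
so that \eqref{eqn-scalarW} becomes $u_\tau=-\mathcal H u+\rho^{1/2}h_0^W$ with $u(\tau_0)=0$. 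The two zero-energy solutions of $\mathcal H u=0$ are $u_1(\rho)=\rho^{1/2}\frac{\rho^2-1}{\rho^2+1}$, bounded at the origin (once divided by $\rho^{1/2}$) and asymptotic to $\rho^{1/2}$ at infinity, and a linearly independent partner growing like $\rho^{1/2}\log\rho$. Thus $\mathcal H$ has a threshold \emph{resonance} (not an eigenvalue): this is the sole source of the logarithmic losses that appear in the statement.

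The main input from Appendix~\ref{sec-DFT} (to be established there) is the construction of a distorted Fourier basis $\{e(\rho,\xi)\}_{\xi\ge 0}$ of generalized eigenfunctions $\mathcal H e(\cdot,\xi)=\xi e(\cdot,\xi)$ together with the spectral measure $d\mu(\xi)$. Matching at $\rho=0$ with the regular solution $\sim\rho^{1/2}$ and at $\rho=\infty$ with plane waves $\sim\xi^{-1/4}\sin(\sqrt{\xi}\rho+\delta(\xi))$, the resonance forces $d\mu(\xi)\sim \xi^{-1/2}\langle\log\xi\rangle^{-2}\,d\xi$ near $\xi=0$ and pointwise bounds on $e(\rho,\xi)$ in the three regimes $\rho\le 1$, $1\le\rho\le\xi^{-1/2}$, $\rho\ge\xi^{-1/2}$, with the relevant transition at $\rho\sim\xi^{-1/2}$. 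Using these ingredients, Duhamel in the distorted Fourier variable gives, with zero initial data,
\begin{equation*}
\phi_0^W(\rho,\tau)=\int_{\tau_0}^\tau\!\!\int_0^\infty \!\!\int_0^\infty e^{-\xi(\tau-s)}\,\frac{e(\rho,\xi)\,e(\rho',\xi)}{\rho^{1/2}(\rho')^{1/2}}\,h_0^W(\rho',s)\,\rho'\,d\rho'\,d\mu(\xi)\,ds.
\end{equation*}

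From this representation I would split the time integral at $s=\tau/2$. On the short-time piece $s\in(\tau/2,\tau)$ the heat factor $e^{-\xi(\tau-s)}$ is essentially $1$ for $\xi\lesssim\tau^{-1}$, so the spatial integral is controlled by the pointwise bounds on $e(\rho,\xi)$ and the weight $\langle\rho'\rangle^{-\ell}\|h_0^W\|^{(\tau)}_{v,\ell}$: the three ranges of $\ell$ in the statement correspond to convergence at infinity of $\int (\rho')^{1-\ell}\,d\rho'$ (convergent for $\ell>2$, logarithmically divergent at $\ell=2$, polynomially divergent for $\ell<2$), and produce the $v(\tau)$-prefactor terms $v(\tau)\tau^{1-\ell/2}$, $v(\tau)(\log\tau)^2$, $v(\tau)\log\tau$, together with the separation $\rho\le\tau^{1/2}$ versus $\rho>\tau^{1/2}$ coming from the matching of $e(\rho,\xi)$ at $\rho\sim\xi^{-1/2}\sim\tau^{1/2}$. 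On the long-time piece $s\in(\tau_0/2,\tau/2)$, the factor $e^{-\xi(\tau-s)}$ restricts the frequency integral to $\xi\lesssim\tau^{-1}$; combining with $d\mu(\xi)\sim\xi^{-1/2}\langle\log\xi\rangle^{-2}d\xi$ and integrating in $s$ gives the $\int_{\tau_0/2}^{\tau/2}v(s)\,ds$ terms with the prefactors $\tau^{-\ell/2}$ (inner zone) and $\rho^{-1/2}\tau^{1/4-\ell/2}$ (outer zone), exactly as stated.

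For the refined estimate under \eqref{h-1-ortho}, observe that $\frac{\rho^2-1}{\rho^2+1}$ is precisely (up to the $\rho^{1/2}$ conjugation) the threshold resonance, which drives the leading $\xi\to 0^+$ behavior of $\widehat{h_0^W}(\xi,\tau):=\int_0^\infty e(\rho',\xi)(\rho')^{1/2}h_0^W(\rho',\tau)\,\rho'\,d\rho'$. Orthogonality kills the resonant coefficient and yields $\widehat{h_0^W}(\xi,\tau)=O(\xi^{1/2})$ as $\xi\to 0$, which cancels the singular weight in $d\mu(\xi)$ near the origin and upgrades the $\rho\le\tau^{1/2}$ profile from the bounded one to $\langle\rho\rangle^{2-\ell}$; the restriction $\ell<5/2$ is the threshold at which a second orthogonality (against the growing zero-mode) would be needed. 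The main obstacle, and the bulk of the technical work deferred to Appendix~\ref{sec-DFT}, is the careful matched-asymptotic construction of $e(\rho,\xi)$ and the derivation of the resonant spectral density together with pointwise bounds uniform across the three $\rho$-regimes; the critical $\ell=2$ thresholds in the statement reflect exactly the borderline between convergent and logarithmically divergent spatial integrals in this resonance-driven low-frequency regime.
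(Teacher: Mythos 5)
Your high-level plan matches the paper's: conjugate by $\rho^{1/2}$ to get the half-line Schr\"odinger operator $\mathcal H=-\pp_{\rho\rho}-\frac{1}{4\rho^2}-\frac{8}{(1+\rho^2)^2}$, build the distorted Fourier basis and spectral measure, write Duhamel in the DFT variable, split the time integral at $s=\tau/2$, and use pointwise bounds on the generalized eigenfunctions with the transition at $\rho\sim\xi^{-1/2}$. That part of the proposal is correct and is exactly the paper's route (Appendix~\ref{sec-DFT}).

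However, there is a concrete error that would derail the computation. You claim the resonance forces a singular spectral density $d\mu(\xi)\sim \xi^{-1/2}\langle\log\xi\rangle^{-2}\,d\xi$ near $\xi=0$. This is not what happens, and it is not what the paper proves: because the zero-energy regular solution $\Phi_0^0(\rho)=\frac{\rho^{1/2}(\rho^2-1)}{\rho^2+1}$ is asymptotic to $\rho^{1/2}$ (with \emph{no} $\rho^{1/2}\log\rho$ admixture) at infinity, matching at $\rho\sim\xi^{-1/2}$ gives $|a_0(\xi)|\sim 1$ uniformly, hence a bounded density $\frac{d\rho_0}{d\xi}(\xi)\sim 1$. (This is the analogue of the pure order-$0$ Bessel/Hankel transform and is precisely the ``resonant'' scenario for this half-line operator.) With your extra factor of $\xi^{-1/2}\langle\log\xi\rangle^{-2}$, the $\xi$-integral $\int e^{-\xi(\tau-s)}\cdots\,d\mu(\xi)$ acquires an additional $(\tau-s)^{1/2}\langle\log(\tau-s)\rangle^{-2}$ and the time convolution would not reproduce the stated bounds. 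Related to this, your attribution of the logarithms at $\ell=2$ to the resonance is also off: in the paper's computation the logs arise from the borderline spatial integral $F(\xi)=\int_0^\infty\Phi^0(x,\xi)\,x^{1/2}\langle x\rangle^{-\ell}\,dx$, which is $\sim\xi^{\ell/2-1}$ for $\ell<2$, $\sim|\log\xi|$ at $\ell=2$, and $\sim 1$ for $\ell>2$, not from any singularity of $d\mu$. Consequently, for the refined estimate you invoke a mechanism (``orthogonality cancels the singular weight in $d\mu$'') that refers to a weight that isn't there; the actual gain is that the orthogonality $\int h\,\Phi_0^0=0$ lets one replace $\Phi^0(x,\xi)$ by $\Phi^0(x,\xi)-\Phi_0^0(x)$ inside $F(\xi)$, which by the series expansion of $\Phi^0$ has one extra power of $\xi x^2$ in the inner region, improving the small-$\xi$ behavior of $F$ (and the constraint $\ell<5/2$ arises from that corrected integral, not from a need for a second orthogonality against the $\log$-growing mode).
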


We postpone the proof of Proposition \ref{prop-lt-W0} in Appendix \ref{DFT}.

\medskip

\noindent $\bullet$ {\bf Modes $k=\pm1$.}

\medskip

We notice that $\mathcal L^W_{\pm 1}\equiv \mathcal L^{W^\perp}_0$, where $\mathcal L^{W^\perp}_k$ is defined in \eqref{eqn-scalar} (on $W^\perp$), meaning that the linear theory for modes $\pm1$ in the $W$-direction is exactly the same as the one for mode $0$ on $W^\perp$. So for modes $|k|=1$ in the $W$-direction, we have
\begin{prop}{\rm (\cite[Lemma 7.1, Lemma 7.3]{17HMF})}
\label{prop-lt-W1}
For modes $k=\pm 1$ in \eqref{eqn-scalarW}, there exists a solution $\phi_{k}^W$ that satisfies the following estimate
\begin{equation*}
\begin{aligned}
 |\phi_k^W|&\lesssim   \lambda^\nu_*(t)   \,
 \min\left\{\frac{R^{ 5-a}(t)}{1+|y|^3},  ~\frac{R^{\frac{5-a}2}(t)}{1+|y| } \right\}  \|h^W_k-\bar h^W_k\|_{\nu,a}+ \frac{\la_*^{\nu}(t)R^2(t)}{1+|y|}\|\bar h_k^W\|_{\nu,a} .
\end{aligned}
\end{equation*}
Here $a\in (2, 3)$, $\nu > 0$,  and
\begin{align*}
\bar h_1^W:= &~     \frac {\chi \mathcal Z_{1,1}(y)} { \int_{\R^2} \chi  |\mathcal Z_{1,1} |^2  }  \, \int_{ \R^2 } \Pi_{W}[h](z  , t )  \cdot \mathcal Z_{1,1}(z)\, dz,\\
\bar h_{-1}^W:= &~     \frac {\chi \mathcal Z_{1,2}(y)} { \int_{\R^2} \chi  |\mathcal Z_{1,2} |^2  }  \, \int_{ \R^2 } \Pi_{W}[h](z  , t )  \cdot \mathcal Z_{1,2}(z)\, dz,
\end{align*}
with $\chi$, $\mathcal Z_{1,k}$ defined respectively in \eqref{def-chichi} and \eqref{kernels}.
\end{prop}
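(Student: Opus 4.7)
The plan is to reduce the problem to the linear theory already developed for mode $0$ on $W^\perp$ in Proposition \ref{prop-lt-Wperp}, together with the refinement from \cite[Lemma 7.3]{17HMF} that handles the non-orthogonal part. The crucial observation is that the radial parts of the operators agree: a direct inspection of equations \eqref{eqn-E_1}--\eqref{eqn-U} shows
\[
\mathcal L^W_{\pm 1} \;=\; \partial_{\rho\rho} + \tfrac{1}{\rho}\partial_{\rho} - \tfrac{1}{\rho^2} + \tfrac{8}{(1+\rho^2)^2} \;=\; \mathcal L^{W^\perp}_{0},
\]
so that each of the scalar equations \eqref{eqn-scalarW} for $k=\pm 1$ coincides with the scalar equation \eqref{eqn-scalar} at $k=0$ for which the estimate in Proposition \ref{prop-lt-Wperp} applies directly. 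The only point that requires minor translation is the orthogonality used to split off the slow kernel mode: on $W^\perp$ one uses $Z_{0,j}$ via \eqref{def-hbar}, while here the relevant bounded kernel in the $W$-direction at mode $\pm 1$ is $\mathcal Z_{1,j}$ as dictated by \eqref{kernels}, and this is exactly the projection appearing in the definitions of $\bar h_{\pm 1}^W$ in the statement.

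Concretely, I would first write $h^W_k = (h^W_k - \bar h^W_k) + \bar h^W_k$. For the component orthogonal to the kernel, i.e.\ satisfying $\int \chi (h^W_k - \bar h^W_k)\cdot \mathcal Z_{1,j}\,dy = 0$, the estimate follows from the blow-up/a priori argument of \cite[Lemma 7.1]{17HMF}, which produces the bound
\[
|\phi_k^W| \lesssim \lambda_*^{\nu}(t)\,\min\!\Bigl\{\tfrac{R^{5-a}(t)}{1+|y|^3},\ \tfrac{R^{(5-a)/2}(t)}{1+|y|}\Bigr\}\,\|h^W_k - \bar h^W_k\|_{\nu,a}.
\]
For the parallel part $\bar h^W_k$, which is a multiple of the radial kernel (extended by the cutoff $\chi$ in \eqref{def-chichi}), I would invoke the explicit construction by variation of parameters as in \cite[Lemma 7.3]{17HMF}: the two linearly independent radial solutions of $\mathcal L^{W^\perp}_0 Z = 0$ are known explicitly (one bounded, behaving like $w_\rho$, the other growing like $\rho$ at infinity), which yields the bound $\lambda_*^{\nu}(t) R^2(t)(1+|y|)^{-1}\|\bar h^W_k\|_{\nu,a}$ after integration against the kernel of the weight $\nu$.

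Combining the two pieces and summing gives the stated estimate. The only step that is not a direct quotation is the identification of the orthogonality and of the right pair of scalar solutions in the $W$-direction; but since the operator coincides with $\mathcal L^{W^\perp}_0$ and $\mathcal Z_{1,j}$ plays the same role that $Z_{0,j}$ plays on $W^\perp$, the adaptation is essentially notational. The main obstacle, as already noted in \cite{17HMF}, is the control of the parallel part $\bar h^W_k$, which cannot decay faster than $R^2(t)/|y|$ due to the slow decay of the kernel of $\mathcal L^{W^\perp}_0$ at spatial infinity; this is what forces the weaker factor $R^2(t)$ in the second term of the estimate and ultimately drives the weighted-space choices in \eqref{finalchoice}.
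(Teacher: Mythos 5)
Your proposal takes exactly the same route as the paper: observe that $\mathcal L^W_{\pm 1}\equiv \mathcal L^{W^\perp}_0$ and then quote the linear theory of \cite[Lemma 7.1, Lemma 7.3]{17HMF} for that scalar operator, splitting the right-hand side into orthogonal and parallel pieces relative to the slow kernel. Two small slips to correct: the orthogonality that subtracting $\bar h_k^W$ enforces is $\int_{\R^2}\big(\Pi_W[h]-\bar h_k^W\big)\cdot\mathcal Z_{1,j}\,dy=0$, i.e.\ without the weight $\chi$ inside the integrand ($\chi$ enters only through the normalization $\int\chi|\mathcal Z_{1,j}|^2$ and the cutoff in the definition of $\bar h_k^W$), and the bounded radial kernel of $\mathcal L^{W^\perp}_0$ behaves like $\rho w_\rho=\sin w\sim\rho^{-1}$ at infinity rather than $w_\rho\sim\rho^{-2}$.
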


\begin{remark}\label{rmk-651651651}
For the non-orthogonal parts in Proposition \ref{prop-lt-Wperp} and Proposition \ref{prop-lt-W1}, one can in fact relax the assumption on the spatial decay to $0<a<3$ with estimates modified accordingly; see \cite[p.~394, Lemma 7.1]{17HMF}.
Later we will use this version.
\end{remark}

An analogue of Proposition \ref{prop-RG-Wperp0} also holds true for modes $k=\pm1$. Consider the linear problem in modes $\pm1$ in the $W$-direction:
\begin{equation}\label{eqn-RG-W1}
\left\{
\begin{aligned}
&~\la^2 \pp_t \phi= L_W \phi+h(\rho,t)+\sum_{j=1,2} \hat c_{1j}  \mathcal Z_{1,j} w_\rho^2, \qquad |y|\leq 2 R(t ),~t\in(0,T),\\
&~\phi\perp W^\perp, \quad\phi=0 ~\mbox{ on }~\pp B_{2R}\times (0,T),\quad \phi(\cdot,0)=0 ~\mbox{ in }~B_{2R(0)}.
\end{aligned}
\right.
\end{equation}
We have the following refined estimates:
\begin{prop}\label{prop-RG-W1}
Let $\sigma_*\in(0,1)$. Under the assumptions of Proposition \ref{prop-lt-W1}, there exists $(\phi,~\hat c_{1j})$, linear in $h$, solving \eqref{eqn-RG-W1} such that
$$
|\phi|+(1+\rho)|\pp_\rho \phi|\lesssim \la_*^{\nu}(t)\|h\|_{\nu,a} \min\left\{\frac{R^{\sigma_*(5-a)}(t)}{1+|y|^3},  \frac{1}{1+|y|^{a-2}} \right\}
$$
and such that
$$
\hat c_{1j}=\hat c_{1j}[h]=-\frac{\int_{\R^2} h\cdot \mathcal Z_{1,j}}{\int_{\R^2} w_\rho^2 \mathcal Z_{1,j}^2}-\hat G[h],\quad j=1,2
$$
for some operator $\hat G$ linear in $h$ with
$$
|\hat G[h]|\lesssim \la_*^{\nu} R^{-\sigma_* \sigma'}\|h\|_{\nu,a},\quad \sigma'\in(0,~a-2).
$$
\end{prop}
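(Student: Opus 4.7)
The plan is to mimic the re-gluing argument used for Proposition \ref{prop-RG-Wperp0} (cf.\ \cite[Proposition 7.2]{17HMF}), exploiting the crucial structural coincidence, pointed out right before the statement, that the scalar radial operator $\mathcal L^W_{\pm 1}$ governing modes $\pm 1$ in the $W$-direction equals the scalar radial operator $\mathcal L^{W^\perp}_0$ governing mode $0$ on $W^\perp$. Thus the same functional-analytic setup applies verbatim with $(Z_{0,j},\tilde c_{0j},G)$ replaced by $(\mathcal Z_{1,j},\hat c_{1j},\hat G)$, and the only task is to verify that the re-gluing estimates survive the substitution.

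First I would apply Proposition \ref{prop-lt-W1} (in the version of Remark \ref{rmk-651651651} that relaxes the decay range to $0<a<3$) to the RHS $h+\sum_{j=1,2}\hat c_{1j}\mathcal Z_{1,j} w_\rho^2$, viewing $\hat c_{1j}$ as a free parameter to be fixed. The "orthogonal" piece, obtained by subtracting the $\chi$-weighted projection $\bar h^W_{\pm 1}$, yields a solution with the fast decay $\la^\nu_*\min\{R^{5-a}/(1+|y|^3),\,1/(1+|y|^{a-2})\}$; the "non-orthogonal" piece $\bar h^W_{\pm 1}$ only decays as $\la_*^\nu R^2/(1+|y|)$. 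The goal of the re-gluing is therefore to choose $\hat c_{1j}$ so that the non-orthogonal piece is cancelled or absorbed. The natural ansatz is $\hat c_{1j}=-\int h\cdot\mathcal Z_{1,j}/\int w_\rho^2|\mathcal Z_{1,j}|^2$, which makes $h+\sum\hat c_{1j}\mathcal Z_{1,j}w_\rho^2$ orthogonal to $\mathcal Z_{1,j}$ in the $w_\rho^2$-inner product over $\mathbb R^2$. The actual projection used in Proposition \ref{prop-lt-W1} is however against the cut-off weight $\chi$ supported in $|y|\le 2R$, not against $w_\rho^2$; the resulting mismatch $(w_\rho^2-\chi)$ produces a small residual that forces the extra correction $\hat G[h]$.

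The main step is then an algebraic/fixed-point identification of $\hat G$: imposing that the $\chi$-weighted projection of the full modified RHS vanishes yields a linear equation for $\hat c_{1j}$ whose solution has the stated form with an additive correction $\hat G[h]$ depending linearly on $h$. The bound $|\hat G[h]|\lesssim \la_*^\nu R^{-\sigma_*\sigma'}\|h\|_{\nu,a}$ with $\sigma'\in(0,a-2)$ comes from integrating the mismatched weight $(w_\rho^2-\chi)$, which is supported (up to fast tails) in $|y|\gtrsim R$, against $\mathcal Z_{1,j}\sim|y|^{-1}$; interpolating this tail estimate with the interior decay supplied by Proposition \ref{prop-lt-W1} gives the parameter $\sigma_*\in(0,1)$ in the exponent. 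Once $\hat c_{1j}$ is chosen this way, Proposition \ref{prop-lt-W1} applied to the resulting orthogonalized RHS yields the claimed pointwise estimate on $\phi$, and the gradient bound $(1+\rho)|\partial_\rho\phi|$ follows by standard parabolic interior regularity applied to the scalar equation for $\phi_{\pm 1}^W$. I expect the main obstacle to be the careful bookkeeping of the $(w_\rho^2-\chi)$ mismatch that controls $\hat G$: it requires balancing the $R^{5-a}/(1+|y|^3)$ and $1/(1+|y|^{a-2})$ regimes in Proposition \ref{prop-lt-W1} through the interpolation parameter $\sigma_*$, exactly as in the proof of Proposition \ref{prop-RG-Wperp0} in \cite{17HMF}, and no genuinely new difficulty should arise beyond replacing $Z_{0,j}$ by $\mathcal Z_{1,j}$.
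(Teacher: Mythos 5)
Your proposal is correct and follows essentially the same route as the paper, which does not give an explicit proof but relies precisely on the operator identity $\mathcal L^W_{\pm 1}\equiv\mathcal L^{W^\perp}_0$ (noted just before the statement) together with the radial coincidence $\rho w_\rho=-\sin w$ of the kernel profiles, so that the re-gluing argument of \cite[Proposition~7.2]{17HMF} applies verbatim with $(Z_{0,j},\tilde c_{0j},G)$ replaced by $(\mathcal Z_{1,j},\hat c_{1j},\hat G)$. Your description of the $\chi$-vs-$w_\rho^2$ mismatch generating $\hat G$ and of the iteration producing the $\sigma_*$-interpolation matches the intended mechanism.
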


\medskip

\medskip

\noindent $\bullet$ {\bf Higher modes $|k|\geq 2$.}

\medskip

\begin{prop}\label{prop-lt-W+}
For modes $|k|\geq 2$ in \eqref{eqn-scalarW}, there exists a solution $\phi^W_{k}$ that satisfies
$$
 |\phi_k^W| \lesssim \frac1{k^2}  \lambda^\nu_*(t) \langle \rho \rangle^{2-a} \|h_k^W \|_{\nu,a},
$$
where $a\in(2,3)$, $\nu>0$.
\end{prop}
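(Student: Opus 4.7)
The strategy is to exploit the fact that for $|k|\geq 2$ the potential in $\mathcal L_k^W$, namely $V_k(\rho):=\tfrac{k^2}{\rho^2}-\tfrac{8}{(1+\rho^2)^2}$, is strictly positive. Hence $-\mathcal L_k^W$ is coercive, there is no bounded kernel function (consistent with the classification established in Case 4 of the proof of Theorem \ref{thm}), and both the stationary operator and the parabolic evolution are invertible with good quantitative bounds. The $\tfrac{1}{k^2}$ prefactor is produced by a Wronskian that grows like $k^3$ while the products of homogeneous solutions only contribute a factor $k$.

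First I would record the two linearly independent homogeneous solutions of $\mathcal L_k^W[Z]=0$. These are obtained by specializing the Case~4 ODE system in the proof of Theorem \ref{thm} to $m=1$ and reading off the $W$-direction:
\begin{equation*}
Z_k^{(1)}(\rho)=\frac{(k-1)\rho^{-k}+(k+1)\rho^{2-k}}{1+\rho^2},\qquad Z_k^{(2)}(\rho)=\frac{(k+1)\rho^{k}+(k-1)\rho^{2+k}}{1+\rho^2},
\end{equation*}
so that $Z_k^{(1)}\sim(k\mp 1)\rho^{\mp k}$ as $\rho\to 0,\infty$ and $Z_k^{(2)}\sim(k\pm 1)\rho^{\pm k}$. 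Since the coefficient of $\pp_\rho$ in $\mathcal L_k^W$ is $1/\rho$, the Wronskian satisfies $(\rho W_k)'=0$, and a short asymptotic computation at $\rho\to 0$ gives $W_k(\rho)=\tfrac{2k(k^2-1)}{\rho}$.

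Next I would invert $\mathcal L_k^W$ by variation of parameters. Setting $f=-h_k^W$, the unique solution of $\mathcal L_k^W\phi_k^{\rm st}=f$ that is regular at $\rho=0$ and decaying at $\rho=\infty$ is
\begin{equation*}
\phi_k^{\rm st}(\rho,t)=\frac{1}{2k(k^2-1)}\Bigl[-Z_k^{(1)}(\rho)\int_0^\rho s\,Z_k^{(2)}(s)f(s,t)\,ds-Z_k^{(2)}(\rho)\int_\rho^\infty s\,Z_k^{(1)}(s)f(s,t)\,ds\Bigr].
\end{equation*}
Substituting the pointwise bound $|h_k^W(\rho,t)|\le\lambda_*^\nu(t)\|h_k^W\|_{\nu,a}\langle\rho\rangle^{-a}$ and splitting each integral at $\rho=1$ and at $\rho$, one checks that each of $|Z_k^{(1)}(\rho)|\!\int_0^\rho s|Z_k^{(2)}||h|$ and $|Z_k^{(2)}(\rho)|\!\int_\rho^\infty s|Z_k^{(1)}||h|$ is bounded by $C\,\tfrac{k^2-1}{k+a-2}\,\lambda_*^\nu(t)\|h_k^W\|_{\nu,a}\langle\rho\rangle^{2-a}$ for $a\in(2,3)$, $|k|\geq 2$ (the endpoint $|k|=2$ producing a harmless logarithm that is absorbed into $\langle\rho\rangle^{2-a}$). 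Combined with the $1/[2k(k^2-1)]$ prefactor this yields the desired $\tfrac{1}{k^2}\lambda_*^\nu(t)\|h_k^W\|_{\nu,a}\langle\rho\rangle^{2-a}$ bound for the stationary problem.

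Finally I would pass from stationary to parabolic. Setting $\phi_k^W=\phi_k^{\rm st}+\psi$ with $\phi_k^{\rm st}$ as above, $\psi$ satisfies $\pp_\tau\psi=\mathcal L_k^W[\psi]-\pp_\tau\phi_k^{\rm st}$ with initial data $\psi(\cdot,\tau_0)=-\phi_k^{\rm st}(\cdot,\tau_0)$. Since $\pp_\tau\phi_k^{\rm st}$ is controlled by the slow variation of $\lambda_*^\nu(t(\tau))$ and $\|h_k^W\|_{\nu,a}$, the cleanest way to conclude is a maximum-principle barrier argument: using $V_k\geq c\,k^2\langle\rho\rangle^{-2}$ and the direct computation
\begin{equation*}
\mathcal L_k^W\bigl[\langle\rho\rangle^{2-a}\bigr]=\bigl[(2-a)^2-k^2\bigr]\langle\rho\rangle^{-a}+O\bigl(\langle\rho\rangle^{-a-2}\bigr),
\end{equation*}
the function $\bar\phi_k(\rho,\tau):=\frac{C_0}{k^2}\lambda_*^\nu(t(\tau))\|h_k^W\|_{\nu,a}\langle\rho\rangle^{2-a}$ is a supersolution of $\pp_\tau-\mathcal L_k^W$ with right-hand side $|h_k^W|$, provided $C_0$ is chosen large (independent of $k$) and we exploit that $\lambda_*^\nu$ is monotone so that $\pp_\tau\bar\phi_k$ contributes a favorable sign. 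Comparing $\pm\phi_k^W$ with $\bar\phi_k$ then gives the stated estimate.

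The main technical obstacle is the careful bookkeeping of the $k$-dependence in Step 3, particularly the borderline exponent issues at $|k|=2$ where integrands become critical; these are handled by the $\langle\rho\rangle^{2-a}$ weight, since $a>2$ absorbs the logarithmic losses coming from the inner integral near $\rho=0$.
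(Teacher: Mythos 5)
Your proposal uses essentially the same strategy as the paper: build a barrier function from the solution of the stationary problem $\mathcal L_k^W\varphi_k+\langle\rho\rangle^{-a}=0$, show by variation of parameters that $|\varphi_k|\lesssim k^{-2}\langle\rho\rangle^{2-a}$ (the $1/k^2$ coming from the Wronskian $2k(k^2-1)/\rho$ against the $(k^2-1)$ produced by the homogeneous-solution products), and then verify the parabolic supersolution property. The paper's proof does exactly this (using reduction of order with a single kernel $Z_k$ instead of your two-kernel formula, which is an equivalent bookkeeping choice).

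Three imprecisions are worth flagging. First, you set up the decomposition $\phi_k^W=\phi_k^{\rm st}+\psi$ but then abandon it — your final step compares $\pm\phi_k^W$ directly with the barrier, making Steps 1--2 redundant once the barrier argument is in place (though they are what justifies the $1/k^2$ constant). Second, your claim that monotonicity of $\lambda_*^\nu$ gives a ``favorable sign'' for $\pp_\tau\bar\phi_k$ is backward: since $\lambda_*$ is \emph{decreasing}, $\pp_\tau\bar\phi_k<0$, which is the \emph{unfavorable} sign for the supersolution inequality $-\pp_\tau\bar\phi_k+\mathcal L_k^W\bar\phi_k+|h_k^W|\le 0$; the argument survives not because of the sign but because this term is of size $\nu|\dot\lambda_*|\lambda_*\langle\rho\rangle^2\cdot\lambda_*^\nu\langle\rho\rangle^{-a}\|h\|$ and is crushed by the condition $|\dot\lambda_*|\lambda_*R^2\ll 1$ (i.e.\ $\beta<1/2$), which is precisely the cancellation the paper invokes and which you omit. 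Third, the display $\mathcal L_k^W[\langle\rho\rangle^{2-a}]=[(2-a)^2-k^2]\langle\rho\rangle^{-a}+O(\langle\rho\rangle^{-a-2})$ holds only for $\rho\gtrsim 1$; near $\rho=0$ the term $-k^2\rho^{-2}$ dominates — this is actually even more negative and hence harmless, but the uniform statement as written is not correct. None of these affect the validity of the final estimate, but the sign reasoning around $\pp_\tau\bar\phi_k$ should be repaired to make the barrier verification watertight.
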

\begin{proof}
We start with $k=2$ (the same for $k=-2$). In such case, $\mathcal L^W_2$ has a (non sign-changing) kernel function, denoted by $Z_2$, whose asymptotic behavior is $\rho^2$ both near origin and at infinity. More generally, for $|k|\geq 2$,  $\mathcal L^W_k$ has kernel functions
$$
\frac{\rho^{-k}(k\rho^2+k+\rho^2-1)}{\rho^2+1},\quad \frac{\rho^{k}(k\rho^2+k-\rho^2+1)}{\rho^2+1}.
$$
 We choose barrier function
$$
\bar \varphi_2 =2\|h_2^W \|_{\nu,a}\la_*^{\nu} \varphi_2
$$
with $\varphi_2$ solving
$$
\mathcal L^W_2[\varphi_2]+ \langle \rho\rangle^{-a}=0.
$$
With Dirichlet boundary $\varphi_2(2R)=0$, above equation admits a unique solution given by
$$
\varphi_2(\rho)=Z_2(\rho)\int_\rho^{2R} \frac{dr}{rZ^2_2(r)}\int_0^r  \langle s\rangle^{-a} Z_2(s)sds
$$
where $a\in(2,3)$. Then one has
$$
|\varphi_2|\leq C  \langle \rho\rangle^{2-a},
$$
and thus
\begin{align*}
-\la^2 \pp_t \bar \varphi_2 +\mathcal L^W_2[\bar \varphi_2]+h^W_2\leq&~ 2\nu \|h_2^W \|_{\nu,a} |\dot\la_*|\la_*^{\nu+1}|\varphi_2|-\|h_2^W \|_{\nu,a} \la_*^{\nu} \langle \rho\rangle^{-a}\\
\leq &~\|h_2^W \|_{\nu,a} \la_*^{\nu}\langle \rho\rangle^{-a}\Big( 2\nu C|\dot\la_*|\la_*\langle \rho\rangle^2 -1\Big)\\
<&~0
\end{align*}
since $|\dot\la_*|\la_* R^2\ll 1$ (we choose $\beta<\frac12$ in $R=\la_*^{-\beta}$).  Therefore, the pointwise estimate for mode $k=2$ follows. For the higher modes $|k|\geq 3$, similar argument works. Indeed, We choose barrier function
$$
\bar \varphi_2 =2\|h^W_k \|_{\nu,a}\la_*^{\nu} \varphi_k
$$
with $\varphi_k$ solving
$$
\mathcal L^W_k[\varphi_k]+ \langle \rho\rangle^{-a}=0,
$$
where in this case,
$$
|\varphi_k|\leq \frac{C}{k^2}\langle \rho\rangle^{2-a}
$$
by variation of parameters. The proof is thus completed.
\end{proof}

\bigskip

We then combine Proposition \ref{prop-lt-W0} (returning to $(y,t)$ variables), Proposition \ref{prop-lt-W1} and Proposition \ref{prop-lt-W+} and use a scaling argument together with parabolic regularity estimates to get gradient estimate for \eqref{inner-W}. We thus conclude the following linear theory for the inner problem in the $W$-direction.
\begin{prop}\label{prop-lt-W}
Assume that $a\in(2,\frac52)$, $\nu>0$, and $\|\Pi_{W}[h]\|_{\nu,a} <+\infty$.
Then there exists a solution $\Pi_{W}[\phi](h)$ of problem \eqref{inner-W}, which defines
a linear operator of $\Pi_{W}[h]$, and satisfies the following estimate for
$|y|\leq 2 R(t)$ with $t\in (0,T)$:
\begin{align*}
&\quad
|\Pi_{W }[\phi](y,t)|+(1+|y|)\left |\nabla_y \Pi_{W }[\phi](y,t)\right |  \\
& \lesssim
 \frac{\lambda^\nu_*(t)}{1+|y|^{a-2}}
 \, \| h_{0}^{W } -\bar h_{0}^{W } \|_{\nu,a}
 + \lambda^\nu_*(t) \log R(t) \,   \| \bar h_{0}^{W } \|_{\nu,a}
\\
& \quad +\sum_{k=\pm1}\left(
    \lambda^\nu_*(t)   \,
 \min\left\{\frac{R^{ 5-a}(t)}{1+|y|^3},  ~\frac{R^{\frac{5-a}2}(t)}{1+|y| } \right\}
 \, \| h_k^{W } -\bar h_{k}^{W } \|_{\nu,a}
+
\frac{ \lambda^\nu_*(t)  R^2(t)} {1+ |y|}  \|\bar h_k^{W }\|_{\nu,a}\right)
\\
& \quad
+
\frac{ \lambda^\nu_*(t)  }{ 1+ |y|^{a-2} }\,  \Big\|h-h_0^{W }-h_1^{W }- h_{-1}^{W }\Big\|_{\nu,a} .
\end{align*}
\end{prop}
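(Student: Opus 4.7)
\medskip

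\textbf{Proof proposal for Proposition \ref{prop-lt-W}.} The plan is to decouple the problem mode by mode and then assemble the mode estimates. First, since \eqref{inner-W} is scalar-valued after projecting in the $W$-direction (cf.\ equation \eqref{eqn-U} in Lemma \ref{linearization-lemma-0}, which shows that the $W$-component of $L_W$ acting on $\phi_3 W$ is simply $(\Delta+\tfrac{8}{(1+\rho^2)^2})\phi_3$), I would write $\Pi_W[\phi] = \phi_0^W(\rho,\tau)W + \sum_{k\neq 0}\phi_k^W(\rho,\tau)e^{ik\theta}W$ and $\Pi_W[h] = h_0^W W + \sum_{k\neq 0}h_k^W e^{ik\theta}W$, so that each mode $\phi_k^W$ satisfies the uncoupled scalar equation \eqref{eqn-scalarW} with forcing $h_k^W$. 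The modes do not interact through the linearized operator, which is the essential feature of the $W$-direction (Lemma \ref{linearization-lemma-0} and Remark \ref{rmk-211211}).

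Next, I would apply the three mode-specific linear theories already established: Proposition \ref{prop-lt-W0} for $k=0$ (noting that the orthogonality condition \eqref{h-1-ortho} is exactly the one enforced by subtracting $\bar h_0^W$, interpreting $\mathcal Z_0 \cdot W = \cos w = \frac{\rho^2-1}{\rho^2+1}$), Proposition \ref{prop-lt-W1} for $k=\pm 1$ (again with the orthogonal splitting $h_k^W - \bar h_k^W$), and Proposition \ref{prop-lt-W+} for $|k|\geq 2$. The pointwise bound in Proposition \ref{prop-lt-W0} translated back into $(y,t)$-variables gives a contribution of order $\lambda_*^\nu \log R$ for the projected part $\bar h_0^W$ (coming from the term $\tau^{-\ell/2}\int v\,ds$ once $v(\tau)$ is chosen consistent with the $\|\cdot\|_{\nu,a}$-norm), and of order $\lambda_*^\nu\langle y\rangle^{2-a}$ for the orthogonal part, which matches the claimed bound for the mode-$0$ component after choosing $\ell = a > 2$. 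For the higher modes $|k|\geq 2$, the barrier estimate of Proposition \ref{prop-lt-W+} yields a $1/k^2$ gain, so the sum over $|k|\geq 2$ converges in the Sobolev sense once one controls $\|h-h_0^W-h_1^W-h_{-1}^W\|_{\nu,a}$ by summing the individual mode norms; this is where I would invoke the Plancherel bound $\sum_{|k|\geq 2} k^{-2}\|h_k^W\|_{\nu,a}^2 \lesssim \|h-h_0^W-h_{\pm 1}^W\|_{\nu,a}^2$ to close.

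For the gradient bound $(1+|y|)|\nabla_y\Pi_W[\phi]|$, I would use the standard parabolic interior regularity argument: rescale around an arbitrary point $y_0$ by $\tilde\phi(\tilde y,\tilde\tau) = \phi(y_0 + (1+|y_0|)\tilde y, \tau_0 + (1+|y_0|)^2\tilde\tau)$ so that in the new variables the equation becomes a small perturbation of the heat equation on $B_{1/2}\times(-1/4,0)$; applying $L^\infty \to C^{1,\alpha}$ parabolic estimates then gives $|\nabla_y\phi(y_0,\tau_0)| \lesssim (1+|y_0|)^{-1}\|\phi\|_{L^\infty}$ on that ball, and substituting the mode-wise pointwise estimates produces the $(1+|y|)^{-1}$-weighted gradient bound. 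The only subtle point is the weighted-in-time dependence $\lambda_*^\nu(t)$, which is handled by exploiting that $\lambda_*$ is slowly varying, namely $|\dot\lambda_*|\lambda_* R^2 \ll 1$ (as used already in the barrier argument of Proposition \ref{prop-lt-W+}).

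\textbf{Main obstacle.} The hardest part, which is genuinely new compared to the harmonic map setting, is the mode-$0$ bound obtained through the distorted Fourier transform in Proposition \ref{prop-lt-W0}; in particular, assembling the two-regime estimate there (for $\rho\leq \tau^{1/2}$ vs.\ $\rho>\tau^{1/2}$) into a clean $\lambda_*^\nu/\langle y\rangle^{a-2}$ bound on the orthogonal part, and a clean $\lambda_*^\nu \log R$ bound on the projected part, requires carefully choosing the time-weight $v(\tau)$ compatible with the $(y,t)$-weighted norm $\|\cdot\|_{\nu,a}$ and matching the integrals $\int v\,ds$ against powers of $\lambda_*(t)$. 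Once this matching is done, the remaining steps are essentially bookkeeping of the mode-by-mode estimates.
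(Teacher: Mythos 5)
Your proposal matches the paper's (very terse) proof: the paper likewise combines Proposition \ref{prop-lt-W0} (mode $0$, returning to $(y,t)$ variables), Proposition \ref{prop-lt-W1} (modes $\pm 1$), and Proposition \ref{prop-lt-W+} (modes $|k|\ge 2$), and then closes the gradient estimate with a scaling argument plus parabolic regularity. Your identification of $\mathcal Z_0\cdot W = \cos w = \frac{\rho^2-1}{\rho^2+1}$ as the function appearing in the orthogonality condition \eqref{h-1-ortho}, and your choice $\ell = a$, are consistent with the stated range $a\in(2,\tfrac 52)$.

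One small point to clean up in the higher-mode step: you phrase the closing argument as ``control $\|h-h_0^W-h_1^W-h_{-1}^W\|_{\nu,a}$ by summing the individual mode norms'' and invoke a Plancherel-type bound. The direction is the reverse: since $\|\cdot\|_{\nu,a}$ is a weighted $L^\infty$ norm, each Fourier coefficient obeys $\|h_k^W\|_{\nu,a}\le \|h-h_0^W-h_1^W-h_{-1}^W\|_{\nu,a}$ directly (the $k$-th coefficient of a function bounded in $\theta$ is bounded by the same constant), and then $\sum_{|k|\ge 2}k^{-2}$ converges absolutely, giving
\[
\Big|\sum_{|k|\ge 2}\phi_k^W e^{ik\theta}\Big|\lesssim \lambda_*^\nu\langle\rho\rangle^{2-a}\,\|h-h_0^W-h_1^W-h_{-1}^W\|_{\nu,a}\sum_{|k|\ge 2}\frac{1}{k^2}.
\]
No Plancherel is needed, and the $\ell^2$ framing is a detour (though ultimately harmless). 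The rest of the sketch, including the interior parabolic regularity rescaling for the weighted gradient bound, is what the paper intends.
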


\medskip

\begin{remark}
In practice, we are going to impose orthogonality conditions on the inner problems for modes $0$, $1$ on $W^\perp$  and modes $\pm 1$ in the $W$-direction, and these orthogonalities yield the dynamics of the $\la$-$\gamma$ system, translation parameter $\xi=(\xi_1,\xi_2)$ and $c_1$-$c_2$ system, respectively. Under these orthogonalities, we will solve both inner solutions $\Phi_W$ and $\Phi_{W^\perp}$ to \eqref{gluing-sys}$_1$ and \eqref{gluing-sys}$_2$ in the wighted space:
\begin{equation}\label{topo-in}
\|\phi\|_{{\rm in},\sigma_*,\nu,a}:=\sup_{|y|\leq 2R(t),~t\in(0,T)}\left[ \lambda^\nu_*(t)   \,
 \max\left\{\frac{R^{\sigma_*(5-a)}(t)}{1+|y|^3},  \frac{1}{1+|y|^{a-2}} \right\}\right]^{-1}\Big( |\phi(y,t)|+\langle y\rangle |\nabla_y\phi(y,t)|\Big)
\end{equation}
for some $\sigma_*,~\nu \in(0,1)$, $a\in (2,3)$.
\end{remark}

\bigskip

\section{Solving the gluing system}\label{gluingsystem}

\medskip

In this section, we solve the full gluing system. We are going to work in the following weighted topologies.

\begin{itemize}
\item RHS for both inner problems \eqref{gluing-sys}$_1$ and \eqref{gluing-sys}$_2$: $\|\cdot\|_{\nu,a}$-norm defined in \eqref{topo-inRHS}.

\item RHS for the outer problem \eqref{gluing-sys}$_3$: $\|\cdot\|_{**}$-norm defined in \eqref{topo-outRHS}.

\item Inner solutions $\Phi_W$ and $\Phi_{W^\perp}$: $\|\cdot\|_{{\rm in},\sigma_*,\nu,a}$-norm defined in \eqref{topo-in}.

\item Outer solution $\Psi$: $\|\cdot\|_{\sharp,\Theta,\alpha}$-norm defined in \eqref{topo-out}.
\end{itemize}
The constants measuring above weighted norms will be put into a specific range ensuring the implementation of the gluing process. We now start to estimate RHS in the gluing system \eqref{gluing-sys} and reveal these restrictions on  those constants. We are also going to use the leading asymptotics of the modulation parameters:
\begin{equation*}
|\la|\sim \la_*=\frac{(T-t)|\log T|}{|\log(T-t)|^2}, \quad |\dot\xi|\lesssim \la_*^{3\Theta-1}, \quad |c_1|\lesssim \la_*^{\Theta}, \quad |c_2|\lesssim \la_*^{\Theta},
\end{equation*}
and recall
$
R=R(t)=\la_*^{-\beta}.
$
Our first assumption on the constants is the following:
\begin{equation}\label{restri-0}
0<\nu<1, \quad 2<a<3, \quad 0<\beta<1/2,\quad 0<\Theta<1/2.
\end{equation}

\medskip

\subsection{Non-local systems}

As discussed in Section \ref{leading-dynamics}, translation parameter $\xi$ obeys essentially an ODE of first order, while the dynamics of $\la$-$\gamma$ system and $c_1$-$c_2$ system are governed by integro-differential equations. The leading non-local operator might be regarded as the Abel's integral operator in the end-point case, so the solvability is rather involved.

\medskip

For the $\la$-$\gamma$ system, we recall \eqref{def-p_0}.  To introduce the space for the parameter function $p_0(t)=-2(\dot\lambda + i\lambda\dot\gamma )e^{i\gamma }$,
we recall the  non-local operator $\mathcal B_0$  appears at mode 0 on $W^\perp$ is of the approximate form
\begin{align*}
	\mathcal B_0[p_0 ]
	=   \int_{-T} ^{t-\lambda_*^2}    \frac{p_0(s)}{t-s}ds\, + O ( p_0(t) ).
\end{align*}
For  $\Theta\in (0,1)$, $\varpi\in \R$ and a continuous function $g:[-T,T]\to \mathbb C$, we define the norm
\begin{align}\label{norm-thetavarpi}
	\|g\|_{\Theta,\varpi} = \sup_{t\in [-T,T]} \, (T-t)^{-\Theta} |\log(T-t)|^{\varpi} |g(t)| ,
\end{align}
and for  $\alpha \in (0,1)$,  $m,~\varpi \in \R$, we define the semi-norm
\begin{equation*}
	[ g]_{\frac{\alpha}2,m,\varpi} = \sup \, (T-t)^{-m}  |\log(T-t)|^{\varpi} \frac{|g(t)-g(s)|}{(t-s)^{\frac{\alpha}2}} ,
\end{equation*}
where the supremum is taken over $s \leq t$ in $[-T,T]$  such that $t-s \leq \frac{1}{4}(T-t)$.

\medskip

The following proposition gives an approximate inverse of the non-local operator $\mathcal B_0$ with a small remainder $\mathcal{R}_0$.
\begin{prop}{\rm (\cite[Proposition 6.5, Proposition 6.6]{17HMF})}
	\label{prop-laga}
	Let $\alpha_0 ,  \frac{\alpha}2 \in (0,\frac{1}{2})$, $\varpi\in \R$. There is $\flat>0$ such that if $\Theta\in(0,\flat)$ and $m \leq\Theta - \frac{\alpha}2$, then for $h(t) :[0,T]\to \mathbb C$ satisfying
	\begin{align}
		\label{hypA00}
		\left\{
		\begin{aligned}
			& | h(T) | > 0,
			\\
			& T^\Theta |\log T|^{1+\sigma-\varpi} \| h(\cdot) - h(T) \|_{\Theta,\varpi-1}
			+ [h]_{\frac{\alpha}2,m,\varpi-1}
			\leq C_1 ,
		\end{aligned}		\right.
	\end{align}
	for some $C_1>0$, $\sigma\in (0,1)$,
	then, for $T>0$ sufficiently small there exist two operators $\mathcal P $ and $\mathcal{R}_0$ so that $p_0 = \mathcal P[h]: [-T,T]\to \mathbb C$ satisfies
	\begin{align*}
		\mathcal B_0[p_0](t)
		= h(t) + \mathcal{R}_0[h](t) , \quad t \in [0,T]
	\end{align*}
	with
	\begin{align}
		\nonumber
		& |\mathcal{R}_0[h](t) | \leq  C
		\Bigl( T^{\sigma}
		+ T^\Theta  \frac{\log |\log T|}{|\log T|}  \| h(\cdot) - h(T) \|_{\Theta,\varpi-1}
		+ [h]_{\frac{\alpha}2,m,\varpi-1} \Bigr)
		\frac{(T-t)^{m+\frac{(1+\alpha_0 ) \alpha}2}}{  |\log(T-t)|^{\varpi}}.
	\end{align}
Moreover,
\begin{align*}
	\mathcal P[h] = p_{0,\kappa} + \mathcal{P}_1[h] + \mathcal P_2[h],
\end{align*}
with
\begin{align}
	\nonumber
	p_{0,\kappa}(t) =
\frac{\kappa |\log T|}{|\log(T-t)|^2}
	, \quad  t\leq T  ,
\end{align}
where $\kappa =\kappa[h]$. Moreover, the following bounds hold:
\begin{equation}\label{p-est}
	\begin{aligned}
	&
\kappa = 2h(T) \left( 1+ O(|\log T|^{-1})\right),
\quad
|\pp_{t}\mathcal{P}_1[h](t) | \le C \frac{|\log T|^{1-\sigma} \left( \log (|\log T|)\right)^2}{|\log(T-t)|^{3-\sigma}},
\\
&
|\pp_{t}^2\mathcal{P}_1[h](t)| \le C\frac{|\log T|}{|\log(T-t)|^3 (T-t)},
\quad
\|\pp_{t}\mathcal{P}_2[h] \|_{\Theta,\varpi}
\le C
\left(
T^{\frac{1}{2}+\sigma-\Theta}
+
\| h(\cdot) -h(T) \|_{\Theta,\varpi-1}
\right),
\\
&
[\pp_{t}\mathcal{P}_2[h] ]_{\frac{\alpha}{2},m,\varpi}
\le C
\left(
|\log T|^{\varpi-3} T^{\flat -m-\frac{\alpha}{2}}
+
T^{\Theta} \frac{\log|\log T|}{|\log T|}
\| h(\cdot) -h(T) \|_{\Theta,\varpi-1}
+
[h]_{\frac{\alpha}{2},m,\varpi-1}
\right) .
	\end{aligned}
\end{equation}
\end{prop}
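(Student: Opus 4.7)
The approach is to treat $\mathcal{B}_0$ as a perturbation of the principal integral operator $p_0 \mapsto \int_{-T}^{t-\lambda_*^2} p_0(s)/(t-s)\,ds$ and to construct a quasi-inverse in two stages: first identify a one-parameter leading ansatz that matches the endpoint value $h(T)$, then correct with two perturbative pieces encoding the deterministic subleading structure and the fluctuation $h-h(T)$. The first step is to analyze $\mathcal{B}_0$ acting on the profile $p_{0,\kappa}(t) = \kappa|\log T|/|\log(T-t)|^2$. Splitting the integration domain into $[-T,(T+t)/2]$ and $[(T+t)/2, t-\lambda_*^2]$, and exploiting $\lambda_*^2(t)\sim (T-t)^2/|\log(T-t)|^4$, a direct asymptotic evaluation shows that $\mathcal{B}_0[p_{0,\kappa}](t) = 2\kappa(1+O(|\log T|^{-1}))$ at $t=T$. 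This pins down $\kappa = 2h(T)(1+O(|\log T|^{-1}))$, yielding the first bound in \eqref{p-est}.

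With the leading term subtracted, the residual carries explicit $O(|\log(T-t)|^{-\sigma})$ deterministic corrections coming from further expansion of $\mathcal{B}_0[p_{0,\kappa}]$. I would absorb these into $\mathcal{P}_1[h]$, which is an explicit function of $h(T)$ obtained by inverting $\mathcal{B}_0$ against a known logarithmic profile; the pointwise bounds on $\pp_t \mathcal{P}_1[h]$ and $\pp_t^2 \mathcal{P}_1[h]$ then follow by differentiation of the explicit formula. The remaining piece $\mathcal{P}_2[h]$ is defined to satisfy $\mathcal{B}_0[\mathcal{P}_2[h]] \approx h - h(T) - (\text{known residuals})$, where the right-hand side vanishes as $t\to T$. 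I would construct $\mathcal{P}_2[h]$ by a contraction argument in the product space defined by the weighted norm $\|\cdot\|_{\Theta,\varpi}$ together with the H\"older semi-norm $[\cdot]_{\frac{\alpha}2, m,\varpi}$ from \eqref{norm-thetavarpi}. The inversion is feasible because, restricted to functions vanishing at $t=T$, the principal part of $\mathcal{B}_0$ acts essentially like multiplication by the mild factor $\log|\log(T-t)|$, obtained from integrating $1/(t-s)$ between $s \sim -T$ and $s \sim t-\lambda_*^2$; this is non-degenerate provided $\Theta<\flat$ is small enough.

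The main technical obstacle is controlling the H\"older semi-norm $[\pp_t\mathcal{P}_2[h]]_{\frac{\alpha}2, m,\varpi}$: differentiating the non-local operator produces commutators with the kernel $1/(t-s)$ as well as a boundary term arising from differentiating the endpoint $t-\lambda_*^2(t)$, and each of these scales like $(T-t)^{-1}$. The hypothesis $m \le \Theta-\frac{\alpha}{2}$, together with the smallness of $\Theta$, is precisely what balances these singular contributions against the regularity gained from $h$ and closes the estimates, yielding the remainder bound $|\mathcal{R}_0[h](t)| \lesssim (T-t)^{m+(1+\alpha_0)\alpha/2}|\log(T-t)|^{-\varpi}$ with the prefactor stated in the proposition. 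Since this inversion is structurally identical to the one carried out for the harmonic map heat flow in D\'avila--del Pino--Wei \cite{17HMF}, I would follow their framework, adjusting only the form of the subleading terms and the constants, which are dictated by the $H$-system computation performed in Section~\ref{leading-dynamics}.
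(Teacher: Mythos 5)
This proposition is not proved in the paper under review; it is imported verbatim from D\'avila--del Pino--Wei \cite{17HMF} (Propositions 6.5 and 6.6 there), and the paper offers no internal argument for it. Your sketch correctly identifies the overall strategy from that reference --- anchoring a leading profile $p_{0,\kappa}$ to the endpoint value $h(T)$, then constructing two corrections $\mathcal{P}_1,\mathcal{P}_2$ by a perturbative/contraction argument in the weighted H\"older spaces --- so it is consistent with the cited proof's structure.

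One imprecision worth flagging: you assert that, restricted to functions vanishing at $t=T$, the principal part of $\mathcal{B}_0$ acts ``essentially like multiplication by the mild factor $\log|\log(T-t)|$.'' That is not quite the right heuristic. The convolution kernel $1/(t-s)$ integrated from $s \sim -T$ to $s\sim t-\lambda_*^2$ produces a factor of order $\log\big((t+T)/\lambda_*^2\big) \sim 2|\log(T-t)|$, which is the dominant behavior even on the corrected piece; the $\log|\log(T-t)|$ enters only as the next-order variation of that factor once the prefactor $|\log(T-t)|$ has been absorbed into the ansatz $p_{0,\kappa}\sim\kappa|\log T|/|\log(T-t)|^2$. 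The reason the inversion for $\mathcal{P}_2$ is feasible is not that $\mathcal{B}_0$ is near the identity on the subspace, but that the weighted norm $\|\cdot\|_{\Theta,\varpi}$ together with the small exponent constraint $\Theta<\flat$ and $m\le\Theta-\alpha/2$ ensures the residual operator is a contraction after the large logarithmic factor has been factored out. Since the paper itself delegates to \cite{17HMF}, this is a commentary on your heuristic rather than a discrepancy with anything the present paper proves.
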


\medskip

The dealing of $c_1$-$c_2$ system is similar as the one of $\la$-$\gamma$ system as their leading non-local operator turn out to be the same. Indeed, from Section \ref{sec-lt-c1c2}, we have:
\begin{prop}\label{prop-c1c2}
For $|f(t)|\lesssim \la_*^{\Theta}(t)$, when $T > 0$ is sufficiently small, there exist two operators $\mathcal{P}_{\cc}$ and $\mathcal{R}_{\cc}$ such that $p_1 = \mathcal{P}_{\cc}[f]: [-T, T]\to \mathbb{C}$ satisfies the nonlocal equation
\begin{equation*}
\int_{-T}^t \frac{p_1(s)}{t-s} \Gamma_3\left(\frac{\lambda^2(t)}{t-s}\right)ds+2\la \dot \cc + \Gamma_4[p_0] \cc = f(t) + \mathcal{R}_{\cc}[f](t),\quad t\in [0, T]
\end{equation*}
with
\begin{equation*}
|R_{\cc}[f]|\leq C\|f\|_{*, \Theta, 0}\frac{(\lambda_*(t))^{\Theta}(T-t)^{\alpha_1}}{|\log(T-t)|^2}
\end{equation*}
and
\begin{equation*}
\left|\frac{d}{dt}R_{\cc}[f]\right|\leq C\|f\|_{*, \Theta, 0}\frac{(\lambda_*(t))^{\Theta}(T-t)^{\alpha_1-1}}{|\log(T-t)|^2}.
\end{equation*}
The function $\mathcal{P}_{\cc}[f]$ can be decomposed as $\mathcal{P}_{\cc}[f]=p_{1,0} + p_{1,1}$. Here $p_{0,1}$ and $p_{1,1}$ satisfy the following properties
\begin{equation*}
\left|p_{1,0}(t)\right|\leq C\|f\|_{*, \Theta, 0}\frac{(\lambda_*(t))^{\Theta}}{|\log(T-t)|},
\end{equation*}
\begin{equation*}
\left|p_{1,1}(t)\right|\leq C\|f\|_{*, \Theta, 0}\frac{(\lambda_*(t))^{\Theta}|\log T|^{k-1}}{|\log(T-t)|^{k+1}},
\end{equation*}
\begin{equation*}
\left|\dot p_{1,1}(t)\right|\leq C\|f\|_{*, \Theta, 0}\frac{(\lambda_*(t))^{\Theta}}{|\log(T-t)|^2(T-t)}
\end{equation*}
and
\begin{equation*}
\left|\ddot p_{1,1}(t)\right|\leq C\|f\|_{*, \Theta, 0}\frac{(\lambda_*(t))^{\Theta}}{|\log(T-t)|^2(T-t)^2}.
\end{equation*}
Here $k\in (0,1)$, $\alpha_1\in(0,1/3)$ and $$\|f\|_{*, \Theta, 0}:=\sup_{t\in [0, T]}\left|\la_*^{-\Theta}(t)f(t)\right|.$$
\end{prop}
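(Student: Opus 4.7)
The plan is to parallel the treatment of the $\lambda$-$\gamma$ system (Proposition~\ref{prop-laga}), adapting it to the coupled integro-differential system governing $\cc = c_1+ic_2$. My first step is to isolate the leading non-local operator. As derived at the end of Section~\ref{leading-dynamics}, I can replace the convolution-type kernels $\Gamma_3$ and $\Gamma_5$ by their asymptotic values (using that $\Gamma_3(\tau)=-1+O(\tau)$ as $\tau\to 0$, and similarly for $\Gamma_5$ up to the factor $-\tfrac23$). This reduces the equation, modulo $O(\la_*)$ errors that feed into $\mathcal R_{\cc}$, to
\begin{equation*}
\int_{-T}^{t-\lambda^2(t)}\frac{p_1(s)}{t-s}\,ds + \frac{2}{3}\left(\int_{-T}^{t-\lambda^2(t)}\frac{\mathrm{Re}[p_0(s)e^{-i\gamma(t)}]}{t-s}\,ds\right)\cc = f(t).
\end{equation*}

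Next I apply the logarithmic approximation $\int_{-T}^{t-\lambda^2}g(s)/(t-s)\,ds \approx \int_{-T}^{t}g(s)/(T-s)\,ds - g(t)\log(T-t)$, valid modulo semi-norms of $g$ with weight $(T-t)^{1/2}$. Plugging in and differentiating once in $t$, then using the output of Proposition~\ref{prop-laga} to replace $\mathrm{Re}[p_0 e^{-i\gamma}]$ by its principal value $\mathbf Z$ plus small correction, I reach the first order linear ODE
\begin{equation*}
2\log^2(T-t)(\lambda\cc)' + \frac{2\mathbf Z}{3}\log(T-t)\,\cc \;=\; f(t)\log(T-t) - \int_t^T\frac{f(s)}{T-s}\,ds ,
\end{equation*}
which admits an explicit solution via the integrating factor $\mu(t)=(T-t)^{-\frac{\mathbf Z}{6}\frac{\log(T-t)}{|\log T|}}$, namely
\begin{equation*}
\lambda(t)\cc(t) = \mu(t)^{-1}\int_t^T \mu(\tau)\left(\frac{f(\tau)}{2\log(T-\tau)} - \frac{1}{2\log^2(T-\tau)}\int_\tau^T\frac{f(s)}{T-s}\,ds\right)d\tau.
\end{equation*}
The condition $\mathbf Z>0$ (guaranteed by \eqref{mathbfZ}) ensures that this solution satisfies $|\cc|\lesssim \lambda_*^{\Theta}$ under the hypothesis $|f|\lesssim \lambda_*^{\Theta}$, and then $p_1=-2(\lambda\cc)'$ inherits the decomposition $p_{1,0}+p_{1,1}$ claimed in the proposition: $p_{1,0}$ is the smooth principal part extracted from the near-diagonal portion of the integral defining $\lambda\cc$, while $p_{1,1}$ collects the memory/oscillatory remainder.

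The bounds on $p_{1,0}, p_{1,1}$, together with $\dot p_{1,1}$ and $\ddot p_{1,1}$, require tracking the sharp logarithmic weights. The principal difficulty — and the step I expect to be the main obstacle — is the control of the second derivative $\ddot p_{1,1}$ with weight $\lambda_*^{\Theta}|\log(T-t)|^{-2}(T-t)^{-2}$, since each $t$-derivative of the Cauchy-type integral $\int_{-T}^{t-\lambda^2}p_1(s)/(t-s)\,ds$ formally produces a $(T-t)^{-1}$ singularity, and the weak regularity of $f$ forbids naive differentiation under the integral. Following the strategy of \cite{17HMF} for the $\lambda$-$\gamma$ case, I will differentiate the non-local equation \emph{itself} in $t$, converting $\partial_t$ of the inner integral into a Cauchy integral on $\dot p_1$ plus boundary contributions evaluated at $s=t-\lambda^2$; these boundary terms gain the desired smallness from the factor $\lambda^2(t)$ and from the H\"older modulus of $f$. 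The coupling term $\Gamma_4[p_0]\cc$ is handled as a perturbation using \eqref{p-est}. Finally, $\mathcal R_{\cc}[f]$ is assembled by summing all the approximation errors — the $\Gamma_3,\Gamma_5$ asymptotic replacements, the logarithmic approximation of Cauchy integrals, the $\la\dot\la$-discard in $\mathcal P_0$, and the reduction of the coupling from the exact $\la$-$\gamma$ dynamics to $\mathbf Z$ — each of which contributes at worst $(T-t)^{\alpha_1}$ for some $\alpha_1\in(0,1/3)$, giving the stated decay of $\mathcal R_{\cc}[f]$ and $\frac{d}{dt}\mathcal R_{\cc}[f]$.
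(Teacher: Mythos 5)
The overall scaffolding of your proposal — reducing to a first-order ODE via integrating factor, parallel to Proposition~\ref{prop-laga}, differentiating the non-local equation to control higher derivatives, using $\mathbf Z > 0$ — matches the paper's strategy. But there is a genuine gap in how you construct $p_{1,1}$ and, consequently, in your accounting of the remainder $\mathcal{R}_{\cc}[f]$.

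You describe $p_{1,1}$ as a residual "memory/oscillatory remainder" peeled off from the explicit integrating-factor solution of the approximate ODE. That is not enough, because the errors you discard when passing from the full non-local equation to that ODE are \emph{not} all uniformly smaller than the answer. In particular, the "logarithmic approximation" $\int_{-T}^{t-\lambda^2}\tfrac{p_1(s)}{t-s}\,ds \approx \int_{-T}^t \tfrac{p_1(s)}{T-s}\,ds - p_1(t)\log(T-t)$ produces a near-diagonal error term $\int_{t-(T-t)}^{t-\lambda^2}\tfrac{p_1(s)-p_1(t)}{t-s}\,ds$ whose size is $O(\lambda_*^\Theta / |\log(T-t)|)$ — comparable to $p_1$ itself, not $O((T-t)^{\alpha_1}\lambda_*^\Theta)$. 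So your plan to "sum all approximation errors" and claim each contributes $(T-t)^{\alpha_1}$ would fail: the naive approximate solution does not satisfy the non-local equation up to a remainder of the advertised size.

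The paper resolves this by never discarding the near-diagonal bulk. Instead it decomposes $\mathcal{I}[g] = S_{\alpha_1}[g] + \mathcal{R}_{\alpha_1}[g]$, where $S_{\alpha_1}$ keeps the Cauchy integral down to $s = t-(T-t)^{1+\alpha_1}$ (replacing only the last $(T-t)^{1+\alpha_1}$-sliver by the explicit diagonal value $(1-\alpha_1)|\log(T-t)|g(t)$). The operator $\tilde L_0 + \eta\tilde L_1 - \tfrac{\mathbf Z}{3\lambda}$ built from $S_{\alpha_1}$ is then shown to be invertible (Lemma~\ref{lemma7.2}), and $p_{1,1}$ is defined as the \emph{fixed point} $p_{1,1} = T_1[-\eta E - \tilde{\mathcal B}[p_{1,0}+p_{1,1}]]$ of Proposition~\ref{nonlinear-system-for-linear-problem}, with the $\Gamma_3,\Gamma_5$-expansion errors and the $p_0$-coupling all inside $\tilde{\mathcal B}$ and controlled by the contraction estimates of Lemma~\ref{estimateforB}. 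Only the truncation term $\mathcal{R}_{\alpha_1}[p_{1,1}] = -\int_{t-(T-t)^{1+\alpha_1}}^{t-\lambda_*^2}\tfrac{p_{1,1}(t)-p_{1,1}(s)}{t-s}\,ds$ survives as $\mathcal{R}_{\cc}[f]$, and it gains the factor $(T-t)^{\alpha_1}$ precisely because the integration interval has length $\sim(T-t)^{1+\alpha_1}$ and $\dot p_{1,1}$ satisfies the bound of Lemma~\ref{estimateforSecondDerivative}. This self-consistent fixed-point construction and the identification of $\mathcal{R}_{\cc}$ with the single near-diagonal truncation error are the missing ideas in your proposal.
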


\bigskip

\subsection{The inner-outer gluing system}\label{sec-727272}

Because of the refined linear theories for inner problems (Proposition \ref{prop-RG-Wperp0}, Proposition \ref{prop-RG-W1}), $\la$-$\gamma$ system (Proposition \ref{prop-laga}) and $c_1$-$c_2$ system (Proposition \ref{prop-c1c2}), we need to further decompose inner problems \eqref{gluing-sys}$_1$-\eqref{gluing-sys}$_2$ into three pieces. We search for
$$
\Phi^*_{W}+\Phi^*_{W^\perp}+\Phi^\dagger=\Phi_W+\Phi_{W^\perp}
$$
with
\begin{align} \notag
\la^2\pp_t \Phi^*_W=&~\Delta_y\Phi^*_{W } - 2\pp_{y_1}W\wedge \pp_{y_2}\Phi^*_{W } - 2\pp_{y_1} \Phi^*_{W }\wedge \pp_{y_2}W\\ \notag
&~+\la^2\Pi_W\Big[Q_{-\gamma}\tilde L_U[\Psi](q,t)\Big]+\la^2 \Pi_W\Big[Q_{-\gamma}(S[U_*])-\mathcal R^\dagger\Big]+\mathcal H_{{\rm in}}^{W} \\ \notag
&~+ \mathbf{R_{\cc}}[\Psi]+ \sum_{j=1,2}\hat c_{1j} \mathcal Z_{1,j}w_\rho^2 \quad\mbox{  in  }~~ \mathcal{D}_{2R},\\ \notag
\la^2\pp_t \Phi^*_{W^\perp}=&~\Delta_y\Phi^*_{W^\perp} - 2\pp_{y_1}W\wedge \pp_{y_2}\Phi^*_{W^\perp} - 2\pp_{y_1} \Phi^*_{W^\perp}\wedge \pp_{y_2}W\\ \label{gluing-inn}
&~+\la^2\Pi_{W^\perp}\Big[Q_{-\gamma}\tilde L_U[\Psi](q,t)\Big]+\la^2 \Pi_{W^\perp}\Big[Q_{-\gamma} (S[U_*])-\mathcal R^\dagger\Big]+\mathcal H_{{\rm in}}^{W^\perp}\\ \notag
&~+ \mathbf{R_{0}}[\Psi]+\la^2 \Pi_{W^\perp}\Big[Q_{-\gamma}\tilde L_U^{(1)}[\Psi]\Big]+ \sum_{j=1,2}\tilde c_{0j}  Z_{0,j}w_\rho^2+\sum_{j=1,2}c_{1j} Z_{1,j}w_\rho^2 \quad\mbox{  in  }~~ \mathcal{D}_{2R},\\ \notag
\la^2\pp_t \Phi^{\dagger}=&~\Delta_y\Phi^{\dagger} - 2\pp_{y_1}W\wedge \pp_{y_2}\Phi^{\dagger} - 2\pp_{y_1} \Phi^{\dagger}\wedge \pp_{y_2}W\\ \notag
&~+\la^2 \Big[Q_{-\gamma}\left(\tilde L_U[\Psi](\la y+\xi,t)-\tilde L_U[\Psi](q,t)\right)\Big]-\la^2 \Pi_{W^\perp}\Big[Q_{-\gamma}\tilde L_U^{(1)}[\Psi]\Big]\\ \notag
&~+\mathbf{R_{\cc}}[\Psi]+\mathbf{R_{0}}[\Psi]+\la^2   \mathcal R^\dagger \quad\mbox{  in  }~~ \mathcal{D}_{2R},
\end{align}
where
\begin{equation}\label{R_ccR_0}
\begin{aligned}
\mathbf{R_{\cc}}[\Psi]:=&~  \la\mathcal R_{\cc}\Bigg[\displaystyle\int_{\R^2} \la\Pi_W\left[Q_{-\gamma}\tilde L_U[\Psi](q,t) \right]\cdot\mathcal Z_{1,1}\Bigg]\frac{w_\rho^2 \mathcal Z_{1,1}}{{\displaystyle\int_{\R^2} w_\rho^2 \mathcal Z^2_{1,1}}}\\
&~+\la\mathcal R_{\cc}\Bigg[\displaystyle\int_{\R^2}i\la\Pi_W\left[Q_{-\gamma}\tilde L_U[\Psi](q,t) \right]\cdot\mathcal Z_{1,2}\Bigg]\frac{w_\rho^2 \mathcal Z_{1,2}}{{\displaystyle\int_{\R^2} w_\rho^2 \mathcal Z^2_{1,2}}},\\
\mathbf{R_{0}}[\Psi]:=&~\la\mathcal R_{0}\Bigg[\displaystyle\int_{\R^2} \la\Pi_{W^\perp}\left[Q_{-\gamma}\tilde L_U[\Psi](q,t) \right]\cdot Z_{0,1}\Bigg]\frac{w_\rho^2  Z_{0,1}}{{\displaystyle\int_{\R^2} w_\rho^2  Z^2_{0,1}}}\\
&~+\la\mathcal R_{0}\Bigg[\displaystyle\int_{\R^2}i\la\Pi_{W^\perp}\left[Q_{-\gamma}\tilde L_U[\Psi](q,t) \right]\cdot  Z_{0,2}\Bigg]\frac{w_\rho^2  Z_{0,2}}{{\displaystyle\int_{\R^2} w_\rho^2  Z^2_{0,2}}}
\end{aligned}
\end{equation}
with the operators $\mathcal R_0$ and $\mathcal R_{\cc}$ given in Proposition \ref{prop-laga} and Proposition \ref{prop-c1c2}, respectively; for $j=1,2$,
\begin{equation}\label{def-ccc1}
\begin{aligned}
&~\hat c_{1j}=\hat c_{1j}\Bigg[\la^2\Pi_W\Big[Q_{-\gamma}\tilde L_U[\Psi](q,t)\Big]+\la^2 \Pi_W\Big[Q_{-\gamma}(S[U_*])-\mathcal R^\dagger\Big]+\mathcal H_{{\rm in}}^{W} + \mathbf{R_{\cc}}[\Psi]\Bigg]\\
&~\tilde c_{0j}=\tilde c_{0j}\Bigg[\la^2\Pi_{W^\perp}\Big[Q_{-\gamma}\tilde L_U[\Psi](q,t)\Big]+\la^2 \Pi_{W^\perp}\Big[Q_{-\gamma} (S[U_*])-\mathcal R^\dagger\Big]+\mathcal H_{{\rm in}}^{W^\perp} + \mathbf{R_{0}}[\Psi]\Bigg]
\end{aligned}
\end{equation}
are given in Proposition \ref{prop-RG-W1} and Proposition \ref{prop-RG-Wperp0}, respectively; the term $\Pi_{W^\perp}\Big[Q_{-\gamma}\tilde L_U^{(1)}[\Psi]\Big]$ denotes the part of mode $1$ in the projection of $Q_{-\gamma}\left(\tilde L_U[\Psi](\la y+\xi,t)-\tilde L_U[\Psi](q,t)\right)$ onto $W^\perp$,
\begin{equation}\label{def-ccc2}
c_{1j}=\frac{\displaystyle\int_{\R^2}\left(\la^2\Pi_{W^\perp}\Big[Q_{-\gamma}\tilde L_U[\Psi](q,t)\Big]+\la^2 \Pi_{W^\perp}\Big[Q_{-\gamma} (S[U_*])-\mathcal R^\dagger\Big]+\mathcal H_{{\rm in}}^{W^\perp}\right)\cdot Z_{1,j}}{\displaystyle\int_{\R^2} w_\rho^2 Z_{1,j}^2},
\end{equation}
 and
\begin{equation*}
\begin{aligned}
\mathcal R^\dagger:=&~-\frac{2\eta_1 }r \left((\pp_\theta \phi_2^{(2)}+\pp_\theta \phi_2^{(-2)})+\cos w (\phi_1^{(2)}+\phi_1^{(-2)})+\sin2\theta \sin w\psi_2\right)\\
&~\quad \times\la^{-1}w_{\rho}\sin w(c_1 \cos\theta + c_2 \sin\theta) W\\
&~-\frac{2\eta_1 }r \Bigg[\la^{-1}\left(\sin2\theta w_\rho \psi_2+\pp_\rho\phi_1^{(2)}+\pp_\rho\phi_1^{(-2)}\right)\Bigg] \times \sin^2 w(c_1\cos\theta+ c_2\sin \theta) W.
\end{aligned}
\end{equation*}
We will solve \eqref{gluing-inn}$_1$ and \eqref{gluing-inn}$_2$ with orthogonality conditions imposed at corresponding modes
$$
\hat c_{1j}=0,\quad \tilde c_{0j}=0, \quad c_{1j}=0,
$$
and solve \eqref{gluing-inn}$_3$ (consisting of components both on $W^\perp$ and in the $W$-direction) without orthogonality.

\bigskip

\noindent $\bullet$ In the inner problems (supported in $B_{2R}$) with orthogonality condition imposed, by \eqref{eqn-a27a27a27}, one has
\begin{align*}
&~\left|\la^2 \Pi_W\Big[Q_{-\gamma}(S[U_*])-\mathcal R^\dagger\Big]\right|\\
\lesssim&~\la^2|Q_{-\gamma}\mathcal R_{U}|+ \la^2\Big|\Pi_W\Big[Q_{-\gamma}(\mathcal R_*)-\mathcal R^\dagger\Big]\Big|\\
&~+\la^2\Bigg|\Pi_W\Big[Q_{-\gamma}\Big(\eta_1 (\mathcal E_{U^\perp}^{(0)}+\tilde{\mathcal R}^0)+\eta_1 (\mathcal E_{U}^{(\pm1)}+  \tilde{\mathcal R}^1)+\eta_1 \tilde L_U[\Phi^{(0)}+\Phi^{(1)}]\Big)\Big]\Bigg|\\
\lesssim &~  \la_*^{4\Theta}\langle \rho \rangle^{-2}+\la_*^{1+\Theta}\langle \rho \rangle^{-3}+ \la_* \langle \rho \rangle^{-2}+\la_*^{3\Theta+1}+\la_*^2|\dot\la_*|\langle \rho \rangle^{-1} \\
&~+ \Bigg[\la_*^2+ \la_*^{2\Theta+1}  \bigg(\langle \rho \rangle^{-1-\delta}+\langle \rho \rangle^{-\delta_1}\bigg) +\la_*^{5\Theta}  \bigg(\langle \rho \rangle^{-2-\delta}+\langle \rho \rangle^{-1-\delta_1}\bigg)\\
&~\qquad+\la_*^{4\Theta} \langle \rho \rangle^{-2-2\delta}+\la_*^{\Theta+1} \langle \rho \rangle^{-3}\Bigg] \1_{\{\rho\lesssim \la_*^{-1}\}}\\
\lesssim &~  \la_*^{4\Theta}\langle \rho \rangle^{-2}+ \la_* \langle \rho \rangle^{-2}+\la_*^{3\Theta+1}+\la_*^2+  \la_*^{2\Theta+1}  \bigg(\langle \rho \rangle^{-1-\delta}+\langle \rho \rangle^{-\delta_1}\bigg),
\end{align*}
and
\begin{align*}
&~\left|\la^2 \Pi_{W^{\perp}}\Big[Q_{-\gamma}(S[U_*])-\mathcal R^\dagger\Big]\right|\\
\lesssim&~\la^2\Big|Q_{-\gamma}(\mathcal R_{U^\perp}+\mathcal E_{U^\perp}^{(1)})\Big|+ \la^2\Big|\Pi_{W^\perp}\Big[Q_{-\gamma}(\mathcal R_*)\Big]\Big|\\
&~+\la^2\Bigg|\Pi_{W^\perp}\Big[Q_{-\gamma}\Big(\eta_1 (\mathcal E_{U^\perp}^{(0)}+\tilde{\mathcal R}^0)+\eta_1 (\mathcal E_{U}^{(\pm1)}+  \tilde{\mathcal R}^1)+\eta_1 \tilde L_U[\Phi^{(0)}+\Phi^{(1)}]\Big)\Big]\Bigg|\\
\lesssim &~  \la_*^{3\Theta}\langle \rho \rangle^{-2}+\la_*  \langle \rho \rangle^{-2}+\la_*^{3\Theta+1} \\
&~+ \Bigg[\la_*^2+ \la_*^{2\Theta+1}  \langle \rho \rangle^{ -\delta} +\la_*^{4\Theta}  \bigg(\langle \rho \rangle^{-3-2\delta}+\langle \rho \rangle^{-2-\delta-\delta_1}\bigg)+\la_*^{5\Theta}  \langle \rho \rangle^{ -1-\delta} \\
&~\qquad+\la_*^{\Theta+1} \langle \rho \rangle^{-2}+\la_*^{3\Theta}  \langle \rho \rangle^{-3-\delta}\Bigg] \1_{\{\rho\lesssim \la_*^{-1}\}}\\
\lesssim &~ \la_*^{3\Theta}\langle \rho \rangle^{-2}+\la_*  \langle \rho \rangle^{-2}+\la_*^{3\Theta+1}+\la_*^2+ \la_*^{2\Theta+1}  \langle \rho \rangle^{ -\delta}+\la_*^{5\Theta}  \langle \rho \rangle^{ -1-\delta}.
\end{align*}
We thus need following restrictions on the parameters after simplification by \eqref{restri-0}
\begin{equation}\label{restri-1}
\begin{aligned}
&3\Theta-\nu-\beta(a-2)>0,\quad 1-\nu-\beta(a-2)>0,\\
&2\Theta+1-\nu-\beta(a-\delta)>0,\quad 5\Theta-\nu-\beta(a-1-\delta)>0
\end{aligned}
\end{equation}
so that above terms have finite $\|\cdot\|_{\nu,a}$-norm. Next, we estimate by Lemma \ref{linearization-lemma-2}
\begin{equation*}
\begin{aligned}
\left|\la^2\Pi_W\Big[Q_{-\gamma}\tilde L_U[\Psi](q,t)\Big]\right|,~ \left|\la^2\Pi_{W^\perp}\Big[Q_{-\gamma}\tilde L_U[\Psi](q,t)\Big]\right|\lesssim \la_* \langle y\rangle^{-2}.
\end{aligned}
\end{equation*}
So we need
\begin{equation}\label{restri-2}
1-\nu-\beta(a-2)>0.
\end{equation}
Recall \eqref{def-HWHWperp} and \eqref{def-U_*}. We estimate
\begin{equation}\label{est-710710710}
\begin{aligned}
&~\Big|\mathcal H_{{\rm in}}^{W^\perp}\Big|,\quad \Big|\mathcal H_{{\rm in}}^{W}\Big|\\
\lesssim&~ \Big| \nabla_y \Big(\eta_R Q_\gamma(\Phi_W+\Phi_{W^\perp})+\Psi\Big)\Big| \Big| \nabla_y\big(c_1 Q_\gamma \mathcal Z_{1,1}+c_2 Q_\gamma \mathcal Z_{1,2} +\eta_1 \Phi_*\big)\Big|\\
&~+\left|-\eta_R\la^2\dot\gamma J_z Q_\gamma(\Phi_W +\Phi_{W^\perp}) +\eta_R (\la \dot\xi+\la \dot\la y)\cdot \nabla_y(Q_\gamma\Phi_W +Q_\gamma\Phi_{W^\perp})\right|\\
&~+\left|\pp_{y_1}\Big(\eta_R Q_\gamma(\Phi_W+\Phi_{W^\perp})+\Psi\Big)\wedge \pp_{y_2}\Big(\eta_R Q_\gamma(\Phi_W+\Phi_{W^\perp})+\Psi\Big)\right|\\
\lesssim&~\bigg[\la_*^{\nu}R^{\sigma_*(5-a)}\langle y\rangle^{-4} \Big(\|\Phi_W\|_{{\rm in},\sigma_*,\nu,a}+\|\Phi_{W^\perp}\|_{{\rm in},\sigma_*,\nu,a}\Big)+\la_*^{\Theta}(0)\la_*(t)\|\Psi\|_{\sharp,\Theta,\alpha}\bigg]\\
&~\quad\times
\Big( \la_*(t)+\la_*^{\Theta}\langle y\rangle^{-2}+\la_*^{2\Theta}\langle y\rangle^{-\delta-1}\Big)\\
&~+(\la_*^{1+\nu}+\la_*^{3\Theta+\nu})R^{\sigma_*(5-a)}\langle y\rangle^{-3} \Big(\|\Phi_W\|_{{\rm in},\sigma_*,\nu,a}+\|\Phi_{W^\perp}\|_{{\rm in},\sigma_*,\nu,a}\Big)\\
&~+\la_*^{2\nu}R^{2\sigma_*(5-a)}\langle y\rangle^{-8}\Big(\|\Phi_W\|_{{\rm in},\sigma_*,\nu,a}+\|\Phi_{W^\perp}\|_{{\rm in},\sigma_*,\nu,a}\Big)^2 +\la_*^{2\Theta}(0)\la_*^2(t)\|\Psi\|^2_{\sharp,\Theta,\alpha},
\end{aligned}
\end{equation}
and thus need
\begin{equation}\label{restri-3}
\begin{aligned}
\Theta-\beta \sigma_*(5-a)>0, \quad \nu-2 \beta \sigma_*(5-a)>0,
\end{aligned}
\end{equation}
where parts of the restrictions are in \eqref{restri-1} already.

From Proposition \ref{prop-laga} and Proposition \ref{prop-c1c2}, we have
\begin{equation}\label{est-R_ccR_0}
\begin{aligned}
|\mathbf{R_{0}}[\Psi]|\lesssim&~(\la_*(t))^{1+m+\frac{(1+\alpha_0)\alpha}{2}} \langle \rho\rangle^{-5}\\
|\mathbf{R_{\cc}}[\Psi]|\lesssim&~(\la_*(t))^{1+\alpha_1+\Theta} \langle \rho\rangle^{-5}
\end{aligned}
\end{equation}
with sufficient decay in space-time.

\medskip

\noindent $\bullet$ In \eqref{gluing-inn}$_3$ without orthogonality imposed on the RHS, we notice that the term
$$
\la^2 \Big[Q_{-\gamma}\left(\tilde L_U[\Psi](\la y+\xi,t)-\tilde L_U[\Psi](q,t)\right)\Big]-\la^2 \Pi_{W^\perp}\Big[Q_{-\gamma}\tilde L_U^{(1)}[\Psi]\Big]
$$
has no mode $1$ on $W^\perp$. In order for it to be small such that $\Phi^\dagger$, solved from non-orthogonal versions of Proposition \ref{prop-lt-W0}, Proposition \ref{prop-lt-W} (in the $W$-direction), Proposition \ref{prop-lt-Wperp} (on $W^\perp$) and Remark \ref{rmk-651651651}, stays within the space with the $\|\cdot\|_{{\rm in},\sigma_*,\nu,a}$-topology, we need
\begin{equation}\label{restri-4}
1+\Theta+\alpha\beta-2\beta>\nu-\sigma_*\beta(5-a),\quad 0<\alpha<1.
\end{equation}
Indeed, we have
\begin{align*}
\bigg|\la^2 \Big[Q_{-\gamma}\left(\tilde L_U[\Psi](\la y+\xi,t)-\tilde L_U[\Psi](q,t)\right)\Big]-\la^2 \Pi_{W^\perp}\Big[Q_{-\gamma}\tilde L_U^{(1)}[\Psi]\Big]\bigg|
\lesssim  \la_* \langle y\rangle^{\alpha-2} \la_*^{\Theta} R^{-\alpha}\|\Psi\|_{\sharp,\Theta,\alpha}
\end{align*}
where we have used Lemma \ref{linearization-lemma-2} and
$$
|\nabla_x \Psi(x,t)-\nabla_x \Psi(q,t)|\lesssim |x-q|^{\alpha}\la_*^{\Theta}(\la_* R)^{-\alpha}\|\Psi\|_{\sharp,\Theta,\alpha}.
$$
The necessity of restriction \eqref{restri-4} is clear for all the modes except for the mode $0$ in the $W$-direction. For the mode $0$ in the $W$-direction, we can estimate
\begin{align*}
&~\bigg|\la^2 \Big[Q_{-\gamma}\left(\tilde L_U[\Psi](\la y+\xi,t)-\tilde L_U[\Psi](q,t)\right)\Big]-\la^2 \Pi_{W^\perp}\Big[Q_{-\gamma}\tilde L_U^{(1)}[\Psi]\Big]\bigg|\\
\lesssim &~ \la_* \langle y\rangle^{\alpha-2} \la_*^{\Theta} R^{-\alpha}\|\Psi\|_{\sharp,\Theta,\alpha}
\lesssim   \la_*^{1+\Theta} \langle y\rangle^{-\ell} R^{\ell-2}  \|\Psi\|_{\sharp,\Theta,\alpha}
\end{align*}
thanks to the support of the inner problem. Here $\ell>3/2$ and is close to $3/2$, and $\alpha\in(0,1)$ and is close to $1$. So we need
$$
1+\Theta-\beta(\ell-2)>\nu-\sigma_*\beta(5-a),
$$
which is true if \eqref{restri-4} holds.

For the other part $\la^2   \mathcal R^\dagger$, by definitions \eqref{eqn-correctpm2'} and \eqref{eqn-psi_2}, we first observe that it is in modes $\pm1$, $\pm3$ (in the $W$-direction), and also
$$
|\la^2   \mathcal R^\dagger|\lesssim \la_*^{3\Theta}\langle y\rangle^{-3-\delta}.
$$
 Then from Proposition \ref{prop-lt-W1} and Proposition \ref{prop-lt-W+}, we require
 \begin{equation}\label{restri-5}
 3\Theta-2\beta>\nu-\sigma_*\beta(5-a)
 \end{equation}
so that $\|\Phi^\dagger\|_{{\rm in},\sigma_*,\nu,a}<+\infty.$

For $\mathbf{R}_{\cc}[\Psi]$ and $\mathbf{R}_0[\Psi]$, since they are respectively in mode $\pm 1$ of the $W$-direction and in mode $0$ on $W^\perp$, we apply the non-orthogonal part of Proposition \ref{prop-lt-Wperp} and Proposition \ref{prop-lt-W1}. By \eqref{est-R_ccR_0}, we need the restrictions \eqref{restri-nonortho} and \eqref{restri-9} and so that $\Phi^\dagger$ survives in the desired topology.

\medskip

\medskip

\noindent $\bullet$ We now consider the outer problem
\begin{equation}\label{eqn-outerZ}
\left\{
\begin{aligned}
&\pp_t\Psi=\Delta_x \Psi+\mathcal G ~&\mbox{ in }~\R^2\times (0,T),\\
&\Psi(x,0)=Z^*(x)+\sum_{k=1}^5 \mathscr C_k \mathscr Z_k(x) ~&\mbox{ in }~\R^2,
\end{aligned}
\right.
\end{equation}
where $\mathcal G=\mathcal G[\Phi_W,\Phi_{W^\perp},\Psi,p_0,p_1,\xi,Z^*]$ is the RHS in the outer problem \eqref{gluing-sys}$_3$, and $\mathscr C_k$ will be chosen such that we have desired vanishing \eqref{outer-vanishing}. The outer problem can be formulated as a fixed-point problem
$$
\mathcal T_{{\rm out}}[\Psi]=\Gamma_{\R^2}\bullet\mathcal G+\Gamma_{\R^2}\circ\left(Z^*+\sum_{k=1}^5 \mathscr C_k \mathscr Z_k \right).
$$

For the RHS $\mathcal G$ given in \eqref{gluing-sys}$_3$, by \eqref{eqn-a27a27a27} and \eqref{eqn-a28a28a28}, we have
\begin{align*}
&~\left|(1-\eta_R)  S[U_*]\right|\\
\lesssim &~  (1-\eta_R)\Bigg|\Big[\mathcal R_{U^\perp}+\mathcal R_{U}+\mathcal E_{U^\perp}^{(1)}+\eta_1 (\mathcal E_{U^\perp}^{(0)}+\tilde{\mathcal R}^0)+\eta_1 (\mathcal E_{U}^{(\pm1)}+  \tilde{\mathcal R}^1)+\eta_1 \tilde L_U[\Phi^{(0)}+\Phi^{(1)}]\Big]\Bigg|\\
&~+(1-\eta_R)|\mathcal R_*|+|E_{\eta_1}|\\
\lesssim&~\Big(\la_*^{\Theta-1}+\la_*^{3\Theta-2}\Big)\langle \rho \rangle^{-2}  \1_{\{\la_* R\lesssim r \}} +\Big(\la_*^{-1}  \langle \rho \rangle^{-2}+\la_*^{3\Theta-1}+|\dot\la_*| \langle \rho \rangle^{-1} \Big)\1_{\{\la_* R\lesssim r \lesssim 1\}} \\
&~+\Bigg[1+ \la_*^{2\Theta-1}  \langle \rho \rangle^{ -\delta} +\la_*^{4\Theta-2}  \bigg(\langle \rho \rangle^{-3-2\delta}+\langle \rho \rangle^{-2-\delta-\delta_1}\bigg)+\la_*^{5\Theta-2}  \langle \rho \rangle^{ -1-\delta} \\
&~\qquad+\la_*^{\Theta-1} \langle \rho \rangle^{-2}+\la_*^{3\Theta-2}  \langle \rho \rangle^{-3-\delta}\Bigg] \1_{\{\la_* R\lesssim r \lesssim 1\}}+1.
\end{align*}
In order for above terms to have finite $\|\cdot\|_{**}$-norm, following restrictions should be imposed on parameters
\begin{equation}\label{restri-6}
\begin{aligned}
&\sigma_0>0, \quad 3\Theta-1\geq 0,\quad  2\Theta-1+\delta \beta>0.
\end{aligned}
\end{equation}
Also
\begin{equation*}
\begin{aligned}
\left|(1-\eta_R) \tilde L_U[\Psi]\right|\lesssim \frac{\la_*(t)}{r^2}\1_{ \{ r \geq  \la_*R \} }\lesssim T^\epsilon \varrho_2
\end{aligned}
\end{equation*}
for some $\epsilon>0$ provided $\sigma_0>0$.
For the remaining terms, we estimate
\begin{equation*}
\begin{aligned}
 &~\left|Q_\gamma(\Phi_W +\Phi_{W^\perp})\Delta_x\eta_R  + 2\nabla_x\eta_R\cdot\nabla_x(Q_\gamma\Phi_W +Q_\gamma\Phi_{W^\perp})-Q_\gamma(\Phi_W +\Phi_{W^\perp})\partial_t\eta_R\right|\\
 \lesssim &~\big[\la_*^{\nu-2}R^{-a}+\la_*^{\nu}R^{2-a}(T-t)^{-1}\big] \Big(\|\Phi_W\|_{{\rm in},\sigma_*,\nu,a}+\|\Phi_{W^\perp}\|_{{\rm in},\sigma_*,\nu,a}\Big)\1_{\{r\sim \la_*R\}},
\end{aligned}
\end{equation*}
\begin{equation*}
\begin{aligned}
&~\left|(1-\eta_R)\Big[-\eta_R\dot\gamma J_z Q_\gamma(\Phi_W +\Phi_{W^\perp}) +\eta_R (\la^{-1}\dot\xi+\la^{-1}\dot\la y)\cdot \nabla_y(Q_\gamma\Phi_W +Q_\gamma\Phi_{W^\perp})\Big]\right|\\
\lesssim&~\Big(\la_*^{\nu-1}R^{2-a}+\la_*^{3\Theta+\nu-2}R^{1-a}\Big)\Big(\|\Phi_W\|_{{\rm in},\sigma_*,\nu,a}+\|\Phi_{W^\perp}\|_{{\rm in},\sigma_*,\nu,a}\Big)\1_{\{r\geq \la_* R\}},
\end{aligned}
\end{equation*}
and by \eqref{def-U_*}, one has
\begin{align*}
&~(1-\eta_R)\bigg|-2\pp_{x_1}(U_*-U)\wedge \pp_{x_2}\Big(\eta_R Q_\gamma(\Phi_W+\Phi_{W^\perp})+\Psi\Big)\\
&~\qquad\qquad-2\pp_{x_1}\Big(\eta_R Q_\gamma(\Phi_W+\Phi_{W^\perp})+\Psi\Big)\wedge \pp_{x_2}(U_*-U)\bigg|\\
\lesssim&~(1-\eta_R) \Big| \nabla_x \Big(\eta_R Q_\gamma(\Phi_W+\Phi_{W^\perp})+\Psi\Big)\Big| \Big| \nabla_x\big(c_1 Q_\gamma \mathcal Z_{1,1}+c_2 Q_\gamma \mathcal Z_{1,2} +\eta_1 \Phi_*\big)\Big|\\
\lesssim&~\bigg[\la_*^{\nu-1}R^{1-a}\1_{\{r\sim \la_* R\}}\Big(\|\Phi_W\|_{{\rm in},\sigma_*,\nu,a}+\|\Phi_{W^\perp}\|_{{\rm in},\sigma_*,\nu,a}\Big)+\la_*^{\Theta}(0)\1_{\{r\geq \la_*R\}}\|\Psi\|_{\sharp,\Theta,\alpha}\bigg]\\
&~\times
\Big( 1+\la_*^{\Theta-1}\langle y\rangle^{-2}+\la_*^{2\Theta-1}\langle y\rangle^{-\delta-1}\Big).
\end{align*}
Also, we have
\begin{align*}
&~(1-\eta_R)\left|-2\pp_{x_1}\Big(\eta_R Q_\gamma(\Phi_W+\Phi_{W^\perp})+\Psi\Big)\wedge \pp_{x_2}\Big(\eta_R Q_\gamma(\Phi_W+\Phi_{W^\perp})+\Psi\Big)\right|\\
\lesssim &~(1-\eta_R)\Big|\nabla_x \Big(\eta_R Q_\gamma(\Phi_W+\Phi_{W^\perp})+\Psi\Big)\Big|^2\\
\lesssim&~\la_*^{2\nu-2}\langle y\rangle^{2-2a}\Big(\|\Phi_W\|_{{\rm in},\sigma_*,\nu,a}+\|\Phi_{W^\perp}\|_{{\rm in},\sigma_*,\nu,a}\Big)^2\1_{\{|y|\sim R\}}+\la_*^{2\Theta}(0)\|\Psi\|^2_{\sharp,\Theta,\alpha}.
\end{align*}
To bound above terms in the $\|\cdot\|_{**}$-topology, we need
\begin{equation}\label{restri-7}
\begin{aligned}
&~\nu-2+a\beta>\Theta-1+\beta,\quad \nu-1-\beta(2-a)>\Theta-1+\beta,\\
&~3\Theta+\nu-2-\beta(1-a)>\Theta-1+\beta,\quad \nu-1-\beta(1-a)>\Theta-1+\beta,\\
&~\nu+\Theta-2+\beta(1+a)>\Theta-1+\beta,\quad 2\nu-2-\beta(2-2a)>\Theta-1+\beta.
\end{aligned}
\end{equation}

For the Cauchy integrals $\Gamma_{\R^2}\circ\left(Z^*+\sum_{k=1}^5 \mathscr C_k \mathscr Z_k \right)$, we claim
\begin{equation*}
\left| \Gamma_{\R^2}\circ\left(Z^*+\sum_{k=1}^5 \mathscr C_k \mathscr Z_k \right)\right|\lesssim 1
\end{equation*}
since $\|Z^*+\sum_{k=1}^5 \mathscr C_k \mathscr Z_k\|_{L^\infty}\lesssim 1$. Indeed, $\mathscr C_k$ is chosen such that the vanishing \eqref{vanishings} holds, so the Cauchy integrals have the same control as the space-time convolutions estimated above. In fact, one has better estimate $\sum_{k=1}^5|\mathscr C_k|\lesssim\la_*^{-\Theta-1}(0)R^{-1}(0)|\log T|^{-1}$ that can be derived similar to \cite[Proposition A.1]{17HMF}.

\medskip

\subsection{Orthogonal equations}\label{solve-ortho}

We first employ Proposition \ref{prop-laga} and Proposition \ref{prop-c1c2} to derive restrictions on constants, measuring weighted topologies, that ensure the implementation of the gluing process, and then analyze the remainder terms neglected in Section \ref{leading-dynamics}.

\noindent $\bullet$  For the orthogonal equation of $p_0$:
$$
\tilde c_{0j}=0,
$$
we apply Proposition \ref{prop-laga} with
\begin{equation*}
h[\Psi ] :=
\left[
\pp_{x_{1}} \Psi_1 +
\pp_{x_{2}} \Psi_2  + i \left(\pp_{x_{1}} \Psi_2
-
\pp_{x_{2}}  \Psi_1 \right)
\right](q ,t)	.
\end{equation*}
 The vanishing and H\"older properties \eqref{hypA00} are exactly the ones inherited from the weighted topology \eqref{topo-out} for the outer problem, namely
 \begin{equation*}
\begin{aligned}
	&
|h[\Psi](t)-h[\Psi](T)|\lesssim \la_*^{\Theta}(t),
\\
&
\frac{|h[\Psi ](t)-h[\Psi ](s)|}{|t-s|^{\alpha/2}}\lesssim \lambda_*(t)^{\Theta-\alpha(1-\beta)}
	 \mbox{ \ for \ } |t-s|<\frac{T-t}{4}.
\end{aligned}
\end{equation*}
So it is then natural to choose in the $[\cdot]_{\frac{\alpha}2,m,\varpi-1}$-seminorm
\begin{equation*}
	m=\Theta-\alpha(1-\beta).
\end{equation*}
Then in the last line of \eqref{p-est}, $\flat -m-\frac{\alpha}{2}>0$. In order for both $\| h[\Psi ](\cdot) - h[\Psi ](T) \|_{\Theta,\varpi-1}$, $[h[\Psi ]]_{\frac{\alpha}2,m,\varpi-1}$ to be finite, we need
\begin{equation*}
		\varpi-1-2\Theta<0,
		\quad
		\varpi-1-2m<0.
\end{equation*}
Also the assumption $m\leq \Theta-\frac{\alpha}{2}$ in Proposition \ref{prop-laga} implies
\begin{equation*}
	\beta\leq 1/2,
\end{equation*}
which is in the desired self-similar regime as we require before.
Recall the estimate of $\mathcal R_0[h[\Psi ]]$. We require
$$
m+(1+\alpha_0)\frac{\alpha}{2}>\Theta,
$$
namely,
\begin{equation*}
		0<\alpha_0<1/2,\quad
		2\beta-1+\alpha_0>0,
\end{equation*}
so that the vanishing order of $\mathcal R_0[h[\Psi ]]$ as $t\to T$ is faster than the leading part $h[\Psi ]$ itself.

Under the paramter assumption above, the equation for $p_0$ will be solve by Proposition \ref{prop-laga} and the solution $p_0$ satisfies the estimate in \eqref{p-est}. We can then find a solution $\xi $ to the orthogonal equation
$$
c_{1j}=0
$$
with the estimate
\begin{equation}\label{xi-est}
	|\dot{\xi} |\lesssim \lambda_*^{3\Theta-1}
\end{equation}
provided
\begin{equation*}
2\nu -2\sigma_*\beta(5-a) -3\Theta>0 .
\end{equation*}

\medskip

 We then conclude that with above choices of $m,~\alpha_0,~\frac{\alpha}{2},~\varpi$, the remainder gains smallness
\begin{equation}\label{calR-est}
	|\mathcal R_0[h[\Psi ]](t)|\lesssim \la_*^{\Theta+\sigma_1}
\end{equation}
compare to the leading part $h[\Psi ]$ itself, where
\begin{equation*}
	0<\sigma_1<m+\frac{(1+\alpha_0)\alpha}{2}-\Theta.
\end{equation*}
We put the remainder $\mathcal R_0[h[\Psi ]]$, i.e., $\mathbf{R_0}[\Psi]$ defined in \eqref{R_ccR_0}, in the non-orthogonal inner problem, where the extra smallness measured above by $\sigma_1$ is crucial to control the non-orthogonal part. Indeed, from the version of linear theory without orthogonality condition imposed at mode $0$ on $W^\perp$ in Proposition \ref{prop-lt-Wperp}, we need
\begin{equation}\label{restri-nonortho}
\begin{aligned}
	1+\Theta-\alpha(1-\beta)+\frac{(1+\alpha_0)\alpha}{2}-2\beta>\nu-\sigma_*\beta(5-a).
\end{aligned}
\end{equation}

\medskip

In summary, the restrictions on the constants needed when dealing with the reduced problem are given by
\begin{equation}\label{restri-8}
\begin{aligned}
& 0<\beta<\frac12,
		\quad  0<\alpha_0<\frac{1}{2},
		\quad  2\beta-1+\alpha_0>0, \quad 2\nu -2\sigma_*\beta(5-a) -3\Theta>0,\\
&
	1+\Theta-\alpha(1-\beta)+\frac{(1+\alpha_0)\alpha}{2}-2\beta>\nu-\sigma_*\beta(5-a).	
	\end{aligned}
\end{equation}

\medskip

\noindent $\bullet$  By the same reasoning as in $\la$-$\gamma$ system, $c_1$-$c_2$ system will not be completely solved. Instead, the term $\mathbf R_\cc$ defined in \eqref{R_ccR_0} that produces the remainder $R_{\cc}[f]$ will be put in the piece of inner problem with no orthogonality condition imposed. Recall the non-orthogonal linear theory Proposition \ref{prop-lt-W1} for modes $\pm1$ in the $W$-direction. Therefore, from its vanishing bound, one requires
\begin{equation*}
1+\Theta+\alpha_1-2\beta> \nu-\sigma_*\beta(5-a).
\end{equation*}
A more convenient way to achieve above restriction is to assume
\begin{equation}\label{restri-9}
\alpha_1>\alpha\left(\frac{\alpha_0}{2}-\frac12+\beta\right)
\end{equation}
since it will be satisfied provided \eqref{restri-8} and \eqref{restri-9} hold.

\bigskip

 We now analyze those terms that we neglect when deriving the leading dynamics of the modulation parameters. We recall \eqref{error-decomp} and consider the remainders for the mode 0 on $W^\perp$:
\begin{align*}
&\Pi_{W^\perp}\Big[Q_{-\gamma}\tilde L_U[\Psi]\Big]+  \Pi_{W^\perp}\Big[Q_{-\gamma} (S[U_*])\Big]+\mathcal H_{{\rm in}}^{W^\perp}\\
&-
\Pi_{W^\perp}\Big[Q_{-\gamma}\tilde L_U[Z_0^*(q)]\Big]-  \Pi_{W^\perp}\Big[Q_{-\gamma} \Big((\mathcal E_{U^\perp}^{(0)}+\tilde{\mathcal R}^0)+\tilde L_U[\Phi^{(0)}]\Big)\Big].
\end{align*}
In $\mathcal D_{2R}$, one has
\begin{align*}
&~\left|\left(\Pi_{W^\perp}\Big[Q_{-\gamma} (S[U_*])\Big]-  \Pi_{W^\perp}\Big[Q_{-\gamma} \Big((\mathcal E_{U^\perp}^{(0)}+\tilde{\mathcal R}^0)+\tilde L_U[\Phi^{(0)}]\Big)\Big]\right)_{\mathbb C,0}\right|\\
=&~\left|\left(\Pi_{W^\perp}\Big[Q_{-\gamma}\left(\mathcal R_{U^\perp}+\tilde{\mathcal R}^1+\widetilde{\mathcal R}_*+\mathcal R_{*,1}\right)\Big]\right)_{\mathbb C,0}\right|\\
\lesssim &~ \la_*^{\Theta-1}\langle \rho\rangle^{-2}+\la_*^{3\Theta-1}\langle \rho\rangle^{-1}+ 1+ \la_*^{2\Theta-1}  \langle \rho \rangle^{ -\delta} +\la_*^{4\Theta-2}  \bigg(\langle \rho \rangle^{-3-2\delta}+\langle \rho \rangle^{-2-\delta-\delta_1}\bigg)+\la_*^{5\Theta-2}  \langle \rho \rangle^{ -1-\delta} \\
&~ +\la_*^{\Theta-1} \langle \rho \rangle^{-2}+\la_*^{3\Theta-2}  \langle \rho \rangle^{-3-\delta} ,
\end{align*}
where we have used \eqref{eqn-327}, \eqref{def-tildeR_*} and \eqref{eqn-a27a27a27}. Also, it follows from \eqref{topo-out} that
$$
\Big|Q_{-\gamma}\tilde L_U[\Psi-Z_0^*(q)]\Big| \lesssim \la_*^{\Theta-1},
$$
and $\mathcal H_{{\rm in}}^{W^\perp}$ is estimated in \eqref{est-710710710}.

\medskip

We next consider the remainders in mode 1 on $W^\perp$. To estimate
$
\Pi_{W^\perp}\Big[Q_{-\gamma} \widetilde{\mathcal R}_*\Big]+\mathcal H_{{\rm in}}^{W^\perp},
$
we recall \eqref{def-tildeR_*}. Similar to \eqref{eqn-a27a27a27}, one can verify that
\begin{equation}\label{est-732732732}
\begin{aligned}
\Big|\widetilde{\mathcal R}_*\cdot Q_\gamma W\Big| \lesssim &~\Bigg[1+ \la_*^{2\Theta-1}  \bigg(\langle \rho \rangle^{-1-\delta}+\langle \rho \rangle^{-\delta_1}\bigg) +\la_*^{5\Theta-2}  \bigg(\langle \rho \rangle^{-2-\delta}+\langle \rho \rangle^{-1-\delta_1}\bigg)\\
&~\qquad+\la_*^{4\Theta-2} \langle \rho \rangle^{-2-2\delta}+\la_*^{3\Theta-2}  \langle \rho \rangle^{-4-\delta}\Bigg] \1_{\{\rho\lesssim \la_*^{-1}\}},\\
\Big|\widetilde{\mathcal R}_*\cdot Q_\gamma E_1\Big|,~\Big|\widetilde{\mathcal R}_*\cdot Q_\gamma E_2\Big| \lesssim &~\Bigg[1+ \la_*^{2\Theta-1}  \langle \rho \rangle^{ -\delta} +\la_*^{4\Theta-2}  \bigg(\langle \rho \rangle^{-3-2\delta}+\langle \rho \rangle^{-2-\delta-\delta_1}\bigg)+\la_*^{5\Theta-2}  \langle \rho \rangle^{ -1-\delta} \\
&~\qquad+\la_*^{3\Theta-2}  \langle \rho \rangle^{-3-\delta}\Bigg] \1_{\{\rho\lesssim \la_*^{-1}\}}.
\end{aligned}
\end{equation}

\medskip

The estimates for remainders in modes $\pm1$ in the $W$-direction
$
\Pi_{W}\Big[Q_{-\gamma} \widetilde{\mathcal R}_*\Big]+\mathcal H_{{\rm in}}^{W}
$
are similar (cf. \eqref{est-710710710} and \eqref{est-732732732}). Recall that the outer problem will be solved within the weighted space \eqref{topo-out} with the vanishings \eqref{outer-vanishing}. Thanks to this, in the reduced problem for $c_1$ and $c_2$, we have
\begin{equation*}
\begin{aligned}
&~\left| \frac{\la}{\pi}\int_0^{2\pi}\int_0^{+\infty} \left( Q_{-\gamma}\tilde L_U[\Psi(q,t)] \cdot W\right) \sin w e^{i\theta} \rho d\rho d\theta \right|\\
=&~\frac{2}{\pi}\bigg| \int_0^{2\pi}\int_0^{+\infty}  w_\rho\sin^2 w\Big[\partial_{x_1} \Psi_3(q,t)\cos\theta +\partial_{x_2} \Psi_3(q,t)\sin\theta\Big]e^{i\theta} \rho d\rho d\theta \bigg|\\
=&~\frac{2}{\pi}\bigg| \int_0^{2\pi}\int_0^{+\infty}  w_\rho\sin^2 w\Big[\Big(\partial_{x_1} \Psi_3(q,t)-\partial_{x_1} \Psi_3(q,T)\Big)\cos\theta +\Big(\partial_{x_2} \Psi_3(q,t)-\partial_{x_2} \Psi_3(q,T)\Big)\sin\theta\Big]e^{i\theta} \rho d\rho d\theta \bigg|\\
\lesssim &~ \la_*^{\Theta}(t)\|\Psi\|_{\sharp,\Theta,\alpha},
\end{aligned}
\end{equation*}
where we have used Lemma \ref{linearization-lemma-2}. This term appears as part of $f(t)$ in Proposition \ref{prop-c1c2}.

From the estimates above, we see that, under the final choice of constants \eqref{finalchoice}, the remainders in the orthogonal equations $\tilde c_{0j}=c_{1j}=\hat c_{1j}=0$ are indeed of smaller order than the main order taken into account when deriving the leading dynamics in Section \ref{leading-dynamics}.

\bigskip

\subsection{Solving the full system: Proof of Theorem \ref{t:main}}

We now formulate the full gluing system \eqref{gluing-inn} \& \eqref{eqn-outerZ} together with the orthogonal equations
\begin{equation}\label{eqn-mod}
\tilde c_{0j}=c_{1j}=\hat c_{1j}=0
\end{equation}
as a fixed-point problem. Here the definitions for $\hat c_{1j}$, $\tilde c_{0j}$, $c_{1j}$ are given in \eqref{def-ccc1} and \eqref{def-ccc2}.

\medskip

We will solve the outer problem \eqref{eqn-outerZ} in the space
\begin{equation}\label{out-space}
	X_{\Psi} : = \Big\{ \Psi=(\Psi_1,\Psi_2,\Psi_3) :  \|  \Psi  \|_{\sharp, \Theta,\alpha}<+\infty, \  \Psi_j(q,T)=\pp_{x_1}\Psi_3(q,T)=\pp_{x_2}\Psi_3(q,T)=0,~j=1,2,3 \Big\}
\end{equation}
and the inner problems \eqref{gluing-inn}$_1$-\eqref{gluing-inn}$_3$ for $\Phi^*_{W}$, $\Phi^*_{W^\perp}$ and $\Phi^\dagger$ all in the space
\begin{equation}\label{inner-space}
\begin{aligned}
	&X_{\Phi^*_{W}} : = \Big\{ \Phi^*_{W},~\nabla_y\Phi^*_{W}\in L^{\infty}(\mathcal D_{2R}) :  \|\Phi^*_{W}\|_{{\rm in},\sigma_*,\nu,a}<+\infty \Big\}, \\
	&X_{\Phi^*_{W^\perp}} : = \Big\{ \Phi^*_{W^\perp},~\nabla_y\Phi^*_{W^\perp}\in L^{\infty}(\mathcal D_{2R}) :  \|\Phi^*_{W^\perp}\|_{{\rm in},\sigma_*,\nu,a}<+\infty \Big\},\\
	&X_{\Phi^\dagger} : = \Big\{ \Phi^\dagger,~\nabla_y\Phi^\dagger\in L^{\infty}(\mathcal D_{2R}) :  \|\Phi^\dagger\|_{{\rm in},\sigma_*,\nu,a}<+\infty \Big\}.
\end{aligned}
\end{equation}

\medskip

We consider the modulation problem \eqref{eqn-mod}. For $\tilde c_{0j}=0$, Proposition~\ref{prop-laga} gives an approximate inverse $\mathcal P$  of the operator $\mathcal B_0$, so that for given $h(t)$ satisfying \eqref{hypA00},  $ p_0 := \mathcal P  \left[ h \right] $
satisfies
$$
\mathcal B_0[ p_0 ]   = h +\mathcal R_0[ h] \quad \text{ in }~[0,T]
$$
for a small remainder $\mathcal R_0[h]$. Moreover, estimates \eqref{p-est} for
$p_{0,1}:= \mathcal P_1[h]+ \mathcal P_2[h]$
 in Proposition \ref{prop-laga} lead us to define the space
\begin{equation*}
 X_{p_0} := \{  p_{0,1} \in C([-T,T;\mathbb C]) \cap C^1([-T,T;\mathbb C]) \ : \ p_{0,1}(T) = 0 , \ \|p_{0,1}\|_{*,3-\sigma_0}<+\infty \}
\end{equation*}
for some $\sigma_0\in(0,1)$, where we represent $p_0$ by the pair $(\kappa,p_{0,1})$ in the form $p_0 = p_{0,\kappa} + p_{0,1}$, and the $\|\cdot\|_{*,3-\sigma_0}$-seminorm is defined by
$$
\|f\|_{*,3-\sigma_0} := \sup_{t\in [-T,T]}  |\log(T-t)|^{3-\sigma_0} |\dot f(t)|.
$$

For $c_{1j}=0$, we define the space for $\xi(t)$ as
\begin{equation*}
X_{\xi}=\left\{\xi\in C^1((0,T);\R^2)~:~\dot\xi(T)=0,~\|\xi\|_{X_{\xi}}<+\infty\right\}
\end{equation*}
where
\begin{equation*}
\|\xi\|_{X_{\xi}}=\|\xi\|_{L^{\infty}(0,T)}+\sup_{t\in[-T,T]} \la_*^{-\sigma_1}(t)|\dot\xi(t)|
\end{equation*}
for some $\sigma_1\in(0,3\Theta-1)$.

Similarly, for $\hat c_{1j}=0$, Proposition \ref{prop-c1c2} concerns the solvability of $c_1$-$c_2$ system, i.e., $p_1=-2\big[\la(c_1+ic_2)\big]'$, up to a small remainder $\mathcal{R}_{\cc}[f](t)$. The control of the resolution
$
p_1=\mathcal P_{\cc}[f]=p_{1,0}+p_{1,1}
$
for the modified non-local problem motivates us to solve $p_1$ in the space
\begin{equation*}
 X_{p_1} := \{  p_{1} \in C([-T,T;\mathbb C]) \cap C^1([-T,T;\mathbb C]) \ : \ p_{1}(T) = 0 , \ \|p_{1}\|_{\Theta,1}<+\infty \},
\end{equation*}
where the norm above is defined in \eqref{norm-thetavarpi}.

\medskip

We define $\mathfrak X:=X_\Psi\times X_{\Phi^*_{W}} \times X_{\Phi^*_{W^\perp}}\times X_{\Phi^\dagger}\times \mathbb C \times X_{p_0}\times X_{\xi}\times X_{p_1}$ and take a closed subset $\mathfrak B\subset \mathfrak X$ for which
$
(\Psi,\Phi^*_{W},~\Phi^*_{W^\perp},~\Phi^\dagger,~\kappa,~p_{0,1},~\xi,~p_1)\in \mathfrak B
$
 satisfies
\begin{equation*}
\begin{aligned}
&\|  \Psi  \|_{\sharp, \Theta,\alpha}+ \|\Phi^*_{W}\|_{{\rm in},\sigma_*,\nu,a}+ \|\Phi^*_{W^\perp}\|_{{\rm in},\sigma_*,\nu,a}+ \|\Phi^\dagger\|_{{\rm in},\sigma_*,\nu,a}\leq 1,\\
&|\kappa-\kappa_0|\leq |\log T|^{-1},\quad \kappa_0=\div Z^*(q) +i {\rm{curl }} Z^*(q),\\
&\|p_{0,1}\|_{*,3-\sigma_0}\leq C_0 |\log T|^{1-\sigma_0}(\log(|\log T|))^2, \quad \|\xi\|_{X_{\xi}}+\|p_{1}\|_{\Theta,\sigma_2}\leq 1
\end{aligned}
\end{equation*}
for a sufficiently large constant $C_0$. Then we define an operator $\mathfrak F$ which returns the solution from $\mathfrak B$ to $\mathfrak X$
\begin{equation*}
\begin{aligned}
\mathfrak F: \mathfrak B\subset \mathfrak X~\rightarrow& ~\mathfrak X\\
v~\mapsto&~ \mathfrak F(v):=\Big(\mathfrak F_{\Psi}(v),~\mathfrak F_{\Phi^*_W}(v),~\mathfrak F_{\Phi^*_{W^\perp}}(v),~\mathfrak F_{\Phi^\dagger}(v),~\mathfrak F_{\kappa}(v),~\mathfrak F_{p_{0,1}}(v),~\mathfrak F_{\xi}(v),~\mathfrak F_{p_1}(v)\Big).
\end{aligned}
\end{equation*}
Here the operator $\mathfrak F_{\Psi}$ corresponds to the outer problem \eqref{eqn-outerZ} with linear theory given by Proposition \ref{prop3}. The operators $\mathfrak F_{\Phi^*_W}$, $\mathfrak F_{\Phi^*_{W^\perp}}$, $\mathfrak F_{\Phi^\dagger}$ handle respectively three pieces of inner problems \eqref{gluing-inn}$_1$, \eqref{gluing-inn}$_2$, \eqref{gluing-inn}$_3$, and their linear theories are given in Proposition \ref{prop-lt-W} and Proposition \ref{prop-lt-Wperp}. The operators $\mathfrak F_{\kappa}$, $\mathfrak F_{p_{0,1}}$ deal with the $\la$-$\gamma$ system from $\tilde c_{0j}=0$ whose linear theory is in Proposition \ref{prop-laga}. The operator $\mathfrak F_{\xi}$ concerns $c_{1j}=0$ yielding a first order ODE for $\xi$. The operator $\mathfrak F_{p_1}$ is related to the $c_1$-$c_2$ system from $\hat c_{1j}=0$ with linear theory given by Proposition \ref{prop-c1c2}.

The property that $\mathfrak F: \mathfrak B\rightarrow \mathfrak B$ follows from  those estimates in Section \ref{sec-727272} and Section \ref{solve-ortho} under all the restrictions \eqref{restri-0}, \eqref{restri-1}, \eqref{restri-2}, \eqref{restri-3}, \eqref{restri-4}, \eqref{restri-5}, \eqref{restri-6}, \eqref{restri-7}, \eqref{restri-8} and \eqref{restri-9} for the constants. These can be simplified as
\begin{equation}\label{restri}
\begin{aligned}
&0<\nu<1, \quad 2<a<3, \quad 1/3<\Theta<\beta<1/2, \\
&1-\nu-\beta(a-2)>0, \quad 0<\sigma_*<1,\\
&\Theta-\beta \sigma_*(5-a)>0, \quad \nu-2 \beta \sigma_*(5-a)>0,\\
&3\Theta-2\beta>\nu-\sigma_*\beta(5-a),\quad \nu-2+a\beta>\Theta-1+\beta,\\
&0<\alpha<1,\quad \alpha_0\approx1/2, \quad 0<\alpha_1<1/3, \quad \alpha_1>\alpha\left(\frac{\alpha_0}{2}-\frac12+\beta\right),\\
&1+\Theta-\alpha(1-\beta)+\frac{(1+\alpha_0)\alpha}{2}-2\beta>\nu-\sigma_*\beta(5-a),
\end{aligned}
\end{equation}
where we take $\delta=\frac{999}{1000}\approx1$, $\alpha_0=\frac{49}{100}\approx1/2$. With the aid of Mathematica, the system \eqref{restri} admits a valid choice of constants:
\begin{equation}\label{finalchoice}
\begin{aligned}
&\frac{37}{60}<\nu<\frac58, \quad \frac{209-120\nu}{45}<a<3, \quad \beta=\frac38, \quad \Theta=\frac{11}{30},\\
&\frac{8\nu-\frac{14}{5}}{15-3a}<\sigma_*< \frac{4\nu}{15-3a},\quad \alpha=\frac{99}{100},\quad \frac{297}{2500}<\alpha_1<\frac13.
\end{aligned}
\end{equation}
The compactness of the operator $\mathfrak F$ can be proved by suitable variants of \eqref{finalchoice}. Indeed, one can vary slightly the constants such that all the restrictions in \eqref{restri} still hold, and get \eqref{finalchoice} with the weighted norms measured by the new constants with the closed ball $\mathfrak B$ remains the same. For instance, for fixed $\Theta',~\alpha'$ close to $\Theta,\alpha$, one can show that if $v\in\mathfrak B$, then
$$\|\mathfrak F_{\Psi}(v)\|_{\sharp,\Theta',\alpha'}\leq CT^{\epsilon'}$$
for some constants $C,~\epsilon'>0$.
Moreover, one can show that for $\alpha'>\alpha$ and $\Theta'-\Theta>2(\alpha'-\alpha)$, one has a compact embedding in the sense that if a sequence $\{\Psi_k\}_k$ is bounded in the $\|\cdot\|_{\sharp,\Theta',\alpha'}$-norm, then there exists a subsequence that converges in the $\|\cdot\|_{\sharp,\Theta,\alpha}$-norm. The compactness thus follows directly from Arzel\`a--Ascoli's theorem by a standard diagonal argument. The compactness of the rest operators can be proved in a similar manner. Therefore,  the Schauder fixed-point theorem implies the existence of a desired solution. The proof of Theorem \ref{t:main} is complete.

\bigskip

\section{Non-local $c_1$-$c_2$ system}\label{sec-lt-c1c2}
In this section, we solve the equation for $c_1$ and $c_2$ defined in Section \ref{leading-dynamics}
\begin{equation}\label{lambda-c-system}
\int_{-T}^t \frac{p_1(s)}{t-s} \Gamma_3\left(\frac{\lambda^2(t)}{t-s}\right)ds+2\la \dot \cc + \Gamma_4[p_0] \cc = f(t).
\end{equation}
Recall from Section \ref{leading-dynamics} that $p_1=-2(\la \cc)',\quad p_0=-2(\dot\la+i\la\dot\gamma) e^{i\gamma}$, $\cc(t) := c_1(t)+i c_2(t)$ and $f(t)$ is a smooth function satisfying the condition $|f(t)|\lesssim \la_*^{\Theta}(t)$. For simplicity, we will use the notations in this following: $\cc^{(0)}$, $\cc^{(1)}$ and $p_{1,0}(t) = \left(-2\lambda\cc^{(0)}\right)'$, $p_{1,1}(t) = \left(-2\lambda\cc^{(1)}\right)'$.
\subsection{The construction of a solution}
According to the computations in Section \ref{leading-dynamics}, (\ref{lambda-c-system}) can be approximated by the following equation
\begin{equation*}\label{lambda-c-system-100}
\mathcal P_1(t)+\frac23 \mathcal P_0(t)\cc  = f(t) \text{ in } [0, T),
\end{equation*}
so we need to construct an operator $\mathcal{P}_{\cc}$ which assigns $p_1 = \mathcal{P}_{\cc}[f]$ to a function $f$ in a suitable class such that                  \begin{equation}\label{lambda-c-system-1}
\mathcal P_1(t)+\frac23 \mathcal P_0(t)\cc  = f(t) + \mathcal{R}_{\cc}[f](t)\text{ in } [0, T),
\end{equation}
so that $\mathcal{R}_{\cc}[f](t)$ is a suitable small remainder term. The first approximation is the following function
\begin{equation*}
\begin{aligned}
\la \cc^{(0)}=&~(T-t)^{\frac{\mathbf Z}{6}\log(T-t)}\int_t^{T} (T-\tau)^{-\frac{\mathbf Z}{6}\log(T-\tau)}\left(\frac{f(\tau)}{2\log(T-\tau)}-\frac{1}{2\log^2(T-\tau)}\int_\tau^T \frac{f(s)}{T-s}ds\right)d\tau,
\end{aligned}
\end{equation*}
which is a solution of the equation
\begin{equation*}
\mathcal{P}(t):=\int_{-T}^t\frac{p_{1, 0}(s)}{T-s}ds - p_{1, 0}(t)\log(T-t) - \frac{2}{3}\mathbf{Z} \cc^{(0)}  =  f(t),\quad p_{1, 0}(t) = (-2\lambda\mathbf c^{(0)})'
\end{equation*}
and it is an approximation to the equation (\ref{lambda-c-system-1}). Observe that we have $|\lambda\mathbf c^{(0)}|\leq \frac{\lambda_*^\Theta(t)|\log T|(T-t)}{|\log(T-t)|^2}$ and $|(\lambda\mathbf c^{(0)})'|\leq \frac{\lambda_*^\Theta(t)}{|\log(T-t)|}$ since $f(t)$ satisfies the condition $|f(t)|\lesssim \la_*^{\Theta}(t)$.

We look for $p_1$ with the form $p_1 = p_{1, 0} + p_{1, 1}$, where $p_{1, 0}(t) = (-2\lambda\cc^{(0)})'$ and $p_1$ satisfies the following equation
\begin{equation*}
\mathcal{I}[p_{1, 0}] + \mathcal{I}[p_{1, 1}] + \tilde{B}[p_{1, 0} + p_{1, 1}]- \frac{2}{3}\mathbf{Z} \left(\cc^{(0)} + \cc^{(1)}\right)  = f(t) + \mathcal{R}_{\cc}[f](t) \text{ for }t\in [0, T].
\end{equation*}
Here we define $\mathcal{I}[p] = \int_{-T}^{t-\lambda_*^2(t)}\frac{p(s)}{t-s}ds$, the term $\tilde{B}[p_{1, 0} + p_{1, 1}]$ are small terms defined as follows,
\begin{equation*}
\tilde{\mathcal B}[p_1] = \tilde{\mathcal B}_1[p_1] + \tilde{\mathcal B}_2[p_1]\cc + \tilde{\mathcal B}_3[p_1]\cc - 2\lambda\dot\cc,
\end{equation*}
\begin{equation*}
\tilde{\mathcal B}_1[p_1] = -\int_{-T}^{t-\lambda_*^2(t)}\frac{p_1(s)}{t-s}\left(\Gamma_3\left(\frac{\lambda^2(t)}{t-s}\right)+1\right)ds-\int_{t-\lambda_*^2(t)}^t\frac{p_1(s)}{t-s}\Gamma_3\left(\frac{\lambda^2(t)}{t-s}\right)ds,
\end{equation*}
\begin{equation*}
\tilde{\mathcal B}_2[p_1] = \int_{-T}^{t-\lambda_*^2(t)} \frac{ {\rm Re}[p_0(s) e^{-i\gamma(t)}]}{t-s}\left(\Gamma_5\left(\frac{\lambda^2(t)}{t-s}\right)+\frac{2}{3}\right)ds + \int_{t-\lambda_*^2(t)}^t \frac{ {\rm Re}[p_0(s) e^{-i\gamma(t)}]}{t-s}\Gamma_5\left(\frac{\lambda^2(t)}{t-s}\right)ds,
\end{equation*}
\begin{equation*}
\tilde{\mathcal B}_3[p_1] = -\frac{2}{3}\int_{-T}^{t-\lambda_*^2(t)} \frac{ {\rm Re}[p_0(s) e^{-i\gamma(t)}]}{t-s}ds + \frac{2}{3}\mathbf{Z}.
\end{equation*}
The idea is to decompose $\mathcal{I}[p_{1,1}]$ into $\mathcal{I}[p_{1,1}] = S_{\alpha_1}[p_{1, 1}] + \mathcal{R}_{\alpha_1}[p_{1, 1}]$, then replace the operator $\mathcal{I}[p_{1,1}]$ by $S_{\alpha_1}[p_{1,1}]$ and try to solve the corresponding equation. If $\alpha_1 > 0$ is small, then we can find $p_{1,1}$ such that
\begin{equation*}
\mathcal{I}[p_{1, 0}] + S_{\alpha_1}[p_{1, 1}] + \tilde{B}[p_{1, 0} + p_{1, 1}] - \frac{2}{3}\mathbf{Z} \left(\cc^{(0)} + \cc^{(1)}\right) = f(t) \text{ for }t\in [0, T].
\end{equation*}
This means that we have
\begin{equation*}
\mathcal{B}[p_{1, 0} + p_{1, 1}] - \frac{2}{3}\mathbf{Z} \left(\cc^{(0)} + \cc^{(1)}\right) = f(t)  + \mathcal{R}_{\alpha_1}[p_{1, 1}] \text{ for }t\in [0, T].
\end{equation*}
We will prove that
\begin{equation*}
\left|\mathcal{R}_{\alpha_1}[p_{1, 1}]\right |\leq C(T-t)^{\Theta+\alpha_1} \text{ for }t\in [0, T].
\end{equation*}
Now we decompose
\begin{equation*}
S_{\alpha_1}[g] = \tilde L_0[g] + \tilde L_1[g]
\end{equation*}
where
\begin{equation*}
\tilde L_0[g](t) = (1-\alpha_1)|\log(T-t)|g(t)
\end{equation*}
and $\tilde L_1[g]$ contains all other terms, we have
\begin{equation*}
\begin{aligned}
\tilde L_1[g] &= \int_{-T}^t\frac{g(s)}{T-s}ds + \int_{t-(T-t)}^{t-(T-t)^{1+\alpha_1}}\frac{g(s)}{t-s}ds-\int_{t-(T-t)}^t\frac{g(s)}{T-s}ds\\
&\quad + \int_{-T}^{t-(T-t)}g(s)\left(\frac{1}{t-s}-\frac{1}{T-s}\right)ds + (4\log(|\log(T-t)|)-2\log(|\log T|))g(t).
\end{aligned}
\end{equation*}
Now we look for a function $g$ solving the following problem
\begin{equation*}
S_{\alpha_1}[g] - \frac{1}{3} \frac{\mathbf Z}{\lambda}g = \tilde f(t)\text{ in }[-T, T].
\end{equation*}
We solve a modified version of this equation. Let $\eta$ be a smooth cut-off function such that
\begin{equation*}
\eta(s)= 1\text{ for }s\geq 0, \,\, \eta(s) = 0\text{ for }s\leq -\frac{1}{4}.
\end{equation*}
We will find a function $g$ such that
\begin{equation}\label{modified-problem}
\tilde L_0[g] + \eta(\frac{t}{T})\tilde L_1[g] - \frac{1}{3} \frac{\mathbf Z}{\lambda}g = \tilde f(t)\text{ in }[-T, T].
\end{equation}
\begin{lemma}\label{lemma7.2}
When $\alpha_1\in (0, \frac{1}{3})$ and $T > 0$ are sufficiently small, there is a linear operator $T_1$ such that $g=T_1[\tilde f]$ satisfies (\ref{modified-problem}) and the following estimate holds
\begin{equation*}
\|g\|_{*,\Theta,k+1}\leq C\|\tilde f\|_{*,\Theta,k}.
\end{equation*}
Here the norms are defined by
\begin{equation*}
\|h\|_{*, \Theta, k} = \sup_{t\in [-T, T]}(\lambda_*(t))^{-\Theta}|\log(T-t)|^k|h(t)|.
\end{equation*}
Here $k\in (0,1)$.
\end{lemma}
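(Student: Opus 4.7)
The strategy is to recast equation (\ref{modified-problem}) as a fixed-point problem driven by the invertibility of its principal pointwise multiplier. Setting
\[
M(t) := (1-\alpha_1)|\log(T-t)| - \frac{\mathbf Z}{3\lambda(t)},
\]
equation (\ref{modified-problem}) becomes $M(t)\, g(t) = \tilde f(t) - \eta(t/T)\tilde L_1[g](t)$. In the regime $\mathbf Z > 0$ that is considered throughout the paper, the leading asymptotic $\lambda(t) \sim (T-t)|\log T|/|\log(T-t)|^2$ yields $\mathbf Z/(3\lambda(t)) \gg |\log(T-t)|$ uniformly on $[-T,T]$, so $|M(t)| \asymp \mathbf Z/(3\lambda(t))$ and $|M(t)^{-1}|\lesssim \lambda(t)$. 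This allows the rewriting
\[
g(t) = M(t)^{-1}\big(\tilde f(t) - \eta(t/T)\tilde L_1[g](t)\big) =: \Phi[g](t),
\]
and I would apply the Banach contraction principle on the Banach space $\{g \in C([-T,T];\mathbb{C}): \|g\|_{*,\Theta,k+1} < \infty\}$.

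The estimate $\|\Phi[0]\|_{*,\Theta,k+1} = \|M^{-1}\tilde f\|_{*,\Theta,k+1} \le C\|\tilde f\|_{*,\Theta,k}$ follows from $|M^{-1}|\lesssim \lambda$ together with the pointwise bound $\lambda(t)|\log(T-t)| \lesssim T$ on $[-T,T]$, which even provides an extra small factor. For the contraction, I would estimate $\Phi[g_1] - \Phi[g_2] = -M^{-1}\eta\, \tilde L_1[g_1-g_2]$ term-by-term against the five pieces in the definition of $\tilde L_1[g]$. Using the pointwise control $|g(s)| \le \|g\|_{*,\Theta,k+1}\lambda_*^\Theta(s)|\log(T-s)|^{-(k+1)}$, the contributions split as follows: (i) the log-log multiplier $(4\log|\log(T-t)|-2\log|\log T|)g(t)$ is bounded by $C(\log|\log T|)|g(t)|$; (ii) the truncated principal value $\int_{t-(T-t)}^{t-(T-t)^{1+\alpha_1}} g(s)/(t-s)\,ds$ yields $\lesssim \alpha_1 |\log(T-t)|\cdot \|g\|_{*,\Theta,k+1}\lambda_*^\Theta(t)|\log(T-t)|^{-(k+1)}$ since $\lambda_*^\Theta(s)/|\log(T-s)|^{k+1}$ is essentially constant on $s \in (t-(T-t),t)$; (iii) the remaining integrals over $s \in [-T, 2t-T]$ are controlled via the self-similar estimate $\int_{-T}^{2t-T}(T-s)^{\Theta-1}|\log(T-s)|^{-(k+1+2\Theta)}\,ds \lesssim (T-t)^\Theta|\log(T-t)|^{-(k+1+2\Theta)}$ and the identity $(T-t)^\Theta = \lambda_*^\Theta(t)|\log(T-t)|^{2\Theta}|\log T|^{-\Theta}$, giving a gain of $|\log T|^{-\Theta}$. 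After multiplication by $|M^{-1}|\lesssim \lambda$ and using $\lambda(t)|\log(T-t)|\lesssim T$, the operator norm of $g\mapsto M^{-1}\eta\, \tilde L_1[g]$ on $\|\cdot\|_{*,\Theta,k+1}$ is bounded by $C(T\log|\log T| + \alpha_1 + |\log T|^{-\Theta})$, which is strictly less than one for $\alpha_1 \in (0,1/3)$ and $T$ sufficiently small. Banach's theorem then produces a unique fixed point $g = T_1[\tilde f]$, linear in $\tilde f$ by the linearity of $\Phi$, satisfying the desired estimate.

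The main obstacle is the bookkeeping for the non-local pieces of $\tilde L_1[g]$, where one must distinguish two regimes: the near-diagonal region where $t-s \sim T-t$, in which the only gain comes from the $\alpha_1$-thinness of the truncation interval; and the far region where $s$ is well separated from $t$, where the self-similar asymptotics of $\lambda$ yield the negative power of $|\log T|$ that makes the integral small. The delicate point is that neither the $\tilde L_0$ contribution nor any individual non-local piece is small on its own at the level of $|\log(T-t)|$; contraction relies crucially on the gain $|M^{-1}|\lesssim \lambda$ coming from the dominance of $\mathbf Z/(3\lambda)$ over $|\log(T-t)|$, which is precisely the feature that distinguishes the $\mathbf Z>0$ regime from the $\mathbf Z=0$ case mentioned in the paper. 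Once these estimates are assembled the argument closes in a standard way.
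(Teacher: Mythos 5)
Your proposal treats the $\mathbf Z$-term in (\ref{modified-problem}) as a pointwise multiplier, absorbing it into $M(t) := (1-\alpha_1)|\log(T-t)| - \frac{\mathbf Z}{3\lambda(t)}$ and inverting $M$ pointwise. This is not the operator that the paper is inverting. Although (\ref{modified-problem}) is displayed with the shorthand ``$-\frac{1}{3}\frac{\mathbf Z}{\lambda}g$'', the paper's subsequent derivation (``Set $g = (-2\lambda\cc)'$, differentiating this equation we get \dots'') makes clear that the intended term is $-\frac{2}{3}\mathbf Z\,\cc^{(1)}$, where $\cc^{(1)}$ is the weighted primitive of $g$ determined by $g = (-2\lambda\cc^{(1)})'$ together with $(\lambda\cc^{(1)})(T)=0$, i.e. $-\frac{\mathbf Z}{3\lambda(t)}\int_t^T g(s)\,ds$. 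That is consistent with where the equation comes from in Section~\ref{leading-dynamics}: the coupling term in the $c_1$-$c_2$ system is $\Gamma_4[p_0]\cc$, which acts on $\cc$, and $p_1 = -2(\lambda\cc)'$, so the coupling is genuinely nonlocal in $p_1 = g$. Your pointwise $M^{-1}$ therefore does not solve the equation that actually appears, and the operator $T_1$ you construct is not the one used in Proposition~\ref{nonlinear-system-for-linear-problem}, where the $\cc^{(1)}$-dependence must be absorbed into $T_1$.

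This is not merely cosmetic. The paper's $T_0$ comes from an integrating-factor solve of the first-order ODE (\ref{equation-for-lambda-c}), giving the explicit nonlocal representation (\ref{formulaforg}); Lemma~\ref{construction-of-T0} is precisely the boundedness of this solve. If instead one inverts only $\tilde L_0$ (i.e. divides by $(1-\alpha_1)|\log(T-t)|$) and tries to treat the primitive term $\frac{2}{3}\mathbf Z\cc^{(1)}$ as a fixed-point perturbation, the estimate fails: with $|g|\lesssim \lambda_*^\Theta/|\log(T-t)|^{k+1}$ one finds $\big|\frac{\mathbf Z}{3\lambda(t)}\int_t^T g\big|\sim\frac{\lambda_*^\Theta}{|\log T|\,|\log(T-t)|^{k-1}}$, and after dividing by $|\log(T-t)|$ this is off by a factor $|\log(T-t)|/|\log T|$ from the required $\lambda_*^\Theta/|\log(T-t)|^{k+1}$, which is unbounded as $t\to T$. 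The entire point of the integrating factor $(T-t)^{\frac{\mathbf Z}{6(1-\alpha_1)}\frac{\log(T-t)}{|\log T|}}$ is to resolve this competition between $\tilde L_0[g]$ and the $\cc^{(1)}$-term, and that mechanism is absent from your construction. Your $\tilde L_1$-estimates are in line with the paper's, but they sit on a foundation that does not address the equation being solved.
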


Let us start with the construction of the linear operator $T_1$. We will find an inverse for $\tilde L_0$, namely given a function $\tilde f$, find $g$ such that $\tilde L_0[g] - \frac{1}{3} \frac{\mathbf Z}{\lambda}g =\tilde  f$.
Set $g = (-2\lambda \cc)'$, differentiating this equation we get
\begin{equation*}
\begin{aligned}
-2(\lambda \cc)''-2\frac{1}{(T-t)|\log(T-t)|}(\lambda\cc)' + \frac{2\mathbf Z}{3(1-\alpha_1)}\frac{\cc'}{|\log(T-t)|} = \frac{1}{1-\alpha_1}\frac{\tilde f'(t)}{|\log(T-t)|}.
\end{aligned}
\end{equation*}
This equation can be rewritten as
\begin{equation*}
\begin{aligned}
-2\left(|\log(T-t)|(\lambda \cc)'\right)'  + \frac{2\mathbf Z}{3(1-\alpha_1)}\cc'  = \frac{1}{1-\alpha_1}\tilde f'(t).
\end{aligned}
\end{equation*}
Integrating this equation gives us
\begin{equation*}
\begin{aligned}
2|\log(T-t)|(\lambda \cc)'  + \frac{2\mathbf Z}{3(1-\alpha_1)}\int_{t}^T\cc'(s)ds= -\frac{1}{1-\alpha_1}\tilde f(t),
\end{aligned}
\end{equation*}
which is equivalent to
\begin{equation}\label{equation-for-lambda-c}
\begin{aligned}
&(\lambda \cc)' - \frac{\mathbf Z}{3(1-\alpha_1)}|\log(T-t)|^{-1}\cc(t)= \frac{1}{2(1-\alpha_1)}|\log(T-t)|^{-1}\tilde f(t).
\end{aligned}
\end{equation}
Rewrite it as
\begin{equation*}
\begin{aligned}
&\left[(T-t)^{-\frac{\mathbf Z}{6(1-\alpha_1)}\frac{\log(T-t)}{|\log T|}}(\lambda\cc)\right]'= (T-t)^{-\frac{\mathbf Z}{6(1-\alpha_1)}\frac{\log(T-t)}{|\log T|}}\frac{1}{2(1-\alpha_1)}|\log(T-t)|^{-1}\tilde f(t).
\end{aligned}
\end{equation*}
We finally get
\begin{equation}\label{formulaforg}
\begin{aligned}
\lambda\cc = (T-t)^{\frac{\mathbf Z}{6(1-\alpha_1)}\frac{\log(T-t)}{|\log T|}}\int_{t}^T(T-\tau)^{-\frac{\mathbf Z}{6(1-\alpha_1)}\frac{\log(T-\tau)}{|\log T|}}\frac{1}{2(1-\alpha_1)}|\log(T-\tau)|^{-1}\tilde f(\tau)d\tau.
\end{aligned}
\end{equation}
We denote the the operator which assigns $\tilde f(t)$ to $(-2\lambda\cc)'$ by the formulas (\ref{formulaforg}) and (\ref{equation-for-lambda-c}) as $T_0$.
\begin{lemma}\label{construction-of-T0}
Set $\alpha_1\in (0, \frac{1}{3})$, $k\in (0,1)$ and $(-2\lambda\cc)' := T_0(\tilde f)$ given by (\ref{formulaforg}). Then the following estimate holds
\begin{equation*}
\|T_0[\tilde f]\|_{*,\Theta, k+1}\leq \frac{2}{1-\alpha_1}\|\tilde f\|_{*, \Theta, k}
\end{equation*}
for a constant $C > 0$ which is independent of $k$ and $T$.
\end{lemma}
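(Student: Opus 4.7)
\textbf{Proof proposal for Lemma \ref{construction-of-T0}.} The plan is to bound $T_0[\tilde f] = (-2\lambda\cc)'$ pointwise by using the first-order linear ODE \eqref{equation-for-lambda-c} satisfied by $\lambda\cc$, rather than differentiating the integral expression \eqref{formulaforg} directly. From \eqref{equation-for-lambda-c},
\begin{equation*}
(-2\lambda\cc)'(t) \;=\; -\frac{2\mathbf Z}{3(1-\alpha_1)}\,|\log(T-t)|^{-1}\cc(t)\;-\;\frac{1}{1-\alpha_1}\,|\log(T-t)|^{-1}\tilde f(t),
\end{equation*}
so the problem decouples into a ``main term'' involving $\tilde f(t)$ and a ``secondary term'' involving $\cc(t)$. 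The main term is immediate: the hypothesis $|\tilde f(t)|\le \|\tilde f\|_{*,\Theta,k}\lambda_*^\Theta(t)|\log(T-t)|^{-k}$ gives exactly the contribution $\tfrac{1}{1-\alpha_1}\|\tilde f\|_{*,\Theta,k}\lambda_*^\Theta(t)|\log(T-t)|^{-(k+1)}$, which already accounts for half of the desired constant $\tfrac{2}{1-\alpha_1}$.

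The core of the argument is therefore to bound $|\cc(t)|$ via \eqref{formulaforg}. I would substitute $u=\log(T-\tau)$, so that with $L=\log(T-t)<0$ the expression reduces to
\begin{equation*}
\lambda(t)\cc(t)\;=\;\frac{e^{AL^2/|\log T|}}{2(1-\alpha_1)}\int_{-\infty}^{L} e^{-Au^2/|\log T|}\,\frac{\tilde f(T-e^u)}{|u|}\,e^u\,du,\qquad A=\frac{\mathbf Z}{6(1-\alpha_1)}.
\end{equation*}
A further rescaling $u=Lv$ concentrates the integral near $v=1$ (since $e^{Lv}=(T-t)^v$ decays rapidly for $v>1$), and after inserting the pointwise bound on $\tilde f$ together with the explicit form $\lambda_*^\Theta(T-e^u)=e^{u\Theta}|\log T|^\Theta|u|^{-2\Theta}$, the prefactor $e^{AL^2/|\log T|}$ cancels against the leading part of $e^{-AL^2v^2/|\log T|}$. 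A Laplace-type estimate with $w=|L|(v-1)$ then yields
\begin{equation*}
|\lambda(t)\cc(t)|\;\lesssim\;\frac{\lambda_*^{1+\Theta}(t)}{|L|^{k}}\cdot\frac{1}{(1-\alpha_1)\bigl(1+\Theta+\tfrac{2A|L|}{|\log T|}\bigr)}\,\|\tilde f\|_{*,\Theta,k}.
\end{equation*}
Dividing by $\lambda\sim\lambda_*$ and by $|\log(T-t)|$ gives the bound on $\tfrac{2\mathbf Z}{3(1-\alpha_1)}|\log(T-t)|^{-1}|\cc(t)|$; this is where the ratio $\mathbf Z/A$ must conspire with the denominator to reproduce a clean factor independent of $\mathbf Z$.

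The main obstacle I anticipate is precisely this balancing. Because $|\log(T-t)|\ge|\log T|$ for $t\ge 0$, the ``naive'' bound on $|\cc|$ obtained by discarding the integrating-factor decay (i.e.\ taking $A=0$ inside the Laplace integral) produces a loss of the form $|\log(T-t)|/|\log T|$, which is unbounded as $t\to T$ and is therefore incompatible with the target norm $\|\cdot\|_{*,\Theta,k+1}$. The delicate point is to retain the full factor $e^{-AL^2v^2/|\log T|}$ when $\mathbf Z>0$ to pick up the improvement $|L|/|\log T|$ in the denominator, which is exactly the amount needed to cancel the unwanted ratio and recover the required decay. The case $\mathbf Z=0$ is handled separately and trivially, since the secondary term then vanishes identically and only the main term appears. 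Finally, the claim that the constant $C$ is independent of $k$ and $T$ is straightforward since $k\in(0,1)$ and only enters through bounded powers $v^{-k-2\Theta-1}\le 1$ in the Laplace integral.
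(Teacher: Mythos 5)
Your approach is essentially the paper's: both use the first-order ODE \eqref{equation-for-lambda-c} to reduce the estimate of $T_0[\tilde f]=(-2\lambda\cc)'$ to a bound on $|\lambda\cc|$ via the integral formula \eqref{formulaforg}, and both exploit the integrating factor's Gaussian-type decay in $\log(T-\tau)$ to produce the crucial $|\log T|/|\log(T-t)|$ gain; the paper pulls the supremum of the weight $\lambda_*^{\Theta}(\tau)|\log(T-\tau)|^{-(k+1)}$ out of the integral and then estimates the remaining $\mathbf Z$-dependent integral directly, while you obtain the equivalent bound by the explicit change of variables $u=\log(T-\tau)$, $u=Lv$, $w=|L|(v-1)$. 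One arithmetic slip worth flagging: carrying out your Laplace computation gives
\begin{equation*}
|\lambda\cc|\;\lesssim\;\lambda_*^{1+\Theta}(t)\,\frac{|L|^{1-k}}{|\log T|}\cdot\frac{\|\tilde f\|_{*,\Theta,k}}{(1-\alpha_1)\bigl(1+\Theta+\tfrac{2A|L|}{|\log T|}\bigr)}\;=\;\frac{3\lambda_*^{\Theta}(t)|\log T|(T-t)}{2\mathbf Z\,|L|^{k+2}}\|\tilde f\|_{*,\Theta,k}\,\bigl(1+o(1)\bigr),
\end{equation*}
which matches the paper's bound; your stated prefactor $\lambda_*^{1+\Theta}(t)|L|^{-k}$ is a factor $|\log T|/|L|\le 1$ too strong. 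This does not compromise the conclusion: with the correct intermediate bound, dividing by $\lambda\sim\lambda_*$ and by $|L|$ and using $\mathbf Z/A=6(1-\alpha_1)$ still reproduces exactly the claimed constant $\tfrac{2}{1-\alpha_1}$, as you anticipated.
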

\begin{proof}
We recall that
\begin{equation*}
\|\tilde f\|_{*, \Theta, k} = \sup_{t\in [-T, T]}(\lambda_*(t))^{-\Theta}|\log(T-t)|^k|\tilde f(t)|.
\end{equation*}
Then from the formula (\ref{formulaforg}), we have the following estimate
\begin{equation*}
\begin{aligned}
\left|\lambda\cc\right| & = (T-t)^{\frac{\mathbf Z}{6(1-\alpha_1)}\frac{\log(T-t)}{|\log T|}}\int_{t}^T(T-\tau)^{-\frac{\mathbf Z}{6(1-\alpha_1)}\frac{\log(T-\tau)}{|\log T|}}\frac{1}{2(1-\alpha_1)}|\log(T-\tau)|^{-1}\tilde f(\tau)d\tau\\
&\leq \frac{1}{2(1-\alpha_1)}\frac{(\lambda_*(t))^{\Theta}\|\tilde f\|_{**,\Theta,k}}{|\log(T-t)|^{k+1}}(T-t)^{\frac{\mathbf Z}{6(1-\alpha_1)}\frac{\log(T-t)}{|\log T|}}\int_{t}^T(T-\tau)^{-\frac{\mathbf Z}{6(1-\alpha_1)}\frac{\log(T-\tau)}{|\log T|}}d\tau\\
& \leq  \frac{3(\lambda_*(t))^{\Theta}|\log T|(T-t)}{2\mathbf Z|\log(T-t)|^{k+2}}\|\tilde f\|_{**,\Theta,k}.
\end{aligned}
\end{equation*}
Using equation (\ref{equation-for-lambda-c}), we know that
\begin{equation*}
\begin{aligned}
\left|(-2\lambda\cc)'\right| \leq  \frac{2(\lambda_*(t))^{\Theta}}{(1-\alpha_1)|\log(T-t)|^{k+1}}\|\tilde f\|_{**,\Theta,k}.
\end{aligned}
\end{equation*}
\end{proof}
\noindent {\bf Proof of Lemma \ref{lemma7.2}:} We construct $g$ as a solution of the fixed point problem
\begin{equation*}
g = T_0\left[\tilde f-\eta(\frac{t}{T})\tilde L_1[g]\right]
\end{equation*}
where $T_0$ is the operator defined in (\ref{formulaforg}) and $\eta$ is the cut-off function.  By Lemma \ref{construction-of-T0}, we have
\begin{equation*}
\|T_0\left[\eta(\frac{t}{T})\tilde L_1[g]\right]\|_{*,\Theta,k+1}\leq \frac{2}{1-\alpha_1}\|\tilde L_1[g]\|_{*,\Theta,k}.
\end{equation*}
Let us estimate the different terms in $\tilde L_1$, which are defined by
$$
\tilde L_1[g] = \sum_{j=0}^4\tilde L_{1j}[g]
$$
where
$$
\tilde L_{10}[g] = \int_{-T}^t\frac{g(s)}{T-s}ds,\quad
\tilde L_{11}[g] = \int_{t-(T-t)}^{t-(T-t)^{1+\alpha_1}}\frac{g(s)}{t-s}ds,\quad
\tilde L_{12}[g] = \int_{t-(T-t)}^{t}\frac{g(s)}{T-s}ds
$$
$$
\tilde L_{13}[g] = \int_{-T}^{t-(T-t)}g(s)\left(\frac{1}{t-s}-\frac{1}{T-s}\right)ds,\quad
\tilde L_{14}[g] = (4\log(|\log(T-t)|)-2\log(|\log T|))g(t).
$$
Then we have the following estimates. First,
\begin{equation*}
\begin{aligned}
|\tilde L_{10}[g]| & \leq \|g\|_{*,\Theta,k+1} \int_{-T}^{t}\frac{(\lambda_*(s))^{\Theta}}{(T-s)|\log(T-s)|^{k+1}}ds\leq \|g\|_{*,\Theta,k+1}\frac{(\lambda_*(t))^{\Theta}}{|\log(T-t)|^{k+1}},
\end{aligned}
\end{equation*}
and
\begin{equation*}
\begin{aligned}
|\tilde L_{11}[g]| & \leq \|g\|_{*,\Theta,k+1} \int_{t-(T-t)}^{t-(T-t)^{1+\alpha_1}}\frac{(\lambda_*(s))^{\Theta}}{(t-s)|\log(T-s)|^{k+1}}ds\\
&\leq \|g\|_{*,\Theta,k+1}\frac{(\lambda_*(t))^{\Theta}}{|\log(T-t)|^{k+1}}\int_{t-(T-t)}^{t-(T-t)^{1+\alpha_1}}\frac{1}{t-s}ds\\
&\leq \|g\|_{*,\Theta,k+1}\frac{\alpha_1(\lambda_*(t))^{\Theta}}{|\log(T-t)|^{k}},
\end{aligned}
\end{equation*}
therefore
\begin{equation*}
\|\tilde L_{10}[g]\|_{*,\Theta,k}\leq \frac{1}{|\log T|}\|g\|_{*,\Theta,k+1}, \quad \|\tilde L_{11}[g]\|_{*,\Theta,k}\leq \alpha_0\|g\|_{*,\Theta,k+1}.
\end{equation*}
Second,
\begin{equation*}
\begin{aligned}
|\tilde L_{12}[g]| & \leq \|g\|_{*,\Theta,k+1} \int_{t-(T-t)}^{t}\frac{(\lambda_*(s))^{\Theta}}{(T-s)|\log(T-s)|^{k+1}}ds\\
&\leq \|g\|_{*,\Theta,k+1}\frac{(\lambda_*(t))^{\Theta}}{|\log(T-t)|^{k+1}}\int_{t-(T-t)}^{t}\frac{1}{T-s                             }ds\\
&\leq \|g\|_{*,\Theta,k+1}\frac{(\lambda_*(t))^{\Theta}}{|\log(T-t)|^{k+1}}\log 2
\end{aligned}
\end{equation*}
which implies that
\begin{equation*}
\|\tilde L_{12}[g]\|_{*,\Theta,k}\leq \frac{\log 2}{|\log T|}\|g\|_{*,\Theta,k+1}.
\end{equation*}
For $\tilde L_{13}$ we have
\begin{equation*}
\begin{aligned}
|\tilde L_{13}[g]| & \leq \|g\|_{*,\Theta,k+1}\int_{-T}^{t-(T-t)}\frac{(\lambda_*(s))^{\Theta}}{|\log(T-s)|^{k+1}}\left(\frac{1}{t-s}-\frac{1}{T-s}\right)ds\\
& \leq C\|g\|_{*,\Theta,k+1}(T-t)\int_{-T}^{t-(T-t)}\frac{(\lambda_*(s))^{\Theta}}{(T-s)^2|\log(T-s)|^{k+1}}ds\\
& \leq C\|g\|_{*,\Theta,k+1}\frac{(\lambda_*(t))^{\Theta}}{|\log(T-t)|^{k+1}}
\end{aligned}
\end{equation*}
and this gives
\begin{equation*}
\|\tilde L_{13}[g]\|_{*,\Theta,k}\leq \frac{C}{|\log T|}\|g\|_{*,\Theta,k+1}.
\end{equation*}
Finally, we have
\begin{equation*}
\begin{aligned}
|\tilde L_{14}[g]| & \leq \|g\|_{*,\Theta,k+1}(\lambda_*(t))^{\Theta}\frac{\log(|\log(T-t)|) + \log(|\log T|))}{|\log(T-t)|^{k+1}}
\end{aligned}
\end{equation*}
and this gives us the estimate
\begin{equation*}
\|\tilde L_{14}[g]\|_{*,\Theta,k}\leq \frac{C\log(|\log T|)}{|\log T|}\|g\|_{*,\Theta,k+1}.
\end{equation*}
Combine all the estimates above we get
\begin{equation*}
\begin{aligned}
\|T_0[\eta(\frac{t}{T})\tilde L_1[g]]\|_{*,\Theta,k+1}\leq \frac{2}{1-\alpha_1} \left(\alpha_1 + \frac{1}{|\log T|} + \frac{\log|\log T|}{|\log T|}\right)\|g\|_{*,\Theta,k+1}.
\end{aligned}
\end{equation*}
If $0 < \alpha_1 < \frac{1}{3}$ is fixed and $T>0$ is sufficiently small, then we get
\begin{equation*}
\|g\|_{*,\Theta,k+1}\leq C\|\tilde f\|_{*,\Theta,k}.
\end{equation*} \qed

\noindent Let
\begin{equation}\label{definitionofet}
E(t) = \mathcal{I}[p_{1,0}](t) + \frac{2}{3}\mathbf{Z}\cc^{(0)} - f(t)
\end{equation}
and we consider the fixed point problem
\begin{equation}\label{fixedproblemforp1}
p_{1,1} = \mathcal{A}[p_{1,1}]
\end{equation}
where
\begin{equation*}
\mathcal{A}[p_{1,1}] = T_1[-\eta(\frac{t}{T}) E - \tilde{\mathcal{B}}[p_{1,0}+p_{1,1}]].
\end{equation*}
Then we have
\begin{prop}\label{nonlinear-system-for-linear-problem}
When $\alpha\in (0, \frac{1}{3})$ and $k\in (0,1)$, there is a function $p_{1,1}$ satisfying (\ref{fixedproblemforp1}) satisfying $\|p_{1,1}\|_{*,k+1}\leq C_0|\log T|^{k-1}$. Here $C_0$ is a sufficiently large but fixed constant.
\end{prop}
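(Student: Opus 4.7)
The plan is to solve \eqref{fixedproblemforp1} by the Banach fixed point theorem in the closed ball
\begin{equation*}
B_{C_0} := \Big\{ p_{1,1} \in C([-T,T];\mathbb{C}) \; : \; \|p_{1,1}\|_{*,\Theta,k+1} \leq C_0 |\log T|^{k-1}\Big\}
\end{equation*}
equipped with the norm $\|\cdot\|_{*,\Theta,k+1}$. By Lemma \ref{lemma7.2}, the inverse operator $T_1$ satisfies $\|T_1[\tilde f]\|_{*,\Theta,k+1}\leq C\|\tilde f\|_{*,\Theta,k}$, so the whole argument reduces to proving the single bound
\begin{equation*}
\bigl\|\,\eta(t/T) E + \tilde{\mathcal B}[p_{1,0}+p_{1,1}]\,\bigr\|_{*,\Theta,k}\leq C\, |\log T|^{k-1}
\end{equation*}
uniformly for $p_{1,1}\in B_{C_0}$, plus a Lipschitz version of the same estimate with a contraction factor.

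First I would analyze $E(t)$ as defined in \eqref{definitionofet}. Since $p_{1,0}$ is constructed exactly so that $\mathcal P[p_{1,0}]=f$, one can rewrite
\begin{equation*}
E(t) \;=\; \int_{-T}^{t-\lambda_*^2(t)}\!\frac{p_{1,0}(s)}{t-s}\,ds \;-\; \int_{-T}^{t}\frac{p_{1,0}(s)}{T-s}\,ds \;+\; p_{1,0}(t)\log(T-t),
\end{equation*}
which is precisely the error in replacing the truncated Abel-type kernel by its logarithmic surrogate. Splitting the first integral at $t-(T-t)$ and using $\|p_{1,0}\|_{*,\Theta,1}\lesssim 1$ (inherited from the explicit integral representation of $\cc^{(0)}$ and the hypothesis $|f|\lesssim\lambda_*^\Theta$) gives $|E(t)|\lesssim \lambda_*^\Theta(t)|\log(T-t)|^{-2}$, which for $k<1$ comfortably beats the required target $|\log T|^{k-1}\lambda_*^\Theta|\log(T-t)|^{-k}$.

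Next, the decomposition $\tilde{\mathcal B}=\tilde{\mathcal B}_1+\tilde{\mathcal B}_2\cc+\tilde{\mathcal B}_3\cc-2\lambda\dot\cc$ is handled term by term. For $\tilde{\mathcal B}_1$ I would exploit the two-sided decay $\Gamma_3(\tau)+1=O(\tau)$ as $\tau\to 0^+$ and $\Gamma_3(\tau)=O(\tau^{-1})$ for $\tau>1$, splitting the $s$-integral at $t-\lambda_*^2$; the same scheme works for $\tilde{\mathcal B}_2$ with $\Gamma_5+\frac23$ and $\Gamma_5$ in place of $\Gamma_3+1$ and $\Gamma_3$. For $\tilde{\mathcal B}_3$ the bound comes from the $\la$-$\gamma$ identity \eqref{eqn-585858}, which says precisely that the integral appearing in $\tilde{\mathcal B}_3$ differs from $-\mathbf Z$ by a quantity of size $o(1)$ as $t\to T$. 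The remaining term $2\lambda\dot\cc$ splits into a contribution from $\dot\cc^{(0)}$, controlled directly from the integral formula defining $\cc^{(0)}$, and a contribution from $\dot\cc^{(1)}$ which is linear in $p_{1,1}$ and contributes a factor bounded by $C\|p_{1,1}\|_{*,\Theta,k+1}/|\log T|$. Adding up, the constant in front of $\|p_{1,1}\|_{*,\Theta,k+1}$ in the resulting estimate is $O(|\log T|^{-1})+O(\alpha_1)$ and can be made as small as we wish by fixing $\alpha_1<1/3$ small and choosing $T$ sufficiently small.

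This gives both the self-mapping property $\mathcal A:B_{C_0}\to B_{C_0}$ and, since $\mathcal A$ is affine in $p_{1,1}$, the strict contraction with Lipschitz constant $O(|\log T|^{-1})$; Banach's theorem then produces the desired fixed point. The genuine difficulty in this programme is not the fixed-point step itself but the sharp kernel-by-kernel estimates for $\tilde{\mathcal B}_1,\tilde{\mathcal B}_2$, where one must carefully quantify the cancellations between the Abel-type integrals and their principal asymptotic parts at both ends of the integration interval (near $s=t-\lambda_*^2$ and near $s=-T$) while tracking the exact logarithmic gain that distinguishes $|\log T|^{k-1}$ from $|\log(T-t)|^{k-1}$; all other pieces are routine once Lemma \ref{lemma7.2} is in hand.
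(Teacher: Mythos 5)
Your outline follows essentially the paper's own route: reduce via Lemma \ref{lemma7.2} to the single estimate $\|\eta(\cdot/T)E+\tilde{\mathcal B}[p_{1,0}+p_{1,1}]\|_{*,\Theta,k}\lesssim|\log T|^{k-1}$, handle $E$ and the pieces $\tilde{\mathcal B}_1,\tilde{\mathcal B}_2\cc,\tilde{\mathcal B}_3\cc,2\lambda\dot\cc$ separately using the two-sided decay of $\Gamma_3,\Gamma_5$ and the asymptotics of $p_0$, and close by the Banach fixed point theorem with contraction factor $O(|\log T|^{-1})$ coming from the linearity of $\tilde{\mathcal B}$ in $p_{1,1}$. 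This is exactly the structure of Lemmas \ref{estimateoftheerror}--\ref{estimateforB}, the Lipschitz lemma, and the paper's proof of the proposition, so in spirit there is no divergence.

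One inaccuracy worth flagging: you claim $|E(t)|\lesssim\lambda_*^{\Theta}(t)|\log(T-t)|^{-2}$ and say this ``comfortably'' beats the target, but the actual decay is only $|E(t)|\lesssim\|f\|_{*,\Theta,0}\,\lambda_*^{\Theta}(t)|\log(T-t)|^{-1}$ (this is Lemma \ref{estimateoftheerror}). To see that the extra power of $|\log|^{-1}$ is not available, observe that after cancelling the $p_{1,0}(t)\log(T-t)$ terms, the piece $\int_{t-(T-t)}^{t-\lambda_*^2}\frac{p_{1,0}(s)}{t-s}\,ds$ contributes
\begin{equation*}
p_{1,0}(t)\bigl(-2\log|\log T|+4\log|\log(T-t)|\bigr)
\end{equation*}
and also the boundary integral $\int_{t-(T-t)}^{t}\frac{p_{1,0}(s)}{T-s}\,ds$ is genuinely of size $|p_{1,0}(t)|\sim\lambda_*^{\Theta}/|\log(T-t)|$; there is no cancellation reducing the total to $|\log|^{-2}$. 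This does not break your argument: the bound $|\log(T-t)|^{-1}$ is \emph{exactly} what the norm $\|\cdot\|_{*,\Theta,k}$ with $k<1$ requires, since $|\log(T-t)|^{k-1}\le|\log T|^{k-1}$ on $[0,T]$, so $\|\eta E\|_{*,\Theta,k}\lesssim|\log T|^{k-1}$ still holds (with no room to spare rather than ``comfortably''). The ball radius $M=C_0|\log T|^{k-1}$ is forced anyway by the $\tilde{\mathcal B}$ estimate of Lemma \ref{estimateforB}, so the final fixed-point step and the conclusion $\|p_{1,1}\|_{*,\Theta,k+1}\leq C_0|\log T|^{k-1}$ go through as you assert.
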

\noindent We divide the proof into the following lemmas.
\begin{lemma}\label{estimateoftheerror}
For the term $E(t)$ defined in (\ref{definitionofet}), we have
\begin{equation}\label{estimate-for-E(t)}
|E(t)|\leq C\|f\|_{*,\Theta,0}\frac{(\lambda_*(t))^{\Theta}}{\left|\log(T-t)\right |},\quad -\frac{T}{4}\leq t \leq T.
\end{equation}
\end{lemma}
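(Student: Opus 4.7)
The plan is to convert the estimate for $E(t)$ into an estimate for a well-controlled linear combination of $\mathcal I[p_{1,0}]$, $p_{1,0}(t)\log(T-t)$, and a weighted integral of $f$. The starting point is the observation that although $\cc^{(0)}$ is defined by an explicit integral formula, it satisfies exactly the first-order ODE with integrating factor $A(t) = (T-t)^{-\mathbf Z \log(T-t)/(6|\log T|)}$ that produced the formula:
\begin{equation*}
(\la\cc^{(0)})' + \tfrac{A'}{A}(\la\cc^{(0)}) = \tfrac{f(t)}{2\log(T-t)} - \tfrac{1}{2\log^2(T-t)}\int_t^T \tfrac{f(s)}{T-s}\,ds.
\end{equation*}
Using $\la(t)\sim\la_*(t)$ and $\la_*(t)|\log(T-t)|^2 = (T-t)|\log T|$, a short computation shows $\tfrac{A'}{A}\la \approx -\mathbf Z/(3\log(T-t))$, and the ODE rearranges to
\begin{equation*}
f(t) = -p_{1,0}(t)\log(T-t) - \tfrac{2}{3}\mathbf Z\,\cc^{(0)} + \tfrac{1}{\log(T-t)}\int_t^T\tfrac{f(s)}{T-s}\,ds + r_0(t),
\end{equation*}
where $r_0(t)$ collects the small contributions from $\la - \la_*$ and from the $\dot\la\,\cc$ term dropped in the reduction to the ODE; by the bounds on $\la\cc^{(0)}$ already stated, $|r_0(t)|\lesssim \|f\|_{*,\Theta,0}\la_*^\Theta(t)/|\log(T-t)|$.

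Substituting this expression for $f$ into the definition of $E(t)$, the $\tfrac{2}{3}\mathbf Z\cc^{(0)}$ pieces cancel, and what remains is
\begin{equation*}
E(t) = \mathcal I[p_{1,0}](t) + p_{1,0}(t)\log(T-t) - \tfrac{1}{\log(T-t)}\int_t^T\tfrac{f(s)}{T-s}\,ds - r_0(t).
\end{equation*}
The last two terms are directly controlled: $r_0$ by the paragraph above, and the weighted integral by $\bigl|\int_t^T f(s)/(T-s)\,ds\bigr| \lesssim \|f\|_{*,\Theta,0}\int_t^T \la_*^\Theta(s)/(T-s)\,ds \lesssim \|f\|_{*,\Theta,0}\la_*^\Theta(t)$, which after division by $|\log(T-t)|$ produces exactly the target bound.

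The heart of the proof is therefore the estimate of $\mathcal I[p_{1,0}](t) + p_{1,0}(t)\log(T-t)$. Writing $p_{1,0}(s) = p_{1,0}(t) + [p_{1,0}(s)-p_{1,0}(t)]$ inside the kernel $1/(t-s)$ and using $\int_{-T}^{t-\la_*^2}\tfrac{ds}{t-s} = \log(T+t) - 2\log\la_*(t)$, the constant part combines with $p_{1,0}(t)\log(T-t)$ to yield $p_{1,0}(t)\log\bigl[(T+t)(T-t)/\la_*^2\bigr]$; crucially, this coefficient must be offset by a matching contribution hidden in the ODE substitution (the $-2\log\la_* \approx -2\log(T-t)$ that comes from the integrating factor evaluated on $\la_*$), and it is this cancellation that produces the factor $1/|\log(T-t)|$ in the final bound rather than a naive loss. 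The residual integral $\int_{-T}^{t-\la_*^2}[p_{1,0}(s)-p_{1,0}(t)]/(t-s)\,ds$ is then handled by splitting at $s = 2t-T$, applying the pointwise bound $|p_{1,0}(s)|\lesssim \|f\|_{*,\Theta,0}\la_*^\Theta(s)/|\log(T-s)|$ on the far piece and the derivative estimate $|\dot p_{1,0}(s)|\lesssim \|f\|_{*,\Theta,0}\la_*^\Theta(s)/[(T-s)|\log(T-s)|]$ (obtained by differentiating the ODE for $\la\cc^{(0)}$) on the near piece.

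The most delicate step is the bookkeeping of logarithmic factors in the cancellation just described: the raw expression $p_{1,0}(t)\log[(T^2-t^2)/\la_*^2]$ would otherwise lose an extra $\log|\log(T-t)|$ relative to the claimed bound, and it is only after the identification of this constant with a matching term arising from $\tfrac{A'}{A}\la\cc^{(0)}$ in the substitution for $f$ that the two contributions combine into a quantity of size $\la_*^\Theta(t)/|\log(T-t)|$. Once that cancellation is tracked, the remaining pieces are estimated by routine weighted calculus, yielding the desired bound uniformly on $[-T/4, T]$.
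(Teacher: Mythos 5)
The route you take is genuinely different from the paper's: you substitute for $f$ using the first-order ODE that generated the explicit formula for $\la\cc^{(0)}$, whereas the paper works directly with the integro-differential identity $\int_{-T}^t\frac{p_{1,0}(s)}{T-s}ds - p_{1,0}(t)\log(T-t) - \frac{2}{3}\mathbf Z\cc^{(0)} = f(t)$ (quoted in the proof of the lemma), which it uses to trade the singular kernel $\frac{1}{t-s}$ against the regular kernel $\frac{1}{T-s}$. After the substitution, both routes reduce to controlling $\mathcal I[p_{1,0}](t)+p_{1,0}(t)\log(T-t)$ up to benign remainders, so at that point the two approaches look similar — but it is precisely in that estimate that your argument has a gap.

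You apply the decomposition $p_{1,0}(s)=p_{1,0}(t)+[p_{1,0}(s)-p_{1,0}(t)]$ over the \emph{entire} integration range. The resulting constant term is $p_{1,0}(t)\log\bigl[(T+t)(T-t)/\la_*^2\bigr]$, and since $\la_*\sim (T-t)|\log T|/|\log(T-t)|^2$, one has $\log\bigl[(T+t)(T-t)/\la_*^2\bigr]\sim|\log(T-t)|+O(\log|\log(T-t)|)$; with $|p_{1,0}(t)|\lesssim\la_*^\Theta/|\log(T-t)|$, this is of size $\la_*^\Theta$, i.e.\ too large by a full factor of $|\log(T-t)|$, \emph{not} by the $\log|\log(T-t)|$ you assert. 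You correctly sense that this piece must be offset, but you mis-identify the source: the term $\tfrac{A'}{A}\la\cc^{(0)}$ from the integrating factor has already been spent producing the $\frac{2}{3}\mathbf Z\cc^{(0)}$ cancellation and is of size $\la_*^\Theta/|\log(T-t)|$, one full logarithm too small to offset a $\la_*^\Theta$-sized term. The actual compensation lives inside the far piece of your own remainder integral: writing $\int_{-T}^{2t-T}\frac{p_{1,0}(s)-p_{1,0}(t)}{t-s}\,ds = \int_{-T}^{2t-T}\frac{p_{1,0}(s)}{t-s}\,ds - p_{1,0}(t)\log\frac{T+t}{T-t}$, the second summand is exactly $-p_{1,0}(t)\bigl[|\log(T-t)|+O(1)\bigr]$, and only when combined with the constant part does one recover $p_{1,0}(t)\log\frac{(T-t)^2}{\la_*^2}=p_{1,0}(t)\cdot O(\log|\log(T-t)|)$. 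Your stated plan — ``applying the pointwise bound $|p_{1,0}(s)|\lesssim\cdots$ on the far piece'' — estimates this term by its size and discards the sign, so it never realizes the cancellation; as written the bound comes out $\la_*^\Theta$ rather than $\la_*^\Theta/|\log(T-t)|$.

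The paper sidesteps this entirely by cutting at $s=t-(T-t)/5$ and performing the constant-plus-remainder expansion only on the \emph{near} piece $\int_{t-(T-t)/5}^{t-\la_*^2}$; the constant picked up there is $p_{1,0}(t)\log\frac{(T-t)/5}{\la_*^2}$, which combines with the $p_{1,0}(t)\log(T-t)$ coming from the defining identity to give $p_{1,0}(t)\log\frac{(T-t)^2}{5\la_*^2}=p_{1,0}(t)\cdot O(\log|\log(T-t)|)$ from the outset, while the far piece is handled directly through the cancellation in the kernel $\frac{1}{t-s}-\frac{1}{T-s}=\frac{T-t}{(t-s)(T-s)}$. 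If you want to keep your global constant-plus-remainder decomposition, you must explicitly split $-p_{1,0}(t)\int_{-T}^{2t-T}\frac{ds}{t-s}$ out of the far remainder and combine it with the constant before applying absolute-value bounds; otherwise the argument fails.
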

\begin{proof}
By definition (\ref{definitionofet}) we have
\begin{equation*}
E(t) = \int_{-T}^{t-\lambda_*(t)^2}\frac{p_{1,0}(s)}{t-s}ds + \frac{2}{3}\mathbf{Z}\cc^{(0)} - f(t).
\end{equation*}
Let $t\in [-\frac{T}{4}, T]$ and we write
\begin{equation*}
\begin{aligned}
E(t) & = \int_{-T}^{t-(T-t)/5}\frac{p_{1,0}(s)}{t-s}ds + \int_{t-(T-t)/5}^{t-\lambda_*(t)^2}\frac{p_{1,0}(s)}{t-s}ds+ \frac{2}{3}\mathbf{Z}\cc^{(0)} - f(t)\\
& = \int_{-T}^{t}\frac{p_{1,0}(s)}{T-s}ds - \int_{t-(T-t)/5}^{t}\frac{p_{1,0}(s)}{T-s}ds\\
&\quad +\int_{-T}^{t-(T-t)/5}p_{1,0}(s)\left(\frac{1}{t-s}-\frac{1}{T-s}\right)ds + \int_{t-(T-t)/5}^{t-\lambda_*(t)^2}\frac{p_{1,0}(s)}{t-s}ds+ \frac{2}{3}\mathbf{Z}\cc^{(0)} - f(t).
\end{aligned}
\end{equation*}
Then we estimate as follows,
\begin{equation*}
\begin{aligned}
\left|\int_{t-(T-t)/5}^{t}\frac{p_{1,0}(s)}{T-s}ds\right| &\lesssim \|f\|_{*,\Theta,0}\int_{t-(T-t)/5}^{t}\frac{(\lambda_*(s))^{\Theta}}{\left|\log(T-s)\right |}\frac{1}{T-s}ds\\
&\quad \lesssim \|f\|_{*,\Theta,0}\frac{(\lambda_*(t))^{\Theta}}{\left |\log(T-t)\right |}\frac{1}{T-t}\int_{t-(T-t)/5}^{t}ds\lesssim \|f\|_{*,\Theta,0}\frac{(\lambda_*(t))^{\Theta}}{\left|\log(T-t)\right |}
\end{aligned}
\end{equation*}
and
\begin{equation*}
\begin{aligned}
\left|\int_{-T}^{t-(T-t)/5}p_{1,0}(s)\left(\frac{1}{t-s}-\frac{1}{T-s}\right)ds\right| &\lesssim \|f\|_{*,\Theta,0}\int_{-T}^{t-(T-t)/5}\frac{(\lambda_*(s))^{\Theta}}{\left|\log(T-s)\right|}\frac{T-t}{(t-s)(T-s)}ds\\
& \lesssim \|f\|_{*,\Theta,0}\int_{-T}^{t-(T-t)/5}\frac{(\lambda_*(s))^{\Theta}}{\left|\log(T-s)\right|}\frac{T-t}{(T-s)^2}ds\\
& \lesssim \|f\|_{*,\Theta,0}\frac{(\lambda_*(t))^{\Theta}}{\left|\log(T-t)\right|}.
\end{aligned}
\end{equation*}
For the fourth term in $E$ we have
\begin{equation*}
\begin{aligned}
\int_{t-(T-t)/5}^{t-\lambda_*(t)^2}\frac{p_{1,0}(s)}{t-s}ds  & = p_{1,0}(t)\int_{t-(T-t)/5}^{t-\lambda_*(t)^2}\frac{1}{t-s}ds - \int_{t-(T-t)/5}^{t-\lambda_*(t)^2}\frac{p_{1,0}(t)-p_{1,0}(s)}{t-s}ds\\
& = p_{1,0}(t)\left(\log(T-t)/5-2\log(\lambda_*(t))\right) - \int_{t-(T-t)/5}^{t-\lambda_*(t)^2}\frac{p_{1,0}(t)-p_{1,0}(s)}{t-s}ds.
\end{aligned}
\end{equation*}
Furthermore,
\begin{equation*}
\begin{aligned}
\left|\int_{t-(T-t)/5}^{t-\lambda_*(t)^2}\frac{p_{1,0}(t)-p_{1,0}(s)}{t-s}ds\right |\leq \sup_{-T\leq s\leq t}|\dot p_{1,0}(s)|(T-t)\leq \|f\|_{*,\Theta,0}\frac{(\lambda_*(t))^{\Theta}}{\left|\log(T-t)\right |}.
\end{aligned}
\end{equation*}
Since $p_{1,0}$ satisfies the equation $\int_{-T}^t\frac{p_{1, 0}(s)}{T-s}ds - p_{1, 0}(t)\log(T-t) - \frac{2}{3}\mathbf{Z} \cc^{(0)}  - f(t) = 0$, we have
\begin{equation*}
\begin{aligned}
E(t) \leq C \|f\|_{*,\Theta,0}\left(\frac{(\lambda_*(t))^{\Theta}}{\left|\log(T-t)\right |}\right ).
\end{aligned}
\end{equation*}
This is the desired estimate (\ref{estimate-for-E(t)}).
\end{proof}
\begin{lemma}\label{basicestimate}
We have the estimate
\begin{equation*}
\int_{-T}^{t-(\lambda_*(t))^2}\frac{(\lambda_*(s))^{\Theta}}{(t-s)^a|\log(T-s)|^b}ds \leq C\frac{(\lambda_*(t))^{2(1-a)+\Theta}}{|\log(T-t)|^b},\quad t\in [0, T]
\end{equation*}
for $a > 1$ and $b > 0$.
For $\mu\in (0,1)$, $l\in \mathbb{R}$, we have
\begin{equation*}
\int_{-T}^{t-(\lambda_*(t))^2}\frac{(T-s)^{\mu}}{(t-s)^2|\log(T-s)|^l}ds \leq C\frac{(T-t)^\mu}{(\lambda_*(t))^2|\log(T-t)|^l}.
\end{equation*}
\end{lemma}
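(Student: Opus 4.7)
The plan is to treat both estimates by splitting the integration interval $[-T,t-\lambda_*^2(t)]$ into a \emph{near-diagonal} piece $\mathcal{N}:=[t-\tfrac12(T-t),\,t-\lambda_*^2(t)]$ and a \emph{far} piece $\mathcal{F}:=[-T,\,t-\tfrac12(T-t)]$, noting that on $\mathcal{N}$ one has $T-s \sim T-t$, hence
\[
\lambda_*(s)\sim \lambda_*(t),\qquad |\log(T-s)|\sim |\log(T-t)|,
\]
while on $\mathcal{F}$ one has $t-s \geq \tfrac12(T-t)$, and after the change of variables $u=T-s$ one has $u-(T-t)\geq u/3$. Throughout we use $\lambda_*(t)=(T-t)|\log T|/|\log(T-t)|^2$, so that $(\lambda_*(s))^{\Theta}= |\log T|^{\Theta}(T-s)^{\Theta}|\log(T-s)|^{-2\Theta}$.

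For the first estimate, on $\mathcal{N}$ the near-constant behaviour of $\lambda_*$ and $\log$ lets us factor them out, leaving
\[
\int_{\mathcal N}\!\frac{(\lambda_*(s))^{\Theta}}{(t-s)^a|\log(T-s)|^b}\,ds
\;\lesssim\;\frac{(\lambda_*(t))^{\Theta}}{|\log(T-t)|^b}\int_{\lambda_*^2(t)}^{(T-t)/2}\!\frac{d\tau}{\tau^a}
\;\lesssim\;\frac{(\lambda_*(t))^{\Theta+2(1-a)}}{(a-1)\,|\log(T-t)|^b},
\]
since $a>1$ makes the integral dominated by its lower limit $\lambda_*^2(t)$, which is precisely the claimed bound. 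On $\mathcal{F}$, after the change of variables $u=T-s$ and using $u-(T-t)\geq u/3$,
\[
\int_{\mathcal F}\!\frac{(\lambda_*(s))^{\Theta}}{(t-s)^a|\log(T-s)|^b}\,ds
\;\lesssim\;|\log T|^{\Theta}\!\int_{\frac{3}{2}(T-t)}^{2T}\!\frac{u^{\Theta-a}}{|\log u|^{b+2\Theta}}\,du,
\]
and the integrand is integrable at its lower endpoint (for $a>\Theta+1$) or bounded by the elementary bound $\int du/(u|\log u|^2)\lesssim 1/|\log(T-t)|$ in borderline cases; a direct computation then shows this far contribution is majorised by the near one as $T\to 0$, thanks to the extra small factor of $|\log T|^{-1}$ (or a positive power of $T-t$ balanced against $\lambda_*^{2(1-a)+\Theta}$).

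The second estimate follows the same template with $a=2$: on $\mathcal{N}$,
\[
\int_{\mathcal N}\!\frac{(T-s)^{\mu}}{(t-s)^2|\log(T-s)|^l}\,ds
\;\lesssim\;\frac{(T-t)^{\mu}}{|\log(T-t)|^l}\int_{\lambda_*^2(t)}^{(T-t)/2}\!\frac{d\tau}{\tau^2}
\;\lesssim\;\frac{(T-t)^{\mu}}{\lambda_*^2(t)\,|\log(T-t)|^l},
\]
while on $\mathcal{F}$, the change of variables $u=T-s$ yields
\[
\int_{\mathcal F}\!\lesssim\!\int_{\frac{3}{2}(T-t)}^{2T}\!\frac{u^{\mu-2}}{|\log u|^l}\,du\;\lesssim\;\frac{(T-t)^{\mu-1}}{|\log(T-t)|^l},
\]
which one checks is negligible compared with the near part: their ratio is at most $(T-t)|\log T|^2/|\log(T-t)|^4 \lesssim T/|\log T|^2 \to 0$ as $T\to 0$. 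Combining the two contributions gives the desired inequality.

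The main bookkeeping obstacle is tracking the logarithmic factors carefully enough to guarantee that the far-diagonal contribution is genuinely dominated by the near-diagonal one; in particular one must verify that the cutoff at $s=t-\lambda_*^2(t)$ (rather than at $t$ itself) is precisely what produces the factor $\lambda_*^{2(1-a)+\Theta}$, respectively $\lambda_*^{-2}$, coming from integrating $(t-s)^{-a}$ from $\lambda_*^2$ to $(T-t)/2$. Everything else is a routine splitting and change-of-variables computation.
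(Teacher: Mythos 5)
Your proposal is correct and takes essentially the same approach as the paper: both split at roughly $s = t-(T-t)$ into a near piece (where $T-s\sim T-t$ so the weight factors are nearly constant and one just integrates $(t-s)^{-a}$ from $\lambda_*^2(t)$ to $O(T-t)$, producing the $\lambda_*^{2(1-a)}$ factor) and a far piece (where $t-s\sim T-s$, so the change of variables $u=T-s$ reduces it to a power-log integral dominated by the near contribution). The paper cuts at $t-(T-t)$ rather than $t-\tfrac12(T-t)$, which is only a cosmetic difference.
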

\begin{proof}
Let us write
\begin{equation*}
\begin{aligned}
&\int_{-T}^{t-(\lambda_*(t))^2}\frac{(\lambda_*(s))^{\Theta}}{(t-s)^a|\log(T-s)|^b}ds\\
&= \int_{-T}^{t-(T-t)}\frac{(\lambda_*(s))^{\Theta}}{(t-s)^a|\log(T-s)|^b}ds + \int_{t-(T-t)}^{t-(\lambda_*(t))^2}\frac{(\lambda_*(s))^{\Theta}}{(t-s)^a|\log(T-s)|^b}ds.
\end{aligned}
\end{equation*}
Then, for $t\in [0, T]$, we estimate,
\begin{equation*}
\begin{aligned}
&\int_{-T}^{t-(T-t)}\frac{(\lambda_*(s))^{\Theta}}{(t-s)^a|\log(T-s)|^b}ds \leq C\int_{-T}^{t-(T-t)}\frac{1}{(T-s)^{a-\Theta}|\log(T-s)|^{b+2\Theta}}ds\\
&\leq C\frac{1}{(T-t)^{a-\Theta-1}|\log(T-t)|^{b+2\Theta}}\leq C\frac{(\lambda_*(t))^{\Theta}}{(T-t)^{a-1}|\log(T-t)|^{b}}\leq C\frac{(\lambda_*(t))^{2(1-a)+\Theta}}{|\log(T-t)|^b}
\end{aligned}
\end{equation*}
and
\begin{equation*}
\begin{aligned}
&\int_{t-(T-t)}^{t-(\lambda_*(t))^2}\frac{(\lambda_*(s))^{\Theta}}{(t-s)^a|\log(T-s)|^b}ds\\
&\leq C\frac{(\lambda_*(t))^{\Theta}}{|\log(T-t)|^b}\int_{t-(T-t)}^{t-(\lambda_*(t))^2}\frac{1}{(t-s)^a}ds\leq C\frac{(\lambda_*(t))^{\Theta}}{(\lambda_*(t))^{2a-2}|\log(T-t)|^{b}}.
\end{aligned}
\end{equation*}
The case for $t\in [-T, 0]$ is similar. Indeed, we have
\begin{equation*}
\begin{aligned}
\left|\int_{-T}^{t-(\lambda_*(t))^2}\frac{(\lambda_*(s))^{\Theta}}{(t-s)^a|\log(T-s)|^b}ds\right |\leq \frac{(\lambda_*(t))^{\Theta}}{|\log(T-t)|^b}\int_{-T}^{t-(\lambda^*(t))^2}\frac{1}{(t-s)^a}ds\leq C\frac{(\lambda_*(t))^{\Theta+2(1-a)}}{|\log(T-t)|^b}.
\end{aligned}
\end{equation*}
Similarly, we have
\begin{equation*}
\int_{-T}^{t-(\lambda_*(t))^2}\frac{(T-s)^{\mu}}{(t-s)^2|\log(T-s)|^l}ds \leq C\frac{(T-t)^\mu}{(\lambda_*(t))^2|\log(T-t)|^l}.
\end{equation*}
\end{proof}

\begin{lemma}\label{estimateforB}
Let $M = C|\log T|^{k-1}$, $k\in (0,1)$, then for $\|p_{1,1}\|_{*,\Theta,k+1}\leq M$, we have
\begin{equation*}
\|\tilde B[p_{1,0}+p_{1,1}](\cdot)\|_{*,\Theta,k}\leq C\|f\|_{*,\Theta,0}|\log T|^{k-1}.
\end{equation*}
\end{lemma}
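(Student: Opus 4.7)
The plan is to estimate each of the four terms in the decomposition
$$
\tilde{\mathcal{B}}[p_{1,0}+p_{1,1}]\;=\;\tilde{\mathcal{B}}_1[p_{1,0}+p_{1,1}]\;+\;\tilde{\mathcal{B}}_2\,\cc\;+\;\tilde{\mathcal{B}}_3\,\cc\;-\;2\lambda\dot{\cc}
$$
separately in the $\|\cdot\|_{*,\Theta,k}$ seminorm, using the a priori bounds $|p_{1,0}(t)|\lesssim\|f\|_{*,\Theta,0}\lambda_*^{\Theta}/|\log(T-t)|$ and $|p_{1,1}(t)|\le M\lambda_*^{\Theta}/|\log(T-t)|^{k+1}$ together with the expansions $\Gamma_3(\tau)+1=O(\tau)$, $\Gamma_5(\tau)+\tfrac{2}{3}=O(\tau)$ for $\tau\le 1$ and $\Gamma_j(\tau)=O(1/\tau)$ for $\tau\ge 1$. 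The guiding principle is that every contribution eventually has the shape $\|f\|_{*,\Theta,0}\,\lambda_*^{\Theta}/|\log(T-t)|^{k}$ (possibly with an extra small factor), which is exactly $\|f\|_{*,\Theta,0}|\log T|^{k-1}$ once the $\|\cdot\|_{*,\Theta,k}$ norm is taken and $t$ approaches $T$.

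For $\tilde{\mathcal{B}}_1[p_1]$, I would split the integration at $s=t-\lambda_*^2(t)$. On the far part the correction $\Gamma_3(\lambda^2/(t-s))+1$ contributes at most $C\lambda^2/(t-s)$, yielding kernel $\lambda^2/(t-s)^2$; Lemma~\ref{basicestimate} with $a=2$, $b=1$ (resp.\ $b=k+1$) then absorbs the $\lambda^2$ factor and gives contributions of size $\|f\|_{*,\Theta,0}\lambda_*^{\Theta}/|\log(T-t)|$ from $p_{1,0}$ and $M\lambda_*^{\Theta}/|\log(T-t)|^{k+1}$ from $p_{1,1}$. On the near part $s\in[t-\lambda_*^2,t]$, the bound $|\Gamma_3(\lambda^2/(t-s))|\le C(t-s)/\lambda^2$ collapses the kernel to $C/\lambda^2$, and a direct integration delivers the same estimates. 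Passing from these $\|\cdot\|_{*,\Theta,k+1}$-type bounds to $\|\cdot\|_{*,\Theta,k}$ costs at most a factor $|\log T|^{-1}$, so both pieces combine to $C\|f\|_{*,\Theta,0}|\log T|^{k-1}$ once $M\lesssim\|f\|_{*,\Theta,0}|\log T|^{k-1}$ is used.

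Next, for $(\tilde{\mathcal{B}}_2+\tilde{\mathcal{B}}_3)\,\cc$, note that the combined coefficient depends only on $p_0$ and equals $\Gamma_4[p_0]+\tfrac{2}{3}\mathbf{Z}$. The near/far split, together with $\Gamma_5(\tau)+\tfrac23=O(\tau)$, produces a contribution of order $|\dot\lambda_*|$ plus the term coming from $\tilde{\mathcal{B}}_3$, which by the $\lambda$-$\gamma$ balance identity $\int_{-T}^{t-\lambda_*^2}\frac{\mathrm{Re}[p_0 e^{-i\gamma}]}{t-s}ds=-\mathbf{Z}+o(1)$ (a consequence of Proposition~\ref{prop-laga}) vanishes as $t\to T$. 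Coupling this smallness with $|\cc|\lesssim\|f\|_{*,\Theta,0}\lambda_*^{\Theta}/|\log T|$, obtained by integrating $p_1/(2\lambda)$ from $t$ to $T$, places the product safely inside the required norm bound. Finally, for $-2\lambda\dot{\cc}$ I would use the identity $p_1=-2(\lambda\cc)'=-2\dot\lambda\cc-2\lambda\dot\cc$ to rewrite $-2\lambda\dot\cc=p_1+2\dot\lambda\cc$; the first summand is controlled by $\|p_{1,0}\|_{*,\Theta,k}+\|p_{1,1}\|_{*,\Theta,k}\lesssim\|f\|_{*,\Theta,0}|\log T|^{k-1}$, while the second is lower order thanks to $|\dot\lambda|\lesssim \lambda_*/(T-t)$.

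The hard part will be the bookkeeping of the coupling between $p_0$ (entering through $\tilde{\mathcal{B}}_2,\tilde{\mathcal{B}}_3$) and $\cc$ (driven by $p_1$): the bound on $\tilde{\mathcal{B}}_3\,\cc$ genuinely needs the \emph{simultaneous} smallness of the $\lambda$-$\gamma$ residual $\int\frac{\mathrm{Re}[p_0 e^{-i\gamma}]}{t-s}ds+\mathbf{Z}$ from Section~\ref{leading-dynamics} and of $\cc$ as $t\to T$, and one must track carefully whether each remainder sits in $\|\cdot\|_{*,\Theta,k}$ or $\|\cdot\|_{*,\Theta,k+1}$ (since each passage from $p_1$ to $\tilde{\mathcal{B}}[p_1]$ loses one logarithm) in order not to accumulate losses that would break the balance $M\sim|\log T|^{k-1}$ imposed in Proposition~\ref{nonlinear-system-for-linear-problem}.
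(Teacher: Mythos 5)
Your proposal is correct and follows essentially the same route as the paper's proof: term-by-term near/far splitting at $s=t-\lambda_*^2(t)$, the kernel asymptotics $\Gamma_3(\tau)+1=O(\tau)$, $\Gamma_5(\tau)+\tfrac23=O(\tau)$ together with Lemma~\ref{basicestimate}, the pointwise bound $|\cc|\lesssim\lambda_*^\Theta$, and the direct control of $-2\lambda\dot\cc$ via $p_1=-2(\lambda\cc)'$; your one deviation is to bundle $\tilde{\mathcal B}_2\cc+\tilde{\mathcal B}_3\cc=(\Gamma_4[p_0]+\tfrac23\mathbf{Z})\cc$ and invoke the $\lambda$-$\gamma$ balance, whereas the paper bounds $\tilde{\mathcal B}_2$ by the $O(\tau)$ cancellation and $\tilde{\mathcal B}_3$ by the balance separately, but the two packagings are equivalent. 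One sign correction: the balance reads $\int_{-T}^{t-\lambda_*^2}\frac{{\rm Re}[p_0 e^{-i\gamma}]}{t-s}\,ds=\mathbf{Z}+O(|\dot\lambda_*|)+o(1)$ (not $-\mathbf{Z}$), which is precisely what makes $\tilde{\mathcal B}_3=\tfrac23\bigl(\mathbf{Z}-\int\bigr)$ small; with your stated sign $\tilde{\mathcal B}_3$ would be of order $\mathbf{Z}$ and the estimate would not close.
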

\begin{proof}
Observe that with the choice of $M$, if $\|p_{1,1}\|_{*,\Theta,k+1}\leq M$, then we have
\begin{equation*}
\left|\frac{p_{1,1}}{p_{1,0}}\right|\leq \frac{C_0}{|\log T|^k}
\end{equation*}
since $T > 0$ is sufficiently small.
Let us recall that
\begin{equation*}
\tilde{\mathcal B}[p_1] = \tilde{\mathcal B}_1[p_1] + \tilde{\mathcal B}_2[p_1]\cc + \tilde{\mathcal B}_3[p_1]\cc - 2\lambda\dot\cc,
\end{equation*}
\begin{equation*}
\tilde{\mathcal B}_1[p_1] = -\int_{-T}^{t-\lambda_*^2(t)}\frac{p_1(s)}{t-s}\left(\Gamma_3\left(\frac{\lambda^2(t)}{t-s}\right)+1\right)ds-\int_{t-\lambda_*^2(t)}^t\frac{p_1(s)}{t-s}\Gamma_3\left(\frac{\lambda^2(t)}{t-s}\right)ds,
\end{equation*}
\begin{equation*}
\tilde{\mathcal B}_2[p_1] = \int_{-T}^{t-\lambda_*^2(t)} \frac{ {\rm Re}[p_0(s) e^{-i\gamma(t)}]}{t-s}\left(\Gamma_5\left(\frac{\lambda^2(t)}{t-s}\right)+\frac{2}{3}\right)ds + \int_{t-\lambda_*^2(t)}^t \frac{ {\rm Re}[p_0(s) e^{-i\gamma(t)}]}{t-s}\Gamma_5\left(\frac{\lambda^2(t)}{t-s}\right)ds,
\end{equation*}
\begin{equation*}
\tilde{\mathcal B}_3[p_1] = -\frac{2}{3}\int_{-T}^{t-\lambda_*^2(t)} \frac{ {\rm Re}[p_0(s) e^{-i\gamma(t)}]}{t-s}ds + \frac{2}{3}\mathbf{Z}.
\end{equation*}
From the definition, we have
\begin{equation*}
\|2\lambda\dot\cc\|_{*,\Theta,k}\leq C\|f\|_{*,\Theta,0}|\log T|^{k-1}.
\end{equation*}
For the other terms, we write
\begin{equation*}
\tilde B_{1,a}[p_{1,0}+p_{1,1}](t) = \int_{-T}^{t-\lambda_*^2(t)}\frac{p_1(s)}{t-s}\left(\Gamma_3\left(\frac{\lambda^2(t)}{t-s}\right)+1\right)ds,
\end{equation*}
\begin{equation*}
\tilde B_{2,a}[p_{1,0}+p_{1,1}](t) = \int_{-T}^{t-\lambda_*^2(t)} \frac{ {\rm Re}[p_0(s) e^{-i\gamma(t)}]}{t-s}\left(\Gamma_5\left(\frac{\lambda^2(t)}{t-s}\right)+\frac{2}{3}\right)ds
\end{equation*}
and
\begin{equation*}
\tilde B_{1,b}[p_{1,0}+p_{1,1}](t) = \int_{t-\lambda_*^2(t)}^t\frac{p_1(s)}{t-s}\Gamma_3\left(\frac{\lambda^2(t)}{t-s}\right)ds,
\end{equation*}\begin{equation*}
\tilde B_{2,b}[p_{1,0}+p_{1,1}](t) = \int_{t-\lambda_*^2(t)}^t \frac{ {\rm Re}[p_0(s) e^{-i\gamma(t)}]}{t-s}\Gamma_5\left(\frac{\lambda^2(t)}{t-s}\right)ds.
\end{equation*}
From Lemma \ref{basicestimate} and the asymptotic estimates for $\Gamma_3$, $\Gamma_5$, we have the following estimates,
\begin{equation*}
\begin{aligned}
|\tilde B_{1,a}[p_{1,0}+p_{1,1}](t)|&\leq C(\lambda_*(t))^{2\sigma}\left|\int_{-T}^{t-(\lambda_*(t))^2}\frac{(p_{1,0}+p_{1,1})(s)}{(t-s)^{1+\sigma}}ds\right|\\
&\leq C\|f\|_{*,\Theta,0}(\lambda_*(t))^{2\sigma}\int_{-T}^{t-(\lambda_*(t))^2}\frac{(\lambda_*(s))^{\Theta}}{(t-s)^{1+\sigma}|\log(T-s)|}ds\\
&\leq C\|f\|_{*,\Theta,0}\frac{(\lambda_*(t))^{\Theta}}{|\log(T-t)|},
\end{aligned}
\end{equation*}
\begin{equation*}
\begin{aligned}
|\tilde B_{2,a}[p_{1,0}+p_{1,1}](t)\cc(t)|&\leq C(\lambda_*(t))^{2\sigma}\left|\int_{-T}^{t-(\lambda_*(t))^2}\frac{p_{0}(s)}{(t-s)^{1+\sigma}}ds\cc(t)\right|\\
&\leq C\|f\|_{*,\Theta,0}(\lambda_*(t))^{2\sigma+\Theta}\int_{-T}^{t-(\lambda_*(t))^2}\frac{1}{(t-s)^{1+\sigma}|\log(T-s)|^2}ds\\
&\leq C\|f\|_{*,\Theta,0}\frac{(\lambda_*(t))^{\Theta}}{|\log(T-t)|^2}
\end{aligned}
\end{equation*}
and
\begin{equation*}
\begin{aligned}
|\tilde B_{1,b}[p_{1,0}+p_{1,1}](t)|&\leq C\frac{1}{(\lambda_*(t))^{2}}\int_{t-(\lambda_*(t))^2}^{t}\left|(p_{1,0}+p_{1,1})(s)\right |ds\leq C\|f\|_{*,\Theta,0}\frac{(\lambda_*(t))^{\Theta}}{|\log(T-t)|},
\end{aligned}
\end{equation*}
\begin{equation*}
\begin{aligned}
|\tilde B_{2,b}[p_{1,0}+p_{1,1}](t)\cc(t)|&\leq C\frac{\|f\|_{*,\Theta,0}}{(\lambda_*(t))^{2}}\int_{t-(\lambda_*(t))^2}^{t}\left|p_{0}(s)\right |ds(\lambda_*(t))^{\Theta}\leq C\|f\|_{*,\Theta,0}\frac{(\lambda_*(t))^{\Theta}}{|\log(T-t)|^2},
\end{aligned}
\end{equation*}
\begin{equation*}
\begin{aligned}
&|\tilde B_{3}[p_{1,0}+p_{1,1}](t)\cc(t)|\\
&\leq C\|f\|_{*,\Theta,0}(\lambda_*(t))^{\Theta}|\dot \lambda_*| + C\|f\|_{*,\Theta,0}(\lambda_*(t))^{\Theta}\left|\int_{t-(T-t)}^{t-\la^2(t)}\frac{p_0(s)-p_0(t)}{t-s}ds\right|\\
&\leq C\|f\|_{*,\Theta,0}\frac{(\lambda_*(t))^{\Theta}}{|\log(T-t)|^2}.
\end{aligned}
\end{equation*}
Combine all the above estimates, we obtain
\begin{equation*}
\|\tilde B[p_{1,0}+p_{1,1}](\cdot)\|_{*,\Theta,k}\leq C\|f\|_{*,\Theta,0}|\log T|^{k-1}.
\end{equation*}
\end{proof}

\begin{lemma}
Let $M = C|\log T|^{k-1}$, then for $\|p_{1,i}\|_{*,\Theta,k}\leq M$, $i = 1, 2$, we have the following estimate,
\begin{equation*}
\|\tilde B[p_{1,0}+p_{1,1}](\cdot)-\tilde B[p_{1,0}+p_{1,2}](\cdot)\|_{*,\Theta,k}\leq \frac{C}{|\log T|}\|p_{1,1}-p_{1,2}\|_{*,\Theta,k+1}.
\end{equation*}
\end{lemma}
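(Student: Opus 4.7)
The proof rests on the observation that $\tilde{\mathcal B}[\cdot]$ is in fact linear in its argument once the dependencies are tracked carefully. The piece $\tilde{\mathcal B}_1[p_1]$ is visibly linear in $p_1$. The quantities $\tilde{\mathcal B}_2[p_1]$ and $\tilde{\mathcal B}_3[p_1]$, despite the notation, do not actually depend on $p_1$ at all: they are driven only by $p_0$, $\lambda$, and $\gamma$. The $p_1$-dependence of $\tilde{\mathcal B}_2[p_1]\cc + \tilde{\mathcal B}_3[p_1]\cc - 2\lambda\dot\cc$ enters solely through $\cc$, which is recovered from $p_1$ through the linear relation $p_1 = -2(\lambda\cc)'$ with $\cc(T)=0$. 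Consequently
\begin{equation*}
\tilde{\mathcal B}[p_{1,0}+p_{1,1}] - \tilde{\mathcal B}[p_{1,0}+p_{1,2}] = \tilde{\mathcal B}[p_{1,1}-p_{1,2}],
\end{equation*}
where the right hand side is computed with the $\cc$ associated to $q := p_{1,1}-p_{1,2}$, satisfying the same homogeneous endpoint condition at $T$.

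The plan is to rerun the pointwise estimates of Lemma~\ref{estimateforB} with the single input $q$, exploiting that $q$ enjoys the stronger weight $|\log(T-t)|^{-(k+1)}$. The key contrast with the proof of the previous lemma is that there the factor $|\log(T-t)|^{-1}$ (rather than $|\log(T-t)|^{-(k+1)}$) arose from the bound $|p_{1,0}(t)|\lesssim\|f\|_{*,\Theta,0}(\lambda_*(t))^\Theta/|\log(T-t)|$ on the background piece, which dominated; when only $q$ is fed through $\tilde{\mathcal B}_{1,a},\tilde{\mathcal B}_{1,b},\tilde{\mathcal B}_{2,a},\tilde{\mathcal B}_{2,b},\tilde{\mathcal B}_3$, the sharper weight survives, and Lemma~\ref{basicestimate} applied with $b=k+1$ in place of $b=1$ yields
\begin{equation*}
|\tilde{\mathcal B}[q](t)| \leq C\,\|q\|_{*,\Theta,k+1}\,\frac{(\lambda_*(t))^\Theta}{|\log(T-t)|^{k+1}}.
\end{equation*}
Taking the $\|\cdot\|_{*,\Theta,k}$-seminorm gains one power of $|\log(T-t)|^{-1}$, uniformly bounded by $C/|\log T|$ on $[-T,T]$ since $k-1<0$, which is precisely the asserted estimate.

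The one piece requiring separate bookkeeping is $-2\lambda\dot\cc$. The identity $p_1 = -2\dot\lambda\,\cc - 2\lambda\dot\cc$ rewrites it as $-2\lambda\dot\cc = p_1 + 2\dot\lambda\,\cc$; the first piece is controlled directly in the desired norm, while for the second one integrates $(\lambda\cc)' = -p_1/2$ with $\cc(T)=0$ and uses the smallness $|\dot\lambda|\lesssim |\log T|/|\log(T-t)|^2$ together with Lemma~\ref{basicestimate} to produce a bound of the same form. The main (minor) obstacle of the proof is thus the routine verification that every one of the five integral pieces in $\tilde{\mathcal B}_1,\tilde{\mathcal B}_2,\tilde{\mathcal B}_3$ indeed absorbs the stronger weight $|\log(T-s)|^{-(k+1)}$ without loss, which is handled uniformly by invoking Lemma~\ref{basicestimate} with the shifted exponent at each step.
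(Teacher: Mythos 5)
Your proposal is correct and follows essentially the same route as the paper: both bound the five integral pieces of $\tilde{\mathcal B}$ via Lemma \ref{basicestimate} applied with the stronger weight $|\log(T-s)|^{-(k+1)}$ of the difference $q=p_{1,1}-p_{1,2}$, recover $\cc$ from $q$ through the endpoint relation at $T$, and then convert the resulting $|\log(T-t)|^{-(k+1)}$ pointwise bound into the factor $1/|\log T|$ at the level of the $\|\cdot\|_{*,\Theta,k}$-seminorm. The only cosmetic difference is that the paper parametrizes $p_{1,\zeta}=\zeta p_{1,1}+(1-\zeta)p_{1,2}$ and integrates $\frac{d}{d\zeta}\tilde B[\,\cdot\,]$ (as one would for a nonlinear operator), whereas you exploit directly that $\tilde{\mathcal B}$ is linear in $p_1$ (since $\lambda$, $p_0$, $\gamma$ do not depend on $p_1$ and $\cc$ is linear in $p_1$), which collapses the FTC step to the same computation.
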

\begin{proof}
Let us use the notations in Lemma \ref{estimateforB}. The estimate for $2\lambda\dot\cc$ is from definition directly. For the other terms, we write
\begin{equation*}
D_{1,a}:= \tilde B_{1,a}[p_{1,0}+p_{1,1}](t)-\tilde B_{1,a}[p_{1,0}+p_{1,2}](t),\quad D_{2,a}:= \tilde B_{2,a}[p_{1}](\cc^{(0)} + \cc^{(1,1)})(t)-\tilde B_{2,a}[p_{1}](\cc^{(0)} + \cc^{(1,2)})(t)
\end{equation*}
\begin{equation*}
D_{1,b}:= \tilde B_{1,b}[p_{1,0}+p_{1,1}](t)-\tilde B_{1,b}[p_{1,0}+p_{1,2}](t),\quad D_{2,b}:= \tilde B_{2,b}[p_{1}](\cc^{(0)} + \cc^{(1,1)})(t)-\tilde B_{2,b}[p_{1}](\cc^{(0)} + \cc^{(1,2)})(t),
\end{equation*}
\begin{equation*}
D_{3}:= \tilde B_{3}[p_{1}](\cc^{(0)} + \cc^{(1,1)})(t)-\tilde B_{3}[p_{1}](\cc^{(0)} + \cc^{(1,2)})(t).
\end{equation*}
Let us consider the term
\begin{equation*}
\begin{aligned}
\frac{d}{d\zeta}\tilde B_{1,a}[p_{1,0}+p_{1,\zeta}](t) & = \int_{-T}^{t-(\lambda_*(t))^2}\frac{p_{1,1}(s)-p_{1,2}(s)}{t-s}\left(\Gamma_3\left(\frac{|(p_0+p_1)(t)|^2}{t-s}\right)+1\right)ds
\end{aligned}
\end{equation*}
with $p_{1,\zeta}(t) = \zeta p_{1,1}(t)+(1-\zeta)p_{1,2}(t)$. Using the decaying estimates for $\Gamma_3$ and Lemma \ref{basicestimate}, we have
\begin{equation*}
\begin{aligned}
&\left|\int_{-T}^{t-(\lambda_*(t))^2}\frac{p_{1,1}(s)-p_{1,2}(s)}{t-s}\left(\Gamma_3\left(\frac{|(p_0+p_1)(t)|^2}{t-s}\right)+1\right)ds\right |\\
&\leq C \int_{-T}^{t-(\lambda_*(t))^2}\frac{|p_{1,1}(s)-p_{1,2}(s)|}{t-s}\left(\frac{|(p_0+p_\zeta)(t)|^2}{t-s}\right)^\sigma ds\\
&\leq C\|p_{1,1}-p_{1,2}\|_{*,\Theta,k+1}(\lambda_*(t))^{2\sigma} \int_{-T}^{t-(\lambda_*(t))^2}\frac{(\lambda_*(s))^{\Theta}}{(t-s)^{1+\sigma}|\log(T-s)|^{k+1}} ds\\
&\leq C\|p_{1,1}-p_{1,2}\|_{*,\Theta,k+1}(\lambda_*(t))^{2\sigma} \frac{(\lambda_*(t))^{\Theta}}{(\lambda_*(t))^{2\sigma}|\log(T-t)|^{k+1}}\\
&= C\|p_{1,1}-p_{1,2}\|_{*,\Theta,k+1}\frac{(\lambda_*(t))^{\Theta}}{|\log(T-t)|^{k+1}}.
\end{aligned}
\end{equation*}
Therefore we have
\begin{equation*}
\begin{aligned}
&\left|D_{1,a}(t)\right| = \left|\int_{0}^1\frac{d}{d\zeta}\tilde B_{1,a}[p_{1,0}+p_{1,\zeta}](t)\right|\\
&\leq \left|\int_{-T}^{t-(\lambda_*(t))^2}\frac{p_{1,1}(s)-p_{1,2}(s)}{t-s}\left(\Gamma_3\left(\frac{|(p_0+p_1)(t)|^2}{t-s}\right)-1\right)ds\right |\\
& \leq C\|p_{1,1}-p_{1,2}\|_{*,\Theta,k+1}\frac{(\lambda_*(t))^{\Theta}}{|\log(T-t)|^{k+1}}.
\end{aligned}
\end{equation*}
Similarly, we estimate $D_{1,b}$ as follows.
\begin{equation*}
\begin{aligned}
\frac{d}{d\zeta}\tilde B_{1,b}[p_{1,0}+p_{1,\zeta}](t) & = \int_{t-(\lambda_*(t))^2}^t\frac{p_{1,1}(s)-p_{1,2}(s)}{t-s}\Gamma_3\left(\frac{|(p_0+p_1)(t)|^2}{t-s}\right)ds.
\end{aligned}
\end{equation*}
For this term we have
\begin{equation*}
\begin{aligned}
&\left|D_{1,b}(t)\right| = \left|\int_{0}^1\frac{d}{d\zeta}\tilde B_{1,a}[p_{1,0}+p_{1,\zeta}](t)\right|\\
&\leq \left|\int_{t-(\lambda_*(t))^2}^t\frac{p_{1,1}(s)-p_{1,2}(s)}{t-s}\Gamma_3\left(\frac{|(p_0+p_1)(t)|^2}{t-s}\right)ds\right |\\
&\leq \frac{C}{|(p_0+p_\zeta)(t)|^2}\int_{t-(\lambda_*(t))^2}^t|p_{1,1}(s)-p_{1,2}(s)| ds\\
&\leq C\|p_{1,1}-p_{1,2}\|_{*,\Theta,k+1}\frac{(\lambda_*(t))^{\Theta}}{|\log(T-t)|^{k+1}}.
\end{aligned}
\end{equation*}
Now we estimate $D_{2,a}$. We have
\begin{align*}
&|D_{2,a}| = |\tilde B_{2,a}[p_{1}](\cc^{(0)} + \cc^{(1,1)})(t)-\tilde B_{2,a}[p_{1}](\cc^{(0)} + \cc^{(1,2)})(t)|\\
&\leq C(\lambda_*(t))^{2\sigma}\left|\int_{-T}^{t-(\lambda_*(t))^2}\frac{p_{0}(s)}{(t-s)^{1+\sigma}}ds(\cc^{(1,1)} - \cc^{(1,2)})(t)\right|\\
&\leq C(\lambda_*(t))^{2\sigma-1}\left|\int_{-T}^{t-(\lambda_*(t))^2}\frac{p_{0}(s)}{(t-s)^{1+\sigma}}ds\int_{t}^T(p_{1,1}(s)-p_{1,2}(s))ds\right|\\
&\leq C\|p_{1,1}-p_{1,2}\|_{*,\Theta,k+1}(\lambda_*(t))^{2\sigma-1}\left|\int_{-T}^{t-(\lambda_*(t))^2}\frac{p_{0}(s)}{(t-s)^{1+\sigma}}ds\int_{t}^T\frac{(\lambda_*(s))^\Theta}{|\log(T-s)|^{k+1}}ds\right|\\
&\leq C\|p_{1,1}-p_{1,2}\|_{*,\Theta,k+1}(\lambda_*(t))^{2\sigma-1}\left|\int_{-T}^{t-(\lambda_*(t))^2}\frac{|\log T|}{(t-s)^{1+\sigma}|\log(T-s)|^2}ds\int_{t}^T\frac{(\lambda_*(s))^\Theta}{|\log(T-s)|^{k+1}}ds\right|\\
&\leq C\|p_{1,1}-p_{1,2}\|_{*,\Theta,k+1}(\lambda_*(t))^{2\sigma-1}\frac{|\log T|(\lambda_*(t))^{-2\sigma}}{|\log(T-t)|^2}\frac{(\lambda_*(t))^\Theta}{|\log(T-t)|^{k+1}}(T-t)\\
&\leq C\|p_{1,1}-p_{1,2}\|_{*,\Theta,k+1}\frac{(\lambda_*(t))^{\Theta}}{|\log(T-t)|^{k+1}}.
\end{align*}
For $D_{2,b}$, we have
\begin{align*}
&|D_{2,b}| = |\tilde B_{2,b}[p_{1}](\cc^{(0)} + \cc^{(1,1)})(t)-\tilde B_{2,b}[p_{1}](\cc^{(0)} + \cc^{(1,2)})(t)|\\
&\leq C(\lambda_*(t))^{-2}\left|\int_{t-(\lambda_*(t))^2}^{t}|p_0(s)|ds(\cc^{(1,1)} - \cc^{(1,2)})(t)\right|\\
&\leq C(\lambda_*(t))^{-2}\left|\int_{t-(\lambda_*(t))^2}^{t}|p_0(s)|ds\int_{t}^T(p_{1,1}(s)-p_{1,2}(s))ds\right|\\
&\leq C\|p_{1,1}-p_{1,2}\|_{*,\Theta,k+1}(\lambda_*(t))^{-2}\left|\int_{t-(\lambda_*(t))^2}^{t}|p_{0}(s)|ds\int_{t}^T\frac{(\lambda_*(s))^\Theta}{|\log(T-s)|^{k+1}}ds\right|\\
&\leq C\|p_{1,1}-p_{1,2}\|_{*,\Theta,k+1}(\lambda_*(t))^{-2}\left|\frac{|\log T|(\lambda_*(t))^2}{|\log(T-t)|^2}\int_{t}^T\frac{(\lambda_*(s))^\Theta}{|\log(T-s)|^{k+1}}ds\right|\\
&\leq C\|p_{1,1}-p_{1,2}\|_{*,\Theta,k+1}(\lambda_*(t))^{-2}\frac{|\log T|(\lambda_*(t))^{2}}{|\log(T-t)|^2}\frac{(\lambda_*(t))^\Theta}{|\log(T-t)|^{k+1}}(T-t)\\
&\leq C\|p_{1,1}-p_{1,2}\|_{*,\Theta,k+1}\frac{(\lambda_*(t))^{\Theta+1}}{|\log(T-t)|^{k+1}}.
\end{align*}
For $D_{3}$, we have
\begin{align*}
&|D_{3}| = |\tilde B_{3}[p_{1}](\cc^{(0)} + \cc^{(1,1)})(t)-\tilde B_{3}[p_{1}](\cc^{(0)} + \cc^{(1,2)})(t)|\\
&\leq C\left|\left(|\dot \lambda_*| + \left|\int_{t-(T-t)}^{t-\la^2(t)}\frac{p_0(s)-p_0(t)}{t-s}ds\right|\right)(\cc^{(1,1)} - \cc^{(1,2)})(t)\right|\\
&\leq C\frac{|\log T|}{|\log(T-t)|^2}\left|\int_{t}^T(p_{1,1}(s)-p_{1,2}(s))ds\right|\\
&\leq C\|p_{1,1}-p_{1,2}\|_{*,\Theta,k+1}\left|\frac{|\log T|}{|\log(T-t)|^2}\int_{t}^T\frac{(\lambda_*(s))^\Theta}{|\log(T-s)|^{k+1}}ds\right|\\
&\leq C\|p_{1,1}-p_{1,2}\|_{*,\Theta,k+1}\frac{|\log T|}{|\log(T-t)|^2}\frac{(\lambda_*(t))^\Theta}{|\log(T-t)|^{k+1}}(T-t)\\
&\leq C\|p_{1,1}-p_{1,2}\|_{*,\Theta,k+1}\frac{(\lambda_*(t))^{\Theta+1}}{|\log(T-t)|^{k+1}}.
\end{align*}
Combine all the above estimates, we obtain
\begin{equation*}
\|\tilde B[p_{1,0}+p_{1,1}](\cdot)-\tilde B[p_{1,0}+p_{1,2}](\cdot)\|_{*,\Theta,k}\leq \frac{C}{|\log T|}\|p_{1,1}-p_{1,2}\|_{*,\Theta,k+1}.
\end{equation*}
\end{proof}
\noindent {\bf Proof of Proposition \ref{nonlinear-system-for-linear-problem}}: Now we prove Proposition \ref{nonlinear-system-for-linear-problem} based on the above estimates and the fixed point arguments. Indeed, we choose $k\in (0,1)$. Then from Lemma \ref{lemma7.2} we have
\begin{equation*}
\|\mathcal{A}[p_{1,1}]\|_{*,\Theta,k+1}\leq C\left(\|\eta \tilde E\|_{*,\Theta,k} + \|\tilde B[p_{1,0}+p_{1,1}]\|_{*,\Theta,k}\right)
\end{equation*}
and
\begin{equation*}
\|\eta \tilde E\|_{*,\Theta,k}\leq C_E|\log T|^{k-1}
\end{equation*}
\begin{equation*}
\|\tilde B[p_{1,0}+p_{1,1}]\|_{*,\Theta,k}\leq C|\log T|^{k-1}.
\end{equation*}
Therefore we have
\begin{equation*}
\|\mathcal{A}[p_{1,1}]\|_{*,\Theta,k+1}\leq C\cdot C_E|\log T|^{k-1} + C|\log T|^{k-1}\leq C_0|\log T|^{k-1}
\end{equation*}
for fixing $C_0$ large. Hence the operator maps $\overline{B}_M(0)$ into itself for $M = C_0|\log T|^{k-1}$.
Moreover, we have
\begin{equation*}
\|\mathcal{A}[p_{1,1}]-\mathcal{A}[p_{1,2}]\|_{*,\Theta,k+1}\leq C \|\tilde B[p_{1,0}+p_{1,1}] - \tilde B[p_{1,0}+p_{1,2}]\|_{*,\Theta,k}\leq \frac{C}{|\log T|}\|p_{1,1}-p_{1,2}\|_{*,\Theta,k+1}.
\end{equation*}
Then we obtain a fixed point of problem (\ref{fixedproblemforp1}) by the contraction mapping theorem. \qed

\subsection{Estimates of the derivatives}
\begin{lemma}\label{estimateforsecondderivative}
We have the following estimates for $p_{1,1}$,
\begin{equation}\label{estimateforsecondderivative1}
\left|\dot p_{1,1}(t)\right|\leq C\|f\|_{*, \Theta, 0}\frac{(\lambda_*(t))^{\Theta}}{|\log(T-t)|^2(T-t)},
\end{equation}
\begin{equation}\label{estimateforsecondderivative2}
\left|\ddot p_{1,1}(t)\right|\leq C\|f\|_{*, \Theta, 0}\frac{(\lambda_*(t))^{\Theta}}{|\log(T-t)|^2(T-t)^2}.
\end{equation}
\end{lemma}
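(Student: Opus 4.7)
\medskip

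\noindent\textbf{Proof proposal for Lemma \ref{estimateforsecondderivative}.}
The plan is to exploit the explicit first-order ODE satisfied by $\lambda \cc^{(1)}$ and the algebraic identity this induces for $p_{1,1}$, rather than attempt to differentiate the implicit fixed-point equation directly. By construction of $T_0$ (cf.\ the derivation preceding Lemma~\ref{construction-of-T0}) and the definition of $T_1$ as a fixed point $g=T_0[\tilde f-\eta(t/T)\tilde L_1[g]]$, the function $\lambda\cc^{(1)}$ with $p_{1,1}=-2(\lambda\cc^{(1)})'$ satisfies
\begin{equation*}
(\lambda\cc^{(1)})'(t) - \frac{\mathbf Z}{3(1-\alpha_1)|\log(T-t)|}\cc^{(1)}(t) = \frac{h(t)}{2(1-\alpha_1)|\log(T-t)|},
\end{equation*}
where $h(t)=-\eta(t/T)E(t)-\widetilde{\mathcal B}[p_{1,0}+p_{1,1}](t)-\eta(t/T)\widetilde L_1[p_{1,1}](t)$ is the right-hand side of the fixed-point equation \eqref{fixedproblemforp1}. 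Multiplying by $-2$ yields the pointwise identity
\begin{equation*}
p_{1,1}(t) \;=\; -\frac{2\mathbf Z}{3(1-\alpha_1)|\log(T-t)|}\,\cc^{(1)}(t)\;-\;\frac{h(t)}{(1-\alpha_1)|\log(T-t)|},
\end{equation*}
which turns the problem of estimating $\dot p_{1,1}$ and $\ddot p_{1,1}$ into one of controlling time derivatives of $\cc^{(1)}$, $h$ and $|\log(T-t)|^{-1}$.

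\medskip

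For $\dot p_{1,1}$ I would differentiate the identity above. The derivative of $|\log(T-t)|^{-1}$ produces an extra factor $(T-t)^{-1}|\log(T-t)|^{-1}$, which combined with Proposition~\ref{nonlinear-system-for-linear-problem} directly yields the right form of bound. For $\dot\cc^{(1)}$ I would use $\cc^{(1)}=(\lambda\cc^{(1)})/\lambda$, so $\dot\cc^{(1)}=\lambda^{-1}\big(-\tfrac12 p_{1,1}-\dot\lambda\,\cc^{(1)}\big)$, and combine the bound $|p_{1,1}|\lesssim\|f\|_{*,\Theta,0}\lambda_*^\Theta|\log T|^{k-1}/|\log(T-t)|^{k+1}$ from Proposition~\ref{nonlinear-system-for-linear-problem} with $|\cc^{(1)}|\lesssim\lambda^{-1}\int_t^T|p_{1,1}(s)|\,ds$ (using $\cc^{(1)}(T)=0$) and $|\dot\lambda|\sim|\log T|/|\log(T-t)|^2$; the cancellation $(T-t)/\lambda\cdot|\dot\lambda|\sim 1$ makes both contributions to $\dot\cc^{(1)}$ of the same order $\lambda_*^{\Theta-1}|\log T|^{k-1}/|\log(T-t)|^{k+1}$, which converts (after division by $|\log(T-t)|$ and using $k<1$) to a bound of the desired shape $\lambda_*^\Theta/(|\log(T-t)|^2(T-t))$.

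\medskip

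The derivative $\dot h$ splits into three parts. The derivative $\dot E$ is estimated by the same ODE trick applied to $p_{1,0}$ (which solves the exact analogue of the equation above with $\cc^{(0)}$, $f$ in place of $\cc^{(1)}$, $h$), giving algebraic control of $\dot p_{1,0}$ and hence, via differentiating under the integral in the definition of $\mathcal{I}[p_{1,0}]$, control of $\dot E$ by the same Abel-type splitting used in the proof of Lemma~\ref{estimateoftheerror}. The derivatives of $\widetilde{\mathcal B}[p_{1,0}+p_{1,1}]$ and of $\eta(t/T)\widetilde L_1[p_{1,1}]$ require differentiating singular integral operators under the integral sign: split each integral at the Duhamel cutoff $s=t-\lambda_*^2(t)$ (respectively $s=t-(T-t)$), apply the decay bounds on $\Gamma_3,\Gamma_5$ and the identities like $\int_{t-\lambda_*^2}^t \frac{g(s)-g(t)}{t-s}ds$ used in Lemma~\ref{estimateforB}, and absorb the movement of the endpoint via $\frac{d}{dt}\lambda_*^2\sim\lambda_*|\dot\lambda_*|$. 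After these splittings the term-by-term estimates mirror those in Lemma~\ref{estimateforB}, with each time-derivative costing a factor $(T-t)^{-1}$.

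\medskip

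For $\ddot p_{1,1}$ the plan is to differentiate once more; the same algebraic identity holds, and each extra $\partial_t$ introduces at most one factor of $(T-t)^{-1}$ paired with a logarithmic factor, exactly matching the target bound \eqref{estimateforsecondderivative2}. The main obstacle in this program is not the algebra at the level of the ODE identity, but the careful differentiation of the non-local operators $\mathcal I$, $\widetilde{\mathcal B}_1$, $\widetilde{\mathcal B}_2$ and $\widetilde L_{1j}$ twice, where each derivative moves a kernel singularity to the diagonal; keeping track of the principal-value contributions and the boundary contributions from the moving cutoffs $s=t-\lambda_*^2(t)$, $s=t-(T-t)^{1+\alpha_1}$ and $s=t-(T-t)$, together with the bookkeeping of all logarithmic factors $|\log T|^{k-1}$ versus $|\log(T-t)|^{k+2}$, is where the technical work concentrates. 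Provided $\alpha_1\in(0,\tfrac13)$ and $k\in(0,1)$ as in Proposition~\ref{nonlinear-system-for-linear-problem}, the pointwise bounds \eqref{estimateforsecondderivative1}--\eqref{estimateforsecondderivative2} then follow from straightforward (if tedious) application of Lemma~\ref{basicestimate}.
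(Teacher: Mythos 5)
Your starting point is sound and close in spirit to the paper's own proof: both begin by differentiating the fixed-point equation satisfied by $p_{1,1}$. The difficulty is that your ``pointwise identity'' $p_{1,1}(t)=-\tfrac{2\mathbf Z}{3(1-\alpha_1)|\log(T-t)|}\cc^{(1)}(t)-\tfrac{h(t)}{(1-\alpha_1)|\log(T-t)|}$ does \emph{not} reduce the estimate of $\dot p_{1,1}$ to controlling known quantities, because $h$ itself contains $p_{1,1}$ through the non-local operators $\widetilde L_1[p_{1,1}]$ and $\widetilde{\mathcal B}[p_{1,0}+p_{1,1}]$. When you differentiate those under the integral sign, you do not get only boundary terms and kernel-derivative terms: changing variables $r=t-s$ and differentiating, $\frac{d}{dt}\widetilde L_{11}[p_{1,1}]$ produces $\widetilde L_{11}[\dot p_{1,1}]$ plus boundary terms, and $\frac{d}{dt}\widetilde{\mathcal B}_1[p_{1,0}+p_{1,1}]$ produces an operator $\mathcal U$ applied to $\dot p_{1,1}$ (plus kernel and boundary contributions). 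So $\dot h$ contains $\dot p_{1,1}$ implicitly at earlier times, and your plan to ``estimate $\dot h$'' directly is circular.

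The paper resolves this by keeping all terms involving $\dot p_{1,1}$ on the left, arriving at a \emph{new functional equation}
\begin{equation*}
(1-\alpha_1)|\log(T-t)|\,\dot p_{1,1}+\eta(\tfrac{t}{T})\widetilde L_1[\dot p_{1,1}]+\mathcal U[\dot p_{1,1}]=h(t)
\end{equation*}
with a right-hand side $h$ that genuinely depends only on $p_{1,1}$, $p_{1,0}$, $E$, the kernel derivatives, and moving-endpoint boundary terms, all of which can be bounded by $C\|f\|_{*,\Theta,0}\lambda_*^{\Theta}/(|\log(T-t)|(T-t))$. One then observes this equation has the same dominant-diagonal structure as the original one for $p_{1,1}$ ($\widetilde L_1$ and $\mathcal U$ being small perturbations relative to $(1-\alpha_1)|\log(T-t)|$), so the same absorption argument of Lemma~\ref{lemma7.2} applies, giving $|\dot p_{1,1}|\lesssim |h|/|\log(T-t)|$, which is the target bound. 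The same iteration at one higher order yields \eqref{estimateforsecondderivative2}. Your write-up describes the boundary-term and kernel-differentiation bookkeeping correctly, but omits the essential step of recognizing the differentiated relation as a fixed-point equation for $\dot p_{1,1}$ that must itself be inverted; without it, the argument does not close.
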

\begin{proof}
To prove (\ref{estimateforsecondderivative1}), let us recall that $p_{1,1}$ satisfies the following equation
\begin{equation*}
\tilde L_0[p_{1,1}] + \eta(\frac{t}{T})\tilde L_1[p_{1,1}] + \eta(\frac{t}{T}) E + \tilde{B}[p_{1,0}+p_{1,1}](t) - \frac{2\mathbf Z}{3} \cc^{(1)} = 0\text{ in }[-T, T].
\end{equation*}
Differentiate this equation with respect to $t$, we have
\begin{equation*}
\begin{aligned}
(1-\alpha_1)|\log(T-t)|\dot p_{1,1} + (1-\alpha_1)\frac{p_{1,1}}{T-t} & + \eta(\frac{t}{T})\frac{d}{dt}\tilde L_1[p_{1,1}] + \frac{1}{T}\eta'\left(\frac{t}{T}\right)\tilde L_1[p_{1,1}] + \eta\left(\frac{t}{T}\right) \frac{d}{dt} E \\
&+ \frac{1}{T}\eta'\left(\frac{t}{T}\right) E + \frac{d}{dt}\tilde{B}[p_{1,0}+p_{1,1}](t) - \frac{d}{dt}\left(\frac{2\mathbf Z}{3} \cc^{(1)}\right) = 0.
\end{aligned}
\end{equation*}
Rewrite the above equation as
\begin{equation*}
(1-\alpha_1)|\log(T-t)|\dot p_{1,1} + \eta(\frac{t}{T})\tilde L_1[\dot p_{1,1}] + \mathcal{U}[\dot p_{1,1}](t)= h(t)
\end{equation*}
where $h$ is a function satisfying
\begin{equation*}
|h(t)|\leq C\|f\|_{*, \Theta, 0}\frac{(\lambda_*(t))^{\Theta}}{|\log(T-t)|(T-t)}
\end{equation*}
and $\mathcal{U}$ is the operator defined by
\begin{equation*}
\mathcal{U}[\dot p_{1,1}] = -\int_{-T}^{t-(\lambda_*(t))^2}\frac{\dot p_{1,1}(s)}{t-s}\left(\Gamma_3\left(\frac{(\lambda(t))^2}{t-s}\right)+1\right)ds - \int_{t-(\lambda_*(t))^2}^t\frac{\dot p_{1,1}(s)}{t-s}\Gamma_3\left(\frac{(\lambda(t))^2}{t-s}\right)ds.
\end{equation*}
Observe that One of the terms in $h$ is $\eta(\frac{t}{T})\frac{d}{dt} E$, from the proof of Lemma \ref{estimateoftheerror}, we have
\begin{equation*}
\left|\eta(\frac{t}{T})\frac{d}{dt} E\right|\leq C\|f\|_{*, \Theta, 0}\frac{(\lambda_*(t))^{\Theta}}{|\log(T-t)|(T-t)}.
\end{equation*}
Let us recall the proof of Lemma \ref{lemma7.2}, we estimate the term $\frac{d}{dt}\tilde L_{10}[p_{1,1}]$ as follows
\begin{equation*}
\begin{aligned}
\left|\frac{d}{dt}\tilde L_{10}[p_{1,0}]\right| = \left|\frac{d}{dt}\int_{-T}^t\frac{p_{1,0}(s)}{T-s}ds\right|=\left|\frac{p_{1,0}(t)}{T-t}\right|\leq C\|f\|_{*, \Theta, 0}\frac{(\lambda_*(t))^{\Theta}}{|\log(T-t)|(T-t)}.
\end{aligned}
\end{equation*}
Next we compute $\frac{d}{dt}\tilde L_{11}[p_{1,1}]$,
\begin{align*}
\frac{d}{dt}\tilde L_{11}[p_{1,1}] &= \frac{d}{dt}\int_{(T-t)^{1+\alpha_1}}^{T-t}\frac{p_{1,1}(t-r)}{r}dr\\
& = \int_{(T-t)^{1+\alpha_1}}^{T-t}\frac{\dot p_{1,1}(t-r)}{r}dr -\frac{p_{1,1}(t-(T-t))}{T-t}\\
&\quad + (1+\alpha_1)\frac{p_{1,1}(t-(T-t)^{1+\alpha_0})}{T-t}\\
& = \tilde L_{11}[\dot p_{1,1}] -\frac{p_{1,1}(t-(T-t))}{T-t} + (1+\alpha_1)\frac{p_{1,1}(t-(T-t)^{1+\alpha_0})}{T-t}.
\end{align*}
The last two terms can be estimated as follows
\begin{equation*}
\left|\frac{p_{1,1}(t-(T-t))}{T-t}\right|\leq C\|f\|_{*, \Theta, 0}\frac{|\log T|^{k-1}(\lambda_*(t))^{\Theta}}{|\log(T-t)|^{k+1}(T-t)}\leq C\|f\|_{*, \Theta, 0}\frac{(\lambda_*(t))^{\Theta}}{|\log(T-t)|(T-t)},
\end{equation*}
\begin{equation*}
\left|(1+\alpha_1)\frac{p_{1,1}(t-(T-t)^{1+\alpha_0})}{T-t}\right|\leq C\|f\|_{*, \Theta, 0}\frac{|\log T|^{k-1}(\lambda_*(t))^{\Theta}}{|\log(T-t)|^{k+1}(T-t)}\leq C\|f\|_{*, \Theta, 0}\frac{(\lambda_*(t))^{\Theta}}{|\log(T-t)|(T-t)}.
\end{equation*}
Similarly, the term $\frac{d}{dt}\tilde L_{12}[p_{1,1}]$ can be estimated as
\begin{equation*}
\begin{aligned}
\frac{d}{dt}\tilde L_{12}[p_{1,1}] &= \frac{d}{dt}\int_{0}^{T-t}\frac{p_{1,1}(t-r)}{T-t+r}dr\\
& = \int_{0}^{T-t}\frac{\dot p_{1,1}(t-r)}{T-t+r}dr -\frac{p_{1,1}(t-(T-t))}{2(T-t)}\\
&\quad + \int_{0}^{T-t}\frac{p_{1,1}(t-r)}{(T-t+r)^2}dr\\
& = \tilde L_{12}[\dot p_{1,1}] -\frac{p_{1,1}(t-(T-t))}{2(T-t)} + \int_{t-(T-t)}^{t}\frac{p_{1,1}(s)}{(T-s)^2}ds
\end{aligned}
\end{equation*}
and
\begin{equation*}
\left|\frac{p_{1,1}(t-(T-t))}{2(T-t)}\right|\leq C\|f\|_{*, \Theta, 0}\frac{|\log T|^{k-1}(\lambda_*(t))^{\Theta}}{|\log(T-t)|^{k+1}(T-t)}\leq C\|f\|_{*, \Theta, 0}\frac{(\lambda_*(t))^{\Theta}}{|\log(T-t)|(T-t)},
\end{equation*}
\begin{equation*}
\left|\int_{t-(T-t)}^{t}\frac{p_{1,1}(s)}{(T-s)^2}ds\right|\leq C\|f\|_{*, \Theta, 0}\frac{|\log T|^{k-1}(\lambda_*(t))^{\Theta}}{|\log(T-t)|^{k+1}(T-t)}\leq C\|f\|_{*, \Theta, 0}\frac{(\lambda_*(t))^{\Theta}}{|\log(T-t)|(T-t)}.
\end{equation*}
The term $\frac{d}{dt}\tilde L_{13}[p_{1,1}]$ can be estimated as
\begin{align*}
\frac{d}{dt}\tilde L_{13}[p_{1,1}] &= \frac{d}{dt}\int_{T-t}^{T+t}\frac{p_{1,1}(t-r)(T-t)}{r(T-t+r)}dr\\
& = \int_{T-t}^{T+t}\frac{\dot p_{1,1}(t-r)(T-t)}{r(T-t+r)}dr + \frac{p_{1,1}(t-(T-t))}{2(T-t)} + \frac{p_{1,1}(t-(T+t))(T-t)}{2T(T+t)}\\
&\quad - \int_{T-t}^{T+t}\frac{p_{1,1}(t-r)}{r(T-t+r)}dr + \int_{T-t}^{T+t}\frac{p_{1,1}(t-r)(T-t)}{r(T-t+r)^2}dr\\
& = \tilde L_{13}[\dot p_{1,1}] + \frac{p_{1,1}(t-(T-t))}{2(T-t)} + \frac{p_{1,1}(-T)(T-t)}{2T(T+t)}\\
&\quad - \int_{-T}^{t-(T-t)}\frac{p_{1,1}(s)}{(t-s)(T-s)}ds + \int_{-T}^{t-(T-t)}\frac{p_{1,1}(s)(T-t)}{(t-s)(T-s)^2}ds
\end{align*}
and
\begin{equation*}
\left|\frac{p_{1,1}(t-(T-t))}{2(T-t)}\right|\leq C\|f\|_{*, \Theta, 0}\frac{|\log T|^{k-1}(\lambda_*(t))^{\Theta}}{|\log(T-t)|^{k+1}(T-t)}\leq C\|f\|_{*, \Theta, 0}\frac{(\lambda_*(t))^{\Theta}}{|\log(T-t)|(T-t)},
\end{equation*}
\begin{equation*}
\left|\frac{p_{1,1}(-T)(T-t)}{2T(T+t)}\right|\leq C\left|\frac{p_{1,1}(-T)}{2T}\right|\leq C\|f\|_{*, \Theta, 0}\frac{|\log T|^{k-1}(\lambda_*(t))^{\Theta}}{|\log(T-t)|^{k+1}(T-t)}\leq C\|f\|_{*, \Theta, 0}\frac{(\lambda_*(t))^{\Theta}}{|\log(T-t)|(T-t)},
\end{equation*}
\begin{equation*}
\begin{aligned}
\left|\int_{-T}^{t-(T-t)}\frac{p_{1,1}(s)}{(t-s)(T-s)}ds\right|&\leq C\|f\|_{*, \Theta, 0}\int_{-T}^{t-(T-t)}\frac{|\log T|^{k-1}(\lambda_*(s))^{\Theta}}{(T-s)^2|\log(T-s)|^{k+1}}ds\\
&\leq C\|f\|_{*, \Theta, 0}\frac{|\log T|^{k-1}(\lambda_*(t))^{\Theta}}{|\log(T-t)|^{k+1}(T-t)}\leq C\|f\|_{*, \Theta, 0}\frac{(\lambda_*(t))^{\Theta}}{|\log(T-t)|(T-t)},
\end{aligned}
\end{equation*}
\begin{equation*}
\begin{aligned}
\left|\int_{-T}^{t-(T-t)}\frac{p_{1,1}(s)(T-t)}{(t-s)(T-s)^2}ds\right|&\leq C\|f\|_{*, \Theta, 0}\int_{-T}^{t-(T-t)}\frac{|\log T|^{k-1}(\lambda_*(s))^{\Theta}(T-t)}{(T-s)^3|\log(T-s)|^{k+1}}ds\\
&\leq C\|f\|_{*, \Theta, 0}\frac{|\log T|^{k-1}(\lambda_*(t))^{\Theta}}{|\log(T-t)|^{k+1}(T-t)}\leq C\|f\|_{*, \Theta, 0}\frac{(\lambda_*(t))^{\Theta}}{|\log(T-t)|(T-t)}.
\end{aligned}
\end{equation*}
The term $\frac{d}{dt}\tilde L_{14}[p_{1,1}]$ can be estimated as
\begin{equation*}
\begin{aligned}
\frac{d}{dt}\tilde L_{14}[p_{1,1}] &= \frac{d}{dt}\left((4\log(|\log(T-t)|)-2\log(|\log T|))p_{1,1}(t)\right)\\
& = \tilde L_{14}[\dot p_{1,1}] + \frac{4}{(T-t)|\log(T-t)|}p_{1,1}
\end{aligned}
\end{equation*}
and
\begin{equation*}
\left|\frac{4}{(T-t)|\log(T-t)|}p_{1,1}\right|\leq C\|f\|_{*, \Theta, 0}\frac{|\log T|^{k-1}(\lambda_*(t))^{\Theta}}{|\log(T-t)|^{k+2}(T-t)}\leq C\|f\|_{*, \Theta, 0}\frac{(\lambda_*(t))^{\Theta}}{|\log(T-t)|^{2}(T-t)}.
\end{equation*}
For the term $\frac{1}{T}\eta'\left(\frac{t}{T}\right)\tilde L_1[p_{1,1}]$, we have\begin{equation*}
\begin{aligned}
\left|\frac{1}{T}\eta'\left(\frac{t}{T}\right)\tilde L_1[p_{1,1}]\right| & \leq C\|f\|_{*, \Theta, 0}\frac{|\log T|^{k-1}(\lambda_*(t))^{\Theta}}{T|\log(T-t)|^{k}}\leq C\|f\|_{*, \Theta, 0}\frac{|\log T|^{k-1}(\lambda_*(t))^{\Theta}}{|\log(T-t)|^{k}(T-t)}\\
& \leq C\|f\|_{*, \Theta, 0}\frac{(\lambda_*(t))^{\Theta}}{|\log(T-t)|(T-t)}
\end{aligned}
\end{equation*}
since $|\log(T-t)|\sim |\log T|$ in the interval $t\in [-\frac{T}{4},0]$ where $\eta'\left(\frac{t}{T}\right)$ is not zero.
Now we consider the term $\frac{d}{dt}\tilde{B}[p_{1,0}+p_{1,1}](t)$. Observe that
one of these terms are $$\frac{d}{dt}\int_{-T}^{t-\lambda_*^2(t)}\frac{p_1(s)}{t-s}\left(\Gamma_3\left(\frac{\lambda^2(t)}{t-s}\right)+1\right)ds,$$ we have
\begin{align*}
&\frac{d}{dt}\int_{-T}^{t-\lambda_*^2(t)}\frac{p_1(s)}{t-s}\left(\Gamma_3\left(\frac{\lambda^2(t)}{t-s}\right)+1\right)ds = \frac{d}{dt}\int_{\lambda_*^2(t)}^{t+T}\frac{p_1(t-r)}{r}\left(\Gamma_3\left(\frac{\lambda^2(t)}{r}\right)+1\right)dr\\
& = \int_{\lambda_*^2(t)}^{t+T}\frac{\dot p_1(t-r)}{r}\left(\Gamma_3\left(\frac{\lambda^2(t)}{r}\right)+1\right)dr + \int_{\lambda_*^2(t)}^{t+T}\frac{ p_1(t-r)}{r}\left(\Gamma_3'\left(\frac{\lambda^2(t)}{r}\right)\right)\frac{2\lambda(t)\lambda'(t)}{r}dr\\
&\quad + \frac{p_1(-T)}{t+T}\left(\Gamma_3\left(\frac{\lambda^2(t)}{t+T}\right)+1\right) + \frac{p_1(t-\lambda_*^2(t))}{\lambda_*^2(t)}\left(\Gamma_3\left(\frac{\lambda^2(t)}{\lambda_*^2(t)}\right)+1\right)2\lambda_*(t)\lambda_*'(t)
\end{align*}
and from Lemma \ref{basicestimate}, it holds that
\begin{align*}
&\left|\int_{\lambda_*^2(t)}^{t+T}\frac{ p_1(t-r)}{r}\left(\Gamma_3'\left(\frac{\lambda^2(t)}{r}\right)\right)\frac{2\lambda(t)\lambda'(t)}{r}dr\right| = \left|\int_{-T}^{t-\lambda_*^2(t)}\frac{ p_1(s)}{t-s}\left(\Gamma_3'\left(\frac{\lambda^2(t)}{t-s}\right)\right)\frac{2\lambda(t)\lambda'(t)}{t-s}ds\right|\\
&\leq C|\lambda(t)\lambda'(t)|\int_{-T}^{t-\lambda_*^2(t)}\frac{|p_1(s)|}{(t-s)^2}\left(\frac{t-s}{\lambda^2(t)}\right)^\sigma ds\\
&\leq C\lambda(t)^{1-2\sigma}|\lambda'(t)|\|f\|_{*,\Theta,0}\int_{-T}^{t-\lambda_*^2(t)}\frac{1}{(t-s)^{2-\sigma}}\frac{(\lambda_*(s))^{\Theta}}{|\log(T-s)|}ds\\
&\leq C\lambda(t)^{1-2\sigma}|\lambda'(t)|\|f\|_{*,\Theta,0}\frac{(\lambda_*(t))^{\Theta + 2(\sigma-1)}}{|\log(T-t)|}\leq C\|f\|_{*,\Theta,0}\frac{(\lambda_*(t))^{\Theta}}{|\log(T-t)|(T-t)},
\end{align*}
\begin{equation*}
\begin{aligned}
&\left|\frac{p_1(-T)}{t+T}\left(\Gamma_3\left(\frac{\lambda^2(t)}{t+T}\right)+1\right)\right| \leq \left|\frac{|p_1(-T)}{T}\right|\leq C\|f\|_{*,\Theta,0}\frac{(\lambda_*(2T))^{\Theta}}{|\log(2T)|T}\leq C\|f\|_{*,\Theta,0}\frac{(\lambda_*(t))^{\Theta}}{|\log(T-t)|(T-t)},
\end{aligned}
\end{equation*}
\begin{equation*}
\begin{aligned}
&\left|\frac{p_1(t-\lambda_*^2(t))}{\lambda_*^2(t)}\left(\Gamma_3\left(\frac{\lambda^2(t)}{\lambda_*^2(t)}\right)+1\right)2\lambda_*(t)\lambda_*'(t)\right|\\
&\leq C\|f\|_{*,\Theta,0}\frac{(\lambda_*(t))^{\Theta}}{|\log(T-t)|}\frac{|\lambda_*'(t)|}{\lambda_*(t)}\leq C\|f\|_{*,\Theta,0}\frac{(\lambda_*(t))^{\Theta}}{|\log(T-t)|(T-t)}.
\end{aligned}
\end{equation*}
The other terms in $h$ as well as the proof of (\ref{estimateforsecondderivative2}) can be estimated similarly, we omit the details here.
\end{proof}

\subsection{Estimate of the remainder $R_{\alpha_1}[p_{1,1}]$}
\begin{lemma}\label{estimateoftheremainder}
Let $p_{1,1}$ be the solution constructed in Proposition \ref{nonlinear-system-for-linear-problem}, then we have
\begin{equation*}
|R_{\alpha_1}[p_{1,1}]|\leq C\|f\|_{*, \Theta, 0}\frac{(\lambda_*(t))^{\Theta}(T-t)^{\alpha_1}}{|\log(T-t)|^2}.
\end{equation*}
Furthermore, it holds that
\begin{equation*}
\left|\frac{d}{dt}R_{\alpha_1}[p_{1,1}]\right|\leq C\|f\|_{*, \Theta, 0}\frac{(\lambda_*(t))^{\Theta}(T-t)^{\alpha_1-1}}{|\log(T-t)|^2}.
\end{equation*}
\end{lemma}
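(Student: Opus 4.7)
First I would obtain a clean explicit expression for $\mathcal R_{\alpha_1}[p_{1,1}]$. Unfolding the definition of $S_{\alpha_1}[g] = \tilde L_0[g] + \tilde L_1[g]$, the first, third, and fourth terms in $\tilde L_1[g]$ collapse to $\int_{-T}^{t-(T-t)}\frac{g(s)}{t-s}ds$, which combines with the second term of $\tilde L_1[g]$ to yield $\int_{-T}^{t-(T-t)^{1+\alpha_1}}\frac{g(s)}{t-s}ds$. Using the explicit form $\lambda_*(t) = (T-t)|\log T|/|\log(T-t)|^2$, one checks that the pointwise contributions $(1-\alpha_1)|\log(T-t)|g(t) + (4\log|\log(T-t)| - 2\log|\log T|)g(t)$ equal $g(t)\cdot[\log((T-t)^{1+\alpha_1}) - \log\lambda_*^2(t)]$, which is precisely the constant-$g$ evaluation of $\int_{t-(T-t)^{1+\alpha_1}}^{t-\lambda_*^2(t)}\frac{g}{t-s}ds$. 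Subtracting $S_{\alpha_1}[p_{1,1}]$ from $\mathcal I[p_{1,1}]$ leaves the clean identity
\begin{equation*}
\mathcal R_{\alpha_1}[p_{1,1}](t) = \int_{t-(T-t)^{1+\alpha_1}}^{t-\lambda_*^2(t)}\frac{p_{1,1}(s) - p_{1,1}(t)}{t-s}\,ds.
\end{equation*}

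With this identity in hand, the pointwise bound follows from the mean value theorem and estimate (\ref{estimateforsecondderivative1}). On $s \in [t-(T-t)^{1+\alpha_1}, t-\lambda_*^2(t)]$ one has $T-s \sim T-t$, hence $\lambda_*(s) \sim \lambda_*(t)$ and $|\log(T-s)| \sim |\log(T-t)|$. Thus the integrand $|p_{1,1}(s) - p_{1,1}(t)|/(t-s)$ is bounded by $\sup|\dot p_{1,1}| \lesssim \|f\|_{*,\Theta,0}(\lambda_*(t))^{\Theta}/[|\log(T-t)|^2(T-t)]$ uniformly on the interval, and multiplication by the interval length $(T-t)^{1+\alpha_1}$ produces the claimed estimate.

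For the derivative I would differentiate the representation via Leibniz's rule. After the substitution $r = t-s$, the formula becomes $\int_{\lambda_*^2(t)}^{(T-t)^{1+\alpha_1}}[p_{1,1}(t-r) - p_{1,1}(t)]/r\,dr$, and $\partial_t$ generates three contributions: an upper-boundary term of size $(T-t)^{\alpha_1}\sup|\dot p_{1,1}|$; a lower-boundary term proportional to $|\dot\lambda_*(t)|\lambda_*(t)\sup|\dot p_{1,1}|$; and an interior integral $\int_{\lambda_*^2(t)}^{(T-t)^{1+\alpha_1}}[\dot p_{1,1}(t-r) - \dot p_{1,1}(t)]/r\,dr$. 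The upper-boundary term, using (\ref{estimateforsecondderivative1}), and the interior integral, controlled via the mean value theorem and the second-derivative bound (\ref{estimateforsecondderivative2}) multiplied by the interval length, each yield $\|f\|_{*,\Theta,0}(\lambda_*(t))^{\Theta}(T-t)^{\alpha_1-1}/|\log(T-t)|^2$. The subtle point is verifying that the lower-boundary term is strictly dominated by this target: invoking $|\dot\lambda_*(t)| \lesssim |\log T|/|\log(T-t)|^2$ and $\lambda_*(t) \lesssim (T-t)|\log T|/|\log(T-t)|^2$, the product $|\dot\lambda_*(t)|\lambda_*(t)$ is of order $(T-t)|\log T|^2/|\log(T-t)|^4$, which is much smaller than $(T-t)^{\alpha_1}$ for $\alpha_1 < 1$ and $T$ small, so this contribution is absorbed. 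Summing the three pieces delivers the claimed bound on $\frac{d}{dt}\mathcal R_{\alpha_1}[p_{1,1}]$.
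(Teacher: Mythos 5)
Your proof is correct and follows essentially the same route as the paper: both reduce $\mathcal R_{\alpha_1}[p_{1,1}]$ to the integral $\int_{t-(T-t)^{1+\alpha_1}}^{t-\lambda_*^2(t)}\frac{p_{1,1}(s)-p_{1,1}(t)}{t-s}\,ds$ (using that the pointwise terms in $\tilde L_0 + \tilde L_1$ cancel against the constant-$g$ evaluation of the logarithmic integral over $[t-(T-t)^{1+\alpha_1},\,t-\lambda_*^2(t)]$), then apply the mean value theorem with the bounds of Lemma \ref{estimateforsecondderivative}, and differentiate by Leibniz's rule to obtain two boundary contributions plus an interior integral of second-order differences. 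Your explicit telescoping of $\tilde L_1$ and the explicit verification that the lower ($\lambda_*^2$) boundary term is subdominant fill in steps the paper simply takes as given, but the underlying argument is identical.
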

\begin{proof}
From the definition $\mathcal{R}_{\alpha_1}[p_{1, 1}] = \mathcal{I}[p_{1,1}] - S_{\alpha_1}[p_{1, 1}]$ and the estimates in Lemma \ref{estimateforsecondderivative}, we have
\begin{equation*}
\begin{aligned}
|R_{\alpha_1}[p_{1,1}]|& \leq \int_{t-(T-t)^{1+\alpha_1}}^{t-(\lambda_*(t))^2}\frac{|p_{1,1}(t) - p_{1,1}(s)|}{t-s}ds\\
& \leq \sup_{r\in (t-(T-t)^{1+\alpha_1}, t-(\lambda_*(t))^2) } |\dot p_{1,1}(r)|(T-t)^{1+\alpha_1}\\
& \leq C\|f\|_{*, \Theta, 0}\frac{(\lambda_*(t))^{\Theta}(T-t)^{\alpha_1}}{|\log(T-t)|^2}.
\end{aligned}
\end{equation*}
On the other hand, we have
\begin{equation*}
\begin{aligned}
\frac{d}{dt}R_{\alpha_1}[p_{1,1}]& = -\frac{d}{dt}\int_{t-(T-t)^{1+\alpha_1}}^{t-(\lambda_*(t))^2}\frac{p_{1,1}(t) - p_{1,1}(s)}{t-s}ds\\
& = -\frac{d}{dt}\int_{(\lambda_*(t))^2}^{(T-t)^{1+\alpha_1}}\frac{p_{1,1}(t) - p_{1,1}(t-r)}{r}dr\\
& = -\int_{(\lambda_*(t))^2}^{(T-t)^{1+\alpha_1}}\frac{\dot p_{1,1}(t) - \dot p_{1,1}(t-r)}{r}dr -2(1+\alpha_1)\frac{p_{1,1}(t-(\lambda_*(t))^2)}{\lambda_*(t)}\dot \lambda_*(t)\\
&\quad + (1+\alpha_1)\frac{p_{1,1}(t-(T-t)^{1+\alpha_0})}{T-t}.
\end{aligned}
\end{equation*}
From the estimates in Lemma \ref{estimateforsecondderivative}, we obtain
\begin{equation*}
\left|\frac{d}{dt}R_{\alpha_1}[p_{1,1}]\right|\leq C\|f\|_{*, \Theta, 0}\frac{(\lambda_*(t))^{\Theta}(T-t)^{\alpha_1-1}}{|\log(T-t)|^2}.
\end{equation*}
\end{proof}
Combine the results in Proposition \ref{nonlinear-system-for-linear-problem}, Lemma \ref{estimateforsecondderivative} and Lemma \ref{estimateoftheremainder}, define $\mathcal{R}_{\cc}[f](t):=R_{\alpha_1}[p_{1,1}](t)$, then we obtain the conclusion of Proposition \ref{prop-c1c2}.

\bigskip

\appendix

\section{Proof of technical lemmas}\label{appendixA}

Several technical lemmas will be proved in this section.

\subsection{Decomposition of the linearization}\label{appendix-linearization}

\medskip

\begin{proof}[Proof of Lemma \ref{linearization-lemma-0}]
We compute
\begin{equation*}
\begin{aligned}
\Delta(\phi_3 W )=&~\left[\partial_{\rho\rho}\phi_3+\frac1\rho \partial_{\rho}\phi_3+\frac{1}{\rho^2}\partial_{\theta\theta}\phi_3-\left(w_{\rho}^2+\frac{\sin^2 w}{\rho^2}\right)\phi_3\right]W \\
&~+\left[\left(w_{\rho\rho}+\frac{w_{\rho}}{\rho}-\frac{\sin w\cos w}{\rho^2}\right)\phi_3+2w_{\rho}\partial_{\rho}\phi_3\right]E_1+2\frac{\sin w \partial_{\theta}\phi_3}{\rho^2} E_2,
\end{aligned}
\end{equation*}
\begin{equation*}
\begin{aligned}
\Delta(\phi_1 E_1)=&~-\left[\left(w_{\rho\rho}+\frac{w_{\rho}}{\rho}+\frac{\sin w\cos w}{\rho^2}\right)\phi_1+2w_{\rho}\partial_{\rho}\phi_1\right]W \\
&~+\left[\partial_{\rho\rho}\phi_1+\frac1\rho \partial_{\rho}\phi_1+\frac{1}{\rho^2}\partial_{\theta\theta}\phi_1-\left(w_{\rho}^2+\frac{\cos^2 w}{\rho^2}\right)\phi_1\right]E_1+2\frac{\cos w \partial_{\theta}\phi_1}{\rho^2} E_2,
\end{aligned}
\end{equation*}
and
\begin{equation*}
\begin{aligned}
\Delta(\phi_2 E_2)=&~-\frac{2\sin w \partial_{\theta}\phi_2}{\rho^2}W -\frac{2\cos w\partial_{\theta}\phi_2}{\rho^2}E_1\\
&~+\left[\partial_{\rho\rho}\phi_2+\frac1\rho \partial_{\rho}\phi_2+\frac{1}{\rho^2}\partial_{\theta\theta}\phi_2-\frac{\phi_2}{\rho^2}\right]E_2.\\
\end{aligned}
\end{equation*}
So we get
\begin{align*}
&~~\Delta\phi\\
=&~\left[\partial_{\rho\rho}\phi_3+\frac1\rho \partial_{\rho}\phi_3+\frac{1}{\rho^2}\partial_{\theta\theta}\phi_3-\left(w_{\rho}^2+\frac{\sin^2 w}{\rho^2}\right)\phi_3-\left(w_{\rho\rho}+\frac{w_{\rho}}{\rho}+\frac{\sin w\cos w}{\rho^2}\right)\phi_1-2w_{\rho}\partial_{\rho}\phi_1-\frac{2\sin w \partial_{\theta}\phi_2}{\rho^2}\right]W \\
&~+\left[\left(w_{\rho\rho}+\frac{w_{\rho}}{\rho}-\frac{\sin w\cos w}{\rho^2}\right)\phi_3+2w_{\rho}\partial_{\rho}\phi_3+\partial_{\rho\rho}\phi_1+\frac1\rho \partial_{\rho}\phi_1+\frac{1}{\rho^2}\partial_{\theta\theta}\phi_1-\left(w_{\rho}^2+\frac{\cos^2 w}{\rho^2}\right)\phi_1-\frac{2\cos w\partial_{\theta}\phi_2}{\rho^2}\right]E_1\\
&~+\left[\partial_{\rho\rho}\phi_2+\frac1\rho \partial_{\rho}\phi_2+\frac{1}{\rho^2}\partial_{\theta\theta}\phi_2-\frac{\phi_2}{\rho^2}+2\frac{\sin w \partial_{\theta}\phi_3}{\rho^2}+2\frac{\cos w \partial_{\theta}\phi_1}{\rho^2} \right]E_2.
\end{align*}
Also, one has
\begin{align*}
\phi_{y_1}=&~\left(\cos\theta \pp_{\rho}-\frac{\sin\theta}{\rho}\pp_{\theta}\right)(\phi_3 W +\phi_1 E_1+\phi_2 E_2)\\
=&~\left[\cos\theta \pp_{\rho}\phi_3-\frac{\sin\theta}{\rho}\pp_{\theta}\phi_3-\cos\theta w_{\rho}\phi_1+\frac{\sin\theta\sin w}{\rho}\phi_2\right]W \\
&~+\left[\cos\theta \pp_{\rho}\phi_1-\frac{\sin\theta}{\rho}\pp_{\theta}\phi_1+\cos\theta w_{\rho}\phi_3+\frac{\sin\theta\cos w}{\rho}\phi_2\right]E_1\\
&~+\left[\cos\theta \pp_{\rho}\phi_2-\frac{\sin\theta}{\rho}\pp_{\theta}\phi_2-\frac{\sin\theta\sin w}{\rho}\phi_3-\frac{\sin\theta \cos w}{\rho}\phi_1\right]E_2\\
\end{align*}
\begin{align*}
\phi_{y_2}=&~\left(\sin\theta \pp_{\rho}+\frac{\cos\theta}{\rho}\pp_{\theta}\right)(\phi_3 W +\phi_1 E_1+\phi_2 E_2)\\
=&~\left[\sin\theta \pp_{\rho}\phi_3+\frac{\cos\theta}{\rho}\pp_{\theta}\phi_3-\sin\theta w_{\rho}\phi_1-\frac{\cos\theta \sin w}{\rho}\phi_2\right]W \\
&~+\left[\sin\theta \pp_{\rho}\phi_1+\frac{\cos\theta}{\rho}\pp_{\theta}\phi_1+\sin\theta w_{\rho} \phi_3-\frac{\cos \theta \cos w}{\rho}\phi_2\right]E_1\\
&~+\left[\sin\theta \pp_{\rho}\phi_2+\frac{\cos\theta}{\rho}\pp_{\theta}\phi_2+\frac{\cos\theta\sin w}{\rho}\phi_3+\frac{\cos\theta\cos w}{\rho}\phi_1\right]E_2\\
\end{align*}
and thus
\begin{align*}
U_{y_1}\wedge \phi_{y_2}=&~\cos\theta w_{\rho}\left[\sin\theta \pp_{\rho}\phi_2+\frac{\cos\theta}{\rho}\pp_{\theta}\phi_2+\frac{\cos\theta\cos w}{\rho}\phi_1\right]W \\
&~+\frac{\sin\theta\sin w}{\rho}\left[\sin\theta \pp_{\rho}\phi_1+\frac{\cos\theta}{\rho}\pp_{\theta}\phi_1-\frac{\cos \theta \cos w}{\rho}\phi_2\right]W +\frac{w_{\rho}\sin w}{\rho}\phi_3 W \\
&~-\frac{\sin\theta\sin w}{\rho}\left[\sin\theta \pp_{\rho}\phi_3+\frac{\cos\theta}{\rho}\pp_{\theta}\phi_3-\sin\theta w_{\rho}\phi_1-\frac{\cos\theta \sin w}{\rho}\phi_2\right]E_1\\
&~-\cos\theta w_{\rho}\left[\sin\theta \pp_{\rho}\phi_3+\frac{\cos\theta}{\rho}\pp_{\theta}\phi_3-\sin\theta w_{\rho}\phi_1-\frac{\cos\theta \sin w}{\rho}\phi_2\right]E_2,\\
\end{align*}
\begin{align*}
\phi_{y_1}\wedge U_{y_2}
=&~\frac{\cos\theta\sin w}{\rho}\left[\cos\theta \pp_{\rho}\phi_1-\frac{\sin\theta}{\rho}\pp_{\theta}\phi_1+\frac{\sin\theta\cos w}{\rho}\phi_2\right] W \\
&~-\sin\theta w_{\rho}\left[\cos\theta \pp_{\rho}\phi_2-\frac{\sin\theta}{\rho}\pp_{\theta}\phi_2-\frac{\sin\theta \cos w}{\rho}\phi_1\right] W +\frac{w_{\rho}\sin w}{\rho}\phi_3 W \\
&~-\frac{\cos\theta\sin w}{\rho}\left[\cos\theta \pp_{\rho}\phi_3-\frac{\sin\theta}{\rho}\pp_{\theta}\phi_3-\cos\theta w_{\rho}\phi_1+\frac{\sin\theta\sin w}{\rho}\phi_2\right] E_1\\
&~+\sin\theta w_{\rho}\left[\cos\theta \pp_{\rho}\phi_3-\frac{\sin\theta}{\rho}\pp_{\theta}\phi_3-\cos\theta w_{\rho}\phi_1+\frac{\sin\theta\sin w}{\rho}\phi_2\right]E_2.\\
\end{align*}
Therefore, linearization
\begin{equation*}
\Delta_y \phi-2U_{y_1} \wedge \phi_{y_2}-2\phi_{y_1} \wedge U_{y_2}=0
\end{equation*}
implies
\begin{align*}
&~\left[\partial_{\rho\rho}\phi_3+\frac1\rho \partial_{\rho}\phi_3+\frac{1}{\rho^2}\partial_{\theta\theta}\phi_3-\left(w_{\rho}^2+\frac{\sin^2 w}{\rho^2}\right)\phi_3-\left(w_{\rho\rho}+\frac{w_{\rho}}{\rho}+\frac{\sin w\cos w}{\rho^2}\right)\phi_1-2w_{\rho}\partial_{\rho}\phi_1-\frac{2\sin w \partial_{\theta}\phi_2}{\rho^2}\right]W \\
&~+\left[\left(w_{\rho\rho}+\frac{w_{\rho}}{\rho}-\frac{\sin w\cos w}{\rho^2}\right)\phi_3+2w_{\rho}\partial_{\rho}\phi_3+\partial_{\rho\rho}\phi_1+\frac1\rho \partial_{\rho}\phi_1+\frac{1}{\rho^2}\partial_{\theta\theta}\phi_1-\left(w_{\rho}^2+\frac{\cos^2 w}{\rho^2}\right)\phi_1-\frac{2\cos w\partial_{\theta}\phi_2}{\rho^2}\right]E_1\\
&~+\left[\partial_{\rho\rho}\phi_2+\frac1\rho \partial_{\rho}\phi_2+\frac{1}{\rho^2}\partial_{\theta\theta}\phi_2-\frac{\phi_2}{\rho^2}+2\frac{\sin w \partial_{\theta}\phi_3}{\rho^2}+2\frac{\cos w \partial_{\theta}\phi_1}{\rho^2} \right]E_2\\
=&~2\left[\frac{w_{\rho}}{\rho}\pp_{\theta}\phi_2+\frac{w_{\rho}\cos w}{\rho}\phi_1+\frac{\sin w}{\rho}\pp_{\rho}\phi_1+2\frac{w_{\rho}\sin w}{\rho}\phi_3\right]W \\
&~+2\left[-\frac{\sin w}{\rho}\pp_{\rho}\phi_3+\frac{\sin w}{\rho} w_{\rho}\phi_1\right] E_1+2\left[-\frac{w_{\rho}}{\rho}\pp_{\theta}\phi_3+\frac{w_{\rho}\sin w}{\rho} \phi_2\right] E_2,
\end{align*}
and the proof is complete by collecting terms in the same direction.

\medskip
\end{proof}

\medskip

\begin{proof}[Proof of Lemma \ref{linearization-lemma-1}]
First, we notice the fact
\begin{equation*}
U_r = \frac{1}{\lambda}w_{\rho}(\rho)Q_\gamma  E_1, \quad \frac{1}{r}\partial_\theta U = -\frac{1}{\lambda}w_{\rho}(\rho) Q_\gamma  E_2.
\end{equation*}
By direct computation, we have
\begin{align*}
\frac1r U_r\wedge \Phi_\theta &= \frac1r \left(\frac{1}{\lambda}w_{\rho}(\rho)Q_\gamma  E_1\right)\wedge \Phi_\theta\\
& = \frac{1}{\lambda r}w_{\rho}(\rho)(Q_\gamma  E_1\wedge \Phi_\theta)\\
& = \frac{1}{\lambda r}w_{\rho}(\rho)\left[(-\Phi_\theta\cdot (Q_\gamma  W )) Q_\gamma  E_2 + (\Phi_\theta\cdot (Q_\gamma  E_2))Q_\gamma  W \right]
\end{align*}
and
\begin{align*}
\frac1r \Phi_r\wedge U_\theta &= \Phi_r\wedge \left(-\frac{1}{\lambda}w_{\rho}(\rho)Q_\gamma  E_2\right)\\
& = \left(-\frac{1}{\lambda}w_{\rho}(\rho)\right)\Phi_r\wedge(Q_\gamma  E_2)\\
& = \left(-\frac{1}{\lambda}w_{\rho}(\rho)\right)\left[\left(-\Phi_r\cdot (Q_\gamma  W )\right)Q_\gamma  E_1 + (\Phi_r\cdot(Q_\gamma  E_1))Q_\gamma  W \right].
\end{align*}
Therefore, we have
\begin{align*}
\tilde{L}_U[\Phi] =&~ -\frac{2}{\lambda r}w_{\rho}(\rho)\left[(-\Phi_\theta\cdot (Q_\gamma   W ))Q_\gamma   E_2 + (\Phi_\theta\cdot (Q_\gamma  E_2))Q_\gamma  W \right]\\
&~ +\frac{2}{\lambda}w_{\rho}(\rho)\left[\left(-\Phi_r\cdot (Q_\gamma   W )\right) Q_\gamma  E_1 + (\Phi_r\cdot (Q_\gamma   E_1)) Q_\gamma   W \right]\\
 =&~ -\frac{2}{\lambda}w_{\rho}(\rho)\left(\Phi_r\cdot(Q_\gamma   W )\right)Q_\gamma   E_1 + \frac{2}{\lambda r}w_{\rho}(\rho)(\Phi_\theta\cdot (Q_\gamma  W ))Q_\gamma   E_2\\
 &~ + \frac{2}{\lambda r}w_{\rho}(\rho)\left[ r(\Phi_r\cdot(Q_\gamma   E_1))-(\Phi_\theta\cdot (Q_\gamma  E_2))  \right] Q_\gamma  W .
\end{align*}
\end{proof}

\medskip

\begin{proof}[Proof of Lemma \ref{linearization-lemma-2}]
Clearly,
\begin{equation*}
\Phi_r = \cos\theta\partial_{x_1}\Phi + \sin\theta\partial_{x_2}\Phi, \quad \frac{1}{r}\Phi_{\theta} = -\sin\theta\partial_{x_1}\Phi+\cos\theta\partial_{x_2}\Phi.
\end{equation*}
We have $(\Phi\cdot Q_\gamma  E_j)=(Q_{-\gamma } \Phi\cdot E_j)$ and write $\phi=Q_{-\gamma } \Phi=(\phi_1,~\phi_2,~\phi_3)^{T}$ so that
\begin{equation}\label{eqn-252525}
\Phi=\begin{bmatrix}
\varphi_1+i\varphi_2\\
\varphi_3\\
\end{bmatrix}=\begin{bmatrix}
e^{i\gamma }(\phi_1+i\phi_2)\\
\phi_3\\
\end{bmatrix}.
\end{equation}
Then, one has
\begin{equation*}
\begin{aligned}
\phi_r\cdot W  &= \frac{1}{2}\sin w\Big[[\partial_{x_1}\phi_1+\partial_{x_2}\phi_2]+\cos(2\theta)[\partial_{x_1}\phi_1-\partial_{x_2}\phi_2]+\sin(2\theta)[\partial_{x_2}\phi_1+\partial_{x_1}\phi_2]\Big]\\
& \quad + \cos w[\partial_{x_1}\phi_3\cos\theta +\partial_{x_2}\phi_3\sin\theta]
\end{aligned}
\end{equation*}
\begin{equation*}
\begin{aligned}
\frac1r\phi_\theta\cdot W  &= \frac{1}{2}\sin w\Big[[\partial_{x_2}\phi_1-\partial_{x_1}\phi_2]+\cos(2\theta)[\partial_{x_1}\phi_2+\partial_{x_2}\phi_1]+\sin(2\theta)[\partial_{x_2}\phi_2-\partial_{x_1}\phi_1]\Big]\\
& \quad + \cos w[-\partial_{x_1}\phi_3\sin\theta +\partial_{x_2}\phi_3\cos\theta]
\end{aligned}
\end{equation*}
and
\begin{equation*}
\begin{aligned}
\phi_r\cdot E_1 &= \frac{1}{2}\cos w\Big[[\partial_{x_1}\phi_1+\partial_{x_2}\phi_2]+\cos(2\theta)[\partial_{x_1}\phi_1-\partial_{x_2}\phi_2]+\sin(2\theta)[\partial_{x_2}\phi_1+\partial_{x_1}\phi_2]\Big]\\
& \quad -\sin w[\partial_{x_1}\phi_3\cos\theta +\partial_{x_2}\phi_3\sin\theta]
\end{aligned}
\end{equation*}
\begin{equation*}
\begin{aligned}
\frac1r\phi_\theta\cdot E_2 &= [-\sin\theta\partial_{x_1}\phi_1+\cos\theta\partial_{x_2}\phi_1](-\sin\theta)+[-\sin\theta\partial_{x_1}\phi_2+\cos\theta\partial_{x_2}\phi_2](\cos\theta)\\
& = [\partial_{x_1}\phi_1\sin^2\theta+\partial_{x_2}\phi_2\cos^2\theta]-\frac{1}{2}\sin(2\theta)[\partial_{x_2}\phi_1+\partial_{x_1}\phi_2]\\
& = [\partial_{x_1}\phi_1\frac{1-\cos(2\theta)}{2}+\partial_{x_2}\phi_2\frac{\cos(2\theta)+1}{2}]-\frac{1}{2}\sin(2\theta)[\partial_{x_2}\phi_1+\partial_{x_1}\phi_2]\\
& = \frac{1}{2}[\partial_{x_1}\phi_1+\partial_{x_2}\phi_2]-\frac{1}{2}\sin(2\theta)[\partial_{x_2}\phi_1+\partial_{x_1}\phi_2]-\frac{1}{2}\cos(2\theta)[\partial_{x_1}\phi_1-\partial_{x_2}\phi_2]\\
\end{aligned}
\end{equation*}
\begin{equation*}
\begin{aligned}
\phi_r\cdot E_1 - \frac1r\phi_\theta\cdot E_2 &= \frac{1}{2}\cos w\Big[[\partial_{x_1}\phi_1+\partial_{x_2}\phi_2]+\cos(2\theta)[\partial_{x_1}\phi_1-\partial_{x_2}\phi_2]+\sin(2\theta)[\partial_{x_2}\phi_1+\partial_{x_1}\phi_2]\Big]\\
& \quad - \sin w[\partial_{x_1}\phi_3\cos\theta +\partial_{x_2}\phi_3\sin\theta]\\
& \quad - \frac{1}{2}[\partial_{x_1}\phi_1+\partial_{x_2}\phi_2]+\frac{1}{2}\sin(2\theta)[\partial_{x_2}\phi_1+\partial_{x_1}\phi_2]+\frac{1}{2}\cos(2\theta)[\partial_{x_1}\phi_1-\partial_{x_2}\phi_2].
\end{aligned}
\end{equation*}
Combining above terms with above Lemma and \eqref{eqn-252525}, we obtain the desired decomposition.
\end{proof}

\medskip

\medskip

\subsection{Analysis of the new error}\label{app-remainder}

\medskip

\begin{proof}[Proof of Lemma \ref{lemma-newerror}]
We have
\begin{align*}
&~S[U_*]\\
=&~\mathcal R_{U^\perp}+\mathcal R_{U}+\eta_1 (\mathcal E_{U^\perp}^{(0)}-\pp_t \Phi^{(0)}+\Delta \Phi^{(0)})+\eta_1 (\mathcal E_{U}^{(\pm1)}-\pp_t \Phi^{(1)}+\Delta \Phi^{(1)})+\mathcal E_{U^\perp}^{(1)}\\
&~+(1-\eta_1 )\Big(\mathcal E_{U^\perp}^{(0)}+ \mathcal E_{U}^{(\pm1)}+\mathcal E_{U^\perp}^{(\pm2)}+\mathcal E_{U}^{(\pm2)}\Big)+(\Delta \eta_1 -\pp_t \eta_1)\Phi_*+2\nabla \Phi_*\cdot \nabla \eta_1\\
&~-\eta_1\left[\phi_1^{(2)}\pp_t(Q_\gamma E_1)+\phi_2^{(2)}\pp_t(Q_\gamma E_2)+2\theta_t (\phi_1^{(2)}Q_\gamma E_2-\phi_2^{(2)}Q_\gamma E_1)+\pp_\rho\Phi_{U^\perp}^{(2)} \rho_t\right]\\
&~-\eta_1\left[\phi_1^{(-2)}\pp_t(Q_\gamma E_1)+\phi_2^{(-2)}\pp_t(Q_\gamma E_2)-2\theta_t (\phi_1^{(-2)}Q_\gamma E_2-\phi_2^{(-2)}Q_\gamma E_1)+\pp_\rho\Phi_{U^\perp}^{(-2)} \rho_t\right]\\
&~-\eta_1\left[\sin2\theta \psi_2 \pp_t(Q_\gamma W)+2\cos2\theta \psi_2 \theta_t Q_\gamma W+\sin2\theta \pp_\rho \psi_2 \rho_t Q_\gamma W\right]\\
&~+\eta_1\tilde L_U[\Phi^{(0)}+ \Phi^{(1)}]-2\pp_{x_1}(c_1Q_\gamma\mathcal Z_{1,1}+c_2Q_\gamma\mathcal Z_{1,2})\wedge \pp_{x_2}(\eta_1\Phi_*) -2\pp_{x_1}(\eta_1\Phi_*)\wedge \pp_{x_2}(c_1Q_\gamma\mathcal Z_{1,1}+c_2Q_\gamma\mathcal Z_{1,2})\\
&~-2\pp_{x_2}\eta_1 \pp_{x_1}(Q_\gamma W)\wedge \Phi_* -2\pp_{x_1}\eta_1   \Phi_*\wedge \pp_{x_2}(Q_\gamma W)  -2\pp_{x_1}(\eta_1\Phi_*)\wedge \pp_{x_2}(\eta_1\Phi_*)\\
=&~\mathcal R_{U^\perp}+\mathcal R_{U}+\eta_1 (\mathcal E_{U^\perp}^{(0)}+\tilde{\mathcal R}^0)+\eta_1(\mathcal E_{U}^{(\pm1)}+\tilde{\mathcal R}^1)+\mathcal E_{U^\perp}^{(1)}\\
&~+(1-\eta_1 )\Big(\mathcal E_{U^\perp}^{(0)}+ \mathcal E_{U}^{(\pm1)}+\mathcal E_{U^\perp}^{(\pm2)}+\mathcal E_{U}^{(\pm2)}\Big)+(\Delta \eta_1 -\pp_t \eta_1)\Phi_*+2\nabla \Phi_*\cdot \nabla \eta_1\\
&~-\eta_1 \la^{-1}w_\rho(\dot\xi_1\cos\theta+\dot\xi_2 \sin\theta+\dot\la \rho)(\phi_1^{(2)}+\phi_1^{(-2)})Q_\gamma W\\
&~+\eta_1 \sin w[\dot\gamma+\la^{-1}\rho^{-1}(\dot\xi_1 \sin\theta-\dot\xi_2 \cos\theta)] (\phi_2^{(2)}+\phi_2^{(-2)})Q_\gamma W\\
&~-\eta_1 \la^{-1}w_\rho(\dot\xi_1\cos\theta+\dot\xi_2 \sin\theta+\dot\la \rho)(\phi_1^{(2)}+\phi_1^{(-2)})Q_\gamma W\\
&~+\eta_1 \la^{-1}(\dot\xi_1\cos\theta+\dot\xi_2 \sin\theta+\dot\la \rho)\sin2\theta \pp_\rho \psi_2 Q_\gamma W\\
&~-2\eta_1 \la^{-1}\rho^{-1}(\dot\xi_1 \sin\theta-\dot\xi_2 \cos\theta)\cos2\theta \psi_2 Q_\gamma W\\
&~+\eta_1 \cos w[\dot\gamma+\la^{-1}\rho^{-1}(\dot\xi_1 \sin\theta-\dot\xi_2 \cos\theta)](\phi_2^{(2)}+\phi_2^{(-2)})Q_\gamma E_1\\
&~+2\eta_1 \la^{-1}\rho^{-1}(\dot\xi_1 \sin\theta-\dot\xi_2 \cos\theta)[\phi_2^{(2)}-\phi_2^{(-2)}]Q_\gamma E_1\\
&~+\eta_1 \la^{-1}(\dot\xi_1\cos\theta+\dot\xi_2 \sin\theta+\dot\la \rho) (\pp_\rho \phi_1^{(2)}+\pp_\rho \phi_1^{(-2)})Q_\gamma E_1\\
&~+\eta_1 \la^{-1}w_\rho(\dot\xi_1\cos\theta+\dot\xi_2 \sin\theta+\dot\la \rho) \sin2\theta \psi_2 Q_\gamma E_1\\
&~-\eta_1 \cos w[\dot\gamma+\la^{-1}\rho^{-1}(\dot\xi_1 \sin\theta-\dot\xi_2 \cos\theta)](\phi_1^{(2)}+\phi_1^{(-2)}) Q_\gamma E_2\\
&~+2\eta_1 \la^{-1}\rho^{-1}(\dot\xi_1 \sin\theta-\dot\xi_2 \cos\theta)[\phi_1^{(-2)}-\phi_1^{(2)}]Q_\gamma E_2\\
&~+\eta_1 \la^{-1}(\dot\xi_1\cos\theta+\dot\xi_2 \sin\theta+\dot\la \rho) (\pp_\rho \phi_2^{(2)}+\pp_\rho \phi_2^{(-2)})Q_\gamma E_2\\
&~-\eta_1 \sin w[\dot\gamma+\la^{-1}\rho^{-1}(\dot\xi_1 \sin\theta-\dot\xi_2 \cos\theta)] \sin2\theta \psi_2 Q_\gamma E_2\\
&~+\eta_1 \tilde L_U[\Phi^{(0)}+\Phi^{(1)}]-\frac2r\pp_{r}(\eta_1 \Phi_*)\wedge \pp_{\theta}(\eta_1 \Phi_*)\\
&~-\frac2r\Big[\pp_r (c_1 Q_\gamma\mathcal Z_{1,1}+c_2Q_\gamma\mathcal Z_{1,2})\wedge \pp_\theta(\eta_1 \Phi_*)+\pp_r(\eta_1 \Phi_*)\wedge \pp_\theta (c_1Q_\gamma\mathcal Z_{1,1}+c_2Q_\gamma\mathcal Z_{1,2})\Big]\\
&~-2\pp_{x_2}\eta_1 \pp_{x_1}(Q_\gamma W)\wedge   \Phi_* -2\pp_{x_1}\eta_1   \Phi_*\wedge \pp_{x_2}(Q_\gamma W) ,
\end{align*}
where we have used
\begin{align*}
&~\rho_t=-\la^{-1}(\dot\xi_1\cos\theta+\dot\xi_2 \sin\theta+\dot\la \rho),\\
&~\theta_t=\la^{-1}\rho^{-1}(\dot\xi_1 \sin\theta-\dot\xi_2 \cos\theta),\\
&~\pp_t (Q_\gamma W)=-\la^{-1}(\dot\xi_1\cos\theta+\dot\xi_2 \sin\theta+\dot\la \rho)w_\rho Q_\gamma E_1+ [\dot\gamma+\la^{-1}\rho^{-1}(\dot\xi_1 \sin\theta-\dot\xi_2 \cos\theta)]\sin w Q_\gamma E_2,\\
&~\pp_t (Q_\gamma E_1)=\la^{-1}(\dot\xi_1\cos\theta+\dot\xi_2 \sin\theta+\dot\la \rho)w_\rho Q_\gamma W+ [\dot\gamma+\la^{-1}\rho^{-1}(\dot\xi_1 \sin\theta-\dot\xi_2 \cos\theta)]\cos w Q_\gamma E_2,\\
&~\pp_t (Q_\gamma E_2)=-[\dot\gamma+\la^{-1}\rho^{-1}(\dot\xi_1 \sin\theta-\dot\xi_2 \cos\theta)](\cos w Q_\gamma E_1+\sin w Q_\gamma W).
\end{align*}
We collect some extra terms produced by the cut-off $\eta_1$ and define
\begin{equation}\label{def-E_eta1}
\begin{aligned}
E_{\eta_1}:=&~ (1-\eta_1 )\Big(\mathcal E_{U^\perp}^{(0)}+ \mathcal E_{U}^{(\pm1)}+\mathcal E_{U^\perp}^{(\pm2)}+\mathcal E_{U}^{(\pm2)}\Big)+(\Delta \eta_1 -\pp_t \eta_1)\Phi_*+2\nabla \Phi_*\cdot \nabla \eta_1\\
&~-2\pp_{x_2}\eta_1 \pp_{x_1}(Q_\gamma W)\wedge   \Phi_* -2\pp_{x_1}\eta_1   \Phi_*\wedge \pp_{x_2}(Q_\gamma W) -\frac{2\eta_1\pp_r \eta_1}{r}\Phi_*\wedge \pp_\theta \Phi_*\\
&~-\frac{2\pp_r \eta_1}{r} \Phi_* \wedge \pp_\theta (c_1Q_\gamma\mathcal Z_{1,1}+c_2Q_\gamma\mathcal Z_{1,2}). \\
\end{aligned}
\end{equation}

To further expand the  error, we use \eqref{eqn-325325325} and first compute
\begin{equation*}
\begin{aligned}
\pp_r \Phi^{(0)}=\begin{bmatrix}
e^{i\theta}(\psi^0+\frac{r^2}{z}\pp_z \psi^0)\\
0\\
\end{bmatrix},\quad \pp_\theta \Phi^{(0)}=\begin{bmatrix}
ire^{i\theta}\psi^0\\
0\\
\end{bmatrix},
\end{aligned}
\end{equation*}

\begin{equation*}
\begin{aligned}
\pp_r \Phi^{(1)}=\begin{bmatrix}
0\\
0\\
{\rm Re}[e^{-i\theta}(\psi^1+\frac{r^2}{z}\pp_z\psi^1)]\\
\end{bmatrix}, \quad \pp_\theta \Phi^{(1)}=\begin{bmatrix}
0\\
0\\
{\rm Im}(r e^{-i\theta}\psi^1)\\
\end{bmatrix},
\end{aligned}
\end{equation*}

\begin{align*}
&~\pp_r (\Phi^{(2)}_{U^\perp}+\Phi^{(-2)}_{U^\perp}+\Phi^{(2)}_U)\\
=&~\la^{-1}\left[\pp_\rho\phi_1^{(2)} Q_\gamma E_1+\pp_\rho\phi_2^{(2)} Q_\gamma E_2-w_\rho\phi_1^{(2)}Q_\gamma W\right]\\
&~+\la^{-1}\left[\pp_\rho\phi_1^{(-2)} Q_\gamma E_1+\pp_\rho\phi_2^{(-2)} Q_\gamma E_2-w_\rho\phi_1^{(-2)}Q_\gamma W\right]\\
&~+\la^{-1}\sin2\theta(\pp_\rho \psi_2 Q_\gamma W+w_\rho \psi_2 Q_\gamma E_1),
\end{align*}
\begin{align*}
&~\pp_\theta (\Phi^{(2)}_{U^\perp}+\Phi^{(-2)}_{U^\perp}+\Phi^{(2)}_U)\\
=&~-\phi_2^{(2)}\sin w Q_\gamma W +(\pp_\theta \phi_1^{(2)}-\cos w \phi_2^{(2)})Q_\gamma E_1+(\pp_\theta \phi_2^{(2)}+\cos w \phi_1^{(2)})Q_\gamma E_2\\
&~-\phi_2^{(-2)}\sin w Q_\gamma W +(\pp_\theta \phi_1^{(-2)}-\cos w \phi_2^{(-2)})Q_\gamma E_1+(\pp_\theta \phi_2^{(-2)}+\cos w \phi_1^{(-2)})Q_\gamma E_2\\
&~+\psi_2(2\cos2\theta Q_\gamma W+\sin2\theta \sin w Q_\gamma E_2).
\end{align*}
We then have
\begin{align*}
\pp_r \Phi_*=&~\begin{bmatrix}
e^{i\theta}(\psi^0+\frac{r^2}{z}\pp_z \psi^0)\\
0\\
\end{bmatrix}+\begin{bmatrix}
0\\
0\\
{\rm Re}[e^{-i\theta}(\psi^1+\frac{r^2}{z}\pp_z\psi^1)]\\
\end{bmatrix}\\
&~+\la^{-1}\left[\pp_\rho\phi_1^{(2)} Q_\gamma E_1+\pp_\rho\phi_2^{(2)} Q_\gamma E_2-w_\rho\phi_1^{(2)}Q_\gamma W\right]\\
&~+\la^{-1}\left[\pp_\rho\phi_1^{(-2)} Q_\gamma E_1+\pp_\rho\phi_2^{(-2)} Q_\gamma E_2-w_\rho\phi_1^{(-2)}Q_\gamma W\right]\\
&~+\la^{-1}\sin2\theta(\pp_\rho \psi_2 Q_\gamma W+w_\rho \psi_2 Q_\gamma E_1)\\
=&~\cos w {\rm Re}\left[e^{-i\gamma }(\psi^0+\frac{r^2}{z}\pp_z \psi^0)\right]Q_\gamma  E_1 + {\rm Im}\left[e^{-i\gamma }(\psi^0+\frac{r^2}{z}\pp_z \psi^0)\right]Q_\gamma  E_2\\
&~ + \sin w {\rm Re}\left[e^{-i\gamma }(\psi^0+\frac{r^2}{z}\pp_z \psi^0)\right]Q_\gamma  W \\
&~+{\rm Re}[e^{-i\theta}(\psi^1+\frac{r^2}{z}\pp_z\psi^1)]\cos w Q_\gamma  W-{\rm Re}[e^{-i\theta}(\psi^1+\frac{r^2}{z}\pp_z\psi^1)]\sin w Q_\gamma  E_1   \\
&~+\la^{-1}\left[\pp_\rho\phi_1^{(2)} Q_\gamma E_1+\pp_\rho\phi_2^{(2)} Q_\gamma E_2-w_\rho\phi_1^{(2)}Q_\gamma W\right]\\
&~+\la^{-1}\left[\pp_\rho\phi_1^{(-2)} Q_\gamma E_1+\pp_\rho\phi_2^{(-2)} Q_\gamma E_2-w_\rho\phi_1^{(-2)}Q_\gamma W\right]\\
&~+\la^{-1}\sin2\theta(\pp_\rho \psi_2 Q_\gamma W+w_\rho \psi_2 Q_\gamma E_1)\\
=&~\left(\sin w {\rm Re}\left[e^{-i\gamma }(\psi^0+\frac{r^2}{z}\pp_z \psi^0)\right]+{\rm Re}[e^{-i\theta}(\psi^1+\frac{r^2}{z}\pp_z\psi^1)]\cos w\right)Q_\gamma W\\
&~+\la^{-1}\left[\sin2\theta\pp_\rho \psi_2-w_\rho(\phi_1^{(2)}+\phi_1^{(-2)})\right]Q_\gamma W\\
&~+\left(\cos w {\rm Re}\left[e^{-i\gamma }(\psi^0+\frac{r^2}{z}\pp_z \psi^0)\right]-{\rm Re}[e^{-i\theta}(\psi^1+\frac{r^2}{z}\pp_z\psi^1)]\sin w\right)Q_\gamma E_1\\
&~+\la^{-1}\left(\sin2\theta w_\rho \psi_2+\pp_\rho\phi_1^{(2)}+\pp_\rho\phi_1^{(-2)}\right)Q_\gamma E_1\\
&~+\left({\rm Im}\left[e^{-i\gamma }(\psi^0+\frac{r^2}{z}\pp_z \psi^0)\right]+\la^{-1}(\pp_\rho\phi_2^{(2)}+\pp_\rho\phi_2^{(-2)})\right)Q_\gamma  E_2,
\end{align*}
\begin{align*}
\pp_\theta \Phi_*=&~\begin{bmatrix}
ire^{i\theta}\psi^0\\
0\\
\end{bmatrix}+\begin{bmatrix}
0\\
0\\
{\rm Im}(r e^{-i\theta}\psi^1)\\
\end{bmatrix}\\
&~-\phi_2^{(2)}\sin w Q_\gamma W +(\pp_\theta \phi_1^{(2)}-\cos w \phi_2^{(2)})Q_\gamma E_1+(\pp_\theta \phi_2^{(2)}+\cos w \phi_1^{(2)})Q_\gamma E_2\\
&~-\phi_2^{(-2)}\sin w Q_\gamma W +(\pp_\theta \phi_1^{(-2)}-\cos w \phi_2^{(-2)})Q_\gamma E_1+(\pp_\theta \phi_2^{(-2)}+\cos w \phi_1^{(-2)})Q_\gamma E_2\\
&~+\psi_2(2\cos2\theta Q_\gamma W+\sin2\theta \sin w Q_\gamma E_2)\\
=&~\cos w {\rm Re}\left[e^{-i\gamma }ir\psi^0\right]Q_\gamma  E_1 + {\rm Im}\left[e^{-i\gamma }ir\psi^0\right]Q_\gamma  E_2+ \sin w {\rm Re}\left[e^{-i\gamma }ir\psi^0\right]Q_\gamma  W \\
&~+{\rm Im}(r e^{-i\theta}\psi^1)\cos w Q_\gamma  W-{\rm Im}(r e^{-i\theta}\psi^1)\sin w Q_\gamma  E_1   \\
&~-\phi_2^{(2)}\sin w Q_\gamma W +(\pp_\theta \phi_1^{(2)}-\cos w \phi_2^{(2)})Q_\gamma E_1+(\pp_\theta \phi_2^{(2)}+\cos w \phi_1^{(2)})Q_\gamma E_2\\
&~-\phi_2^{(-2)}\sin w Q_\gamma W +(\pp_\theta \phi_1^{(-2)}-\cos w \phi_2^{(-2)})Q_\gamma E_1+(\pp_\theta \phi_2^{(-2)}+\cos w \phi_1^{(-2)})Q_\gamma E_2\\
&~+\psi_2(2\cos2\theta Q_\gamma W+\sin2\theta \sin w Q_\gamma E_2)\\
=&~\left(\sin w {\rm Re}\left[e^{-i\gamma }ir\psi^0\right]+{\rm Im}(r e^{-i\theta}\psi^1)\cos w-\sin w(\phi_2^{(2)}+\phi_2^{(-2)})+2\psi_2\cos2\theta\right)Q_\gamma W\\
&~+\left(\cos w {\rm Re}\left[e^{-i\gamma }ir\psi^0\right]-{\rm Im}(r e^{-i\theta}\psi^1)\sin w +(\pp_\theta \phi_1^{(2)}+\pp_\theta \phi_1^{(-2)})-\cos w (\phi_2^{(2)}+\phi_2^{(-2)})\right)Q_\gamma E_1\\
&~+\left({\rm Im}\left[e^{-i\gamma }ir\psi^0\right]+(\pp_\theta \phi_2^{(2)}+\pp_\theta \phi_2^{(-2)})+\cos w (\phi_1^{(2)}+\phi_1^{(-2)})+\sin2\theta \sin w\psi_2\right)Q_\gamma E_2.
\end{align*}

So we obtain
\begin{align*}
&~\pp_{r}\Phi_*\wedge \pp_{\theta}\Phi_*\\
=&~\Bigg[\left(\cos w {\rm Re}\left[e^{-i\gamma }(\psi^0+\frac{r^2}{z}\pp_z \psi^0)\right]-{\rm Re}[e^{-i\theta}(\psi^1+\frac{r^2}{z}\pp_z\psi^1)]\sin w\right)\\
&~\qquad+\la^{-1}\left(\sin2\theta w_\rho \psi_2+\pp_\rho\phi_1^{(2)}+\pp_\rho\phi_1^{(-2)}\right)\Bigg]\\
&~\quad\times \left({\rm Im}\left[e^{-i\gamma }ir\psi^0\right]+(\pp_\theta \phi_2^{(2)}+\pp_\theta \phi_2^{(-2)})+\cos w (\phi_1^{(2)}+\phi_1^{(-2)})+\sin2\theta \sin w\psi_2\right)Q_\gamma W\\
&~-\left({\rm Im}\left[e^{-i\gamma }(\psi^0+\frac{r^2}{z}\pp_z \psi^0)\right]+\la^{-1}(\pp_\rho\phi_2^{(2)}+\pp_\rho\phi_2^{(-2)})\right)\\
&~\quad \times \left(\cos w {\rm Re}\left[e^{-i\gamma }ir\psi^0\right]-{\rm Im}(r e^{-i\theta}\psi^1)\sin w +(\pp_\theta \phi_1^{(2)}+\pp_\theta \phi_1^{(-2)})-\cos w (\phi_2^{(2)}+\phi_2^{(-2)})\right)Q_\gamma W\\
&~+\left({\rm Im}\left[e^{-i\gamma }(\psi^0+\frac{r^2}{z}\pp_z \psi^0)\right]+\la^{-1}(\pp_\rho\phi_2^{(2)}+\pp_\rho\phi_2^{(-2)})\right)\\
&~\quad \times \left(\sin w {\rm Re}\left[e^{-i\gamma }ir\psi^0\right]+{\rm Im}(r e^{-i\theta}\psi^1)\cos w-\sin w(\phi_2^{(2)}+\phi_2^{(-2)})+2\psi_2\cos2\theta\right)Q_\gamma E_1\\
&~-\left(\sin w {\rm Re}\left[e^{-i\gamma }(\psi^0+\frac{r^2}{z}\pp_z \psi^0)\right]+{\rm Re}[e^{-i\theta}(\psi^1+\frac{r^2}{z}\pp_z\psi^1)]\cos w\right)\\
&~\quad\times \left({\rm Im}\left[e^{-i\gamma }ir\psi^0\right]+(\pp_\theta \phi_2^{(2)}+\pp_\theta \phi_2^{(-2)})+\cos w (\phi_1^{(2)}+\phi_1^{(-2)})+\sin2\theta \sin w\psi_2\right)Q_\gamma E_1\\
&~+\Bigg[\left(\sin w {\rm Re}\left[e^{-i\gamma }(\psi^0+\frac{r^2}{z}\pp_z \psi^0)\right]+{\rm Re}[e^{-i\theta}(\psi^1+\frac{r^2}{z}\pp_z\psi^1)]\cos w\right)\\
&~\qquad+\la^{-1}\left[\sin2\theta\pp_\rho \psi_2-w_\rho(\phi_1^{(2)}+\phi_1^{(-2)})\right]\Bigg]\\
&~\quad \times \left(\cos w {\rm Re}\left[e^{-i\gamma }ir\psi^0\right]-{\rm Im}(r e^{-i\theta}\psi^1)\sin w +(\pp_\theta \phi_1^{(2)}+\pp_\theta \phi_1^{(-2)})-\cos w (\phi_2^{(2)}+\phi_2^{(-2)})\right)Q_\gamma E_2\\
&~-\Bigg[\left(\cos w {\rm Re}\left[e^{-i\gamma }(\psi^0+\frac{r^2}{z}\pp_z \psi^0)\right]-{\rm Re}[e^{-i\theta}(\psi^1+\frac{r^2}{z}\pp_z\psi^1)]\sin w\right)\\
&~\qquad+\la^{-1}\left(\sin2\theta w_\rho \psi_2+\pp_\rho\phi_1^{(2)}+\pp_\rho\phi_1^{(-2)}\right)\Bigg]\\
&~\quad \times \left(\sin w {\rm Re}\left[e^{-i\gamma }ir\psi^0\right]+{\rm Im}(r e^{-i\theta}\psi^1)\cos w-\sin w(\phi_2^{(2)}+\phi_2^{(-2)})+2\psi_2\cos2\theta\right)Q_\gamma E_2,
\end{align*}
and
\begin{align*}
&~\pp_r (c_1 Q_\gamma\mathcal Z_{1,1}+c_2Q_\gamma\mathcal Z_{1,2})\wedge \pp_\theta\Phi_*+\pp_r\Phi_*\wedge \pp_\theta (c_1Q_\gamma\mathcal Z_{1,1}+c_2Q_\gamma\mathcal Z_{1,2})\\
=&~\left({\rm Im}\left[e^{-i\gamma }ir\psi^0\right]+(\pp_\theta \phi_2^{(2)}+\pp_\theta \phi_2^{(-2)})+\cos w (\phi_1^{(2)}+\phi_1^{(-2)})+\sin2\theta \sin w\psi_2\right)\\
&~\quad \times\la^{-1}w_{\rho}\sin w(c_1 \cos\theta + c_2 \sin\theta) Q_\gamma W\\
&~-\left({\rm Im}\left[e^{-i\gamma }ir\psi^0\right]+(\pp_\theta \phi_2^{(2)}+\pp_\theta \phi_2^{(-2)})+\cos w (\phi_1^{(2)}+\phi_1^{(-2)})+\sin2\theta \sin w\psi_2\right)\\
&~\quad\times \la^{-1}w_{\rho} \cos w (c_1 \cos\theta + c_2 \sin\theta) Q_\gamma E_1\\
&~+\left(\cos w {\rm Re}\left[e^{-i\gamma }ir\psi^0\right]-{\rm Im}(r e^{-i\theta}\psi^1)\sin w +(\pp_\theta \phi_1^{(2)}+\pp_\theta \phi_1^{(-2)})-\cos w (\phi_2^{(2)}+\phi_2^{(-2)})\right)\\
&~\quad\times \la^{-1}w_{\rho} \cos w (c_1 \cos\theta + c_2 \sin\theta) Q_\gamma E_2\\
&~-\left(\sin w {\rm Re}\left[e^{-i\gamma }ir\psi^0\right]+{\rm Im}(r e^{-i\theta}\psi^1)\cos w-\sin w(\phi_2^{(2)}+\phi_2^{(-2)})+2\psi_2\cos2\theta\right)\\
&~\quad \times \la^{-1}w_{\rho}\sin w(c_1 \cos\theta + c_2 \sin\theta) Q_\gamma E_2\\
&~+\Bigg[\left(\cos w {\rm Re}\left[e^{-i\gamma }(\psi^0+\frac{r^2}{z}\pp_z \psi^0)\right]-{\rm Re}[e^{-i\theta}(\psi^1+\frac{r^2}{z}\pp_z\psi^1)]\sin w\right)\\
&~\qquad+\la^{-1}\left(\sin2\theta w_\rho \psi_2+\pp_\rho\phi_1^{(2)}+\pp_\rho\phi_1^{(-2)}\right)\Bigg] \times \sin^2 w(c_1\cos\theta+ c_2\sin \theta) Q_\gamma W\\
&~-\Bigg[\left(\sin w {\rm Re}\left[e^{-i\gamma }(\psi^0+\frac{r^2}{z}\pp_z \psi^0)\right]+{\rm Re}[e^{-i\theta}(\psi^1+\frac{r^2}{z}\pp_z\psi^1)]\cos w\right)\\
&~\qquad+\la^{-1}\left[\sin2\theta\pp_\rho \psi_2-w_\rho(\phi_1^{(2)}+\phi_1^{(-2)})\right]\Bigg]\times\sin^2 w(c_1\cos\theta+ c_2\sin \theta) Q_\gamma E_1\\
&~+\left({\rm Im}\left[e^{-i\gamma }(\psi^0+\frac{r^2}{z}\pp_z \psi^0)\right]+\la^{-1}(\pp_\rho\phi_2^{(2)}+\pp_\rho\phi_2^{(-2)})\right)\times \sin w (c_2 \cos\theta-c_1\sin\theta) Q_\gamma E_1\\
&~-\Bigg[\left(\cos w {\rm Re}\left[e^{-i\gamma }(\psi^0+\frac{r^2}{z}\pp_z \psi^0)\right]-{\rm Re}[e^{-i\theta}(\psi^1+\frac{r^2}{z}\pp_z\psi^1)]\sin w\right)\\
&~\qquad+\la^{-1}\left(\sin2\theta w_\rho \psi_2+\pp_\rho\phi_1^{(2)}+\pp_\rho\phi_1^{(-2)}\right)\Bigg]\times \sin w (c_2 \cos\theta-c_1\sin\theta) Q_\gamma E_2
\end{align*}
since
\begin{equation*}
\begin{aligned}
&~\pp_r (c_1\mathcal Z_{1,1}+c_2\mathcal Z_{1,2})
=\la^{-1}w_{\rho}(c_1 \cos\theta + c_2 \sin\theta)(\cos w W+\sin w E_1),\\
&~\pp_\theta (c_1\mathcal Z_{1,1}+c_2\mathcal Z_{1,2})
=\sin w (c_2 \cos\theta-c_1\sin\theta) W +\sin^2 w(c_1\cos\theta+ c_2\sin \theta) E_2.
\end{aligned}
\end{equation*}
Collecting all the identities above, we conclude the desired results.
\end{proof}

\bigskip

To estimate terms appearing in the remainder $\mathcal R_*$, we give now several estimates for the corrections $\psi^0$, $\psi^1$,  $\phi_j^{(k)}$, $j=1,2$, ~$k=\pm 2$, and $\psi_2$, where their definitions can be found in \eqref{eqn-psi^0}, \eqref{eqn-psi^1}, \eqref{eqn-correctpm2'},  \eqref{eqn-correctpm2}, \eqref{eqn-psi_2} and \eqref{eqn-correctL2}.

 By the definition of $k(z,t)$ in \eqref{eqn-psi^0}, we have
\begin{equation*}
|\psi^j|\lesssim \int_{-T}^t \frac{ |p_j(s)| }{t-s}
		\left( \1_{\{ \zeta \le 1\}}
		+
		\zeta^{-1}  \1_{\{ \zeta  > 1\}} \right)\Big|_{\zeta = z^2 (t-s)^{-1}}
		ds, \quad j=0,~1,
\end{equation*}
where $\1_A$ denotes the characteristic function of the set $A$. For instance, we estimate
\begin{align*}
|\psi^1|\lesssim z^{-2}\int_{-T}^t |p_1(s)|ds\lesssim z^{-2}(t+T) T^{\Theta}|\log T|^{-1-2\Theta} ~\mbox{ for }~z^2\geq t,
\end{align*}
where we have used $$|p_1|\sim \frac{\la_*^{\Theta}}{|\log(T-s)|},\quad \la_*=\frac{(T-t)|\log T|}{|\log(T-t)|^2}.$$
For $z^2<t$, we have
\begin{align*}
|\psi^1|\lesssim \int_{-T}^{t-z^2} \frac{|p_1(s)|}{t-s}ds + z^{-2}\int_{t-z^2}^t |p_1(s)|ds,
\end{align*}
and if $t\leq \frac{T}2$,
\begin{align*}
|\psi^1|\lesssim T^{\Theta}|\log T|^{-1-2\Theta}\left\langle \log\Big(\frac{t+T}{z^2}\Big)\right\rangle.
\end{align*}
If $z^2<t$, $z^2<T-t$ and $t>\frac{T}{2}$, one has
\begin{align*}
|\psi^1|\lesssim&~ \int_{-T}^{t-(T-t)} \frac{|p_1(s)|}{T-s}ds+\int_{t-(T-t)}^{t-z^2} \frac{|p_1(s)|}{t-s}ds + z^{-2}\int_{t-z^2}^t |p_1(s)|ds\\
\lesssim &~ |\log T|^{\Theta}\int_{-T}^{t-(T-t)} \frac{(T-s)^{\Theta-1}}{|\log(T-s)|^{1+2\Theta}} ds+|p_1(t)|\log\frac{T-t}{z^2} +\la_*^{\Theta}(t) |\log(T-t)|^{-1}\\
\lesssim &~ \frac{T^{\Theta}}{|\log T|^{1+\Theta}}.
\end{align*}
If $z^2<t$, $z^2\geq T-t$ and $t>\frac{T}{2}$, we estimate
\begin{align*}
|\psi^1|\lesssim&~ \frac{T^{\Theta}}{|\log T|^{1+\Theta}}+z^{-2}\int_{t-z^2}^t \frac{(T-s)^{\Theta}|\log T|^\Theta}{|\log(T-s)|^{1+2\Theta}} ds\\
\lesssim&~ \frac{T^{\Theta}}{|\log T|^{1+\Theta}}+ \frac{(T-t+z^2)^\Theta}{|\log(T-t+z^2)|^{1+2\Theta}}.
\end{align*}
Above estimates then give
\begin{equation*}
|\psi^1|\lesssim  \1_{\{ z^2< t \}}  + z^{-2}(t+T) T^{\Theta}|\log T|^{-1-2\Theta}  \1_{\{ z^2\ge t \}},
\end{equation*}
and similarly,
\begin{equation}\label{est-psi^1}
|\psi^1|+z|\pp_z \psi^1|\lesssim  \1_{\{ z^2< t \}}  + z^{-2}(t+T) T^{\Theta}|\log T|^{-1-2\Theta}  \1_{\{ z^2\ge t \}}.
\end{equation}
$\psi^0$ is estimated in the same way. One directly checks that
\begin{equation}\label{est-psi^0}
|\psi^0|+z|\pp_z \psi^0|\lesssim  \1_{\{ z^2< t \}}  + z^{-2}(t+T) |\log T|^{-1}  \1_{\{ z^2\ge t \}}.
\end{equation}

\medskip

Terms $\phi_j^{(k)}$, $j=1,2$, ~$k=\pm 2$ and $\psi_2$ are estimated via the linear theory in both $W$-direction and on $W^\perp$ in mode $\pm 2$, namely Proposition \ref{prop-lt-Wperp} and Proposition \ref{prop-lt-W+} with $k=\pm 2$.

In the scalar equation \eqref{eqn-correctpm2}, the right hand sides for modes $j=2$ and $j=-2$ satisfy
\begin{align*}
&~\left|\la^2\left(Q_{-\gamma}\mathcal E_{U^\perp}^{(2)}\right)_{\mathbb C}\right|=\Big|-i  c_1c_2  w^2_\rho \sin w e^{2i\theta} (1-\cos w)\Big| \lesssim \la_*^{2\Theta}(t)  \rho \langle \rho\rangle^{-8},\\
&~\left|\la^2\left(Q_{-\gamma}\mathcal E_{U^\perp}^{(-2)}\right)_{\mathbb C}\right|=\Big|-i  c_1c_2  w^2_\rho \sin w e^{-2i\theta}(1+\cos w) \Big| \lesssim \la_*^{2\Theta}(t) \rho^3 \langle \rho\rangle^{-8},
\end{align*}
where we have used \eqref{est-cc}. Then Proposition \ref{prop-lt-Wperp}  implies
\begin{equation}\label{est-varphi_pm2}
\begin{aligned}
|\varphi_2|\lesssim &~\la_*^{2\Theta}(t) \langle \rho\rangle^{-\delta}, \\
|\varphi_{-2}|\lesssim &~\la_*^{2\Theta}(t) \langle \rho\rangle^{-\delta}
\end{aligned}
\end{equation}
for some $\delta\in(0,1)$ and close to $1$. In fact, a more careful inspection on the proof of Proposition \ref{prop-lt-Wperp} enables one to obtain faster spatial decay for corrections in higher modes as well as vanishing properties near the origin. For instance, $\delta$ can be taken close to $3$ for mode $2$ and close to $1$ for mode $-2$.  Similarly, in \eqref{eqn-correctL2}, since the RHS
$$
|2c_1c_2 \rho w_\rho^2 \sin^2 w|\lesssim \la_*^{2\Theta}(t)  \rho^3 \langle \rho\rangle^{-8},
$$
one has
\begin{equation}\label{est-psi_2}
|\psi_2|\lesssim \la_*^{2\Theta}(t)  \langle \rho\rangle^{-\delta_1}
\end{equation}
by Proposition \ref{prop-lt-W+}, where $\delta_1$ can be taken close to $2$.

One then uses the bounds \eqref{est-psi^1}, \eqref{est-psi^0}, \eqref{est-varphi_pm2} and \eqref{est-psi_2} and directly checks that
\begin{equation}\label{eqn-a27a27a27}
\begin{aligned}
\Big|\mathcal R_*\cdot Q_\gamma W\Big| \lesssim &~\Bigg[1+ \la_*^{2\Theta-1}  \bigg(\langle \rho \rangle^{-1-\delta}+\langle \rho \rangle^{-\delta_1}\bigg) +\la_*^{5\Theta-2}  \bigg(\langle \rho \rangle^{-2-\delta}+\langle \rho \rangle^{-1-\delta_1}\bigg)\\
&~\qquad+\la_*^{4\Theta-2} \langle \rho \rangle^{-2-2\delta}+\la_*^{\Theta-1} \langle \rho \rangle^{-3}+\la_*^{3\Theta-2}  \langle \rho \rangle^{-4-\delta}\Bigg] \1_{\{\rho\lesssim \la_*^{-1}\}},\\
\Big|\mathcal R_*\cdot Q_\gamma E_1\Big|,~\Big|\mathcal R_*\cdot Q_\gamma E_2\Big| \lesssim &~\Bigg[1+ \la_*^{2\Theta-1}  \langle \rho \rangle^{ -\delta} +\la_*^{4\Theta-2}  \bigg(\langle \rho \rangle^{-3-2\delta}+\langle \rho \rangle^{-2-\delta-\delta_1}\bigg)+\la_*^{5\Theta-2}  \langle \rho \rangle^{ -1-\delta} \\
&~\qquad+\la_*^{\Theta-1} \langle \rho \rangle^{-2}+\la_*^{3\Theta-2}  \langle \rho \rangle^{-3-\delta}\Bigg] \1_{\{\rho\lesssim \la_*^{-1}\}},
\end{aligned}
\end{equation}
where the expression of the remainder $\mathcal R_*$ is given in Lemma \ref{lemma-newerror}, and we have used
$$
|\cc|\lesssim \la_*^{\Theta},\quad |\dot\xi|\lesssim \la_*^{3\Theta-1}.
$$
 For the remaining terms $E_{\eta_1}$ (defined in \eqref{def-E_eta1}) supported outside of the inner region, we have
\begin{equation}\label{eqn-a28a28a28}
|E_{\eta_1}|\lesssim 1
\end{equation}
by \eqref{est-psi^1}, \eqref{est-psi^0}, \eqref{est-varphi_pm2}, \eqref{est-psi_2}, and \eqref{def-error1}.

\bigskip

\section{Spectral analysis of the linearized Liouville equation}\label{sec-DFT}

\medskip

\subsection{Generalized eigenfunctions and density of spectral measure}

We consider the operator
\begin{equation*}
L_0:=\partial_{rr}+\frac1r \partial_{r}+\frac{8}{(1+r^2)^2},
\end{equation*}
which has kernels
$$
K_1=\frac{r^2-1}{r^2+1}, \quad K_2=\frac{(r^2-1)\log r-2}{r^2+1}.
$$
The operator $L_0$ corresponds to the linearization at mode $0$ of the Liouville equation. Let $u(\rho)=r^{-1/2}v(r)$. Then
$$
L_0(u)=r^{-1/2}\mathcal L_0 v,
$$
with
$$
\mathcal L_0 v:= \pp_{rr} v+\frac{v}{4r^2}+\frac{8v}{(1+r^2)^2}.
$$
The new operator $\mathcal L_0$ has kernel
$$
\Phi_0^0(r)=\frac{r^{1/2}(r^2-1)}{r^2+1},$$
and the other one is given by
\begin{equation*}
\begin{aligned}
\Theta_0^0(r)=&~-\Phi_0^0(r)\int \frac1{(\Phi(r))^2} dr\\
=&~-\frac{r^{1/2}(r^2-1)}{r^2+1}\left(\log r-\frac2{r^2-1}+C\right)
\end{aligned}
\end{equation*}
for which
$$
W[\Theta_0^0,\Phi_0^0]=1.
$$
Here for any $C$, $\Theta_0^0(1)=0$, and we take $C=0$.
\begin{lemma} (\cite[Lemma 4.2]{KST09Duke}, \cite{GZ06Mana})
The spectrum of $\mathcal L_{0}$ equals
\begin{equation*}
spec(-\mathcal L_{0}) = \{ \xi_{d}\} \cup [ 0,\infty),
\end{equation*}
where the only negative eigenvalue
$\xi_{d}$ is negative and simple, and its corresponding eigenfunction $\phi_{d}$ has exponential decay.
\end{lemma}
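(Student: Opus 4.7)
The plan is to realize $\mathcal{L}_{0}$ as an essentially self-adjoint Schr\"odinger operator on $L^{2}(0,\infty)$ coming from the radial 2D Laplacian plus a short-range potential, and then combine (a) Weyl's theorem for the essential spectrum with (b) Sturm oscillation theory to count the discrete spectrum. Writing $-\mathcal{L}_{0} = -\partial_{rr} + V(r)$ with
\[
V(r) = -\frac{1}{4r^{2}} - \frac{8}{(1+r^{2})^{2}},
\]
I would first note that via the substitution $u = r^{-1/2}v$ the operator $\mathcal{L}_{0}$ is unitarily equivalent to the action of $L_{0}$ on radial $L^{2}(\mathbb{R}^{2})$ functions, which fixes a canonical self-adjoint realization (in particular, the singular endpoint at $0$ is automatically handled by the $r^{1/2}$ weight, and since $V\in L^{\infty}$ near infinity and $V(r)\to 0$, the other endpoint is limit point).

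Next I would handle the essential spectrum. Since $V(r)\to 0$ as $r\to\infty$ and the singular term near $0$ is absorbed into the free radial 2D Laplacian, a standard Weyl perturbation argument (decomposing $-\mathcal{L}_{0}$ as the free operator plus a relatively compact perturbation) gives $\sigma_{\mathrm{ess}}(-\mathcal{L}_{0}) = [0,\infty)$. Moreover, $0$ is not an eigenvalue: the two zero-energy solutions $\Phi_{0}^{0}$ and $\Theta_{0}^{0}$ exhibited in the excerpt behave at infinity like $r^{1/2}$ and $r^{1/2}\log r$ respectively, neither of which is in $L^{2}(0,\infty)$.

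The heart of the argument is then counting the negative eigenvalues through Sturm oscillation theory. The key observation is that the explicit zero-energy solution
\[
\Phi_{0}^{0}(r) = \frac{r^{1/2}(r^{2}-1)}{r^{2}+1}
\]
has exactly one zero in $(0,\infty)$, namely at $r=1$. By the Sturm oscillation theorem (applied to the regular-at-$0$ Weyl solution), the number of eigenvalues of $-\mathcal{L}_{0}$ strictly below $0$ equals the number of interior zeros of $\Phi_{0}^{0}$, which is $1$. This gives exactly one simple negative eigenvalue $\xi_{d}<0$, with a ground-state eigenfunction $\phi_{d}$ that may be taken positive.

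Finally, exponential decay of $\phi_{d}$ is routine: since $\xi_{d}<0$ and $V(r)\to 0$ at infinity, the ODE $\phi_{d}'' = (-\xi_{d} - V(r) + \xi_{d})\phi_{d}$ is, for large $r$, a small perturbation of $\phi_{d}'' = |\xi_{d}|\phi_{d}$, so by standard Agmon-type estimates (or direct WKB/barrier arguments) the $L^{2}$ solution satisfies $|\phi_{d}(r)|\lesssim e^{-\sqrt{|\xi_{d}|}\, r}$ for all $r$ sufficiently large. The main obstacle I anticipate is the careful book-keeping of the Sturm oscillation count at the borderline singular endpoint $r=0$, where one must verify that the regular Weyl solution corresponds precisely to the explicit $\Phi_{0}^{0}$ above (so that its interior zero count genuinely equals the negative eigenvalue count); everything else is essentially bookkeeping on top of standard self-adjoint extension and oscillation theory, which is why the authors cite \cite{KST09Duke,GZ06Mana} rather than redeveloping it.
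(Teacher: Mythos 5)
The paper does not prove this lemma; it states it and delegates the proof to Lemma~4.2 of Krieger--Schlag--Tataru \cite{KST09Duke} and to the Gesztesy--Zinchenko framework \cite{GZ06Mana}. So there is no in-paper proof to compare against, and the relevant question is simply whether your argument is sound. It is, and it is in fact the standard way such a result is established, so the comparison with the cited sources is one of ``essentially the same route.''

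Your three ingredients are all correct. Weyl's theorem applied relative to the operator $-\partial_{rr}-\tfrac{1}{4r^2}$ (the radial free 2D Laplacian after the $r^{-1/2}$ conjugation, in its Friedrichs realization) gives $\sigma_{\mathrm{ess}}(-\mathcal{L}_0)=[0,\infty)$ since $-8(1+r^2)^{-2}$ is a relatively compact perturbation, and the explicit zero-energy solutions $\Phi_0^0\sim r^{1/2}$, $\Theta_0^0\sim r^{1/2}\log r$ are not in $L^2$ at infinity, so $0$ is a resonance rather than an eigenvalue. The oscillation count is the decisive step: the principal (regular) zero-energy solution $\Phi_0^0(r)=r^{1/2}(r^2-1)/(r^2+1)$ has exactly one zero on $(0,\infty)$, at $r=1$, which by the singular Sturm oscillation theorem equals the number of eigenvalues below $0$; this yields exactly one simple negative eigenvalue. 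Exponential decay of $\phi_d$ at rate $\sqrt{|\xi_d|}$ is then standard. The one place you flagged as an ``obstacle'' is indeed where the care is needed, and you have the right resolution: at $r=0$ the potential $-1/(4r^2)$ is exactly at the Hardy threshold, so one is in the limit circle case; the physically relevant self-adjoint extension is the Friedrichs one, which picks out the principal (non-$\log$) asymptotics $r^{1/2}$, i.e.\ precisely $\Phi_0^0$. Since the endpoint $r=0$ is non-oscillatory (real indicial roots) and $r=\infty$ is non-oscillatory at energy $0$ (decaying potential), the singular oscillation theorem applies verbatim. No genuine gap remains; the argument is a complete sketch.
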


\begin{remark}
The operator $\mathcal L_0$ has a resonance at zero since $\mathcal L_0[\Phi_0^0]=0$ and $\Theta_0^0\not\in L^2(dr)$.
\end{remark}

We now consider the fundamental system of solutions $\phi(r,z)$ and $\theta(r,z)$ to
\begin{equation*}
-\mathcal L_0 y=zy
\end{equation*}
for all $z\in \mathbb C$ so that
$$
W[\theta(\cdot,z),\phi(\cdot,z)]=1.
$$
Notice that these functions are entire in $z$, $\phi(r,0)=c\Phi_0^0(r)$ for some normalization constant $c$. Let $\psi(r,z)$ be a Weyl-Titchmarsh solution. The generalized Weyl-Titchmarsh $m$ function is defined as
$$
C\psi(\cdot,z)=\theta(\cdot,z)+m(z)\phi(\cdot,z)
$$
for some constant $C\neq0$. Then
$$
m(z)=\frac{W[\theta(\cdot,z),\psi(\cdot,z)]}{W[\psi(\cdot,z),\phi(\cdot,z)]}.
$$
A spectral measure of $\mathcal L_0$ is obtained as
$$
\rho((\la_1,\la_2])=\frac1{\pi}\lim_{\delta\to0^+}\lim_{\epsilon\to0^+}\int_{\la_1+\delta}^{\la_2+\delta}{\rm Im}~ m(\la+i\epsilon) d\la.
$$
For the detailed definitions and properties, see \cite{GZ06Mana}.

\begin{prop}
The  $m$ function is
\begin{equation*}
d \rho  =\delta_{\xi_{d}} + \rho(\xi )d\xi,
\quad \rho(\xi) = \pi^{-1} \mathrm{Im} \  m(\xi + i 0^{+}).
\end{equation*}
The distorted Fourier transform (DFT) defined as
\begin{equation*}
\mathcal F : f \rightarrow \hat{f},
\end{equation*}
\begin{equation*}
\hat{f}(\xi_{d}) = \int_{0}^{\infty} \phi_{d}(r) f(r) d r,
\quad
\hat{f}(\xi) =
\lim\limits_{b\rightarrow \infty}
\int_{0}^{b} \phi(r,\xi) f(r) d r, \quad \xi \ge 0,
\end{equation*}
is a unitary operator from $L^2(\RR^{+})$ to $L^2(\{\xi_{d} \}\cup \RR^{+},\rho)$, and its inverse is given by
\begin{equation*}
\mathcal F^{-1}: \hat{f} \rightarrow
f(r) = \hat{f}(\xi_{d}) \phi_{d}(r) +
\lim\limits_{\mu\rightarrow \infty}
\int_{0}^{\mu} \phi(r,\xi )\hat{f}(\xi ) \rho(\xi) d \xi.
\end{equation*}
\end{prop}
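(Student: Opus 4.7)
The strategy is the standard Weyl--Titchmarsh--Kodaira spectral theory adapted to the half-line Schr\"odinger operator $\mathcal L_{0}$, following the framework developed in Krieger--Schlag--Tataru and Gesztesy--Zinchenko. First I would fix the fundamental system $\phi(r,z)$, $\theta(r,z)$ as entire functions of $z$ (for each $r>0$), normalized at $r=1$ so that $W[\theta(\cdot,z),\phi(\cdot,z)]=1$. At $z=0$ these reduce (up to a nonzero normalization) to $\Phi_{0}^{0}$ and $\Theta_{0}^{0}$ respectively, which pins down the behavior at the singular endpoint $r=0$: $\phi(r,z)\sim c\,r^{1/2}(r^{2}-1)(r^{2}+1)^{-1}$ and $\theta(r,z)$ carries the logarithmic branch. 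Because the potential $r^{-2}/4$ at the origin together with the bounded perturbation $8(1+r^{2})^{-2}$ puts $\mathcal L_{0}$ in the limit point case at both endpoints (resonance at zero aside), $\phi(\cdot,z)$ is the unique (up to multiples) solution that is square-integrable near $r=0$.

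Next I would construct the Weyl--Titchmarsh solution $\psi(r,z)$ at $r=\infty$ for $\mathrm{Im}\,z>0$. Since the potential decays like $r^{-4}$ at infinity, for $z=\xi+i\varepsilon$ with $\xi>0$ one expects Jost-type solutions $\psi(r,z)\sim e^{i\sqrt{z}\,r}$ as $r\to\infty$ (with the branch chosen so $\mathrm{Im}\sqrt{z}>0$), obtained by solving a Volterra integral equation based on variation of parameters from the free equation $-y''=zy$. Uniqueness in $L^{2}$ near infinity then gives the $m$-function $m(z)=W[\theta(\cdot,z),\psi(\cdot,z)]/W[\psi(\cdot,z),\phi(\cdot,z)]$, which is Herglotz on the upper half-plane. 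The positive part of the spectrum is purely absolutely continuous, with density $\rho(\xi)=\pi^{-1}\mathrm{Im}\,m(\xi+i0^{+})$ by Stone's formula, while the discrete negative eigenvalue $\xi_{d}$ (established in the cited lemma) contributes the atom $\delta_{\xi_{d}}$ with mass $\|\phi_{d}\|_{L^{2}}^{-2}$ after appropriate normalization of $\phi_{d}$.

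To get the unitary DFT and its inverse, I would invoke the abstract spectral theorem for the self-adjoint realization of $\mathcal L_{0}$ on $L^{2}(\mathbb R^{+})$ together with Weyl--Titchmarsh--Kodaira: the resolvent kernel $(\mathcal L_{0}+z)^{-1}(r,r')$ is constructed from $\phi(\cdot,-z)$ and $\psi(\cdot,-z)$, its boundary values across the spectrum give a Cauchy--Stieltjes representation, and a limiting argument (approximating by compactly supported data and truncated spectral intervals, then using the Plancherel identity on $L^{2}([0,b])$) shows that $\mathcal F$ extends to an isometric isomorphism onto $L^{2}(\{\xi_{d}\}\cup\mathbb R^{+},d\rho)$. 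Orthogonality of $\phi_{d}$ against the generalized eigenfunctions $\phi(\cdot,\xi)$ follows from self-adjointness.

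\textbf{Main obstacle.} The technical heart is the careful construction of $\psi(r,z)$ and the regularity of $m(z)$ up to the continuous spectrum $[0,\infty)$, particularly at the threshold $\xi=0$ where the zero-energy resonance (the bounded non-$L^{2}$ solution $\Phi_{0}^{0}$) forces a subtle expansion of $m(z)$ as $z\to 0^{+}$; this will dictate the low-frequency behavior of $\rho(\xi)$, which is in turn the critical input for the pointwise bounds in Proposition~\ref{prop-lt-W0}. Establishing the Volterra iterates and the uniform-in-$z$ decay estimates for $\psi$ near infinity, as well as tracking the $r^{1/2}$-branch at the origin, are the routine-but-delicate steps; everything else is a direct application of the Weyl--Titchmarsh--Kodaira machinery.
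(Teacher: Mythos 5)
Your outline is correct and is essentially the same route the paper takes: the proposition is stated without proof as a direct consequence of the Weyl--Titchmarsh--Kodaira spectral theory for half-line Schr\"odinger operators with a strongly singular $r^{-2}$-type potential, as developed in the cited reference of Gesztesy--Zinchenko, and your sketch (fundamental system $\phi,\theta$ normalized at $r=1$, Jost solution $\psi$ at infinity via a Volterra equation, Herglotz $m$-function and Stone's formula for $\rho(\xi)$, atom at $\xi_d$, and the spectral-theorem limiting argument for unitarity of $\mathcal F$) is precisely that machinery. You also correctly flag that the substantive work is the boundary behavior of $m$ near $\xi=0$ caused by the zero-energy resonance, which is what the subsequent propositions in the appendix actually establish.
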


Next, we want to give the asymptotic expansion of distorted basis $\Phi^{0}(r,\xi)$ satisfying $-\mathcal L_{0} \Phi^{0}(r,\xi) = \xi \Phi^{0}(r,\xi)$.
\begin{prop}
For any $\xi \in \mathbb{C}$, $\Phi^{0}(r,\xi)$ admits
the asymptotic expansion
\begin{equation*}
\Phi^{0}(r,\xi)
= \Phi_{0}^{0}(r)
+
r^{1/2} \sum\limits_{j=1}^{\infty}
(-  r^2 \xi)^{j} \Phi_{j}^{0}(r^2),
\end{equation*}
which converges absolutely for any fixed $r$, $\xi$ and converges uniformly if $r^2 \xi$ remains bounded with a fixed $r$. Here for $j\geq 1$, $\Phi_{j}^{0}(u)$ are smooth functions in $u\ge 0$ satisfying
\begin{equation*}
\begin{aligned}
\Phi_{1}^{0}(u) =&~\frac{u-1}{12u(u+1)^2}\Bigg[(3u-2\pi^2)(1+u)+6(1+u)\log(1+u)[2+\log(1+u)-2\log u]\\
&~\qquad\qquad\qquad+12(1+u){\rm Polylog}\left(2,\frac{1}{1+u}\right)\Bigg]+\frac{(u+3)\log(u+1)-3u}{u(u+1)},
\end{aligned}
\end{equation*}
\begin{equation*}
|\Phi_{j}^{0}(u)| \leq C,\quad j\geq 2
\end{equation*}
for a large constant $C>0$.
\end{prop}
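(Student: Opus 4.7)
The plan is to construct $\Phi^{0}(r,\xi)$ as a power-series perturbation of the zero-energy solution $\Phi_0^0(r)$, exactly in the spirit of Krieger-Schlag-Tataru \cite{KST09Duke}. Since $\Phi_0^0$ and $\Theta_0^0$ form a fundamental system of $\mathcal L_0 y = 0$ with $W[\Theta_0^0,\Phi_0^0]=1$, variation of parameters turns the eigenvalue equation $(-\mathcal L_0 - \xi)\Phi^{0}=0$ into the Volterra integral equation
\begin{equation*}
\Phi^{0}(r,\xi) = \Phi_0^0(r) - \xi \int_0^r \bigl[\Phi_0^0(r)\Theta_0^0(s) - \Theta_0^0(r)\Phi_0^0(s)\bigr]\,\Phi^{0}(s,\xi)\,ds,
\end{equation*}
where the initial condition at $r=0$ is fixed by requiring $\Phi^{0}(r,\xi) \sim -r^{1/2}$ as $r\to 0$ (so that the basis is regular at the origin and matches $\Phi_0^0$ at $\xi=0$). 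The first step is to verify that this Volterra operator is well-defined, using the explicit forms $\Phi_0^0(r)=r^{1/2}(r^2-1)/(r^2+1)$ and $\Theta_0^0(r) = -r^{1/2}(r^2-1)(r^2+1)^{-1}\log r + 2r^{1/2}(r^2+1)^{-1}$, and in particular that the apparent singularities of $\Theta_0^0$ at $r=0$ and $r=1$ do not obstruct integration against $\Phi_0^0$.

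Next, I would iterate the Volterra equation, defining
\begin{equation*}
\Phi^{0}(r,\xi) = \Phi_0^0(r) + \sum_{j=1}^\infty (-\xi)^j I_j(r), \qquad I_j(r) := \int_0^r K(r,s)I_{j-1}(s)\,ds,
\end{equation*}
with $I_0 := \Phi_0^0$ and $K(r,s) := \Phi_0^0(r)\Theta_0^0(s) - \Theta_0^0(r)\Phi_0^0(s)$. Setting $\Phi_j^{0}(r^2) := r^{-1/2-2j} I_j(r)$ converts this to a recursion purely in $u=r^2$; inspection of the double integral
\begin{equation*}
I_1(r) = -\int_0^r K(r,s)\Phi_0^0(s)\,ds
\end{equation*}
with the explicit basis gives the closed form for $\Phi_1^0(u)$ stated in the proposition (with the dilogarithm contributions coming from the $\log(r)(r^2-1)$ piece of $\Theta_0^0$ integrated against $\Phi_0^0$). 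For $j\ge 2$, only uniform bounds are needed, and I would extract them from the recursion $\Phi_j^0 = \mathcal J[\Phi_{j-1}^0]$.

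For the convergence claim, the key pointwise bound is
\begin{equation*}
|K(r,s)| \lesssim r^{1/2} s^{1/2}(1 + |\log(s/r)|), \qquad 0<s\le r,
\end{equation*}
which, combined with $|\Phi_0^0(s)| \lesssim s^{1/2}$, yields $|I_j(r)| \lesssim C^j r^{1/2+2j}/(j!)$ by induction on $j$ (the logarithmic factors contribute only a harmless constant per iteration). Translating this back, $|\Phi_j^0(u)| \lesssim C^j/j!$, which gives absolute convergence of the series for all $(r,\xi)$ and uniform convergence on compact sets where $r^2\xi$ stays bounded. Finally, one checks term by term that the resulting series solves $(-\mathcal L_0-\xi)\Phi^{0}=0$, by applying $-\mathcal L_0 - \xi$ to the $j$-th term and using that $\mathcal J$ inverts $\mathcal L_0$ up to the required boundary behavior.

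The main obstacle will be the delicate behavior of the kernel $K(r,s)$ near the resonance region $s\sim 1$ and near $s\to 0$: $\Theta_0^0$ has a $\log r$ singularity at the origin and $\Phi_0^0$ vanishes at $r=1$, so the naive pointwise estimate loses factors of $\log r$ at each iteration. Overcoming this requires working in the $u=r^2$ variable and controlling each $\Phi_j^0$ as a smooth function of $u\ge 0$ by isolating the explicit logarithmic structure in the first iterate (as reflected in the explicit formula for $\Phi_1^0$) and then showing that higher iterates regularize, so that the uniform bound $|\Phi_j^0(u)|\le C$ for $j\ge 2$ really holds without additional logarithmic losses.
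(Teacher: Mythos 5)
Your proposal follows essentially the same route as the paper: set up the Volterra iteration by variation of parameters using the fundamental system $\Phi_0^0,\Theta_0^0$ with unit Wronskian, iterate to produce $I_j=\int_0^r K(r,s)I_{j-1}(s)\,ds$, extract the claimed form $\Phi_j^0(r^2)=r^{-1/2-2j}I_j(r)$, and close by a kernel estimate plus induction. The paper merely changes to $u=r^2$, $v=s^2$ so that the kernel becomes $\tfrac{4(u-v)+(v-1)(u-1)\log(u/v)}{4(u+1)(v+1)}$ and is bounded by $C(1+\log(u/v))$, which is exactly the bound you stated in the $r$ variable (the key cancellation $\Phi_0^0(r)\Theta_0^0(s)-\Theta_0^0(r)\Phi_0^0(s)=r^{1/2}s^{1/2}\{g(r)g(s)\log(r/s)+g(r)h(s)-h(r)g(s)\}$, so only $\log(r/s)$ survives, not $\log r$ and $\log s$ separately). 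Two small remarks: first, given your own iteration definition $I_j=\int K\,I_{j-1}$ with $I_0=\Phi_0^0$, the first iterate is $I_1(r)=+\int_0^r K(r,s)\Phi_0^0(s)\,ds$, not $-\int_0^r K\Phi_0^0\,ds$ --- the minus sign is already carried by the $(-\xi)^j$ prefactor in the series, so you have a sign slip there that should be corrected. Second, your closing worry that the ``naive pointwise estimate loses factors of $\log r$ at each iteration'' is self-resolved by the very kernel bound you wrote down, since that bound already exploits the $\log r$--$\log s$ cancellation; there is no loss per iteration, and your induction yielding $|I_j|\lesssim C^j r^{1/2+2j}/j!$ (hence $|\Phi_j^0|\lesssim C^j/j!$, stronger than the uniform bound the proposition asks for) goes through exactly as the paper's does.
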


\begin{proof}
We look for
$$
\Phi^{0}(r, \xi ) = r^{1/2}
\sum\limits_{j=0}^{\infty} (-\xi)^{j} f_{j}(r), \quad f_0(r)=\frac{r^2-1}{r^2+1}
$$
with
$$
\mathcal L_0(r^{1/2}f_j)=r^{1/2} f_{j-1},\quad f_{-1}\equiv 0.
$$
Then above ansatz yields the recurrence relation
\begin{equation*}
\begin{aligned}
f_j(r)=&~\int_0^r r^{-1/2}\left[\Phi_0^0(r)\Theta_0^0(s)-\Theta_0^0(r)\Phi_0^0(s)\right] s^{1/2} f_{j-1}(s) ds\\
=&~\int_0^r \left[\frac{(r^2-1)}{r^2+1}\frac{s\left[2-(s^2-1)\log s\right]}{s^2+1}+\frac{\left[(r^2-1)\log r-2\right]}{r^2+1}\frac{s(s^2-1)}{s^2+1}\right]  f_{j-1}(s) ds\\
=&~\int_0^r \frac{2s(r^2-s^2)+s(s^2-1)(r^2-1)\log(r/s)}{(r^2+1)(s^2+1)} f_{j-1}(s) ds
\end{aligned}
\end{equation*}
 Now we change variable $v=s^2$, $u=r^2$ and define
 $$
 \tilde f_j(v)=f_j(s).
 $$
 In particular,
 $$
 \tilde f_0(u)=\frac{u-1}{u+1}.
 $$
Then we get
\begin{equation*}
\begin{aligned}
\tilde f_j(u)=\int_0^{u} \frac{4(u-v)+(v-1)(u-1)\log (u/v)}{4(u+1)(v+1)} \tilde f_{j-1}(v) dv.
\end{aligned}
\end{equation*}
We first consider $\tilde f_1(u)$
\begin{equation*}
\begin{aligned}
\tilde f_1(u)
=&~\int_0^u \frac{4(u-z)(z-1)-(z-1)^2(u-1)\log (z/u)}{4(u+1)(z+1)^2}dz\\
=&~\frac{1}{1+u}\int_0^u \frac{(u-z)(z-1)}{(z+1)^2}dz-\frac{u-1}{4(u+1)}\int_0^u \frac{(z-1)^2\log(z/u)}{(z+1)^2}dz,
\end{aligned}
\end{equation*}
where we have
$$
\int_0^u \frac{(u-z)(z-1)}{(z+1)^2}dz=(u+3)\log(u+1)-3u,
$$
and
\begin{equation*}
\begin{aligned}
&~\int_0^u \frac{(z-1)^2\log(z/u)}{(z+1)^2} dz\\
=&~-\frac{1}{3(1+u)}\Bigg[3u(1+u)-2\pi^2(1+u)-15 u\log u-3u^2\log u+12(1+u)\log(1+u)\\
&~\qquad\qquad\qquad+6(1+u)\log^2 (1+u)+3 [u(5+u)-4(1+u)\log(1+u)](\log u)\\
&~\qquad\qquad\qquad+12(1+u){\rm Polylog}\left(2,\frac1{1+u}\right)\Bigg]
\end{aligned}
\end{equation*}
with
\begin{equation*}
{\rm Polylog}\left(2,\frac1{1+u}\right)\sim
\left\{
\begin{aligned}
&\frac{\pi^2}{6}+(\log u -1)u-\frac14(2\log u +1)u^2+\frac1{18}(6\log u + 7)u^3+O(u^4\log u )~\mbox{ for }~u\ll1,\\
&\frac1u+\frac1{4u^2}+\frac1{9u^3}+O\left(\frac1{u^4}\right)~\mbox{ for }~u\gg1,\\
\end{aligned}
\right.
\end{equation*}
and for $u\ll 1$
\begin{align*}
\frac{u-1}{u+1}{\rm Polylog}\left(2,\frac1{1+u}\right)=&~-\frac{\pi^2}6+\frac13(3+\pi^2-3\log u)u+\frac1{12}(-21-4\pi^2+30\log u)u^2\\
&~+\frac1{9}(10+3\pi^2-30\log u)u^3+O(u^4\log u).
\end{align*}
So we obtain
\begin{equation*}
\begin{aligned}
\tilde f_1(u)=&~\frac{(u+3)\log(u+1)-3u}{u+1}\\
&~+\frac{u-1}{12(u+1)^2}\Bigg[(3u-2\pi^2)(1+u)+6(1+u)\log(1+u)[2+\log(1+u)-2\log u]\\
&~\qquad\qquad\qquad\quad+12(1+u){\rm Polylog}\left(2,\frac{1}{1+u}\right)\Bigg],
\end{aligned}
\end{equation*}
and
\begin{equation}\label{eqn-310310310}
\tilde f_1(u)\sim
\left\{
\begin{aligned}
&-\frac{u}4+\frac{u^2}{4}-\frac{2u^3}{9}+O(u^4)~&\mbox{ for }~u\ll1,\\
&\frac{u}{4}-\frac{3\log^2 u-12\log u+21+\pi^2}{6}+\frac{6\log^2 u+39+2\pi^2}{6u}+O\left(\frac{\log^2 u}{u^2}\right)
~&\mbox{ for }~u\gg1.\\
\end{aligned}
\right.
\end{equation}

\medskip

Next
we estimate $\tilde f_j(u)$ inductively.  
We consider
\begin{equation*}
\tilde f_j(u)=\int_0^{u} \frac{4(u-v)+(v-1)(u-1)\log (u/v)}{4(u+1)(v+1)} \tilde f_{j-1}(v) dv
\end{equation*}
and bound the kernel function as
\begin{equation*}
\left|\frac{4(u-v)+(v-1)(u-1)\log (u/v)}{4(u+1)(v+1)}\right|\leq C[\log (u/v)+1]
\end{equation*}
for some constant $C>0$ since $v\leq u$. Then from \eqref{eqn-310310310} we have
$$
|\tilde f_1(u)|\leq C u,
$$
and thus
\begin{equation*}
\begin{aligned}
|\tilde f_2(u)|\leq &~ C\int_0^{u} [\log (u/v)+1]|\tilde f_{1}(v)| dv\\
\leq &~ C\int_0^{u} v[\log (u/v)+1] dv\\
\leq &~ C u^2.
\end{aligned}
\end{equation*}
By induction, clearly one has
$$
|\tilde f_j(u)|\leq C u^j.
$$
In the original variable, we have
$
r^{2j}\Phi^0_j(r^2)=f_j(r)=\tilde f_j(r^2)=\tilde f_j(u),
$
and thus
$$
|\Phi^0_j(u)|\leq C
$$
as desired.
\end{proof}

Next we estimate the Weyl-Titchmarsh function $\Psi_0^+(r,\xi)$ with
$$
-\mathcal L_0 \Psi_0^+(r,\xi)=\xi \Psi_0^+(r,\xi),\quad \xi>0,
$$
namely
$$
\pp_{rr}\Psi_0^+(r,\xi) +\frac{\Psi_0^+(r,\xi)}{4r^2}+\xi \Psi_0^+(r,\xi)=-\frac{8\Psi_0^+(r,\xi)}{(1+r^2)^2}.
$$
We write
$$
\Psi_0^+(r,\xi)=g(r\xi^{1/2}).
$$
Then one has
$$
g''(r\xi^{1/2})+\frac{1}{4r^2\xi}g(r\xi^{1/2})+g(r\xi^{1/2})=-\frac8{\xi(1+r^2)^2}g(r\xi^{1/2}),
$$
and
$$
g''(q)+\frac{1}{4q^2}g(q)+g(q)=-\frac8{\xi(1+q^2/\xi)^2}g(q)
$$
for
$$
q:=r\xi^{1/2}.
$$
Homogeneous solutions can be written in terms of special functions. Here we follow the argument in \cite[Proposition 5.6]{Krieger08} and \cite[Section 4]{KMS20WM}. We now consider the case $r\xi^{1/2}\gtrsim 1$.

\begin{lemma}\label{lem-323232}
A Weyl-Titchmarsh function is of the form
\begin{equation*}
\Psi_{0}^{+}(r,\xi) = \xi^{-1/4} e^{ir\xi^{1/2}}
\sigma (r\xi^{1/2},r) \mbox{ \ for \ }  r\xi^{1/2} \gtrsim 1
\end{equation*}
where $\sigma$ admits the asymptotic series approximation about $q$ with a fixed $r$,
\begin{equation*}
\sigma(q,r) \sim  \sum\limits_{j=0}^{\infty} q^{-j} \psi_{j}^{+}(r), \quad
\psi_{0}^{+}(r)=1,
\end{equation*}
\begin{equation*}
\psi_{1}^{+}(r)=-\frac{i}{8} + i\left(\frac{2r^2}{r^2+1}+2r\arctan r -r\pi\right)
\end{equation*}
with
\begin{equation*}
\sup_{r>0}|(r\pp_r)^k \psi_j^+|\le d_{jk},
\end{equation*}
and
\begin{equation*}
	|(r\pp_{r})^{\alpha} (q\pp_{q})^{\beta} (\sigma(q,r) - \sum\limits_{j=0}^{j_0} q^{-j} \psi_{j}^{+}(r) )|
	\le
	e_{\alpha,\beta,j_0} q^{-j_0-1} .
\end{equation*}
\end{lemma}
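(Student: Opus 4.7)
The plan is to carry out a standard WKB-type analysis adapted to $\mathcal{L}_0$ for large $q = r\xi^{1/2}$. First I would substitute the ansatz $\Psi_0^+(r,\xi) = \xi^{-1/4} e^{ir\xi^{1/2}} \sigma(q, r)$ into $-\mathcal{L}_0 \Psi_0^+ = \xi \Psi_0^+$. Treating $q$ and $r$ as independent variables and computing $\pp_{rr}\Psi_0^+$ by the chain rule, the leading $\xi\sigma$ terms cancel and, after dividing by $\xi$, one obtains the transport-type equation
\begin{equation*}
2i\sigma_q + \sigma_{qq} + \frac{2ir}{q}\sigma_r + \frac{2r}{q}\sigma_{qr} + \frac{r^2}{q^2}\sigma_{rr} + \frac{\sigma}{4q^2} + \frac{8r^2}{q^2(1+r^2)^2}\sigma = 0.
\end{equation*}
All terms except $2i\sigma_q$ carry an extra factor $q^{-1}$ or $q^{-2}$, motivating the formal series $\sigma \sim \sum_{j \geq 0} q^{-j} \psi_j^+(r)$.

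Matching coefficients of $q^{-n-1}$, the $n=0$ relation reduces to $r(\psi_0^+)' = 0$, giving the normalization $\psi_0^+ \equiv 1$. For $n \geq 1$ one gets the first-order linear ODE
\begin{equation*}
2i\bigl(r(\psi_n^+)' - n\psi_n^+\bigr) = -\mathcal{D}_n\psi_{n-1}^+,\quad \mathcal{D}_n := r^2\pp_{rr} - 2(n-1)r\pp_r + n(n-1) + \tfrac{1}{4} + \tfrac{8r^2}{(1+r^2)^2}.
\end{equation*}
Using the integrating factor $r^{-n}$, since $(r^{-n}\psi_n^+)' = r^{-n-1}(r(\psi_n^+)' - n\psi_n^+)$, this integrates explicitly, and the constant of integration is fixed so that $\psi_n^+$ remains bounded at $+\infty$. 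For $n=1$ this reduces to $2ir^2(\psi_1^+/r)' = -\tfrac{1}{4} - \tfrac{8r^2}{(1+r^2)^2}$; using $\int(1+r^2)^{-2}dr = \tfrac{r}{2(1+r^2)} + \tfrac{1}{2}\arctan r$ and choosing the integration constant to cancel the linear-in-$r$ growth $ir\pi$ at infinity produces exactly the stated formula for $\psi_1^+$. Uniform bounds $|(r\pp_r)^k\psi_j^+| \leq d_{jk}$ then follow by induction: boundedness of $\psi_{n-1}^+$ and its derivatives makes the right-hand side bounded, the weight $r^{-n-1}$ in the integral combined with the correct choice of integration constant at infinity yields boundedness of $\psi_n^+$, and commuting $r\pp_r$ through the recurrence propagates the estimate to higher derivatives.

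The main obstacle will be the uniform remainder estimate. Define $\sigma_{j_0} := \sum_{j=0}^{j_0} q^{-j}\psi_j^+(r)$, so that $\sigma_{j_0}$ solves the transport equation up to a source $\mathcal{S}_{j_0}$ of size $O(q^{-j_0 - 2})$ with analogous bounds on $r$-derivatives. Writing $R = \sigma - \sigma_{j_0}$ gives an inhomogeneous transport equation $2i\pp_q R = -\mathcal{S}_{j_0} + \mathcal{K}[R]$, where $\mathcal{K}$ collects the lower-order operators $\sigma_{qq}$, $(r^2/q^2)\sigma_{rr}$, etc., each carrying an extra factor of $q^{-1}$. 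The outgoing Weyl condition fixes $R \to 0$ as $q \to \infty$, so $2i\pp_q$ is inverted as a Volterra integral from $+\infty$:
\begin{equation*}
R(q,r) = \frac{1}{2i}\int_q^\infty \bigl[\mathcal{S}_{j_0}(q',r) - \mathcal{K}[R](q',r)\bigr]dq'.
\end{equation*}
On the weighted space with norm $\|R\|_{j_0,\alpha,\beta} := \sup_{q \gtrsim 1,\ r > 0} q^{j_0+1}\bigl|(r\pp_r)^\alpha(q\pp_q)^\beta R\bigr|$, this integral operator is a contraction for $q$ large enough, because $\mathcal{K}$ gains one power of $q^{-1}$ and integration from infinity gains another against the $O(q^{-j_0-2})$ source. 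A Picard iteration then produces the unique solution $R$ with the desired pointwise bound, which yields $|(r\pp_r)^\alpha(q\pp_q)^\beta(\sigma - \sigma_{j_0})| \leq e_{\alpha,\beta,j_0}q^{-j_0-1}$. Finally, existence of the Weyl solution itself, rather than just its formal expansion, follows from the classical Levinson-type theory applied to the perturbation of $-\pp_{rr} - 1/(4r^2)$ by the potential $8/(1+r^2)^2$ for $q$ large.
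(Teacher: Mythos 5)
Your derivation of the formal expansion is correct and is, up to a change of bookkeeping, the same as the paper's. The paper writes $\sigma(r\xi^{1/2},r) = \sum_{j\ge 0}\xi^{-j/2}f_j(r)$ and gets the recurrence $2if_j' = -f_{j-1}'' - V f_{j-1}$ with $V = \tfrac{1}{4r^2} + \tfrac{8}{(1+r^2)^2}$; your two-variable recurrence $2i(r(\psi_n^+)' - n\psi_n^+) = -\mathcal D_n\psi_{n-1}^+$ is exactly this after the substitution $\psi_n^+ = r^n f_n$ (you can check the coefficient of $r^{n-1}f_{n-1}$ in $\mathcal D_n\psi_{n-1}^+$ vanishes identically, so $\mathcal D_n\psi_{n-1}^+ = r^{n+1}(f_{n-1}'' + Vf_{n-1})$). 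Your explicit computation of $\psi_1^+$ with the integration constant chosen to kill the linear growth is also correct and matches the paper, whose formula is obtained via $f_1(r) = \tfrac{i}{2}\int_r^\infty(-V(s))ds$ and $\psi_1^+ = r f_1$. The bound $\sup_r |(r\pp_r)^k\psi_j^+| \le d_{jk}$ is proved in the paper by showing inductively $|(r\pp_r)^k f_j| \le c_{jk}r^{-j}$ from the integral recurrence; your ``by induction'' gesture is pointing at the same thing, but you should check that the cancellation at $r=\infty$ persists under $r\pp_r$.

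There is, however, a genuine gap in your remainder estimate, and it is precisely where the paper does something you did not anticipate. You propose to isolate $2i\pp_q R$ and solve $R = \tfrac{1}{2i}\int_q^\infty(\mathcal S_{j_0} - \mathcal K[R])\,dq'$ by Picard iteration on the weighted space with norm $\sup q^{j_0+1}|(r\pp_r)^\alpha(q\pp_q)^\beta R|$. Two things break. First, $\mathcal K$ contains the full second derivatives $R_{qq}$, $R_{qr}$, $R_{rr}$; bounding $\mathcal K[R]$ in the $(\alpha,\beta)$-norm therefore requires the $(\alpha+2,\beta)$, $(\alpha+1,\beta+1)$, and $(\alpha,\beta+2)$ norms of $R$, so the map does not close on any fixed finite-derivative space and is not a contraction there. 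Second, and more structurally, the Volterra step gains only one power of $q^{-1}$, and the term $\tfrac{2ir}{q}R_r = \tfrac{2i}{q}(r\pp_r R)$ inside $\mathcal K$ sits at exactly the same order $q^{-1}$ as $\pp_q R$ on the decay scale you are tracking: after integrating from $\infty$ you come back to $O(q^{-j_0-1})$ with a prefactor $\tfrac{1}{j_0+1}$ but no genuine gain, so the iteration is at best marginal for fixed small $j_0$. The underlying issue is that $\pp_q + \tfrac{r}{q}\pp_r$ is the transport vector field here (the derivative along $\xi=$ const.), and splitting off only $\pp_q$ mis-assigns a leading-order transport term to $\mathcal K$.

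The paper circumvents both problems by treating the remainder equation
\[
\left(\pp_{rr} + 2i\xi^{1/2}\pp_r + \tfrac{1}{4r^2} + \tfrac{8}{(1+r^2)^2}\right)\sigma_1 = e(r\xi^{1/2},r),
\]
as a bona fide ODE in $r$ with $\xi$ fixed, rewriting it as the first-order system $(v_1,v_2) = (\sigma_1, r\pp_r\sigma_1)$, and taking $\pp_r(|v_1|^2 + |v_2|^2)$. The crucial point your Volterra scheme misses is that the oscillatory factor $e^{ir\xi^{1/2}}$ produces a coefficient $-2i\xi^{1/2}$ that is purely imaginary, so it drops from $\mathrm{Re}(\bar v_2\pp_r v_2)$; what survives is bounded by $r^{-1}|v|^2 + r|v||e|$, and a Gronwall argument from $r$ to $\infty$ closes the estimate with no derivative loss and no smallness parameter. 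One further device you omit: the paper replaces your finite truncation $\sigma_{j_0}$ by a Borel-type resummation $\sigma_{ap}(q,r) = \sum_j q^{-j}\psi_j^+(r)\,(1-\eta(q\delta_j))$ with $\delta_j\to 0$ fast, so that a single approximation satisfies the error bound $|e| \lesssim r^{-2}q^{-j_0}$ simultaneously for every $j_0$. This makes $\sigma = \sigma_{ap} - \sigma_1$ manifestly $j_0$-independent; with your $\sigma = \sigma_{j_0} + R_{j_0}$ you would additionally need to invoke uniqueness of the outgoing Weyl solution to see that the construction is consistent across $j_0$. Your last sentence appeals to Levinson theory for existence, which is fine in spirit, but it does not replace the quantitative a priori bound that the Gronwall argument supplies.
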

\begin{proof}
From the form of $\Psi_{0}^{+}(r,\xi)$, we only need to consider
$$
\left(\pp_{rr}+2i\xi^{1/2}\pp_r+\frac1{4r^2}+\frac{8}{(1+r^2)^2}\right)\sigma(r\xi^{1/2},r)=0.
$$
We look for a formal power series that solves above equation
$$
\sigma(r\xi^{1/2},r)=\sum_{j=0}^{\infty}\xi^{-j/2}f_j(r)
$$
which implies the following recurrence relation
$$
2i\pp_r f_j(r)=\left(-\pp_{rr}-\frac1{4r^2}-\frac8{(1+r^2)^2}\right)f_{j-1}(r),\quad j\geq 1, ~f_0(r)=1.
$$
A solution is given by
$$
f_j(r)=\frac{i}{2}\pp_r f_{j-1}(r)+\frac{i}2\int_r^{\infty}\left(-\frac1{4s^2}-\frac8{(1+s^2)^2}\right)f_{j-1}(s)ds.
$$
Then we get
$$
|f_1(r)|\leq c_{10} r^{-1},\quad |(r\pp_r)^k f_1|\leq c_{1k} r^{-1},~k\in\mathbb N,
$$
and by induction
$$
|(r\pp_r)^k f_j|\leq c_{jk} r^{-j},\quad j\geq 1,~r>0.
$$
Now write
$$
\psi_j^+(r):=r^j f_j(r),
$$
and it follows from the control of $f_j$ that
$$
\sup_{r>0}|(r\pp_r)^k\psi_j^+|\leq d_{jk}
$$
for some constant $d_{jk}$. Write $q=r\xi^{1/2}$. Then
$$q^{-j} \psi_{j}^{+}(r) = \xi^{-j/2} f_j(r).$$
Define
$$\sigma_{ap}(q,r):= \sum\limits_{j=0}^{\infty} q^{-j} \psi_{j}^{+}(r) (1-\eta(q\delta_j)),
$$
where $\delta_j \rightarrow 0^{+}$ sufficiently fast to ensure the convergence of the summation. Indeed, one has
\begin{equation*}
\begin{aligned}
&
\bigg|\sigma_{ap}(q,r) - \sum\limits_{j=0}^{j_0} q^{-j} \psi_{j}^{+}(r) \bigg|
\\
= \ &
\bigg|\sum\limits_{j=j_0 +1}^{\infty} q^{-j} \psi_{j}^{+}(r) (1-\eta(q\delta_j))
-
\sum\limits_{j=0}^{j_0} q^{-j} \psi_{j}^{+}(r) \eta(q\delta_j)
\bigg|
\\
\le \ &
e_{j_0} q^{-j_0-1} .
\end{aligned}
\end{equation*}
In the last step, we have used the following estimates. For $j\ge j_0+2$,
$$\big|q^{-j} \psi_{j}^{+}(r) (1-\eta(q\delta_j)) \big| \le q^{-j} \1_{\{q\delta_j >1 \} } d_{j0} \le q^{-j_0-1} 2^{-j} ~\mbox{ if }~\delta_j d_{j0} \le 2^{-j}.
$$
For $0\le j \le j_0$,
$$|q^{-j} \psi_{j}^{+} \eta(q\delta_j)| = |q^{-j_0-1}q^{j_0+1-j} \psi_{j}^{+} \eta(q\delta_j)| \le q^{-j_0-1} (2\delta_{j}^{-1})^{j_0+1} d_{j0}
\le q^{-j_0-1} \mathop{\max}\limits_{0\le j\le j_0}(2\delta_{j}^{-1})^{j_0+1} d_{j0}.
$$
Similarly, we have
\begin{equation*}
		\bigg|(r\pp_{r})^{\alpha} (q\pp_{q})^{\beta} (\sigma_{ap}(q,r) - \sum\limits_{j=0}^{j_0} q^{-j} \psi_{j}^{+}(r) )\bigg|
		\le
		e_{\alpha,\beta,j_0} q^{-j_0-1} .
\end{equation*}
This implies that $\sigma_{ap}(r\xi^{1/2},r)$ is a good approximation at infinity, namely the error
\begin{equation*}
\begin{aligned}
e(r\xi^{1/2},r) = \ & \left(\pp_{rr} + 2i\xi^{1/2} \pp_{r} +\frac{1}{4r^2} + \frac{8}{(1+r^2)^2}\right)  \sigma_{ap}
\\
= \ &
\left(\pp_{rr} + 2i\xi^{1/2} \pp_{r} +\frac{1}{4r^2} + \frac{8}{(1+r^2)^2}\right)
\left(\sigma_{ap} - \sum\limits_{j=0}^{j_0}
q^{-j} \psi_{j}^+(r) + \sum\limits_{j=0}^{j_0}
q^{-j} \psi_{j}^+(r) \right)
\\
\end{aligned}
\end{equation*}
is small. Indeed, for the first term, we estimate
\begin{equation*}
	\left|\left(\pp_{rr} +\frac{1}{4r^2} + \frac{8}{(1+r^2)^2} \right)
	\left(\sigma_{ap} - \sum\limits_{j=0}^{j_0}
	q^{-j} \psi_{j}^+(r)  \right) \right|
\lesssim
r^{-2} q^{-j_0},
\end{equation*}
and
\begin{equation*}
\left|2i\xi^{1/2} \pp_{r}
	\left(\sigma_{ap} - \sum\limits_{j=0}^{j_0}
	q^{-j} \psi_{j}^+(r)  \right)\right|
	\lesssim
	\xi^{1/2}  r^{-1} q^{-j_0 -1}
= r^{-2} q^{-j_0}.
\end{equation*}
For the second term, we have
\begin{align*}
&
\left(\pp_{rr} + 2i\xi^{1/2} \pp_{r} +\frac{1}{4r^2} + \frac{8}{(1+r^2)^2}\right)
\sum\limits_{j=0}^{j_0}
q^{-j} \psi_{j}^+(r)
\\
= \ &
\left(\pp_{rr} + 2i\xi^{1/2} \pp_{r} +\frac{1}{4r^2} + \frac{8}{(1+r^2)^2}\right)
\sum\limits_{j=0}^{j_0}
\xi^{-j/2} f_{j}(r)
\\
= \ &
\left(\pp_{rr}  +\frac{1}{4r^2} + \frac{8}{(1+r^2)^2}\right)
\sum\limits_{j=0}^{j_0}
\xi^{-j/2} f_{j}(r)
+
2i
\left( \sum\limits_{j=0}^{j_0 -1}
\xi^{-j/2}  \pp_{r}   f_{j+1}(r)
\right)
\\
= \ &
\left(\pp_{rr}  +\frac{1}{4r^2} + \frac{8}{(1+r^2)^2}\right)
\xi^{-{j_0}/2} f_{j_0}(r),
\end{align*}
so
\begin{equation*}
\left|\left(\pp_{rr} + 2i\xi^{1/2} \pp_{r} +\frac{1}{4r^2} + \frac{8}{(1+r^2)^2}\right)
\sum\limits_{j=0}^{j_0}
q^{-j} \psi_{j}^+(r) \right|\lesssim \xi^{-{j_0}/2} r^{-j_0 -2} =
r^{-2} q^{-j_0} .
\end{equation*}
Therefore, the error has the following bound
\begin{equation*}
\big|e(r\xi^{1/2},r)\big| \le c_{j_0}  r^{-2} q^{-j_0} \mbox{ \ for all \ } j_0\ge 0 .
\end{equation*}
We then look for perturbation $\sigma_1 = -\sigma +\sigma_{ap}$ solved by
\begin{equation*}
\left(\pp_{rr} + 2i\xi^{1/2} \pp_{r} +\frac{1}{4r^2} + \frac{8}{(1+r^2)^2}\right) \sigma_{1} = e(r\xi^{1/2} ,r),
\end{equation*}
which can be formulated, by writing $(v_1,v_2)=(\sigma, r\pp_r \sigma)$, as a system of first order
\begin{equation}\label{eqn-v}
\left\{
\begin{aligned}
&\pp_r v_1=r^{-1}v_2\\
&\pp_r v_2=\left(-\frac1{4r}-\frac{8r}{(1+r^2)^2}\right) v_1+(r^{-1}-2i\xi^{1/2})v_2+r e(r\xi^{1/2},r)\\
\end{aligned}
\right.
\end{equation}
We then estimate
\begin{equation*}
\begin{aligned}
&~\pp_r(|v_1|^2+|v_2|^2)\\
=&~ 2{\rm Re}(\bar v_1\pp_r v_1)+2{\rm Re}(\bar v_2\pp_r v_2)\\
=&~2r^{-1}{\rm Re}(\bar v_1 v_2)+2{\rm Re}\Big[\bar v_2\Big(\Big(-\frac1{4r}-\frac{8r}{(1+r^2)^2}\Big) v_1+(r^{-1}-2i\xi^{1/2})v_2+r e(r\xi^{1/2},r)\Big)\Big]\\
=&~2r^{-1}{\rm Re}(\bar v_1 v_2)-\left(\frac1{2r}+\frac{16r}{(1+r^2)^2}\right){\rm Re}(\bar v_2 v_1)+2r^{-1}|v_2|^2+2r{\rm Re}\big(\bar v_2 e(r\xi^{1/2},r)\big)\\
\geq&~ -C (r^{-1}|v|^2+r|v||e(r\xi^{1/2},r)|)
\end{aligned}
\end{equation*}
for some $C>0$. So we have
$$
\pp_r |v|\geq -C (r^{-1}|v|+r|e(r\xi^{1/2},r)|),
$$
and
$$
|v|\leq Cr^{-C}\int_r^{\infty}s|e(s\xi^{1/2},s)|ds.
$$
Recalling the estimate
$$
|e|\lesssim r^{-2}q^{-j_0}=r^{-2-j_0}\xi^{-j_0/2},
$$
we get
$$
|v|\lesssim r^{-j_0}\xi^{-j_0/2}.
$$
For the estimate of higher order derivatives $(r\pp_r)^{\alpha}(\xi\pp_{\xi})^{\beta}v$, one needs to differentiate \eqref{eqn-v} and repeat above process. The argument is a verbatim repetition of the proof in \cite{Krieger08}.
\end{proof}

\medskip

By the asymptotic expansion of $\Psi_{0}^{+}$, we have $W(\Psi_{0}^{+}, \overline{\Psi_{0}^{+}})= -2i$. Concerning the density of spectral measure, we have the following
\begin{prop}
We have
\begin{equation}\label{41}
\Phi^{0}(r,\xi) = a_{0}(\xi ) \Psi_{0}^{+}(r,\xi) +
\overline{a_{0}(\xi ) \Psi_{0}^{+}(r,\xi)},
\end{equation}
where
\begin{equation*}
|a_{0}(\xi)| \sim
1. 
\end{equation*}
The spectral measure $\rho_0$ has density estimate
\begin{equation*}
	\frac{d \rho_0}{d \xi}(\xi)  \sim |a_0(\xi)|^{-2}
	\sim 1.
\end{equation*}
\end{prop}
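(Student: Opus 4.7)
The plan is to exploit the fact that $\Phi^0(r,\xi)$ is real-valued (since it is obtained from the real-analytic series in $\xi$ with real coefficients $\Phi_j^0$ for $\xi\in\R_+$) together with the fact that $\{\Psi_0^+,\overline{\Psi_0^+}\}$ is a fundamental system of $(-\mathcal L_0-\xi)y=0$. The Wronskian computation $W(\Psi_0^+,\overline{\Psi_0^+})=-2i$ (which follows from the leading asymptotic $\Psi_0^+\sim \xi^{-1/4}e^{ir\xi^{1/2}}$ and the next-order $\psi_1^+$ derived in Lemma \ref{lem-323232}) confirms linear independence for every $\xi>0$. Consequently, any real solution, including $\Phi^0(\cdot,\xi)$, admits a unique representation of the form \eqref{41}, and the coefficient is given explicitly by
\begin{equation*}
a_0(\xi)=\frac{i}{2}\,W\bigl(\Phi^0(\cdot,\xi),\overline{\Psi_0^+(\cdot,\xi)}\bigr),
\end{equation*}
which is constant in $r$ and can therefore be evaluated in whichever region is most convenient.

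First, I would prove the upper bound $|a_0(\xi)|\lesssim 1$ by evaluating the Wronskian at a radius $r_*=r_*(\xi)$ chosen inside the asymptotic regime for $\Psi_0^+$. The plan is to take $r_*\sim \xi^{-1/2}$ (or slightly larger) where both expansions coexist: the convergent power series for $\Phi^0$ (with $r_*^2\xi\sim 1$) gives $\Phi^0(r_*,\xi)$ and $\partial_r\Phi^0(r_*,\xi)$ bounded respectively by $r_*^{1/2}$ and $r_*^{-1/2}$, while Lemma \ref{lem-323232} yields $|\Psi_0^+(r_*,\xi)|\lesssim \xi^{-1/4}$ and $|\partial_r\Psi_0^+(r_*,\xi)|\lesssim \xi^{1/4}$. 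Multiplying and converting scales gives $|a_0(\xi)|\lesssim 1$. For the matching lower bound, I would argue by contradiction: if $a_0(\xi_0)=0$ at some $\xi_0>0$, then $\Phi^0(\cdot,\xi_0)\equiv 0$ by \eqref{41}, contradicting the fact that $\Phi^0(\cdot,\xi)$ coincides at $r\ll 1$ with the non-trivial entire series starting from $\Phi_0^0(r)$. A quantitative lower bound $|a_0(\xi)|\gtrsim 1$ is then obtained by continuity together with the behaviors at the two endpoints: as $\xi\to\infty$, matching the convergent series of $\Phi^0$ at $r_*\sim \xi^{-1/2}$ against the oscillatory $\Psi_0^+$ gives $a_0(\xi)\to c_\infty\ne 0$; as $\xi\to 0^+$, one controls $a_0(\xi)$ via the zero-energy solutions $\Phi_0^0,\Theta_0^0$ and the connection formulas for $\Psi_0^+$ at low energies, noting that the zero-energy resonance of $\mathcal L_0$ corresponds exactly to $\Phi_0^0$ and thus does \emph{not} force $a_0$ to vanish (only $a_0$ multiplied by a standard power of $\xi$, after normalization).

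For the density of the spectral measure, I would invoke the standard Weyl--Titchmarsh formula. Writing the fundamental system $(\phi(\cdot,\xi),\theta(\cdot,\xi))$ normalized at the origin by $W[\theta,\phi]=1$, the Weyl solution is $\psi=\theta+m(\xi)\phi$, and the spectral density is $\rho(\xi)=\pi^{-1}\mathrm{Im}\,m(\xi+i0^+)$. Decomposing $\psi\propto \Psi_0^+$ and $\phi\propto \Phi^0$ and substituting \eqref{41} into this relation, one obtains
\begin{equation*}
\frac{d\rho_0}{d\xi}(\xi)\;=\;\frac{1}{\pi}\,\frac{|W[\theta,\Psi_0^+]|}{|a_0(\xi)|^2}\cdot(\text{normalization}),
\end{equation*}
where the Wronskian prefactors reduce to a nonzero constant by the same matching argument (as both solutions are normalized via explicit asymptotics). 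Combining this identity with $|a_0(\xi)|\sim 1$ yields $d\rho_0/d\xi\sim 1$.

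The main obstacle will be establishing the \emph{lower} bound $|a_0(\xi)|\gtrsim 1$ uniformly in $\xi>0$. The upper bound and the matching at one specific energy are essentially algebraic consequences of the asymptotic expansions; the difficulty is to control $a_0$ uniformly across the full spectrum, in particular through the transition region $\xi\sim 1$ where neither the $\xi\to 0$ expansion (involving the zero-energy resonance $\Phi_0^0$) nor the high-frequency WKB expansion of Lemma \ref{lem-323232} is sharp. Here I would rely on the real-analyticity of $a_0(\xi)$ in $\xi\in(0,\infty)$ (inherited from the entireness of $\Phi^0$ and the Weyl solution's analytic dependence on spectral parameter) combined with the explicit endpoint limits; since $a_0$ is continuous and bounded on compact subsets of $(0,\infty)$ with nonzero limits at $0^+$ and $+\infty$, the uniform bound $|a_0|\sim 1$ follows by compactness. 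A parallel argument applies to the density and establishes $d\rho_0/d\xi\sim 1$ uniformly.
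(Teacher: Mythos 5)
Your upper-bound argument for $|a_0(\xi)|\lesssim 1$ matches the paper's proof exactly: evaluate the Wronskian $a_0(\xi) = \tfrac{i}{2}W(\Phi^0,\overline{\Psi_0^+})$ at $r\sim\xi^{-1/2}$, where the convergent series for $\Phi^0$ gives $|\Phi^0|\lesssim\xi^{-1/4}$, $|\partial_r\Phi^0|\lesssim\xi^{1/4}$, and Lemma \ref{lem-323232} gives the matching bounds for $\Psi_0^+$. The Weyl--Titchmarsh derivation of $d\rho_0/d\xi \sim |a_0|^{-2}$ is also the standard one the paper uses.

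Where you diverge is the \emph{lower} bound $|a_0(\xi)|\gtrsim 1$, and this is where there is a genuine gap. Your proposed route --- rule out $a_0(\xi_0)=0$ by contradiction, compute nonzero limits of $a_0$ as $\xi\to 0^+$ and $\xi\to\infty$, then invoke real-analyticity plus compactness to span the middle region --- leaves the crucial quantitative content unproved. Compactness on an interval $[\delta,\delta^{-1}]$ tells you $\inf|a_0|>0$ on that interval, but the constant degenerates as $\delta\to 0$ unless you actually establish the endpoint limits, which your sketch does not do: the assertion $a_0(\xi)\to c_\infty\ne 0$ is exactly as hard to prove as a uniform lower bound for large $\xi$, and the $\xi\to 0^+$ behavior is entangled with the zero-energy resonance, which you explicitly set aside. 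You acknowledge this is the ``main obstacle,'' but the proposed resolution does not actually resolve it; it just renames it.

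The paper instead uses a much more elementary and direct device, which you already have in hand but do not exploit: since $\Phi^0(\cdot,\xi)$ is real, the representation \eqref{41} gives the pointwise inequality
\begin{equation*}
|a_0(\xi)| \;\ge\; \frac{|\Phi^0(r,\xi)|}{2\,|\Psi_0^+(r,\xi)|}
\end{equation*}
valid at \emph{every} $r$, since $\Phi^0 = 2\,\mathrm{Re}(a_0\Psi_0^+)$. Now it suffices to exhibit, for each $\xi>0$, a single radius $r$ at which $|\Phi^0(r,\xi)|\gtrsim \xi^{-1/4}$ while $|\Psi_0^+(r,\xi)|\lesssim\xi^{-1/4}$. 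The paper does this by taking $r=M\xi^{-1/2}$ (with $M$ a fixed large constant) when $0<\xi\le 1$, and $r=m\xi^{-1/2}$ (with $m$ a fixed small constant) when $\xi\ge 1$; in both regimes $r^2\xi$ stays in a fixed window so the series representation of $\Phi^0$ applies, $r$ stays away from the single zero of $\Phi_0^0$ at $r=1$, and the leading term $\Phi_0^0(r)\sim \pm r^{1/2}\sim\xi^{-1/4}$ dominates. This turns the whole issue you flagged into a calculation rather than a soft compactness argument, and yields the uniform bound directly. I would replace your contradiction-plus-compactness paragraph with this pointwise estimate.
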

\begin{proof}
We follow the argument in \cite[Proposition 5.7]{Krieger08}.
\begin{equation*}
a_{0}(\xi) = \frac{i}{2} W(\Phi^{0}, \overline{\Psi_{0}^{+}} ) = \frac{i}{2}
(\Phi^{0}(r,\xi) \pp_{r} \overline{\Psi_{0}^{+} (r,\xi) } - \overline{\Psi_{0}^{+} (r,\xi)  } \pp_{r} \Phi^{0} (r,\xi) ) .
\end{equation*}
We evaluate the Wronskian in the region where both the $\Psi_{0}^{+} (r,\xi)$ and $\Phi^{0} (r,\xi) $ asymptotics are useful, i.e., where $r^2 \xi \sim 1$.  Recall that for $r^2 \xi \sim 1$, one has
$$
\Phi^0(r,\xi)\sim \frac{r^{1/2}(r^2-1)}{r^2+1}+r^{1/2}\Phi_1^0(r^2),
$$
and
\begin{equation*}
\Phi_1^0(u)\sim
\left\{
\begin{aligned}
&-\frac14+\frac{u}4+O(u^2)~&\mbox{ for }~ 0<u\leq 1,\\
&\frac14-\frac{\log^2 u}{2u}+O\left(\frac{\log u}{u}\right)~&\mbox{ for }~ u\geq 1.\\
\end{aligned}
\right.
\end{equation*}
For all $\xi >0$,  it follows from Lemma \ref{lem-323232} that
$$
|\Psi_{0}^{+}(r,\xi)| \lesssim  \xi^{-1/4},\quad |\pp_{r}\Psi_{0}^{+}(r,\xi)| \lesssim  \xi^{1/4}.
$$
For $r\sim \xi^{-1/2}$, we have
$$
|\Phi^{0}(r,\xi)| \lesssim \xi^{-1/4} ,\quad |\pp_{r} \Phi^{0}(r,\xi)| \lesssim \xi^{1/4},
$$
and thus
$$
|a_{0}(\xi)| \lesssim 1.
$$
Therefore, we have
\begin{equation*}
	\frac{d \rho_0}{d \xi}(\xi)  \sim |a_0(\xi)|^{-2}
	\gtrsim 1.
	\end{equation*}
In order to get the lower bound of $|a_{0}(\xi)| $, by \eqref{41}, we have
\begin{equation*}
|a_{0}(\xi)| \ge \frac{|\Phi^{0}(r,\xi)|}{2 |\Psi_{0}^{+} (r,\xi) |}.
\end{equation*}
For $0<\xi \le 1$, we take $r=M\xi^{-1/2}$ with a sufficiently large $M$. Then above estimates imply
$$
|\Phi^0(r,\xi)|\gtrsim \xi^{-1/4}.
$$
For $\xi\geq1,$ we similarly take $r=m\xi^{-1/2}$ for a sufficiently small $m$. Then
$$
|\Phi^0(r,\xi)|\gtrsim \xi^{-1/4}.
$$
Collecting above estimates, we conclude the estimate of density
\begin{equation*}
	\frac{d \rho_0}{d \xi}(\xi)  \sim  1 
\end{equation*}
as desired.
\end{proof}

\medskip

\subsection{Duhamel's representation by DFT}\label{DFT}

\medskip

Now we formulate the Duhamel's formula for the linearization at mode 0
$$
\partial_{\tau} \phi=\partial_{\rho\rho}\phi+\frac1{\rho}\partial_{\rho} \phi+V_0\phi+h,
$$
where
$$
V_0=\frac{8}{(1+\rho^2)^2},
$$
and $h$ has fast decay in space-time. If we take $\phi=\rho^{-1/2} A$, we get
$$
\partial_{\tau} A=A''+\frac{A}{4\rho^2}+V_0 A+h\rho^{1/2}=\mathcal L_0 A+h\rho^{1/2}.
$$
Consider the Cauchy problem
\begin{equation}\label{eqn-Lmode0}
\begin{cases}
\partial_{\tau} A=\mathcal L_0 A+h\rho^{1/2},\\
A(\cdot,t_0)=0.\\
\end{cases}
\end{equation}
For
$$
\mathcal L_0\Phi^0=-\xi\Phi^0(\rho,\xi),
$$
the generalized eigenfunction satisfies the following pointwise upper bound
\begin{equation}\label{est-geigen}
|\Phi^0(\rho,\xi)|\lesssim
\left\{
\begin{aligned}
&\rho^{1/2},\quad &\rho^2\xi\ll 1,\\
&\xi^{-1/4},\quad &\rho^2\xi\gg1,\\
\end{aligned}
\right.
\end{equation}
and its associated spectral measure has density estimate
\begin{equation}\label{est-sm}
	\frac{d \rho_0}{d \xi}(\xi)  \sim  1.
	\end{equation}
Taking distorted Fourier transform on both sides of \eqref{eqn-Lmode0}, we obtain
\begin{equation*}
\left\{
\begin{aligned}
&\partial_{\tau} \hat A(\xi,{\tau})=-\xi\hat A(\xi,{\tau})+\int_0^{\infty}h\rho^{1/2}\Phi^0(\rho,\xi)d\rho,\\
&\hat A(\cdot,{\tau}_0)=0.\\
\end{aligned}
\right.
\end{equation*}
Therefore, one has
\begin{equation*}
\phi(\rho,{\tau})=\rho^{-1/2}A(\rho,{\tau})=\int_{{\tau}_0}^{\tau}\int_0^{\infty}\int_{0}^{\infty} e^{-\xi({\tau}-s)}\rho^{-1/2}\Phi^0(\rho,\xi)\Phi^0(x,\xi)x^{1/2}h(x,s)\frac{d\rho_0}{d\xi}(\xi)d\xi dx ds.
\end{equation*}
Now assume the RHS $h$ is smooth enough with the pointwise space-time control
$$
|h(\rho,{\tau})|\lesssim v(\tau)\langle \rho\rangle^{-\ell}\| h \|^{(\tau)}_{v,\ell} ,\quad ~\ell>\frac32,
$$
where $v(\tau)$ is a regular function decay in $\tau$. Then
$$
|\phi|\lesssim \| h \|^{(\tau)}_{v,\ell}\int_{{\tau}_0}^{\tau} ds \int_{0}^{\infty} v(s)e^{-\xi({\tau}-s)}\rho^{-1/2}\Phi^0(\rho,\xi)\frac{d\rho_0}{d\xi}(\xi)  d\xi\int_0^{\infty}\Phi^0(x,\xi)x^{1/2} \langle x\rangle^{-\ell}dx.
$$

Before estimating $|\phi|$, we first invoke a lemma that will frequently used below. The following lemma is proved in \cite[Lemma A.3]{LLG}.
\begin{lemma}[\cite{LLG}]\label{asy-lem-1}
	Assume constants $a$, $b$ satisfy either $a>-1$ or $a=-1$ and $b<-1$.  For $0\le x_{0} \le x_{1} \le \frac 12$, we have	
	\begin{equation}\label{general case}
		\begin{aligned}
			\int_{x_{0}}^{x_{1}}  e^{-\lambda x} x^{a}
			(- \log  x)^{b} dx
			\lesssim
			\begin{cases}
				\begin{cases}
					x_{1}^{a+1}  (- \log  x_{1})^{b} & \mbox{ \ if \ } a>-1
					\\
					( - \log  x_{1})^{b+1}
					-
					( - \log  x_{0})^{b+1}
					& \mbox{ \ if \ } a=-1, b<-1
				\end{cases}
				&
				\mbox{ \ for \ }
				0\le \lambda \le x_{1}^{-1}
				\\
				\frac{ ( \log  \lambda)^b }{\lambda^{a+1}}
				+
				\begin{cases}
					0
					&
					\mbox{ \ if \ } a>-1
					\\
					( \log  \lambda)^{b+1} - (- \log  x_{0})^{b+1}
					&
					\mbox{ \ if \ } a=-1, b<-1
				\end{cases}
				&
				\mbox{ \ for \ }
				x_{1}^{-1} \le \lambda  \le x_{0}^{-1}
				\\
				\frac{ ( \log  \lambda)^b }{\lambda^{a+1}} e^{-\frac{x_{0 } \lambda}{2}}
				&
				\mbox{ \ for \ }  \lambda \ge x_{0}^{-1}
			\end{cases}
			.
		\end{aligned}
	\end{equation}		
\end{lemma}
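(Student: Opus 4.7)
\textbf{Proof plan for Lemma \ref{asy-lem-1}.} The lemma is a Laplace-type bound, and the natural strategy is to split the domain of integration at the transition point $x \sim \lambda^{-1}$, where the exponential weight $e^{-\lambda x}$ switches from being harmless to providing decay, and then to use the power-logarithmic weight $x^{a}(-\log x)^{b}$ to do the remaining work on each piece.

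First I would establish the purely polynomial-logarithmic lemma: for any $[y_{0},y_{1}] \subset [0,1/2]$,
\[
\int_{y_{0}}^{y_{1}} x^{a}(-\log x)^{b}\, dx
\lesssim
\begin{cases}
y_{1}^{a+1}(-\log y_{1})^{b}, & a>-1, \\[2pt]
(-\log y_{1})^{b+1}-(-\log y_{0})^{b+1}, & a=-1,\ b<-1.
\end{cases}
\]
When $a>-1$ this follows from integration by parts, writing $x^{a}\,dx = \tfrac{1}{a+1}d(x^{a+1})$ and absorbing the derivative $(-\log x)^{b-1}/x$ into the dominant term (using $y_{1}\le 1/2$ so $-\log y_{1}\ge \log 2>0$ keeps the logarithm bounded away from $0$). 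When $a=-1$, $b<-1$ an explicit antiderivative $-(b+1)^{-1}(-\log x)^{b+1}$ gives the result directly.

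Next I would handle the three ranges of $\lambda$. If $0\le \lambda\le x_{1}^{-1}$, then $e^{-\lambda x}\le 1$ on $[x_{0},x_{1}]$, and the claim is an immediate consequence of the polynomial-logarithmic estimate with $(y_{0},y_{1})=(x_{0},x_{1})$. If $x_{1}^{-1}\le \lambda\le x_{0}^{-1}$, split at $\lambda^{-1}\in [x_{0},x_{1}]$. On $[x_{0},\lambda^{-1}]$ bound $e^{-\lambda x}\le 1$ and apply the polynomial-logarithmic bound; its value at $y_{1}=\lambda^{-1}$ is $\lambda^{-(a+1)}(\log \lambda)^{b}$ for $a>-1$ and $(\log\lambda)^{b+1}-(-\log x_{0})^{b+1}$ for $a=-1$. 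On $[\lambda^{-1},x_{1}]$ substitute $y=\lambda x$ to obtain $\lambda^{-a-1}\int_{1}^{\lambda x_{1}} e^{-y} y^{a}(\log \lambda-\log y)^{b}\,dy$; since $y/\lambda=x\le 1/2$, we have $\log\lambda-\log y\ge \log 2>0$ and $(\log\lambda-\log y)^{b}\lesssim (\log\lambda)^{b}$ uniformly for $y\in[1,\lambda x_{1}]$ when $b>0$, and the same uniform equivalence holds for $b<0$ as long as $y$ stays away from $\lambda$, which it does. The exponential $e^{-y}$ then collapses the $y$-integral to a constant, producing $\lambda^{-(a+1)}(\log\lambda)^{b}$, matching the claimed bound in this regime. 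Finally, if $\lambda\ge x_{0}^{-1}$, the same substitution yields
\[
\lambda^{-a-1}\int_{\lambda x_{0}}^{\lambda x_{1}} e^{-y} y^{a}(\log\lambda-\log y)^{b}\, dy,
\]
and the factor $e^{-y}$ at the lower endpoint $y=\lambda x_{0}\ge 1$ can be split as $e^{-y/2}e^{-y/2}\le e^{-\lambda x_{0}/2}e^{-y/2}$, extracting the exponential-small prefactor and leaving a finite integral bounded by $(\log\lambda)^{b}$.

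The main obstacle I anticipate is the bookkeeping in the borderline case $a=-1$, $b<-1$: the polynomial-logarithmic estimate produces differences of $(-\log\cdot)^{b+1}$ rather than a single monomial in $\lambda$, and one has to check that the junction at $x=\lambda^{-1}$ in Case 2 matches the boundary contribution $(\log\lambda)^{b+1}-(-\log x_{0})^{b+1}$ in the stated inequality, without producing a spurious $(-\log\lambda^{-1})^{b+1}$ mismatch. A careful identification $-\log(\lambda^{-1})=\log\lambda$ resolves this, and one should also verify in Cases 2 and 3 that the tail integral after the change of variables contributes only at the constant level and not with an additional $(\log\lambda)^{b+1}$ factor; this uses $b<-1$ so that $\int_{1}^{\infty} e^{-y}y^{-1}(\log\lambda-\log y)^{b}\,dy$ is $O((\log\lambda)^{b})$, strictly smaller than $(\log\lambda)^{b+1}$ at the stated order.
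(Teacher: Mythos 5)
Your overall structure is the natural one and matches the standard Laplace-type argument: establish a pure power--logarithmic bound, then split at the transition scale $x\sim\lambda^{-1}$ and substitute $y=\lambda x$ on the tail. The poly-log lemma, the treatment of Case 1, the junction bookkeeping in Case 2 via $-\log(\lambda^{-1})=\log\lambda$, and the factorization $e^{-y}=e^{-y/2}e^{-y/2}$ in Case 3 are all fine.

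There is, however, a genuine gap in the claim that $(\log\lambda-\log y)^{b}\lesssim(\log\lambda)^{b}$ \emph{uniformly} for $y\in[1,\lambda x_{1}]$ when $b<0$, ``as long as $y$ stays away from $\lambda$.'' The constraint $x\le x_{1}\le\tfrac12$ only gives $y\le\lambda/2$, which yields $\log\lambda-\log y\ge\log 2$, a lower bound by a \emph{constant}, not by $c\log\lambda$ with $c>0$. For $b<0$ this gives only $(\log\lambda-\log y)^{b}\le(\log 2)^{b}=O(1)$, which does not compare to $(\log\lambda)^{b}\to 0$. Thus the tail integral in Cases 2 and 3 cannot be collapsed to a constant times $(\log\lambda)^{b}$ by the pointwise estimate alone; near $y\approx\lambda x_{1}$ the logarithmic factor is order one and one must rely on the exponential weight. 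The fix is a secondary split of $\int_{1}^{\lambda x_{1}}e^{-y}y^{a}(\log\lambda-\log y)^{b}\,dy$ at, say, $y=\lambda^{1/2}$: on $[1,\lambda^{1/2}]$ one has $\log y\le\tfrac12\log\lambda$, hence $\log\lambda-\log y\ge\tfrac12\log\lambda$ and the uniform comparison $(\log\lambda-\log y)^{b}\le 2^{|b|}(\log\lambda)^{b}$ holds; on $[\lambda^{1/2},\lambda x_{1}]$ one bounds $(\log\lambda-\log y)^{b}\lesssim 1$ and uses $\int_{\lambda^{1/2}}^{\infty}e^{-y}y^{a}\,dy\lesssim e^{-\lambda^{1/2}/2}\lesssim(\log\lambda)^{b}$, since the superpolynomial exponential decay beats any negative power of $\log\lambda$. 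With this extra splitting the argument closes; without it, the stated $O((\log\lambda)^{b})$ bound for the tail is not justified when $b<0$.
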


The linear theory for the mode $0$ is stated as follows, and its proof is in a similar spirit as \cite[Proposition 9.8]{LLG}.

\begin{proof}[Proof of Proposition \ref{prop-lt-W0}]

We first estimate the Duhamel's representation without imposing orthogonality on the RHS $h(\rho,\tau)$ and compute
\begin{equation*}
\begin{aligned}
|F(\xi)|:=&~\left|\int_0^{\infty}\Phi^0(x,\xi)x^{1/2}\langle x\rangle^{-\ell} dx\right|
\lesssim \int_0^{\xi^{-1/2}}x\langle x\rangle^{-\ell}  dx+\xi^{-1/4}\int_{\xi^{-1/2}}^{\infty}x^{1/2}\langle x\rangle^{-\ell} dx\\
\lesssim &~\begin{cases}
\xi^{-1/4} ~&\mbox{ for }~\xi\geq 1,\\
\begin{cases}
\xi^{\frac{\ell}{2}-1},~&\ell<2\\
\log\xi,~&\ell=2\\
1,~&\ell>2\\
\end{cases}
 ~&\mbox{ for }~\xi\leq 1.\\
\end{cases}
\end{aligned}
\end{equation*}

Now we consider the integration in $\xi$:
\begin{equation*}
\begin{aligned}
\left|\int_{0}^{\infty} e^{-\xi({\tau}-s)}\rho^{-1/2}\Phi^0(\rho,\xi)F(\xi)\frac{d\rho_0}{d\xi}(\xi) d\xi\right| = &~\left(\int_0^{\rho^{-2}} + \int_{\rho^{-2}}^{\infty}\right)\cdots:=P_1 + P_2.
\end{aligned}
\end{equation*}

\noindent $\bullet$ For $P_1$, if $\rho>1$, then we have
\begin{equation*}
	\begin{aligned}
		|P_{1}| \lesssim \ &
		\int_{0}^{\rho^{-2}}
		e^{-  \xi (\tau -s)}
\begin{cases}
	\xi^{\frac{\ell}{2} -1}
	&
	\mbox{ \ if  \ } \ell<2
	\\
	 \log \xi
	&
	\mbox{ \ if  \ } \ell=2
	\\
	1
	&
	\mbox{ \ if  \ } \ell>2
\end{cases}
		~~d\xi
\lesssim
\begin{cases}
\begin{cases}
	\rho^{-\ell }
&
\mbox{ \ if \ } \tau -s \le \rho^2
\\
(\tau-s)^{-\frac{\ell}{2}}
&
\mbox{ \ if \ } \tau -s > \rho^2
\end{cases}
	&
	\mbox{ \ if  \ } \ell<2
	\\
\begin{cases}
\rho^{-2} \langle  \log  \rho \rangle
	&
	\mbox{ \ if \ } \tau -s \le \rho^2
	\\
 (\tau-s)^{-1} \langle \log(\tau-s) \rangle
	&
	\mbox{ \ if \ } \tau -s > \rho^2
\end{cases}
	&
	\mbox{ \ if  \ } \ell=2
	\\
\begin{cases}
	\rho^{-2}
	&
	\mbox{ \ if \ } \tau -s \le \rho^2
	\\
 (\tau-s)^{-1}
	&
	\mbox{ \ if \ } \tau -s > \rho^2
\end{cases}
	&
	\mbox{ \ if  \ } \ell>2
\end{cases}
	\end{aligned}
\end{equation*}
If $\rho\leq 1$, then one has
\begin{equation*}
\begin{aligned}
P_1=&~\int_0^{\rho^{-2}}e^{-\xi({\tau}-s)}\rho^{-1/2}\Phi^0(\rho,\xi)\frac{d\rho_0}{d\xi}(\xi)F(\xi) d\xi\\
=&~\left(\int_0^1+\int_1^{\rho^{-2}}\right)e^{-\xi({\tau}-s)}\rho^{-1/2}\Phi^0(\rho,\xi)\frac{d\rho_0}{d\xi}(\xi)F(\xi) d\xi\\
:=&~P_{11}+P_{12},
\end{aligned}
\end{equation*}
where we estimate
\begin{equation*}
\begin{aligned}
|P_{11}|\lesssim&~\int_0^1 e^{-  \xi (\tau -s)}
\begin{cases}
	\xi^{\frac{\ell}{2} -1}
	&
	\mbox{ \ if  \ } \ell<2
	\\
	 \log \xi
	&
	\mbox{ \ if  \ } \ell=2
	\\
	1
	&
	\mbox{ \ if  \ } \ell>2
\end{cases}
		~~d\xi
\lesssim \begin{cases}
	\begin{cases}
	1
		&
		\mbox{ \ if \ } \tau -s \le 1
		\\
		(\tau-s)^{-\frac{\ell}{2}}
		&
		\mbox{ \ if \ } \tau -s > 1
	\end{cases}
	&
	\mbox{ \ for  \ } \ell<2
	\\
	\begin{cases}
		1
		&
		\mbox{ \ if \ } \tau -s \le 1
		\\
		(\tau-s)^{-1} \langle \log(\tau-s) \rangle
		&
		\mbox{ \ if \ } \tau -s > 1
	\end{cases}
	&
	\mbox{ \ for  \ } \ell=2
	\\
	\begin{cases}
		1
		&
		\mbox{ \ if \ } \tau -s \le 1
		\\
		(\tau-s)^{-1}
		&
		\mbox{ \ if \ } \tau -s > 1
	\end{cases}
	&
	\mbox{ \ for  \ } \ell>2
\end{cases}
\end{aligned}
\end{equation*}
and
\begin{equation*}
\begin{aligned}
|P_{22}|\lesssim&~\int_1^{\rho^{-2}} e^{-  \xi (\tau -s)} \xi^{-1/4} d\xi
\lesssim
\begin{cases}
	\rho^{-\frac 32}
&
\mbox{ \ if \ } \tau-s \le \rho^2
\\
(\tau-s)^{-\frac 34}
&
\mbox{ \ if \ } \rho^2 < \tau-s \le 1
\\
(\tau-s)^{-\frac 34}
e^{-\frac{\tau-s}{2}}
&
\mbox{ \ if \ }  \tau-s > 1
\end{cases}.
\end{aligned}
\end{equation*}
Hence one obtains for $\rho\leq 1$
\begin{equation*}
	|P_{1}| \lesssim
	\begin{cases}
		\begin{cases}
\rho^{-\frac32}
&
\mbox{ \ if \ } \tau-s \le \rho^2
\\
	 (\tau-s)^{-\frac 34}
&
\mbox{ \ if \ } \rho^2 < \tau-s \le 1
\\
	(\tau-s)^{-\frac{\ell}{2}}
			&
			\mbox{ \ if \ } \tau -s > 1
		\end{cases}
		&
		\mbox{ \ for  \ } \ell<2
		\\
		\begin{cases}
		\rho^{-\frac32}
		&
		\mbox{ \ if \ } \tau-s \le \rho^2
		\\
		(\tau-s)^{-\frac 34}
		&
		\mbox{ \ if \ } \rho^2 < \tau-s \le 1
		\\
				(\tau-s)^{-1} \langle \log(\tau-s ) \rangle
			&
			\mbox{ \ if \ } \tau -s > 1
		\end{cases}
		&
		\mbox{ \ for  \ } \ell=2
		\\
		\begin{cases}
			\rho^{-\frac32}
			&
			\mbox{ \ if \ } \tau-s \le \rho^2
			\\
			(\tau-s)^{-\frac 34}
			&
			\mbox{ \ if \ } \rho^2 < \tau-s \le 1
			\\
				(\tau-s)^{-1}
			&
			\mbox{ \ if \ } \tau -s > 1
		\end{cases}
		&
		\mbox{ \ for  \ } \ell>2
	\end{cases}
.
\end{equation*}

\noindent $\bullet$ For $P_2$, if $\rho\leq 1$, we estimate
\begin{equation*}
\begin{aligned}
	|P_{2}| \lesssim \rho^{-1/2}\ &
	\int_{\rho^{-2}}^{\infty}
	e^{- \xi (\tau -s)}
	\xi^{-\frac 12} 	d\xi
\lesssim
\begin{cases}
\rho^{-1/2}(\tau-s)^{-\frac 12}
&
\mbox{ \ if \ } \tau-s \le \rho^2
\\
\rho^{-1/2}(\tau-s)^{-\frac 12}
e^{- \frac{\tau-s}{2\rho^2}}
&
\mbox{ \ if \ } \tau-s > \rho^2
\end{cases}
.
\end{aligned}
\end{equation*}
If $\rho>1$, then we have
\begin{equation*}
\begin{aligned}
P_2=&~\int^{\infty}_{\rho^{-2}}e^{-\xi({\tau}-s)}\rho^{-1/2}\Phi^0(\rho,\xi)\frac{d\rho_0}{d\xi}(\xi)F(\xi) d\xi\\
=&~\left(\int_{\rho^{-2}}^1+\int_1^{\infty}\right)e^{-\xi({\tau}-s)}\rho^{-1/2}\Phi^0(\rho,\xi)\frac{d\rho_0}{d\xi}(\xi)F(\xi) d\xi\\
:=&~P_{21}+P_{22},
\end{aligned}
\end{equation*}
where
\begin{equation*}
\begin{aligned}
|P_{21}|\lesssim &~\rho^{-1/2}\int_{\rho^{-2}}^1 e^{-  \xi (\tau -s)} \xi^{-1/4}
\begin{cases}
	\xi^{\frac{\ell}{2} -1}
	&
	\mbox{ \ if  \ } \ell<2
	\\
	 \log \xi
	&
	\mbox{ \ if  \ } \ell=2
	\\
	1
	&
	\mbox{ \ if  \ } \ell>2
\end{cases}
		~~d\xi\\
		\lesssim &~\rho^{-1/2}
\begin{cases}
\begin{cases}
1
&
\mbox{ \ if \ } \tau-s\le 1
\\
(\tau-s)^{\frac 14 - \frac{\ell}{2}}
&
\mbox{ \ if \ } 1<\tau-s\le \rho^2
\\
(\tau-s)^{\frac 14 - \frac{\ell}{2}}
e^{-\frac{\tau-s}{2\rho^2}}
&
\mbox{ \ if \ } \tau-s >\rho^2
\end{cases}
	&
	\mbox{ \ for  \ } \ell<2
	\\
\begin{cases}
	1
	&
	\mbox{ \ if \ } \tau-s\le 1
	\\
	(\tau-s)^{-\frac 34}
	\langle \log (\tau-s) \rangle
	&
	\mbox{ \ if \ } 1<\tau-s\le \rho^2
	\\
	(\tau-s)^{-\frac 34}
	\langle \log (\tau-s) \rangle
	e^{-\frac{\tau-s}{2\rho^2}}
	&
	\mbox{ \ if \ } \tau-s >\rho^2
\end{cases}
	&
	\mbox{ \ for  \ } \ell=2
	\\
\begin{cases}
	1
	&
	\mbox{ \ if \ } \tau-s\le 1
	\\
	(\tau-s)^{-\frac 34}
	&
	\mbox{ \ if \ } 1<\tau-s\le \rho^2
	\\
	(\tau-s)^{-\frac 34}
	e^{-\frac{\tau-s}{2\rho^2}}
	&
	\mbox{ \ if \ } \tau-s >\rho^2
\end{cases}
	&
	\mbox{ \ for  \ } \ell>2
\end{cases}
\end{aligned}
\end{equation*}
and
\begin{equation*}
\begin{aligned}
|P_{22}|\lesssim&~\rho^{-1/2}\int_{1}^{\infty}
	e^{- \xi (\tau -s)}
	\xi^{-\frac 12}
	d\xi
	\lesssim \rho^{-1/2}
	\begin{cases}
		(\tau-s)^{-\frac 12}
		&
		\mbox{ \ if \ } \tau-s \le 1
		\\
		(\tau-s)^{-\frac 12}
		e^{- \frac{\tau-s}{2}}
		&
		\mbox{ \ if \ } \tau-s > 1
	\end{cases}
\end{aligned}
\end{equation*}
Therefore, we obtain for $\rho>1$ that
\begin{equation*}
	|P_2| \lesssim \rho^{-1/2}
	\begin{cases}
		\begin{cases}
			(\tau-s)^{-\frac 12}
			&
			\mbox{ \ if \ } \tau-s\le 1
			\\
			(\tau-s)^{\frac 14 - \frac{\ell}{2}}
			&
			\mbox{ \ if \ } 1<\tau-s\le \rho^2
			\\
			(\tau-s)^{\frac 14 - \frac{\ell}{2}}
			e^{-\frac{\tau-s}{4\rho^2}}
			&
			\mbox{ \ if \ } \tau-s >\rho^2
		\end{cases}
		&
		\mbox{ \ for  \ } \ell<2
		\\
		\begin{cases}
			(\tau-s)^{-\frac 12}
			&
			\mbox{ \ if \ } \tau-s\le 1
			\\
			(\tau-s)^{-\frac 34}
			\langle \log (\tau-s) \rangle
			&
			\mbox{ \ if \ } 1<\tau-s\le \rho^2
			\\
			(\tau-s)^{-\frac 34}
			\langle \log (\tau-s) \rangle
			e^{-\frac{\tau-s}{4 \rho^2}}
			&
			\mbox{ \ if \ } \tau-s >\rho^2
		\end{cases}
		&
		\mbox{ \ for  \ } \ell=2
		\\
		\begin{cases}
			(\tau-s)^{-\frac 12}
			&
			\mbox{ \ if \ } \tau-s\le 1
			\\
			(\tau-s)^{-\frac 34}
			&
			\mbox{ \ if \ } 1<\tau-s\le \rho^2
			\\
			(\tau-s)^{-\frac 34}
			e^{-\frac{\tau-s}{4 \rho^2}}
			&
			\mbox{ \ if \ } \tau-s >\rho^2
		\end{cases}
		&
		\mbox{ \ for  \ } \ell>2
	\end{cases}
	.
\end{equation*}

\medskip

Collecting above estimates, we conclude that

\noindent $\bullet$ for $\rho\le 1$,
\begin{equation*}
	|P_1+P_2| \lesssim
	\begin{cases}
		\begin{cases}
\rho^{-1/2}(\tau-s)^{-\frac 12}
&
\mbox{ \ if \ } \tau-s \le \rho^2
\\
	 (\tau-s)^{-\frac 34}
&
\mbox{ \ if \ } \rho^2 < \tau-s \le 1
\\
	(\tau-s)^{-\frac{\ell}{2}}
			&
			\mbox{ \ if \ } \tau -s > 1
		\end{cases}
		&
		\mbox{ \ for  \ } \ell<2
		\\
		\begin{cases}
		\rho^{-1/2}(\tau-s)^{-\frac 12}
		&
		\mbox{ \ if \ } \tau-s \le \rho^2
		\\
		(\tau-s)^{-\frac 34}
		&
		\mbox{ \ if \ } \rho^2 < \tau-s \le 1
		\\
				(\tau-s)^{-1} \langle \log(\tau-s ) \rangle
			&
			\mbox{ \ if \ } \tau -s > 1
		\end{cases}
		&
		\mbox{ \ for  \ } \ell=2
		\\
		\begin{cases}
			\rho^{-1/2}(\tau-s)^{-\frac 12}
			&
			\mbox{ \ if \ } \tau-s \le \rho^2
			\\
			(\tau-s)^{-\frac 34}
			&
			\mbox{ \ if \ } \rho^2 < \tau-s \le 1
			\\
				(\tau-s)^{-1}
			&
			\mbox{ \ if \ } \tau -s > 1
		\end{cases}
		&
		\mbox{ \ for  \ } \ell>2
	\end{cases}
,
\end{equation*}

\noindent $\bullet$  for $\rho>1$,
\begin{equation*}
	|P_1+P_2| \lesssim
	\begin{cases}
		\begin{cases}
			\rho^{-1/2}(\tau-s)^{-\frac 12}
			&
			\mbox{ \ if \ } \tau-s\le 1
			\\
			\rho^{-1/2}(\tau-s)^{\frac 14 - \frac{\ell}{2}}
			&
			\mbox{ \ if \ } 1<\tau-s\le \rho^2
			\\
			(\tau-s)^{-\frac{l}{2}}
			&
			\mbox{ \ if \ } \tau-s >\rho^2
		\end{cases}
		&
		\mbox{ \ for  \ } \ell<2
		\\
		\begin{cases}
			\rho^{-1/2}(\tau-s)^{-\frac 12}
			&
			\mbox{ \ if \ } \tau-s\le 1
			\\
			\rho^{-1/2}(\tau-s)^{-\frac 34}
			\langle \log (\tau-s) \rangle
			&
			\mbox{ \ if \ } 1<\tau-s\le \rho^2
			\\
			(\tau-s)^{-1}\langle\log(\tau-s)\rangle
			&
			\mbox{ \ if \ } \tau-s >\rho^2
		\end{cases}
		&
		\mbox{ \ for  \ } \ell=2
		\\
		\begin{cases}
			\rho^{-1/2}(\tau-s)^{-\frac 12}
			&
			\mbox{ \ if \ } \tau-s\le 1
			\\
			\rho^{-1/2}(\tau-s)^{-\frac 34}
			&
			\mbox{ \ if \ } 1<\tau-s\le \rho^2
			\\
			(\tau-s)^{-1}
			&
			\mbox{ \ if \ } \tau-s >\rho^2
		\end{cases}
		&
		\mbox{ \ for  \ } \ell>2
	\end{cases}
	.
\end{equation*}

\medskip

We now estimate the convolution in time.

\medskip

\noindent $\bullet$ For $\rho\leq 1$, one has

\begin{align*}
\left|	\phi(\rho,\tau)
\right|
\lesssim &~
\left(
\int_{\tau-\rho^2}^{\tau}
+
\int_{\tau-1}^{\tau-\rho^2}
+
\int_{\frac{\tau_{0}}{2}}^{\tau-1}
\right)
v(s) \Big|P_1(\rho,\tau,s)+P_2(\rho,\tau,s)\Big| ds
\\
\lesssim &~
\rho^{-1/2} v(\tau)\int_{\tau-\rho^2}^{\tau}
(\tau-s)^{-\frac 12} ds
+
v(\tau)\int_{\tau-1}^{\tau-\rho^2}
(\tau-s)^{-\frac34} ds
\\
&~
+
\begin{cases}
\int_{\frac{\tau_{0}}{2}}^{\tau-1}
v(s) (\tau-s)^{-\frac{\ell}{2}} ds
 &
 \mbox{ \ if \ } \ell <2
 \\
 \int_{\frac{\tau_{0}}{2}}^{\tau-1}
v(s) (\tau-s)^{-1} \langle \log(\tau-s ) \rangle ds
&
\mbox{ \ if \ } \ell =2
\\
\int_{\frac{\tau_{0}}{2}}^{\tau-1}
v(s) (\tau-s)^{-1} ds
&
\mbox{ \ if \ } \ell >2
\end{cases}
\\\lesssim & ~
v(\tau)
+
\begin{cases}
	v(\tau) \tau^{1-\frac{\ell}{2}}
+
\tau^{-\frac{\ell}{2}}
\int_{\frac{\tau_{0}}{2}}^{ \frac{\tau}{2} }
v(s)  ds
	&
	\mbox{ \ if \ } \ell <2
	\\
v(\tau) ( \log  \tau)^2
+
\tau^{-1}  \log  \tau
	\int_{\frac{\tau_{0}}{2}}^{\frac{\tau}{2}}
	v(s)   ds
	&
	\mbox{ \ if \ } \ell =2
	\\
		v(\tau)  \log  \tau
		+
	\tau^{-1}
	\int_{\frac{\tau_{0}}{2}}^{ \frac{\tau}{2} } v(s) ds
	&
	\mbox{ \ if \ } \ell >2
\end{cases}
.
\end{align*}

\medskip

\noindent $\bullet$ For $1<\rho\leq (\frac{\tau}{2})^{1/2}$, we estimate
\begin{align*}
		\left|	\phi(\rho,\tau)
		\right|
		\lesssim &~
		\left(
		\int_{\tau -1}^{\tau}
		+	
\int_{\tau -\rho^2}^{\tau-1}
+
\int_{\frac{\tau_0}{2}}^{\tau -\rho^2}
		\right)
		v(s) \Big|P_1(\rho,\tau,s)+P_2(\rho,\tau,s)\Big| ds
		\\
\lesssim  & ~
v(\tau)\rho^{-1/2}
\int_{\tau -1}^{\tau} (\tau-s)^{-\frac 12} ds
+	
v(\tau)\rho^{-1/2}
\int_{\tau -\rho^2}^{\tau-1}
\begin{cases}
(\tau-s)^{\frac 14 - \frac{\ell}{2}}
&
\mbox{ \ if \ }\ell <2
\\
(\tau-s)^{-\frac 34}
\langle \log (\tau-s) \rangle
&
\mbox{ \ if \ }\ell =2
\\
(\tau-s)^{-\frac 34}
&
\mbox{ \ if \ }\ell >2
\end{cases}
~~ ds
\\
&
+
\int_{\frac{\tau_0}{2}}^{\tau -\rho^2}
v(s)
\begin{cases}
(\tau-s)^{-\frac{\ell}{2}}
&
\mbox{ \ if  \ } \ell <2
\\
(\tau-s)^{-1} \langle \log(\tau-s) \rangle
&
\mbox{ \ if  \ } \ell =2
\\
(\tau-s)^{-1}
&
\mbox{ \ if  \ } \ell >2
\end{cases}
~~ds
\\
\lesssim  & ~
\rho^{-1/2}
v(\tau)
+	
v(\tau)
\begin{cases}
\rho^{2 -\ell}
	&
	\mbox{ \ if \ }\ell <2
	\\
 \langle \log \rho \rangle
	&
	\mbox{ \ if \ }\ell =2
	\\
1
	&
	\mbox{ \ if \ }\ell >2
\end{cases}
+
\begin{cases}
	v(\tau) \tau^{1-\frac{\ell}{2}}
	+
	\tau^{-\frac{\ell}{2}}
	\int_{\frac{\tau_0}{2}}^{\frac{\tau}{2}}
	v(s) ds
	&
	\mbox{ \ if  \ } \ell <2
	\\
v(\tau)
\int_{\rho^2}^{\frac{\tau}{2}} \langle \log z \rangle z^{-1} dz
+
\tau^{-1}  \log  \tau
	\int_{\frac{\tau_0}{2}}^{\frac{\tau}{2}}
	v(s) ds
	&
	\mbox{ \ if  \ } \ell =2
	\\
	v(\tau) \log (\frac{\tau}{2\rho^2} )
	+
	\tau^{-1}
	\int_{\frac{\tau_0}{2}}^{\frac{\tau}{2}}
	v(s) ds
	&
	\mbox{ \ if  \ } \ell >2
\end{cases}
\\
\lesssim & ~
\begin{cases}
	v(\tau) \tau^{1-\frac{\ell}{2}}
	+
	\tau^{-\frac{\ell}{2}}
	\int_{\frac{\tau_0}{2}}^{\frac{\tau}{2}}
	v(s) ds
	&
	\mbox{ \ if  \ } \ell <2
	\\
	v(\tau) (\langle \log \rho \rangle +
	\int_{\rho^2}^{\frac{\tau}{2}} \langle \log z \rangle z^{-1} dz)
	+
	\tau^{-1} \log \tau
	\int_{\frac{\tau_0}{2}}^{\frac{\tau}{2}}
	v(s) ds
	&
	\mbox{ \ if  \ } \ell =2
	\\
	v(\tau) \langle \log (\frac{\tau}{2\rho^2} ) \rangle
	+
	\tau^{-1}
	\int_{\frac{\tau_0}{2}}^{\frac{\tau}{2}}
	v(s) ds
	&
	\mbox{ \ if  \ } \ell >2
\end{cases}
.
	\end{align*}

\medskip

\noindent $\bullet$ For $(\frac{\tau}{2})^{1/2}\leq \rho \leq \tau^{1/2}$, we estimate
\begin{align*}
		\left|	\phi(\rho,\tau)
		\right|
		\lesssim &~
		\left(
		\int_{\tau -1}^{\tau}
		+	
		\int_{\tau -\rho^2}^{\tau-1}
		+
		\int_{\frac{\tau_0}{2}}^{\tau -\rho^2}
		\right)
		v(s) \Big|P_1(\rho,\tau,s)+P_2(\rho,\tau,s)\Big| ds
		\\
		\lesssim &~
		\rho^{-1/2}v(\tau)
		\int_{\tau -1}^{\tau} (\tau-s)^{-\frac 12} ds
		+	
		\rho^{-1/2}\int_{\tau -\rho^2}^{\tau-1}
		v(s)
		\begin{cases}
			(\tau-s)^{\frac 14 - \frac{\ell}{2}}
			&
			\mbox{ \ if \ }\ell <2
			\\
			(\tau-s)^{-\frac 34}
			\langle \log (\tau-s) \rangle
			&
			\mbox{ \ if \ }\ell =2
			\\
			(\tau-s)^{-\frac 34}
			&
			\mbox{ \ if \ }\ell >2
		\end{cases}
		~~ds
		\\
		&
		+
		\int_{\frac{\tau_0}{2}}^{\tau -\rho^2}
		v(s)
		\begin{cases}
			(\tau-s)^{-\frac{\ell}{2}}
			&
			\mbox{ \ if  \ } \ell <2
			\\
			(\tau-s)^{-1} \langle \log(\tau-s)  \rangle
			&
			\mbox{ \ if  \ } \ell =2
			\\
			(\tau-s)^{-1}
			&
			\mbox{ \ if  \ } \ell >2
		\end{cases}
		~~ds
		\\
		\lesssim & ~
\rho^{-1/2}v(\tau)
+	
\rho^{-1/2}v(\tau)
\begin{cases}
\tau^{\frac 54 - \frac{\ell}{2}}
	&
	\mbox{ \ if \ }\ell <2
	\\
	\tau^{\frac 14}
	\langle \log\tau \rangle
	&
	\mbox{ \ if \ }\ell =2
	\\
	\tau^{\frac 14}
	&
	\mbox{ \ if \ }\ell >2
\end{cases}
+
\rho^{-1/2}\int_{\tau -\rho^2}^{\frac{\tau}{2}}
v(s) ds
\begin{cases}
	\tau^{\frac 14 - \frac{\ell}{2}}
	&
	\mbox{ \ if \ }\ell <2
	\\
	\tau^{-\frac 34}
	\langle \log \tau \rangle
	&
	\mbox{ \ if \ }\ell =2
	\\
	\tau^{-\frac 34}
	&
	\mbox{ \ if \ }\ell >2
\end{cases}
\\
&
+
\int_{\frac{\tau_0}{2}}^{\tau -\rho^2}
v(s) ds
\begin{cases}
	\tau^{-\frac{\ell}{2}}
	&
	\mbox{ \ if  \ } \ell <2
	\\
	\tau^{-1} \langle \log\tau  \rangle
	&
	\mbox{ \ if  \ } \ell =2
	\\
\tau^{-1}
	&
	\mbox{ \ if  \ } \ell >2
\end{cases}
\\
	\lesssim  &~
\rho^{-1/2}v(\tau)
\begin{cases}
	\tau^{\frac 54 - \frac{\ell}{2}}
	&
	\mbox{ \ if \ }\ell <2
	\\
	\tau^{\frac 14}
	\langle \log\tau \rangle
	&
	\mbox{ \ if \ }\ell =2
	\\
	\tau^{\frac 14}
	&
	\mbox{ \ if \ }\ell >2
\end{cases}
+
\rho^{-1/2}\int_{ \frac{\tau_0}{2} }^{\frac{\tau}{2}}
v(s) ds
\begin{cases}
	\tau^{\frac 14 - \frac{\ell}{2}}
	&
	\mbox{ \ if \ }\ell <2
	\\
	\tau^{-\frac 34}
	\langle \log \tau \rangle
	&
	\mbox{ \ if \ }\ell =2
	\\
	\tau^{-\frac 34}
	&
	\mbox{ \ if \ }\ell >2
\end{cases}
\\
\sim & ~
\begin{cases}
		\tau^{1 - \frac{\ell}{2}} v(\tau) +
	\tau^{ - \frac{\ell}{2}} \int_{ \frac{\tau_0}{2} }^{\frac{\tau}{2}}
	v(s) ds
	&
	\mbox{ \ if \ }\ell <2
	\\
		\langle \log\tau \rangle v(\tau)+
	\tau^{-1}
	\langle \log \tau \rangle \int_{ \frac{\tau_0}{2} }^{\frac{\tau}{2}}
	v(s) ds
	&
	\mbox{ \ if \ }\ell =2
	\\
	v(\tau)
	+
	\tau^{-1} \int_{ \frac{\tau_0}{2} }^{\frac{\tau}{2}}
	v(s) ds
	&
	\mbox{ \ if \ }\ell >2
\end{cases}
.
	\end{align*}

\medskip

\noindent $\bullet$ For $\rho\geq \tau^{1/2}$, we estimate
\begin{align*}
\left|	\phi(\rho,\tau)
\right|
\lesssim &~
\left(
\int_{\tau -1}^{\tau}
+	
\int_{\frac{\tau_0}{2}}^{\tau-1}
\right)
v(s) \Big|P_1(\rho,\tau,s)+P_2(\rho,\tau,s)\Big| ds
\\
\lesssim &~
\rho^{-1/2}v(\tau)
\int_{\tau -1}^{\tau}
(\tau-s)^{-\frac 12} ds
+	
\rho^{-1/2}
\int_{\frac{\tau_0}{2}}^{\tau-1}
v(s)
\begin{cases}
(\tau-s)^{\frac 14 - \frac{\ell}{2}}
&
\mbox{ \ if  \ }\ell<2
\\
(\tau-s)^{-\frac 34}
\langle \log (\tau-s) \rangle
&
\mbox{ \ if  \ }\ell=2
\\
(\tau-s)^{-\frac 34}
&
\mbox{ \ if  \ }\ell>2
\end{cases}
~~ds
\\
\lesssim  &~
\rho^{-1/2}v(\tau)
+	
\rho^{-1/2}
\begin{cases}
	v(\tau) \tau^{\frac 54-\frac{\ell}{2}}
	+
	\tau^{\frac 14 - \frac{\ell}{2}}
	\int_{\frac{\tau_0}{2}}^{ \frac{\tau}{2} }
	v(s)  ds
	&
	\mbox{ \ if  \ }\ell<2
	\\
	v(\tau) \tau^{\frac 14} \langle \log \tau \rangle
	+
	\tau^{-\frac 34}
	\langle \log \tau \rangle
	\int_{\frac{\tau_0}{2}}^{ \frac{\tau}{2} }
	v(s) ds
	&
	\mbox{ \ if  \ }\ell=2
	\\
	v(\tau) \tau^{\frac 14 }
	+
	\tau^{-\frac 34 }
	\int_{\frac{\tau_0}{2}}^{ \frac{\tau}{2} }
	v(s)  ds
	&
	\mbox{ \ if  \ }\ell>2
\end{cases}
\\
\lesssim  &~
\rho^{-1/2}
\begin{cases}
	v(\tau) \tau^{\frac 54-\frac{\ell}{2}}
	+
	\tau^{\frac 14 - \frac{\ell}{2}}
	\int_{\frac{\tau_0}{2}}^{ \frac{\tau}{2} }
	v(s)  ds
	&
	\mbox{ \ if  \ }\ell<2
	\\
	v(\tau) \tau^{\frac 14} \langle \log \tau \rangle
	+
	\tau^{-\frac 34}
	\langle \log \tau \rangle
	\int_{\frac{\tau_0}{2}}^{ \frac{\tau}{2} }
	v(s) ds
	&
	\mbox{ \ if  \ }\ell=2
	\\
	v(\tau) \tau^{\frac 14 }
	+
	\tau^{-\frac 34 }
	\int_{\frac{\tau_0}{2}}^{ \frac{\tau}{2} }
	v(s)  ds
	&
	\mbox{ \ if  \ }\ell>2
\end{cases}
.
\end{align*}
Collecting above estimates, we obtain the a priori estimates without orthogonality. Estimates with orthogonality can be derived in a similar manner, where the orthogonality condition
\begin{equation*}
\int_0^{\infty} h(\rho,\tau)\frac{\rho^2-1}{\rho^2+1}\rho d\rho=0\quad \forall \tau\in(\tau_0,+\infty)
\end{equation*}
is used when estimating
\begin{equation*}
\left|\int_0^{\infty}\Phi^0(x,\xi)x^{1/2}h(x,s) dx\right|=\Bigg|\int_0^{\infty} \left(\Phi^0(x,\xi)-\frac{x^{1/2}(x^2-1)}{x^2+1}\right)x^{1/2}h(x,s) dx\Bigg|.
\end{equation*}
We refer the interested readers to \cite[Proposition 9.8]{LLG}.
\end{proof}

\bigskip

\section*{Acknowledgements}
Y. Sire is partially supported by the NSF DMS Grant 2154219, ``Regularity vs singularity formation in elliptic and parabolic equations''. J. Wei is partially supported by NSERC of Canada. Y. Zheng is partially supported by NSF of China (No. 12171355). Part of this work was accomplished when Y. Zheng visited the Department of Mathematical Sciences at Tsinghua University in spring 2023; he is very grateful to the institution and Professor Wenming Zou for the kind hospitality. Y. Zhou is supported in part by a start-up grant at Wuhan University.
\bigskip


\end{document}